\newtheorem{thm}{Theorem}
\newtheorem{cor}{Corollary}[section]
\newtheorem{prop}{Proposition}[section]
\theoremstyle{remark}
\newtheorem{rmk}{Remark}[section]
\theoremstyle{definition}
\newtheorem{defn}{Definition}[section]
\newtheorem*{theorem*}{Theorem}
\newtheorem{theorem}{Theorem}[section]
\newtheorem{lemma}[theorem]{Lemma}
\def\p{\partial}
\def\R{\mathbb{R}}
\def\C{\mathbb{C}}
\def\l{\lambda}
\def\i{\sqrt{-1}}
\def\o{\omega}
\def\cD{\mathcal D}
\def\cE{{\mathcal E}}
\def\cF{{\mathcal F}}
\def\cH{{\mathcal H}}
\def\cK{{\mathcal K}}
\def\cL{{\mathcal L}}
\def\cO{{\mathcal O}}
\def \bp {\overline{\partial}}
\def \Ker {\text{Ker}}
\begin{document}

\title{Scalar curvature and properness on Sasaki manifolds}
\author{Weiyong He}
\address{Department of Mathematics, University of Oregon, Eugene, OR 97403. }
\email{whe@uoregon.edu}
\begin{abstract}
We study (transverse) scalar curvature type equation on compact Sasaki manifolds, in view of recent breakthrough of Chen-Cheng \cite{CC1, CC2, CC3} on existence of K\"ahler metrics with constant scalar curvature (csck) on compact K\"ahler manifolds. Following their strategy, we prove that given a Sasaki structure (with Reeb vector field and complex structure on its cone fixed ), there exists a Sasaki structure with transverse constant scalar curvature (cscs) if and only if the $\cK$-energy is reduced proper modulo the identity component of the automorphism group which preserves both the Reeb vector field and transverse complex structure. Technically, the proof mainly consists of two parts. The first part is a priori estimates for scalar curvature type equations which are parallel to Chen-Cheng's results in  \cite{CC2, CC3} in Sasaki setting. The second part is geometric pluripotential theory on a compact Sasaki manifold, building up on profound results in geometric pluripotential theory on K\"ahler manifolds. There are notable, and indeed subtle differences in Sasaki setting (compared with K\"ahler setting) for both parts (PDE and pluripotential theory). The PDE part is an adaption of deep work of Chen-Cheng \cite{CC1, CC2, CC3} to Sasaki setting with necessary modifications, where Chen's continuity path plays a very important role.
While the geometric pluripotential theory on a compact Sasaki manifold has new  difficulties, compared with  geometric pluripotential theory in K\"ahler setting which is very intricate. We shall present the details of geometric pluripotential on Sasaki manifolds in a separate paper \cite{HL} (joint work with Jun Li). 
\end{abstract}

\maketitle

\section{Introduction}
Sasaki manifolds have gained their prominence in physics and in algebraic geometry \cite{BG}. Technically Sasaki geometry is an odd dimensional analogue of K\"ahler geometry and almost all results in K\"ahler geometry have their counterparts in Sasaki geometry.  Calabi's extremal metric \cite{Ca1, Ca2} (and csck) has played a very important role in K\"ahler geometry. In 1997, S. K. Donaldson \cite{Don97} proposed an extremely fruitful program to approach existence of csck (extremal metrics) on a compact K\"ahler manifold with a fixed K\"ahler class. Tremendous work and progress have been made to characterize exactly when a K\"ahler class contains a csck (extremal K\"ahler metric). The analytic part for existence of csck is to solve a fourth order highly nonlinear elliptic equation, the scalar curvature type equation. This problem is regarded as a very hard problem in the field. Recently Chen and Cheng \cite{CC1, CC2, CC3} have solved  a major conjecture that existence of csck is equivalent to well studied conditions such as properness of Mabuchi's $K$-energy, or geodesic stability. In this paper we  prove the following result for Sasaki manifolds. 

\begin{thm}\label{cscs}There exists a Sasaki metric with constant scalar curvature if and only if the $\cK$-energy is reduced proper with respect to $\text{Aut}_0(\xi, J)$, the identity component of automorphism group which preserves the Reeb vector field and transverse complex structure. 
\end{thm}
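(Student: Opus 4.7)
The plan is to adapt the Chen--Cheng strategy \cite{CC1, CC2, CC3} to the Sasaki setting, handling the two directions separately. The easy direction asserts that existence of a cscs metric forces reduced properness of $\cK$. I would obtain this by combining the convexity of the $\cK$-energy along $C^{1,1}$-geodesics in the space of Sasaki potentials with the Sasaki analogue of the Calabi--Matsushima--Lichnerowicz reductivity of $\text{Aut}_0(\xi, J)$. Convexity together with the fact that a cscs metric is a global minimum shows that $\cK$ is bounded below; the standard argument of Darvas--Rubinstein and Berman--Darvas--Lu in the K\"ahler case, transplanted to basic potentials, then promotes this to reduced properness along the $d_1$-completion modulo the action of $\text{Aut}_0(\xi, J)$.

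For the converse, the plan is to run Chen's continuity path for a one-parameter family $\{\vp_t\}_{t\in[0,1]}$ of basic potentials, of the form
\[
 t\bigl(R_{\vp_t}-\underline{R}\bigr) \,=\, (1-t)\bigl(n-\tr_{\o_{\vp_t}^T}\o_0^T\bigr),
\]
where $R_{\vp_t}$ is the transverse scalar curvature of $\o_{\vp_t}^T$, $\underline{R}$ its average, and $\o_0^T$ a reference transverse K\"ahler form. At $t=0$ this reduces to a transverse complex Monge--Amp\`ere equation, solvable by the Sasaki analogue of Yau's theorem. Openness for $t>0$ follows from the implicit function theorem, the kernel of the linearized (transverse Lichnerowicz type) operator being identified with transverse holomorphy potentials, i.e.\ with the Lie algebra of $\text{Aut}_0(\xi, J)$; one works on a slice orthogonal to this action. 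Closedness is the substantive point and reduces to a priori estimates uniform on $[0,1)$.

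The estimate chain follows \cite{CC2, CC3} but must be transposed to basic functions on a Sasaki manifold. First, I would extract an $L^\infty$ bound on $\vp_t$ along the path from reduced properness of $\cK$ via geometric pluripotential theory for basic potentials, relating $\cK$ to entropy and energy functionals on the $d_1$-completion of the space of Sasaki potentials (this input is developed in \cite{HL}). Next, a transverse Laplacian estimate $\tr_{\o_0^T}\o_{\vp_t}^T\le C$ follows from a maximum principle applied to a Yau/Aubin-type auxiliary function, using a lower bound of the transverse bisectional curvature of $\o_0^T$; since the quantities involved are basic, the maximum principle can be run either on the Sasaki manifold directly or on the K\"ahler cone, which is important when the Reeb flow is irregular. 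A transverse Evans--Krylov estimate followed by Schauder bootstrap in the transverse directions, combined with smoothing along the Reeb direction, then produces a smooth cscs structure at $t=1$.

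The main obstacle is the $L^\infty$ step. In the K\"ahler case one appeals to the mature pluripotential theory of Guedj--Zeriahi, Darvas and others, but in the Sasaki setting the natural plurisubharmonic class consists of basic functions associated to the Reeb foliation, whose leaves may fail to close (the irregular case). One therefore cannot simply descend to a quotient and import K\"ahler results wholesale; instead envelopes, capacities, finite-energy classes and the $d_1$-completion must be constructed intrinsically on the Sasaki manifold, and the limit of the continuity path identified with a finite-energy minimizer of $\cK$. This is precisely the content of the companion paper \cite{HL}; once it is in hand, the continuity method together with the preceding estimates closes the argument.
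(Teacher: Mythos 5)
Your skeleton (Chen's continuity path for one direction, convexity plus pluripotential theory for the other) matches the paper's, but three substantive pieces are missing or wrong. First, the closedness estimates: the cscs equation is fourth order, and a Yau/Aubin-type maximum principle on $\tr_{\omega^T}\omega_\phi$ does not close, because the volume ratio $F$ is an unknown coupled to $\phi$ through the second equation $\Delta_\phi F=\tr_\phi\gamma-h$, so there is no pointwise control of $\Delta F$ to feed into Yau's computation. The paper instead runs the Chen--Cheng scheme: the only a priori input is the entropy bound $H(\phi)$ coming from $\tilde K_t\leq C$, from which one gets $C^0$ bounds on $\phi$ \emph{and} on $F$ via an auxiliary Monge--Amp\`ere equation and the Alexandrov maximum principle (Lemmas \ref{cma0} and \ref{cma1}), then integral $L^p$ estimates for $n+\Delta\phi$ and for $|\nabla_\phi F|_\phi$ (Theorems \ref{pmain} and \ref{pmain1}), and only then the Laplacian bound and bootstrap (through $W^{2,p}$ and the H\"older estimate for the Monge--Amp\`ere equation, not Evans--Krylov). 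Second, and more seriously: since properness is only modulo $G=\Aut_0(\xi,J)$, at times $t_i\to1$ one must gauge by $\sigma_i\in G$, and the gauged potential satisfies the equation with twist $\frac{1-t_i}{t_i}\sigma_i^*\omega^T=\frac{1-t_i}{t_i}(\omega^T+\sqrt{-1}\p_B\bar\p_B h_i)$, where $h_i$ is \emph{not} uniformly bounded and is controlled only through the $\alpha$-invariant, i.e. $\int_M e^{-p_0 f_i}dv_g\leq C$ for $f_i=\frac{1-t_i}{t_i}h_i$. Your fixed reference form on the right-hand side does not survive this gauging; without the flexible right-hand-side estimates of Theorem \ref{main1} the limit $t\to1$ cannot be taken. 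This is the central technical point of the properness-implies-existence direction and it is absent from your plan.

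Third, the converse direction is not the easy step you describe. Boundedness below of $\cK$ follows from convexity, but promoting it to $d_{1,G}$-properness requires the regularity theorem that any minimizer of $\cK$ in the finite-energy class $\cE_1$ is a smooth cscs (Theorem \ref{regularity1}, the Sasaki analogue of Chen--Cheng's resolution of the Darvas--Rubinstein conjecture). That theorem is itself proved by yet another continuity path, twisted by approximating metrics $\omega_{\phi_i}$ with $d_1(\phi_i,\phi_*)\to0$, and consumes the full a priori estimates a second time. Reductivity of $\Aut_0(\xi,J)$ plays no role; what is actually used is a contradiction argument along unit-speed finite-energy geodesics, combining convexity of $\cK$ on $\cE_1$, compactness of $d_1$-bounded sets with bounded entropy (Lemma \ref{compactness}), and transitivity of $G$ on cscs metrics. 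Your appeal to the ``standard argument'' of Darvas--Rubinstein and Berman--Darvas--Lu presupposes exactly this regularity input, so it must be stated and proved as a separate hard step rather than folded into the easy direction.
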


We consider Sasaki structures induced by transverse K\"ahler potentials, with the Reeb vector field and the transverse complex structure both fixed. The reduced properness seems to be precise notion of properness for Mabuchi's $\cK$-energy. Roughly speaking, we test the properness of the $\cK$-energy against the Finsler distance $d_1$ on the space of transverse K\"ahler potentials, modulo the action of $\text{Aut}_0(\xi, J)$. We shall be precise about the statement of Theorem \ref{cscs} in Section 2. Theorem \ref{cscs} also holds for Sasaki-extremal metric, with the $\cK$-energy replaced by modified $\cK$-energy, with necessary modifications in line with \cite{He18}. The details will appear in \cite{HL}. 

Our proof of Theorem \ref{cscs} follows the same strategy as in K\"ahler setting \cite{CC3}.  
We first adapt Chen-Cheng's method to prove a priori estimates in Sasaki setting. To formulate these results precisely, we recall some notations in Sasaki geometry. 
 Let $(M, g)$ be a compact Riemannian manifold of dimension $2n+1$, with a Riemannian metric $g$. Sasaki manifolds have very rich geometric structures and have many equivalent descriptions. A probably most straightforward formulation is as follows: its metric cone \[X=\R_{+}\times M,  g_X=dr^2+r^2 g.\] is a K\"ahler cone i.e. there exists a complex structure $J$ on $X$ such that $(g_X, J)$ defines a K\"ahler structure.  We identify $M$ with its natural embedding $M\rightarrow \{r=1\}\subset X$. 
 The $1$-form $\eta$ is given by $\eta=J(r^{-1}dr)$ and it defines a contact structure on $M$. The vector field $\xi:=J(r\p_r)$ is a nowhere vanishing, holomorphic Killing vector field and it is called the \emph{Reeb vector field} when it is restricted on $M$. The integral curves of $\xi$ are geodesics, and give rise  to a foliation structure on $M$, called the \emph{Reeb foliation}. Then there is a K\"ahler structure on the local leaf space of the Reeb foliations, called the \emph{transverse K\"ahler structure}.  A standard example of a Sasaki manifold is the odd dimensional round sphere $S^{2n+1}$. The corresponding K\"ahler cone is $\C^{n+1}\backslash\{0\}$ with the flat metric and its transverse K\"ahler structure descends to $\mathbb{CP}^2$ with the Fubini-Study metric. 
 
We can also formulate Sasaki geometry, in particular its transverse K\"ahler structure via its contact bundle $\cD=\text{Ker}(\eta)\subset TM$. The complex structure $J$ on the cone descends to the contact bundle via $\Phi:=J|_\cD$. The Sasaki metric can be written as follows,
 \begin{equation*}
 g=\eta\otimes\eta+g^T,
 \end{equation*}
 where $g^T$ is the transverse K\"ahler metric, given by $g^T:=2^{-1}d\eta(\Phi\otimes \mathbb{I})$. The transverse K\"ahler form is denoted by $\omega^T=2^{-1}d\eta$. 
We want to study the transverse K\"ahler geometry of Sasaki metrics, with the Reeb vector field $\xi$ and transverse complex structure (equivalently the complex structure $J$ on the cone) both fixed.
This means that we fix the basic K\"ahler class $[\omega^T]$ with $\omega^T=2^{-1}d\eta$ and study the Sasaki structures induced by the space of transverse K\"ahler potentials,
\begin{equation}
\cH=\{\phi\in C_B^\infty(M): \omega_\phi=\omega^T+\p_B\bar\p_B \phi>0\}
\end{equation}
where $C_B^\infty$ stands for smooth \emph{basic functions} $d\phi(\xi)=0$. 

To prove Theorem \ref{cscs}, Chen's continuity path \cite{chen15} plays a very important tole, as in K\"ahler setting. 
Given all analytic results including a priori estimates for scalar curvature type equation and profound results in pluripotential theory, this continuity path seems still to be inevitable as an instrumental tool to prove Theorem \ref{cscs}. 
We study Chen's continuity path in Sasaki setting,
\begin{equation}
t(R^T_\phi-\underline R)=(1-t) (\text{tr}_{\omega_\phi} \omega^T-n).
\end{equation}
or more generally, for a given real basic $(1, 1)$ form $\beta$ and a basic function $h$. 
\begin{equation}\label{scalar01}
R^T_\phi=\text{tr}_\phi \beta+h.
\end{equation}
We introduce another notation, the log-volume ratio $F:=F_\phi$,
\begin{equation*}
\eta\wedge \omega_\phi^n=\eta_\phi\wedge (\omega_\phi)^n=e^F\eta\wedge (\omega^T)^n
\end{equation*}
Denote $dv_g=\eta\wedge \frac{(\omega^T)^n}{n!}$ and $dv_\phi=\eta_\phi\wedge \frac{(\omega_\phi)^n}{n!}$. We introduce the entropy $H(\phi)$,
\begin{equation}\label{entropy101}
H(\phi):=\int_M \log \frac{\omega_\phi^n\wedge \eta}{\omega_T^n\wedge \eta}dv_\phi=\int_M e^F F dv_g
\end{equation}
We have the following a priori estimates, 
\begin{thm}\label{main00}
Let $(M, g, \xi, \eta)$ be a compact Sasaki manifold. Consider a basic function $\phi\in \cH, \sup \phi=0$ satisfying \eqref{scalar01}. Then there exists a constant $C_0>1$ such that
\begin{equation}
C_0^{-1}\leq n+\Delta \phi\leq C_0, \|\phi\|_{C^k}\leq C=C(k, C_0),
\end{equation}
where $C_0$ is positive bounded constant depending on the background metric $(M, g)$, the upper bound of $H(\phi)$, $\max|\beta|_g$ and $\max|h|$. 
\end{thm}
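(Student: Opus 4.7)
The plan is to follow the Chen--Cheng strategy \cite{CC2, CC3} adapted to the transverse (basic) setting. The starting observation is that the fourth-order equation \eqref{scalar01} decouples into a complex Monge--Amp\`ere equation together with a second-order auxiliary equation once one introduces the log-volume ratio $F$. Using the standard identity $R^T_\phi = -\Delta_{\omega_\phi} F + \text{tr}_{\omega_\phi}\text{Ric}(\omega^T)$, the equation \eqref{scalar01} becomes the system
\begin{equation*}
(\omega^T + \sqrt{-1}\,\p_B\bp_B\phi)^n = e^F (\omega^T)^n, \qquad -\Delta_{\omega_\phi} F = h + \text{tr}_{\omega_\phi}\bigl(\text{Ric}(\omega^T) - \beta\bigr).
\end{equation*}
The entropy bound $H(\phi)=\int_M e^F F\,dv_g\leq C$ is precisely an $L\log L$ control of $e^F$, which is the input that drives the whole estimate hierarchy.

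First I would establish $\|F\|_\infty\leq C$ by Moser iteration on the auxiliary equation. Multiplying by $e^{pF}$ and integrating against $dv_\phi$ produces a good gradient term on the left, while the right-hand side is controlled by $\max|h|$, $\max|\beta|_g$, and $\text{tr}_{\omega_\phi}\omega^T$ (whose integral against $dv_\phi$ is a cohomological constant). The Sobolev inequality applied in the class of basic functions, combined with the entropy bound to start the iteration, yields $\sup F\leq C$; an essentially symmetric argument produces $\inf F\geq -C$. With $e^F\in L^\infty$ in hand, I would next invoke a Sasaki version of Ko\l odziej's pluripotential estimate to obtain $\|\phi\|_\infty\leq C$. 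Because $\phi$ is basic and $\omega^T$ lives on the transverse K\"ahler structure, this step can in principle be phrased through local leaf-space K\"ahler Monge--Amp\`ere equations and a patching argument, but the clean formulation is the basic pluripotential theory developed in \cite{HL}. This is the step most sensitive to the Sasaki structure and I expect it to be the principal obstacle of the proof.

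Once $\|\phi\|_\infty$ and $\|F\|_\infty$ are controlled, the remaining estimates proceed as in K\"ahler geometry with only notational changes. I would apply the maximum principle to $u:=e^{-A\phi}(n+\Delta\phi)$ with $A$ sufficiently large: the Aubin--Yau computation transfers to basic functions because $\p_B\bp_B$, $\Delta_{\omega_\phi}$, and the transverse curvature tensors act on basic objects exactly as their K\"ahler analogues do, yielding the upper bound on $n+\Delta\phi$. The lower bound is then a direct consequence of the Monge--Amp\`ere equation combined with the two-sided bound on $F$. With $n+\Delta\phi$ pinched between two positive constants, \eqref{scalar01} becomes uniformly elliptic; transverse Evans--Krylov supplies a $C^{2,\alpha}_B$ estimate, and basic Schauder theory bootstraps to $\|\phi\|_{C^k}$ for every $k$. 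Apart from the pluripotential input, the technical burden reduces to verifying that cut-offs, test functions, and the quantities appearing in maximum-principle arguments can all be chosen basic, which is immediate since every datum $\omega^T, \phi, F, \beta, h$ already is.
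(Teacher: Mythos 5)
Your decomposition of \eqref{scalar01} into the coupled system and your closing Evans--Krylov/bootstrap step agree with the paper, but the two central estimates are obtained by arguments that do not work here, and the mechanisms the paper actually uses are missing. First, the bound $\sup F\leq C$ cannot be obtained by Moser iteration on $\Delta_\phi F=\text{tr}_\phi\gamma-h$: the gradient term you generate lives in the metric $\omega_\phi$, so the iteration needs a Sobolev inequality for $\omega_\phi$, whose constant is precisely what is not controlled a priori; moreover the weighted integrals $\int_M e^{pF}\,\text{tr}_\phi\omega^T\,dv_\phi$ are not cohomological for $p>0$, and the entropy bound is only an $L\log L$ control of $e^F$, too weak to start an iteration. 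The paper's (Chen--Cheng's) replacement is the auxiliary Monge--Amp\`ere potential $\psi$ solving $\det(g_{i\bar j}^T+\psi_{i\bar j})=\underline{C}e^F\sqrt{F^2+1}\det(g_{i\bar j}^T)$ (Lemma \ref{cma1}): one applies the Alexandrov--Bakelman--Pucci maximum principle to $e^{\delta(F+\epsilon_0\psi-2C\phi)}\zeta_{p_0}$ with a non-basic cutoff, closes the estimate with the $\alpha$-invariant, deduces $e^F\in L^p$ for every $p$, then gets $|\phi|,|\psi|\le C$ from the Blocki-style ABP estimate of Lemma \ref{cma0} --- note the paper explicitly states that Kolodziej's estimate, which you invoke, is \emph{not} established in the Sasaki setting --- and only then concludes $\sup F\le C$. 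This auxiliary-function/ABP step is the heart of the theorem and is absent from your proposal; your order of operations (first $\|F\|_\infty$, then $\|\phi\|_\infty$) is also inverted relative to what the argument can actually deliver.

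Second, the pointwise maximum principle applied to $e^{-A\phi}(n+\Delta\phi)$ fails: the Aubin--Yau differential inequality contains the term $\Delta F$ (background Laplacian), which for the scalar-curvature system is not pointwise controlled --- the equation only governs $\Delta_\phi F$, and at this stage $F$ is known at best in $W^{1,p}$. This is exactly why the paper proceeds by integral estimates: $L^p$ bounds on $n+\Delta\phi$ for every $p$ using the test function $e^{-\lambda(F+C\phi)}(n+\Delta\phi)$ and an integration by parts to absorb the $\Delta F$ term (Theorem \ref{pmain}), then the gradient bound $|\nabla_\phi F|_\phi\le C$ by iteration on $e^{F/2}|\nabla_\phi F|^2_\phi$ (Theorem \ref{pmain1}), giving $F\in W^{1,p}$, and finally the integral iteration of \cite{chenhe12} (Theorem \ref{second}) converting $F\in W^{1,p}$ with $p>2n+1$ into the two-sided bound on $n+\Delta\phi$. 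Your final bootstrap is fine once these bounds are in place, but as written the proposal omits the two ideas that make the theorem true.
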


It is important to consider the scalar curvature type equation in a more general form,
\begin{equation}\label{scalar02}
R^T_\phi=\text{tr}_\phi \beta+h,\end{equation}
where $ \beta:=\beta_0+\sqrt{-1}\p\bar\p f\geq 0$ and $f$ is a smooth basic function with $\sup f=0$ and \[\int_M e^{-p_0 f}dv_g<\infty.\]
We assume $p_0$ is a sufficiently large constant depending only on $n$ ($p_0=100(n^2+1)$ is sufficient for example). 
Then we have the following
\begin{thm}\label{main01}
Let $(M, g, \xi, \eta)$ be a compact Sasaki manifold. Consider a basic function $\phi\in \cH, \sup \phi=0$ satisfying \eqref{scalar02}. Then there exists a constant $C_0>1$ such that, for $1\leq p\leq p_0-1$,
\begin{equation}
\|F+f\|_{W^{1, 2p}}+\|n+\Delta \phi\|_{L^p}\leq C_0
\end{equation}
where $C_0$ is a positive bounded constant depending on the background metric $(M, g)$, the upper bound of $H(\phi)$, $\max|\beta_0|_g$, $\max|h|$ and $\int_M e^{-p_0 f}dv_g$. 
\end{thm}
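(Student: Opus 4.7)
The plan is to reduce \eqref{scalar02} to a second-order elliptic equation for the single unknown $u:=F+f$, and then carry out integral estimates in the spirit of Chen--Cheng \cite{CC2,CC3} adapted to the Sasaki setting. Taking $\log$ of the Monge--Amp\`ere identity $\omega_\phi^n=e^F(\omega^T)^n$ and applying $\sqrt{-1}\p\bp$ on the transverse side, one gets $\mathrm{Ric}(\omega_\phi)=\mathrm{Ric}(\omega^T)-\sqrt{-1}\p\bp F$ and hence $R^T_\phi=-\D_\phi F+\tr_\phi\mathrm{Ric}(\omega^T)$. Substituting into \eqref{scalar02} and using $\tr_\phi(\sqrt{-1}\p\bp f)=\D_\phi f$ yields
\[
-\D_\phi u=\tr_\phi\chi+h,\qquad \chi:=\beta_0-\mathrm{Ric}(\omega^T),
\]
where $\chi$ is a smooth bounded basic $(1,1)$ form. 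The singular part of $\beta$ has been absorbed into the new unknown $u$; in exchange, the weight $e^{-p_0 f}$ will have to be paid for in every subsequent H\"older splitting.

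To obtain the $W^{1,2p}$ bound on $u$, I would multiply the equation above by $e^{\lambda u}$ for a small $\lambda>0$ and integrate against $dv_\phi=e^F\,dv_g$. Integration by parts gives
\[
\lambda\int_M e^{\lambda u}|\nabla u|_\phi^2\,dv_\phi=\int_M e^{\lambda u}(\tr_\phi\chi+h)\,dv_\phi.
\]
The central task is to control the right-hand side. Applying the Newton--Maclaurin inequality to the eigenvalues of $\omega_\phi$ relative to $\omega^T$ (whose product is $e^F$ and whose sum is $n+\D\phi$) yields $\tr_\phi\omega^T\le C_n e^{-F}(n+\D\phi)^{n-1}$, and hence $|\tr_\phi\chi|\le C e^{-F}(n+\D\phi)^{n-1}$. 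Rewriting $dv_\phi=e^F\,dv_g$, the right-hand side is dominated by a weighted integral of $e^{\lambda f+\lambda F}(n+\D\phi)^{n-1}$ against $dv_g$. H\"older then distributes this among three factors: a power of $e^{-p_0 f}$, controlled by hypothesis; exponential moments of $F$, controlled by the entropy bound $H(\phi)\le C$ through a Moser--Trudinger inequality for basic functions; and a moderate power of $n+\D\phi$, supplied by the $L^p$ estimate on the Laplacian below. Choosing $\lambda$ small and iterating gives the $W^{1,2p}$ bound for each $1\le p\le p_0-1$.

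The $L^p$ estimate on $n+\D\phi$ itself comes from the Chern--Lu / Yau--Aubin inequality in the transverse setting, of the schematic form
\[
\D_\phi\log(n+\D\phi)\ge -B\,\tr_\phi\omega^T+\frac{-\D_{\omega^T} F+\text{bounded}}{n+\D\phi},
\]
where $B$ is a lower bound for the bisectional curvature of $\omega^T$. After rewriting $R^T_\phi$ via \eqref{scalar02}, I would multiply by $(n+\D\phi)^{p-1}e^{-Au}$ with $A$ large and integrate by parts against $dv_\phi$; the $-B\tr_\phi\omega^T$ term is absorbed in Yau's style using the Monge--Amp\`ere identity together with the factor $e^{-Au}$, while the gradient contribution involving $\nabla u$ is controlled by the $W^{1,2p}$ bound from the preceding paragraph. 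The two estimates close up together via a bootstrap.

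The main obstacle is the singular/degenerate nature of $\beta$: one has only integrability of $e^{-p_0 f}$ rather than any pointwise control of $f$, so the H\"older exponents and the parameters $\lambda$ and $A$ must be tuned delicately to keep the losses paid to $\int_M e^{-p_0 f}\,dv_g$ finite for each $p\le p_0-1$. A secondary, Sasaki-specific subtlety is that every test function, integration by parts, and Sobolev / Moser--Trudinger inequality has to be formulated on the space $C_B^\infty(M)$ of basic functions along the Reeb foliation; this is structurally parallel to the K\"ahler case but has to be verified directly, since the ambient $(2n+1)$-dimensional geometry is not itself K\"ahler.
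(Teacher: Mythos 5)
Your reduction of \eqref{scalar02} to the second--order equation $\Delta_\phi(F+f)=\tr_\phi(\mathrm{Ric}-\beta_0)-h$ is correct and is exactly the paper's starting point \eqref{scalar4}. But there is a genuine gap at the foundation of everything that follows: you never establish the uniform $C^0$ bounds $|F+f|\leq C$ and $|\phi|\leq C$, and the step you offer in their place does not work. You claim that ``exponential moments of $F$'' are ``controlled by the entropy bound $H(\phi)\le C$ through a Moser--Trudinger inequality for basic functions.'' The entropy bound $\int_M e^F F\,dv_g\le C$ only gives $L\log L$ control of $e^F$; it does not yield $\int_M e^{pF}\,dv_g\le C$ for any $p>1$, and no Moser--Trudinger-type inequality converts an entropy bound into exponential integrability of $F$. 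This is precisely the deep point of Chen--Cheng's argument that the paper reproduces: one must introduce the auxiliary potential $\psi$ solving the complex Monge--Amp\`ere equation $\det(g^T_{i\bar j}+\psi_{i\bar j})=\underline{C}e^F\sqrt{F^2+1}\det(g^T_{i\bar j})$, prove $F+f+\epsilon_0\psi-C\phi\le C_1$ via the Alexandrov--Bakelman--Pucci maximum principle applied to a localized test function $e^{\delta u}\zeta_{p_0}$ on a foliation chart (Lemma \ref{cma4}), and only then invoke Tian's $\alpha$-invariant applied to $\psi$ to obtain $\int_M e^{p(F+f)}\,dv_g\le C$ for all $p$, whence the upper bounds on $F+f$ and $|\phi|$ follow from the ABP-based $C^0$ estimate of Lemma \ref{cma0}. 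The lower bound $F+f\ge -C$ (Lemma \ref{lower002}) again requires ABP rather than a pointwise maximum principle because $f$ is not bounded below. Without these two-sided bounds, the weight $e^{\lambda F}$ in your first integral identity, the factor $e^F$ in the Moser iteration for the gradient, and the $L^1$ bound that seeds that iteration are all uncontrolled.

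A second, related problem is that your proposed closure is circular as stated: you say the $W^{1,2p}$ estimate needs the $L^p$ bound on $n+\Delta\phi$, and that the $L^p$ bound on $n+\Delta\phi$ needs the gradient control on $u$ from the $W^{1,2p}$ estimate. The paper breaks this circle by making the argument strictly sequential: first the $C^0$ bounds (independent of everything else), then the weighted $L^p$ estimate $\int_M e^{(p-1)f}(n+\Delta\phi)^p\,dv_g\le C$ (Lemma \ref{pp01}), in which the dangerous cross term $\nabla_\phi(F+\delta f+C\phi)\cdot_\phi\nabla_\phi(\Delta\phi)$ is absorbed by Cauchy--Schwarz against the third-derivative term $|\phi_{i\bar jk}|^2/((1+\phi_{i\bar i})(1+\phi_{j\bar j}))$ coming from Yau's computation --- no a priori gradient bound on $F+f$ is used there --- and only afterwards the pointwise bound $|\nabla_\phi(F+f)|_\phi\le C$ by Moser iteration, which converts to $\|\nabla(F+f)\|_{L^{2p}}$ via $|\nabla(F+f)|^2\le|\nabla_\phi(F+f)|^2_\phi(n+\Delta\phi)$. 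Your instinct about the Sasaki-specific issues (all quantities basic, Sobolev exponent $2n+1$ instead of $2n$) is sound and matches the paper's remarks, but the proof cannot be completed along the lines you sketch without first supplying the auxiliary-function/ABP/$\alpha$-invariant machinery for the $C^0$ estimates.
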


\begin{rmk}Theorem \ref{main00} and Theorem \ref{main01} are direct adaption of results in \cite{CC2, CC3} to the Sasaki setting.  We follow the method in \cite{CC3}[Section 2], including our proof of Theorem \ref{main00}, which corresponds to \cite{CC2}[Theorem 3.1]. The method in \cite{CC3} simplifies the proof considerately. The integral method as in \cite{chenhe12} has played an important role in these estimates. 
\end{rmk}

With these estimates, another central piece for Theorem \ref{cscs} is the pluripotential theory on Sasaki manifolds; for us the most relevant results would be in \cite{GZ, BB, D2} and \cite{D4} gives a very nice reference. 
One of the key results is the following geometric structure of the metric completion of $\cH$, denoted by $(\overline{\cH}, d_1)=(\cE_1, d_1)$ with the Finsler metric $d_1$. The Orlicz-Finsler geometric on $\cH$ was first introduced by T. Darvas \cite{D1, D2} in K\"ahler setting and it has played an important role in the proof of properness conjecture \cite{BDL2, CC2, CC3}. We have the following, 

\begin{thm}[He-Li \cite{HL}]\label{pluri01}$(\cE_1(M, \xi, \omega^T), d_1)$ is a geodesic metric space, which is the metric completion of $(\cH, d_1)$. For any $u, v\in \cE_1$, there exists  a uniform constant $C=C(n)>1$ such that
\begin{equation}\label{d01}
C^{-1} I_1(u, v)\leq d_1(u, v)\leq CI_1(u, v),
\end{equation}
where the energy functional $I_1$ is given by
\[
I_1(u, v)=\int_M |u-v| \omega_u^n\wedge \eta+\int_M |u-v|\omega_v^n\wedge \eta. 
\] 
Moreover, we have
\begin{equation}\label{d02}
d_1(u, \frac{u+v}{2})\leq C d_1(u, v). 
\end{equation}
\end{thm}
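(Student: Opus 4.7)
The plan is to transplant Darvas's strategy from the K\"ahler setting to the transverse setting of a compact Sasaki manifold, working throughout with basic functions that respect the Reeb foliation $\cF_\xi$. I would proceed in four stages.

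First, I would define the class $\cE_1=\cE_1(M,\xi,\omega^T)$ of finite-energy basic $\omega^T$-plurisubharmonic potentials, by analogy with Guedj--Zeriahi. A basic $\omega^T$-psh function $u$ lies in $\cE_1$ if its non-pluripolar transverse Monge--Amp\`ere measure $\omega_u^n\wedge\eta$ has full mass $\int_M(\omega^T)^n\wedge\eta$ and $u\in L^1(\omega_u^n\wedge\eta)$. Setting up this class requires a foliated Bedford--Taylor theory for basic potentials on $M$: using basic convolution (e.g.\ averaging over transverse holomorphic charts of the local leaf space) one can approximate $u\in\cE_1$ by a decreasing sequence in $\cH$ and show that the usual monotonicity and comparison principles survive.

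Second, on the smooth space $\cH$ I would introduce the Mabuchi--Finsler $L^1$ length by $\|\zeta\|_{1,\phi}=\int_M|\zeta|\,\omega_\phi^n\wedge\eta$ for a basic variation $\zeta\in C^\infty_B(M)$, and define $d_1(u,v)$ to be the infimum of the lengths of smooth basic paths from $u$ to $v$. The key analytic input is the existence of a $C^{1,\bar 1}$ basic geodesic $\phi_t$ connecting any two smooth endpoints, obtained by solving the transverse homogeneous Monge--Amp\`ere equation on $M\times[0,1]\times S^1$ (the Sasaki analogue of Chen's theorem; compatibility with the Reeb foliation is the main adaptation). Along such a geodesic the speed $|\dot\phi_t|$ is constant in $t$, so
\begin{equation*}
d_1(u,v)=\int_M|\dot\phi_0|\,\omega_u^n\wedge\eta.
\end{equation*}
Splitting $\dot\phi_0$ into positive and negative parts and integrating by parts against the transverse K\"ahler form yields the two-sided comparison $C^{-1}I_1(u,v)\leq d_1(u,v)\leq CI_1(u,v)$ on $\cH$. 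The midpoint estimate \eqref{d02} then reduces to the algebraic inequality $I_1(u,(u+v)/2)\leq C\,I_1(u,v)$, which follows from monotonicity of the transverse Monge--Amp\`ere energy under the decreasing sequence $\max(u,v)\geq(u+v)/2\geq\min(u,v)$.

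Third, I would extend $d_1$ and the comparison to $\cE_1$ by monotone approximation. Upper semicontinuity of $I_1$ along decreasing sequences of basic $\omega^T$-psh potentials, combined with the $\cH$-comparison, shows that $d_1$-Cauchy sequences in $\cH$ converge to a unique element of $\cE_1$, and conversely any $u\in\cE_1$ is the $d_1$-limit of its basic regularizations. Geodesic connectedness of $(\cE_1,d_1)$ follows by taking locally uniform limits of the $C^{1,\bar 1}$ basic geodesics joining smooth approximants of the endpoints, using the lower semicontinuity of $d_1$ along decreasing sequences to identify the limit as a minimizer.

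The main obstacle, and the reason this is pushed to the companion paper \cite{HL}, is the foliation itself. When $\xi$ is irregular the leaves of $\cF_\xi$ are dense in tori and there is no global K\"ahler quotient, so one cannot simply invoke Darvas's theorems on the leaf space; every construction (non-pluripolar products, envelopes, regularization, $C^{1,\bar 1}$ geodesics) must be carried out intrinsically on $M$ while preserving the basic condition $d\,\cdot\,(\xi)=0$. Checking that the capacity-theoretic estimates, the envelope constructions of the form $P(\min(u,v))$, and the Chen--\Laplace style interior estimates for the transverse Monge--Amp\`ere operator all descend cleanly through the foliation is the technical heart of the argument, and is where genuinely new input beyond the K\"ahler case is needed.
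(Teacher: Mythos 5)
The paper itself does not prove Theorem~\ref{pluri01}: the statement is imported from the companion paper \cite{HL}, with only the remark that the proof ``follows the lines closely as in \cite{D4}'' and the Appendix's comment that the irregular Reeb foliation is where genuinely new work is needed. Your outline --- finite-energy classes for the transverse Monge--Amp\`ere operator, the $L^1$ Finsler length on basic variations, $C^{1,\bar 1}$ basic geodesics via \cite{GZ1}, comparison of $d_1$ with $I_1$ through rooftop-type envelopes, and monotone approximation to pass from $\cH$ to $\cE_1$ --- is precisely the strategy the paper announces, so it matches the paper's (deferred) approach.
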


This is a counterpart of T. Darvas's results \cite{D1, D2} in K\"ahler setting. We shall prove this theorem following the lines closely as in \cite{D4}. 
Given this result, one can then extend the $\cK$-energy to $\cE_1(M, \xi, \omega^T)$, and keep it still convex along geodesics, see \cite{BB, JZ, VC, BDL2}.
We actually generalize almost all related results in \cite{D4} to Sasaki setting, building up on profound results in pluripotential theory by many and geodesic equation \cite{GZ1}. We refer readers to \cite{D4} for  references. We should emphasize that the geometric pluripotential theory on a compact Sasaki manifold does impose new difficulties, in particular when the Reeb foliation is irregular. The arguments are  tricky at times and lengthy.  We will summarize the results below for pluripotential theory on compact Sasaki manifolds. We shall present these results, including the proof of Theorem \ref{pluri01},  in a forth-coming paper \cite{HL}. 

There are tremendous work in the last two decades in Sasaki geometry, in particular on Sasaki-Einstein manifolds, see \cite{BG, GMSW, BGK, FOW, MSY, Sparks, zhang1, HeSun, CS} and reference therein. Calabi's extremal metric has a direct adaption in Sasaki setting \cite{BGS1} and Donaldson's program has also been extended to Sasaki setting, see \cite{GZ} for example. Even though we only state Theorem \ref{pluri01} for $d_1$, but it holds for a more general setting (see \cite{D4} and \cite{HL}). The Riemannian distance $d_2$ plays a prominent role, which was studied extensively in \cite{GZ1}, as a counterpart of Chen's results \cite{chen01} in K\"ahler setting. \\

{\bf Acknowledgement:} The author thanks Prof. Xiuxiong Chen sincerely for encouragement on this work and constant support. It is evident that the profound results in K\"ahler geometry make it possible for us to deal with the Sasaki case.  
The author is supported in part by an NSF grant, award no. 1611797.

\numberwithin{equation}{section}
\numberwithin{thm}{section}

\section{Preliminary on Sasaki geometry}A good reference on Sasaki geometry can be found in the monograph \cite{BG} by Boyer-Galicki. 
Let $M$ be a compact differentiable manifold of dimension $2n+1 (n\geq 1)$. A \emph{Sasaki structure} on $M$ is defined to be a K\"ahler cone structure on $X=M\times \R_{+}$, i.e. a K\"ahler metric $(g_X, J)$ on $X$  of the form $$g_X=dr^2+r^2g,$$ where $r>0$ is a coordinate on $\R_{+}$, and $g$ is a Riemannian metric on $M$. We call $(X, g_X, J)$ the \emph{K\"ahler cone} of $M$. We also identify $M$ with the link $\{r=1\}$ in $X$ if there is no ambiguity.  Because of the cone structure, the K\"ahler form on $X$ can be expressed as 
$$\omega_X=\frac{1}{2}\sqrt{-1}\p\bp r^2=\frac{1}{4}dd^c r^2.$$
We denote by $r\p_r$ the homothetic vector field on the cone, which is easily seen to be a real holomorphic vector field. 
A tensor $\alpha$ on $X$ is said to be of homothetic degree $k$ if 
$$\cL_{r\p_r} \alpha=k\alpha.$$
In particular, $\omega$ and $g$ have homothetic degree two, while $J$ and $r\p_r$ has homothetic degree zero.
We define the \emph{Reeb vector field} $$\xi=J(r\p_r).$$
Then $\xi$ is a holomorphic Killing field on $X$ with homothetic degree zero. Let $\eta$ be the dual one-form to $\xi$:
 \[\eta(\cdot)=r^{-2}g_X(\xi, \cdot)=d^c \log r=\i (\bp-\p)\log r\ .\] 
We also use $(\xi, \eta)$ to denote the restriction of them on $(M, g)$.  Then we have 
\begin{itemize}
\item $\eta$ is a contact form on $M$, and $\xi$ is a Killing vector field on $M$ which we also call the Reeb vector field;
\item $\eta(\xi)=1, \iota_{\xi} d\eta(\cdot)=d\eta (\xi, \cdot)=0$;
\item the integral curves of $\xi$ are geodesics.  
\end{itemize}

The Reeb vector field $\xi$ defines a foliation $\cF_\xi$ of $M$ by geodesics. There is a classification of Sasaki structures according to the global property of the leaves. If all the leaves are compact, then $\xi$ generates a circle action on $M$, and the Sasaki structure is called {\it quasi-regular}. In general this action is only locally free, and we get a polarized orbifold structure on the leaf space. If the circle action is globally free, then the Sasaki structure is called {\it regular}, and the leaf space is a polarized K\"ahler manifold. If $\xi$ has a non-compact leaf the Sasaki structure is called {\it irregular}. Regularity will not appear essential explicitly in the paper, but we shall see that irregular Sasaki structures do impose substantial difficulties at many occasions.

There is an orthogonal decomposition of the tangent bundle \[TM=L\xi\oplus \cD,\] where $L\xi$ is the trivial  bundle generalized by $\xi$, and $\cD=\Ker (\eta)$.
The metric $g$ and the contact form $\eta$ determine a $(1,1)$ tensor field $\Phi$ on $M$ by
\[
g(Y, Z)=\frac{1}{2} d\eta(Y, \Phi Z), Y, Z\in \Gamma(\cD). 
\]
$\Phi$ restricts to an almost complex structure on $\cD$: \[\Phi^2=-\mathbb{I}+\eta\otimes \xi. \] 

Since both $g$ and $\eta$ are invariant under $\xi$, there is a well-defined K\"ahler structure $(g^T, \omega^T, J^T)$ on the local leaf space of the Reeb foliation. We call this a \emph{transverse K\"ahler structure}. In the quasi-regular case, this is the same as the K\"ahler structure on the quotient. Clearly 
\[
\o^T=\frac{1}{2}d\eta. 
\]
The upper script $T$ is used to denote both the transverse geometric quantity, and the corresponding quantity on the bundle $\cD$. For example we have on $M$
$$g=\eta\otimes  \eta+g^T.$$
From the above discussion it is not hard to see that there is an intrinsic formulation of a Sasaki structure as a compatible integrable pair $(\eta, \Phi)$, where $\eta$ is a contact one form and $\Phi$ is a almost CR structure on $\mathcal D=\Ker \eta$. Here ``compatible" means  first that 
$d\eta(\Phi U, \Phi V)=d\eta(U, V)$ for any $U, V\in \mathcal D$, and $d\eta(U, \Phi U)>0$ for any non zero $U\in \mathcal D$. Further we require $\mathcal L_{\xi}\Phi=0$, where $\xi$ is the unique vector field with $\eta(\xi)=1$, and $d\eta(\xi, \cdot)=0$. 
$\Phi$ induces a splitting $$\mathcal  D\otimes \C=\mathcal D^{1,0}\oplus \mathcal D^{0,1}, $$
 with $\overline{\mathcal D^{1,0}}=\mathcal D^{0,1}$. 
``Integrable" means that $[\mathcal D^{0,1}, \mathcal D^{0,1}]\subset \mathcal D^{0,1}$. This is equivalent to that the induced almost complex structure on the local leaf space of the foliation by $\xi$ is integrable. For more discussions on this, see \cite{BG} Chapter 6. 

\begin{defn} A $p$-form $\theta$ on $M$ is called basic if
\[
\iota_\xi \theta=0, L_\xi \theta=0.
\]
Let $\Lambda^p_B$ be the bundle of basic $p$-forms and $\Omega^p_B=\Gamma(S, \Lambda^p_B)$ the sections of $\Lambda^p_B$.  
\end{defn}
The exterior differential preserves basic forms. We set $d_B=d|_{\Omega^p_B}$. 
Thus the subalgebra $\Omega_{B}(\cF_\xi)$ forms a subcomplex of the de Rham complex, and its cohomology ring $H^{*}_{B}(\cF_\xi)$  is called the {\it basic cohomology ring}. When $(M, \xi, \eta, g)$ is a Sasaki structure, there is a natural splitting of $\Lambda^p_B\otimes \C$ such that
\[
\Lambda^p_B\otimes \C=\oplus \Lambda^{i, j}_B,
\]
where $\Lambda^{i, j}_B$ is the bundle of type $(i, j)$ basic forms. We thus have the well defined operators
\[
\begin{split}
\p_B: \Omega^{i, j}_B\rightarrow \Omega^{i+1, j}_B,\\
\bar\p_B: \Omega^{i, j}_B\rightarrow \Omega^{i, j+1}_B.
\end{split}
\]
Then we have $d_B=\p_B+\bar \p_B$. 
Set $d^c_B=\frac{1}{2}\i\left(\bar \p_B-\p_B\right).$ It is clear that
\[
d_Bd_B^c=\i\p_B\bar\p_B=\frac{1}{2}d\Phi d, d_B^2=(d_B^c)^2=0.
\]
We shall  recall the transverse structure on local coordinates.  Let $U_\alpha$ be an open covering of $M$ and $\pi_\alpha: U_\alpha\rightarrow V_\alpha\subset \C^n$ submersions
such that 
\[
\pi_\alpha\circ \pi^{-1}_\beta: \pi_\beta(U_\alpha\cap U_\beta)\rightarrow \pi_\alpha (U_\alpha\cap U_\beta)
\]
is biholomorphic when $U_\alpha\cap U_\beta$ is not empty. One can choose  local coordinate charts $(z_1, \cdots, z_n)$ on $V_\alpha$ and local coordinate charts $(x, z_1, \cdots, z_n)$ on  $U_\alpha\subset M$  such that $\xi=\p_x$, where we use the notations
\[
\p_x=\frac{\p}{\p x}, \p_i=\frac{\p}{\p z_i}, \bar \p_{ j}=\p_{\bar j}=\frac{\p}{\p \bar z_{ j}}=\frac{\p}{\p z_{\bar j}}. 
\]
The map $\pi_\alpha: (x, z_1, \cdots, z_n)\rightarrow (z_1, \cdots, z_n)$ is then the natural projection. There is an isomorphism, for any $p\in U_\alpha$,
\[
d\pi_\alpha: \cD_p\rightarrow T_{\pi_\alpha(p)}V_\alpha. 
\]
The restriction of $g$ on $\cD$ gives an Hermitian metric $g^T_\alpha$ on $V_\alpha$ since $\xi$ generates isometries of $g$. 
One can  verify that there is a well defined K\"ahler metric $g_\alpha^T$ on each $V_\alpha$ and 
\[
\pi_\alpha\circ \pi^{-1}_\beta: \pi_\beta(U_\alpha\cap U_\beta)\rightarrow \pi_\alpha (U_\alpha\cap U_\beta)
\]
gives an isometry of K\"ahler manifolds $(V_\alpha, g^T_\alpha)$.  The collection of K\"ahler metrics $\{g^T_\alpha\}$ on $\{V_\alpha\}$ can be used as an alternative definition of the transverse K\"ahler metric.
  
Now we consider a Sasaki structure induced by a transverse K\"ahler potential $\phi\in \cH$, such that \[\eta_\phi=\eta+\sqrt{-1}(\bar\p-\p) \phi, \omega_\phi=\omega^T+\sqrt{-1}\p\bar\p\phi,\]
where $\eta_\phi, \omega_\phi$ are real basic $(1, 1)$-forms. 
Its transverse scalar curvature $R^T_\phi$ reads, given a local coordinate $(x, z_1, \cdots, z_n)$ with $\xi=\p_x$, 
\begin{equation}
R^T_\phi=-g^{i\bar j}_{T}(\phi)\p_i\p_{\bar j} \log \det (g_{k\bar l}^T+\phi_{k\bar l})
\end{equation}

Notations: for simplicity, we shall use $R, Ric$ to represent the transverse scalar curvature $R^T, Ric^T$, and write $R_\phi$ as the transverse scalar curvature of $\omega_\phi$. 
We $\Delta:=\Delta_g=\Delta_\eta$ to denote the Laplacian operator of $g$, $\Delta_{\eta_\phi}$ to denote the Laplacian operator of the metric \[g_{\eta_\phi}=\eta_\phi\otimes \eta_\phi+g^T_\phi.\]
When acting on basic functions, $\Delta$ and $\Delta_{\eta_\phi}$ coincide with the transverse Laplacian (or basic Laplacian), which we shall denote as $\Delta, \Delta_\phi$ respectively. We shall only keep the notations $\omega^T$ (sometimes $\omega_T$ such as $(\omega_T)^n$) and $g_{i\bar j}^T$ (sometimes $g^{i\bar j}_T$ for its inverse) to indicate that we are working with transverse K\"ahler structures on a Sasaki manifold, instead of working on a K\"ahler manifold. When a function $f$ is not basic, on a local coordinate $(x, z_1, \cdots, z_n)$, we have the following,
\begin{equation}
\Delta_\eta f=\Delta f=\p^2_x f+g^{i\bar j}_T\p_i\p_{\bar j} f. 
\end{equation}

\subsection{The energy functionals in Sasaki setting}

We recall some well-known functionals in Sasaki setting, which are direct extensions of functionals in K\"ahler geometry, see \cite{zhang1} for example.  We use the notations in terms of transverse K\"ahler structure mostly, parallel to K\"ahler setting. 
Let $(M, \xi, \eta, g)$ be a compact Sasaki manifold. We fix the Reeb vector field $\xi$ and the complex structure on its metric cone. Consider the basic K\"ahler class $[\omega_T]=\frac{d\eta}{2}$ and the Sasaki structures induced by 
\begin{equation}
\cH=\{\phi\in C_B^{\infty}(M): \omega_\phi=\omega_T+\sqrt{-1}\p\bar \p \phi>0\}
\end{equation}
We recall the $\mathbb{I}$-functional,
\begin{equation}
\mathbb{I}_{\omega_T}(\phi)=\frac{1}{(n+1)!}\int_M \phi \sum_{k=0}^n \omega^k_T\wedge \omega^{n-k}_\phi\wedge \eta.
\end{equation}
For a given real basic $(1, 1)$ form $\chi$, we also define 
\begin{equation}
\mathbb{I_{{\omega_T}, \chi}}(\phi)=\frac{1}{n!}\int_M \phi \sum_{k=0}^{n-1}\chi\wedge \omega^k_T\wedge \omega_\phi^{n-1-k}\wedge \eta.
\end{equation}
We recall the so-called $\mathbb{J}$-functional, with the base metric $\omega$,
\begin{equation}\label{j0}
\mathbb{J}_{\omega_T}(\phi):=\mathbb{J}(\omega_T, \omega_\phi)=\sum_{k=0}^n\frac{1}{(n+1)!}\int_M \phi \omega^{k}_T\wedge\omega_\phi^{n-k}\wedge \eta-\frac{1}{n!}\int_M \phi\omega^n_\phi\wedge \eta.
\end{equation}
When the base metric $\omega_T$ is clear, we simply write $\mathbb{J}_{\omega_T}=\mathbb{J}$ and $\mathbb{I}_{\omega_T}=\mathbb{I}$. The $\mathbb{J}$-functional  can be characterized by its derivative, 
\begin{equation}
\frac{d \mathbb{J}(\phi)}{d t}=\int_M \frac{\p \phi}{\p t} (\text{tr}_\phi \omega-n) \frac{\omega_\phi^n}{n!}\wedge \eta
\end{equation}
We have the relation
\[
\mathbb{J}(\phi)=\mathbb{I}(\phi)-\frac{1}{n!}\int_M \phi\omega^n_\phi\wedge \eta.
\]
Note that $\mathbb{I}(\phi)$ is a functional on $\phi\in \cH$, while $\mathbb{J}(\phi)$ does not depend on the normalization condition on $\phi$, hence a functional on transverse K\"ahler metrics (or Sasaki metrics).
For a given real $(1, 1)$ form $\chi$, we also define $\mathbb{J}_{\omega, \chi}$ by
\begin{equation}\label{j1}
\begin{split}
\mathbb{J}_{{\omega_T}, \chi}(\phi)=&\frac{1}{n!}\int_M \phi \sum_{k=0}^{n-1}\chi\wedge \omega^k_T\wedge \omega_\phi^{n-1-k}\wedge \eta-\frac{1}{(n+1)!}\int_M \underline{\chi} \phi \sum_{k=0}^n \omega^k_T\wedge \omega_\phi^{n-k}\wedge \eta
\end{split}
\end{equation}
where $\underline \chi$ takes the form
\[
\underline \chi=\text{Vol}^{-1}(M) \int_M \chi\wedge \frac{\omega^{n-1}}{(n-1)!}\wedge \eta. 
\]
We also recall Aubin's $I$-functional and $J$-functional  in Sasaki setting as follows,
\begin{equation}\label{ij1}
\begin{split}
I_{\omega_T}(\phi):=I(\omega_T, \omega_\phi)=\frac{1}{n!}\int_M \phi\left(\omega^n_T-\omega_\phi^n\right)\wedge \eta\\
J_{\omega_T}(\phi):=J(\omega_T, \omega_\phi)=\frac{1}{n!}\int_M\phi \omega^n_T\wedge \eta-\mathbb{I}_{\omega_T}(\phi). 
\end{split}
\end{equation}
We have the following well-known facts, see for example \cite{PG}[Proposition 4.2.1],
\begin{equation}\label{ij2}
0\leq \frac{1}{n+1}I(\phi)\leq J(\phi)\leq \frac{n}{n+1}I(\phi)
\end{equation}
It follows that,
\begin{equation}\label{ij3}
\frac{1}{n+1} I(\phi)\leq \mathbb{J}(\phi)=I(\phi)-J(\phi)\leq \frac{n}{n+1}I(\phi)
\end{equation}
In general, for $\phi, \psi\in \cH$, we write
\begin{equation}\label{I-topology}
I(\phi, \psi)=I(\omega_\phi, \omega_\psi)=\frac{1}{n!}\int_M (\phi-\psi)\left(\omega^n_\psi-\omega_\phi^n\right)\wedge \eta
\end{equation}

Now we recall that the Mabuchi's $\cK$-energy can be characterized by its variation,
\[
\delta K=-\int_M \delta \phi (R_\phi-\underline R)\frac{\omega_\phi^n}{n!}\wedge \eta,
\]
where $\underline{R}$ is the average of the transverse  scalar curvature.
Following Chen \cite{chen01}, the $\cK$-energy reads (in Sasaki setting),
\begin{equation}\label{kenergy0}
\cK(\phi)=H(\phi)+\mathbb{J}_{\omega, -Ric} (\phi),
\end{equation}
where $H(\phi)$ is the entropy \eqref{entropy101} and the $\mathbb{J}_{-Ric}$-functional takes the form
\[
\begin{split}
\mathbb{J}_{-Ric}(\phi)=&\frac{n\underline{R}}{(n+1)!}\int_M \phi \sum_{k=0}^n \omega^k_T\wedge \omega_\phi^{n-k}\wedge \eta-\frac{1}{n!}\int_M \phi \sum_{k=0}^{n-1}Ric\wedge \omega^k_T\wedge \omega_\phi^{n-1-k}\wedge \eta
\end{split}
\]
An extremely important fact about the $\cK$-energy is that it is convex along $C^{1, \bar 1}$ geodesics connecting points in $\cH$ (conjectured by Chen), proved by Berman-Berndtson \cite{BB} (see also  Chen-Li-Paun \cite{CLP}) 
\begin{thm}[Berman-Berndtsson]On a compact K\"ahler manifold, the $\cK$-energy $\cK(\phi_t)$  is convex in $t$ along the $C^{1, 1}$ geodesics $\phi_t$ connecting $\phi_0, \phi_1\in \cH$, for $t\in [0, 1]$. 
\end{thm}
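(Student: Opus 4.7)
The plan is to decompose the energy following \eqref{kenergy0} as $\cK(\phi) = H(\phi) + \mathbb{J}_{-Ric}(\phi)$ and handle each summand along the $C^{1,1}$ geodesic $\phi_t$ separately. The pluripotential summand $\mathbb{J}_{-Ric}$ turns out to be affine in $t$, while the entropy summand $H$ is convex; this is where Berndtsson's positivity of direct images enters.

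First I would treat the term $\mathbb{J}_{-Ric}$. The defining feature of a $C^{1,1}$ geodesic is that its complexification $\Phi(z,x) = \phi_{\log|z|}(x)$, defined on the annulus $A = \{1 \leq |z| \leq e\} \subset \C^*$ times $M$, satisfies the homogeneous complex Monge-Amp\`ere equation $(\omega + dd^c_{A\times M}\Phi)^{n+1} = 0$ as a bounded positive current. A Bedford-Taylor integration by parts on $A \times M$, valid at $C^{1,1}$ regularity, shows that every multilinear functional of the form $\phi \mapsto \int_M \phi\, \chi \wedge \omega_\phi^k \wedge \omega^{n-1-k}$ with a fixed closed $(1,1)$-form $\chi$, and $\phi \mapsto \int_M \phi\, \omega_\phi^k \wedge \omega^{n-k}$, is affine in $t$ along $\phi_t$. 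Summing appropriately, as in the defining expression for $\mathbb{J}_{-Ric}$, shows that $\mathbb{J}_{-Ric}(\phi_t)$ is affine in $t$, hence convex.

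The main obstacle is convexity of the entropy $H(\phi_t)$. I would follow Berman-Berndtsson and proceed by approximation: solve the perturbed equation $(\omega + dd^c\Phi^\ve)^{n+1} = \ve\, \omega^{n+1}$ on $A\times M$ with boundary data $\phi_0,\phi_1$, which by Chen's construction produces smooth psh families $\phi_t^\ve$. For these smooth families, Berndtsson's subharmonicity of direct images, applied to a Bergman-type weight built from $-\Phi^\ve$ together with local potentials of the fibrewise Ricci form, implies subharmonicity in $z$ of the associated log-integrals $-\log\int_M e^f \omega^n$. Combined with the convex-duality identity $H(\mu|\nu) = \sup_f \bigl\{\int f\, d\mu - \log\int e^f d\nu\bigr\}$, in which the first term is already affine in $t$ by the argument of the previous paragraph, one concludes that $H(\phi_t^\ve)$ is convex in $t = \log|z|$ as a supremum of convex functions of $t$.

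The last step is to pass $\ve \to 0$. Chen's uniform $C^{1,1}$ estimates for the perturbed equation, and their refinements by Blocki and Chu-Tosatti-Weinkove, yield uniform $L^\infty$-bounds on $\Phi^\ve$ and weak convergence $\omega_{\phi_t^\ve}^n \to \omega_{\phi_t}^n$ with uniformly bounded densities. Here lies the genuine technical difficulty: entropy is only lower semicontinuous under weak convergence, so to preserve the limiting \emph{equality} needed for convexity one must control the approximate maximizers $f_\ve = \log(\omega_{\phi_t^\ve}^n/\omega^n)$, typically via the $\ve$-Euler-Lagrange equation and an entropy-energy bound exploiting the $L^\infty$-bound on the Monge-Amp\`ere density. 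Once that is accomplished, convexity of $H(\phi_t^\ve)$ descends to $H(\phi_t)$ in the limit, and together with affineness of $\mathbb{J}_{-Ric}$ this gives convexity of $\cK$ along $\phi_t$.
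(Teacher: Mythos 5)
This theorem is not proved in the paper: it is stated as background and attributed to Berman--Berndtsson \cite{BB} (with \cite{CLP} as an alternative route, and \cite{JZ, VC} for the Sasaki analogue used later), so there is no internal argument to compare yours against. Judged on its own terms, your sketch assembles the right ingredients of the Berman--Berndtsson proof --- Chen's decomposition \eqref{kenergy0}, Legendre duality for the entropy, Berndtsson's subharmonicity of direct images, approximation by $\ve$-geodesics --- but the way you distribute the convexity between the two summands is incorrect, and the error is structural rather than cosmetic.

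Concretely, for a fixed closed $(1,1)$-form $\chi$ the functional $\mathbb{I}_{\omega,\chi}$ is \emph{not} affine along geodesics: along a smooth geodesic one computes
\begin{equation*}
\frac{d^2}{dt^2}\,\mathbb{I}_{\omega,\chi}(\phi_t)=\int_M \chi\bigl(\nabla^{1,0}\dot\phi_t,\overline{\nabla^{1,0}\dot\phi_t}\bigr)_{\omega_{\phi_t}}\,\frac{\omega_{\phi_t}^n}{n!},
\end{equation*}
which vanishes for all geodesics only when $\chi=0$; only the untwisted energy $\mathbb{I}$ is affine, while $\mathbb{I}_{\omega,\chi}$ is convex for $\chi\ge 0$ and concave for $\chi\le 0$ (this is exactly why the paper's $\mathbb{J}$-functional in Chen's continuity path is \emph{strictly} convex rather than affine). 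Hence $\mathbb{J}_{-Ric}$ is affine only in degenerate cases, and correspondingly the entropy alone satisfies
\begin{equation*}
\frac{d^2}{dt^2}H(\phi_t)=\int_M \bigl|\bar\partial\nabla^{1,0}\dot\phi_t\bigr|^2_{\omega_{\phi_t}}\frac{\omega_{\phi_t}^n}{n!}+\int_M Ric(\omega)\bigl(\nabla^{1,0}\dot\phi_t,\overline{\nabla^{1,0}\dot\phi_t}\bigr)_{\omega_{\phi_t}}\frac{\omega_{\phi_t}^n}{n!},
\end{equation*}
which has no sign in general; only the sum $\cK=H+\mathbb{J}_{-Ric}$ is convex. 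The same defect reappears inside your duality step: for a fixed test function $f$, the map $t\mapsto\int_M f\,\omega_{\phi_t}^n$ agrees with $\mathbb{I}_{\omega, dd^cf}(\phi_t)$ up to a constant and is therefore not affine along the geodesic, so $H(\phi_t)$ is not exhibited as a supremum of convex functions of $t$ in the way you claim. The actual Berman--Berndtsson mechanism exploits the fact that Chen's formula is invariant under changing the reference weight on $-K_M$ (replacing $\omega^n$ by $e^{\psi}$ trades $-Ric(\omega)$ for $dd^c\psi$ in the pluripotential part): they let this weight vary plurisubharmonically with the geodesic parameter, apply Berndtsson's theorem to the resulting $-\log\int_M e^{\psi_z}$ term, and deduce subharmonicity of the \emph{combined} functional; the entropy and pluripotential parts cannot be decoupled. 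Your $\ve\to 0$ discussion is sensible but inherits the gap, since the quantity whose convexity must survive the limit is the full $\cK$, not $H$ alone.
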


In Sasaki setting, the convexity of $\cK$-energy along $C^{1, \bar 1}$ geodesic has been proved by Ji-Zhang \cite{JZ} and van Coevering \cite{VC}, building up  on \cite{BB} and \cite{GZ1}. 

\subsection{The metric completion $(\overline\cH, d_1)$}

In this section, we summarize results in pluripotential theory we shall prove in \cite{HL}. These results  will be used to connect the properness of $\cK$-energy to existence of cscs, and play an important role in the proof of Theorem \ref{cscs}. 

Following T. Darvas \cite{D1}, we  introduce the notion of $d_p$ distance on $\cH$,
\[
\|\xi\|_{p, \phi}^p=\int_M |\xi|^p\omega_\phi^n\wedge \eta, \forall \xi\in T_\phi\cH=C^\infty_B(M). 
\]
When $p=2$, this is the counterpart of Mabuchi's metric in the space of K\"ahler potentials and it was studied by Guan-Zhang \cite{GZ1} extensively. We shall be mostly interested in $p=2$ and $p=1$. 
One can then define the length of curves in $\cH$ and the distance function $d_p(\phi_0, \phi_1)$ for $\phi_0, \phi_1$ in $\cH$. The later is simply the infimum of the length of the curves connecting $\phi_0$ and $\phi_1$.
We need to extend many profound results in pluripotential theory in K\"ahler setting to Sasaki setting. We need mostly notions in \cite{GZ}, \cite{D2} and in particular \cite{D4}. Part of them has been done by van Covering \cite{VC}, with  main focus on $C^0$ potentials (part of it on $L^\infty$).  We shall extend most of results regarding $\cE$ and $\cE_1$ (defined below) to Sasaki setting.  Given a Sasaki structure $(M, \xi, \eta, g)$, we recall the following definition (see \cite{VC} for example), 
\begin{defn}An $L^1$, upper semicontinuous (usc) function $u: M\rightarrow \R\cup \{-\infty\}$ is called a transverse $\omega^T$-plurisubharmonic (tpsh for short) if $u$ is invariant under the Reeb flow, and $u$ is $\omega^T$-plurisubharmonic on each local foliation chart $V_\alpha$, that is $\omega^T_\alpha+\sqrt{-1}\p\bar\p u\geq 0$ as a positive closed $(1, 1)$-current on $V_\alpha$. 
 \end{defn}

In other words, $\phi$ defines a genuine $\omega^T_\alpha-$plurisubharmonic function with respect to the transverse K\"ahler structure $\omega^T$ on each chart $V_\alpha$. Because of the cocycle conditions, this definition is independent of the choice of foliation charts. Since the definition depends only on transverse K\"ahler structure (not on a particular choice of foliation charts), we can refer 
$\omega_u:=\omega^T+\sqrt{-1}\p_B\bar\p_Bu\geq 0$ as a \emph{positive basic $(1, 1)$ current} (even though we shall not pursue general theory of positive basic currents on Sasaki manifolds).
We use the notation usc to stand for upper semicontinuous, and 
\begin{equation}\label{psh}
\text{PSH}(M, \xi, \omega^T):=\{u\in L^1(M, dv_g), u\; \text{is usc and is invariant w.r.t}\; \xi, \omega_u\geq 0\}
\end{equation}

Since $u$ is a psh function with respect to the K\"ahler form $\omega^T_\alpha$ on $V_\alpha$, notions in local nature for psh functions certainly apply directly to $u$, such as Lelong number etc. We shall use these notions freely.  
Guedj and Zeriahi \cite{GZ} introduced the finite energy space of $\omega$-plurisubharmonic functions, for any $p\geq 1$, on K\"ahler manifolds. Their notions have a direct adaption in Sasaki setting, 
\begin{equation}
\cE_p(M, \xi, \omega^T)=\{\phi\in\text{PSH}(M, \xi, \omega^T): \int_M |\phi|^p\omega_\phi^n\wedge \eta<\infty, \int_M \omega_\phi^n\wedge \eta=\int_M \omega^n_T\wedge \eta\}
\end{equation}

Darvas has proved that $(\cE_p, d_p)$ is a geodesic metric space which is the metric completion of $(\cH, d_p)$ in K\"ahler setting \cite{D1, D2}, together with an effective estimate of the distance $d_p$. 
We shall summarize the results  in \cite{HL}, by extending the pluripotential theory on K\"ahler manifolds to  Sasaki setting.  Darvas's lecture notes \cite{D4} would be a very good reference. 
First we shall follow T. Darvas (see \cite{D4}[Theorem 3.36]) to prove the following in Sasaki setting, 

\begin{thm}[He-Li \cite{HL}]\label{hl01}Given $u_0, u_1\in \cE_1(M, \xi, \omega^T)$, there exists decreasing sequences $u_0^k, u_1^k\in \cH$ such that $u_0^k\rightarrow u_0, u^k_1\rightarrow u_1$. Define the distance $d_1(u_0, u_1)$ as follows,
\begin{equation}d_1(u_0, u_1)=\lim_{k\rightarrow \infty} d_1(u_0^k, u_1^k)
\end{equation}
The above limit exists and it is independent of approximating sequences. For each $t\in (0, 1)$, define
\[
u_t:=\lim u^k_t, t\in (0, 1),
\]
where $u^k_t$ is the $C^{1, \bar 1}$ geodesic connecting $u^k_0$ and $u^k_1$, by the result of  \cite{GZ1}. Then $u_t\in \cE_1$ and the curve $t\rightarrow u_t$ is independent of the choice of approximating sequences and is a $d_1$ geodesic in the sense that \[d_1(u_t, u_s)=d_1(u_0, t_1)|t-s|, t, s\in [0, 1].\] $(\cE_1(M, \xi, \omega^T), d_1)$ is a geodesic metric space which is the metric completion of  $(\cH, d_1)$. Moreover, \eqref{d01} and \eqref{d02} hold. 
\end{thm}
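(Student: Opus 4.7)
The plan is to follow Darvas's Kähler strategy as laid out in \cite{D4}[Theorem 3.36], transplanting each ingredient to the Sasaki/basic setting. The overall architecture is: first establish the key inequality \eqref{d01} on $\cH$, then use it to define $d_1$ on $\cE_1$ via decreasing smooth approximations and to identify $\cE_1$ with the abstract completion of $(\cH, d_1)$, and finally realize geodesics in $(\cE_1, d_1)$ as pointwise limits of $C^{1,\bar 1}$ geodesics from Guan--Zhang \cite{GZ1}.

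First I would establish the two inequalities \eqref{d01}, \eqref{d02} for smooth potentials $u, v \in \cH$. On $\cH$ the length of a smooth curve $\phi_t$ is $\int_0^1 \!\int_M |\dot\phi_t|\,\omega_{\phi_t}^n\wedge\eta\,dt$, and by following Darvas's variational computation verbatim in the basic framework (integration by parts against $\p_B\bar\p_B$ is valid because all quantities are basic and $\eta$ is fixed), I would show that $d_1(u,v)$ is comparable to $I_1(u,v)$. The key input is that the $C^{1,\bar 1}$ geodesic $u_t$ of \cite{GZ1} realizes the infimum of length, so that $d_1(u,v) = \int_M |\dot u_0|\omega_u^n\wedge\eta = \int_M |\dot u_1|\omega_v^n\wedge\eta$; from here the two-sided bound against $I_1$ and the midpoint estimate \eqref{d02} follow by the same convexity/monotonicity manipulations as in the Kähler case, since on each foliation chart $V_\alpha$ the potentials are genuine $\omega^T_\alpha$-psh and the pointwise estimates used by Darvas are local in nature.

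Next I would produce the decreasing smooth approximation: for any $u \in \cE_1(M,\xi,\omega^T)$ there is $u^k \in \cH$ with $u^k \downarrow u$. This is the Sasaki analogue of the Blocki--Kolodziej/Demailly regularization. The natural approach is to regularize on the Kähler cone $X = \R_+ \times M$ using the complex structure $J$, applied to the cone-psh function $\log r + u$ (or a homogeneous extension), then average over the Reeb flow to restore basic invariance; because $\xi$ is a holomorphic Killing field, averaging preserves both $\omega^T$-tpsh'ness and the decreasing property, and one only loses a controllable amount of smoothness. Alternatively one patches local Demailly regularizations on foliation charts using a basic partition of unity. With this approximation in hand, one defines $d_1(u_0, u_1) := \lim_k d_1(u_0^k, u_1^k)$; the Cauchy property of the approximating sequence, and independence of the choice of sequence, follow from the $I_1$-comparison \eqref{d01} proved on $\cH$, since $I_1$ extends continuously to decreasing limits in $\cE_1$ by monotone convergence for mixed Monge--Ampère masses (whose construction in the Sasaki setting reduces via \cite{VC} and local foliation charts to the Bedford--Taylor theory on $V_\alpha$).

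With $d_1$ defined, completeness and the geometric identification of $\cE_1$ with $\overline{\cH}^{\,d_1}$ go as follows. I would show that a $d_1$-Cauchy sequence $\{u^k\} \subset \cH$ has a subsequence converging in $L^1(M, dv_g)$ to some $u \in \text{PSH}(M,\xi,\omega^T)$, and then use the energy comparison $d_1 \asymp I_1$ plus standard lower semicontinuity of energy to place $u \in \cE_1$ and conclude $d_1(u^k,u)\to 0$. For the geodesic structure, given $u_0, u_1 \in \cE_1$ take decreasing approximations $u_0^k, u_1^k \in \cH$ and let $u_t^k$ be the Guan--Zhang $C^{1,\bar 1}$ geodesic. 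Comparison principles in each foliation chart show that $u_t^k$ is decreasing in $k$ (at each $t$) and tpsh-bounded, so $u_t := \lim_k u_t^k$ exists in $\cE_1$; linearity of $t \mapsto d_1(u_s, u_t)$ passes to the limit thanks to the midpoint inequality \eqref{d02}, and independence of approximations is immediate from the definition of $d_1$.

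The main obstacle I expect is the regularization step combined with the irregular Sasaki case: when the Reeb foliation has non-compact (dense) leaves there is no global Kähler quotient, so one cannot simply import the Kähler regularization theorem. Averaging over the Reeb flow lives on the closure of a non-compact orbit, which forces careful use of the transverse holonomy and an approximation by quasi-regular structures, or a direct cone-based construction using the $J$-invariant and $r\p_r$-homogeneous geometry. Once this regularization is in place, the rest of the argument is a faithful, if technically long, translation of \cite{D4}[Chapter 3] into the basic calculus on $(M,\xi,\eta)$.
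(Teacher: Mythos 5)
Your outline follows the same route the paper itself describes: establish the comparison \eqref{d01} on $\cH$ following Darvas \cite{D4}[Theorem 3.36], define $d_1$ on $\cE_1$ via decreasing approximation from $\cH$, and realize finite-energy geodesics as decreasing limits of the Guan--Zhang $C^{1,\bar 1}$ geodesics of \cite{GZ1}, with the regularization of finite-energy transverse psh functions in the irregular case singled out as the main difficulty. Be aware, though, that this paper does not actually prove Theorem \ref{hl01} --- it is stated as a result of the forthcoming work \cite{HL} --- so the only comparison available is against the strategy the author sketches (in the introduction and the appendix), which your proposal reproduces faithfully, including the use of the K\"ahler cone structure and Reeb-flow averaging to handle the regularization step that the author likewise identifies as the delicate point.
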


Indeed as in T. Darvas \cite{D2}, the above theorem holds in a much more general setting (for $p\in [1, \infty)$). 
Darvas's results also have strong connections with previous results in pluripotential theory, which we now recall. Note that
Aubin's $I$-function can be used to define a strong topology in $\cE_1$ \cite{BBEGZ}[Section 2]. We shall need the following adaption to Sasaki setting, see \cite{BBEGZ}[Theorem 1.8] and \cite{CC2}[Lemma 5.8],
\begin{lemma}[He-Li \cite{HL}]\label{ifunctional}For $\phi_1, \phi_2, \phi_3\in \cE_1(M, \xi, \omega^T)$, there exists a constant $C=C(n)$ such that
\begin{equation}
I(\phi_1, \phi_3)\leq C(I(\phi_1, \phi_2)+I(\phi_2, \phi_3)). 
\end{equation}
\end{lemma}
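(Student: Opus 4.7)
The plan is to adapt the K\"ahler proof in \cite{BBEGZ}[Theorem 1.8] and \cite{CC2}[Lemma 5.8] to the basic setting on $(M,\xi,\eta)$. First I would reduce to smooth potentials $\phi_1,\phi_2,\phi_3\in\cH$. By Theorem \ref{hl01}, each $\phi_i\in\cE_1$ admits a decreasing approximation $\phi_i^k\in\cH$ converging in $d_1$. The $I$-functional is continuous under such decreasing approximations because the basic Monge-Amp\`ere measures $\omega_{\phi^k}^n\wedge\eta$ converge weakly to $\omega_\phi^n\wedge\eta$: this is Bedford-Taylor convergence applied on each foliation chart $V_\alpha$ and patched using the basicness of all quantities, so no global leaf space is needed and the irregular case is handled uniformly.

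Second, for smooth basic potentials I would use basic integration by parts to write
\[
I(\phi,\psi)=\frac{1}{n!}\sum_{k=0}^{n-1}\int_M \sqrt{-1}\,\partial_B(\phi-\psi)\wedge\bar\partial_B(\phi-\psi)\wedge \omega_\phi^k\wedge\omega_\psi^{n-1-k}\wedge\eta,
\]
displaying $I$ as a positive quadratic form in $\phi-\psi$. Telescoping $\phi_1-\phi_3=(\phi_1-\phi_2)+(\phi_2-\phi_3)$ and using the pointwise positive-form bound
\[
\sqrt{-1}\,\partial_B(a+b)\wedge\bar\partial_B(a+b)\leq 2\sqrt{-1}\,\partial_B a\wedge\bar\partial_B a+2\sqrt{-1}\,\partial_B b\wedge\bar\partial_B b,
\]
the problem reduces to controlling integrals of $\sqrt{-1}\,\partial_B(\phi_i-\phi_{i+1})\wedge\bar\partial_B(\phi_i-\phi_{i+1})$ against the mixed current $\omega_{\phi_1}^k\wedge\omega_{\phi_3}^{n-1-k}$ by the corresponding integrals against $\omega_{\phi_1}^k\wedge\omega_{\phi_2}^{n-1-k}$ and $\omega_{\phi_2}^k\wedge\omega_{\phi_3}^{n-1-k}$. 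I would accomplish this by expanding $\omega_{\phi_3}^{n-1-k}=(\omega_{\phi_2}+\sqrt{-1}\,\partial_B\bar\partial_B(\phi_3-\phi_2))^{n-1-k}$, integrating by parts repeatedly, and applying basic Cauchy-Schwarz with AM-GM to split each cross term into pure diagonal quadratics dominated by $I(\phi_1,\phi_2)$ and $I(\phi_2,\phi_3)$, yielding a constant $C=C(n)$.

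The main technical obstacle is the mixed-current bookkeeping in the second step: producing a constant depending only on $n$ requires induction on $k$ together with careful positivity tracking at each integration by parts, and one must verify that the basic Stokes formula and Cauchy-Schwarz inequality remain valid for $\cE_1$ potentials after collapsing the approximation. A parallel route worth pursuing is to combine the trivial bound $I(u,v)\leq I_1(u,v)$ with Theorem \ref{pluri01} and the triangle inequality for $d_1$, reducing the claim to a reverse comparison $I_1(u,v)\leq C\,I(u,v)$ modulo additive constants; even so, the reverse comparison ultimately rests on the same mixed-current manipulations, which is why the integration-by-parts approach is the cleanest in the Sasaki framework where all arguments must stay intrinsic to the basic/foliated structure.
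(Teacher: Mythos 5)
You should first note that the paper itself gives no proof of Lemma \ref{ifunctional}: it is quoted from the companion paper \cite{HL} as a direct adaptation of \cite{BBEGZ}[Theorem 1.8] (see also \cite{CC2}[Lemma 5.8]), and your plan --- reduce to smooth basic potentials, write
$I(\phi,\psi)=\frac{1}{n!}\sum_{k}\int_M \sqrt{-1}\,\p_B(\phi-\psi)\wedge\bar\p_B(\phi-\psi)\wedge\omega_\phi^k\wedge\omega_\psi^{n-1-k}\wedge\eta$
by basic integration by parts (valid since a basic $(2n+1)$-form vanishes), then telescope and swap the mixed background currents one factor at a time via Cauchy--Schwarz and AM--GM --- is exactly that adaptation, so you are on the intended route. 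Two cautions: the continuity of $I$ along the decreasing approximation needs the genuine $\cE_1$ convergence results rather than mere weak convergence of the measures (you are integrating the possibly unbounded function $\phi-\psi$ against them), and your ``parallel route'' through $d_1$ is a dead end as you suspect, since the reverse comparison $I_1\leq C\,I$ fails (e.g.\ $I(u,u+c)=0$ for a constant $c$ while $I_1(u,u+c)>0$).
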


We recall the compactness theorem \cite{BBEGZ}[Theorem 2.17]. We have the following adaption to Sasaki setting,

\begin{lemma}[He-Li \cite{HL}]\label{compactness}Suppose $\{u_k\}\subset \cH$ such that $d_1(0, u_k), H(u_k)\leq D$ for some $D\geq 0$. Then there exists $u\in \cE_1$ such that $d_1(u_{k_l}, u)\rightarrow 0$ as $k_l\rightarrow \infty$. 
\end{lemma}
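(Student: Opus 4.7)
The strategy is to mirror the argument for \cite{BBEGZ}[Theorem 2.17] in the transverse setting, leaning on the three pluripotential ingredients already stated: the comparability $d_1 \asymp I_1$ from Theorem \ref{pluri01}, the approximation scheme of Theorem \ref{hl01}, and the quasi-triangle inequality for $I$ in Lemma \ref{ifunctional}. The steps are (i) extract an $L^1$-convergent subsequence, (ii) identify the limit as an element of $\cE_1$ using the entropy bound, and (iii) upgrade $L^1$ convergence to $d_1$ convergence, again via the entropy bound.

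First I would normalize: adding a constant changes $d_1(0, u_k)$ in a controlled way, so we may assume $\sup u_k = 0$ and $d_1(0, u_k) \le C D$. Theorem \ref{pluri01} then gives $I_1(0, u_k) \le C D$, hence $\int_M |u_k|\, \omega_T^n \wedge \eta$ is bounded. Each $u_k$ restricts to a genuine $\omega_\alpha^T$-plurisubharmonic function on every local foliation chart $V_\alpha$, so classical psh compactness, together with the $\xi$-invariance (to patch limits across foliation charts in a Reeb-equivariant way), yields a subsequence $u_{k_l}\to u$ in $L^1(M, dv_g)$ with $u\in \mathrm{PSH}(M,\xi,\omega^T)$ and $\sup u = 0$. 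Let $v_l = \bigl(\sup_{j\ge l} u_{k_j}\bigr)^*$ be the usc regularization; then $v_l \downarrow u$, each $v_l \in \mathrm{PSH}(M,\xi,\omega^T)$, and a domination argument (together with the $d_1$-monotonicity of ``taking usc envelopes'') gives $d_1(0,v_l) \le C D$. Upper semicontinuity of the Aubin-Mabuchi energy along decreasing sequences combined with the uniform bounds on $I_1(0, u_{k_l})$ and $H(u_{k_l})$ then forces $u$ to have finite energy, i.e. $u \in \cE_1(M, \xi, \omega^T)$. Applying Theorem \ref{hl01} to the decreasing sequence $v_l \downarrow u$ in $\cE_1$ yields $d_1(v_l, u) \to 0$.

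It remains to prove $d_1(u_{k_l}, v_l) \to 0$; this is where the entropy bound enters. Using Theorem \ref{pluri01} and Lemma \ref{ifunctional},
\[
d_1(u_{k_l}, v_l) \le C I(u_{k_l}, v_l) = C \int_M (v_l - u_{k_l})\,\bigl(\omega^n_{u_{k_l}} + \omega^n_{v_l}\bigr) \wedge \eta.
\]
Since $v_l - u_{k_l} \ge 0$ and $v_l - u_{k_l} \to 0$ in $L^1(dv_g)$, one splits along $\{v_l - u_{k_l}\le N\}$ and its complement. The bounded part tends to zero by $L^1$-convergence together with the fact that $\omega^n_{u_{k_l}} \wedge \eta$ and $\omega^n_{v_l} \wedge \eta$ are probability-like measures. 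The tail $\{v_l - u_{k_l} > N\}$ is controlled by the Young-type inequality $st \le e^{s} + t\log t$ applied to $s = v_l - u_{k_l}$ against the density $e^{F_{u_{k_l}}}$, which converts the tail bound into $H(u_{k_l})/N + o_l(1) \le D/N + o_l(1)$. Taking $N$ large makes the right-hand side arbitrarily small. Combining $d_1(u_{k_l},v_l)\to 0$ with $d_1(v_l, u)\to 0$ gives $d_1(u_{k_l}, u)\to 0$.

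The main obstacle I anticipate is this last quantitative entropy-versus-$I$ estimate, Step (iii). In the K\"ahler case it is standard that $L^1$-convergence plus an entropy bound upgrades to $d_1$-convergence, but the Sasaki version must be carried out through foliation charts, and when the Reeb foliation is irregular there is no global quotient K\"ahler manifold on which to quote the K\"ahler statement. The workaround is to perform the Young-inequality splitting in each chart, exploit the transverse cocycle consistency to glue the local measures, and use the $\xi$-invariance to restrict attention to basic test functions; the comparability of $d_1$ with $I_1$ from Theorem \ref{pluri01} is what ensures these pointwise integral bounds actually translate into the genuinely global Finsler convergence $d_1(u_{k_l}, u)\to 0$.
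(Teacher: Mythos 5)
The paper itself contains no proof of this lemma: it is stated as a result of the forthcoming work \cite{HL}, as the Sasaki adaptation of \cite{BBEGZ}[Theorem 2.17], so there is no in-text argument to compare yours against line by line. Measured against the standard K\"ahler proof that \cite{HL} adapts, your steps (i) and (ii) have the right skeleton: normalization, $L^1$-compactness of tpsh functions obtained chart by chart using basic-ness, the envelopes $v_l=(\sup_{j\ge l}u_{k_j})^*\downarrow u$, and membership $u\in\cE_1$. (Two small remarks there: the entropy bound is irrelevant for $u\in\cE_1$ --- monotonicity $E(v_l)\ge E(u_{k_l})\ge -C(D)$ from the $d_1$-bound plus continuity of $E$ along decreasing sequences suffices; and Theorem \ref{hl01} does not literally assert $d_1(v_l,u)\to 0$ for a decreasing sequence inside $\cE_1$, though the needed fact $d_1(v_l,u)=E(v_l)-E(u)\to 0$ is standard in the Darvas package.)

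The genuine gap is in step (iii), and it is already present in the K\"ahler model, independently of the foliation-chart issues you flag as the main obstacle. First, passing through $I_1(u_{k_l},v_l)$ forces you to control $\int_M (v_l-u_{k_l})\,\omega_{v_l}^n\wedge\eta$, for which you have no tool: $v_l$ is only an usc envelope, its Monge--Amp\`ere measure carries no entropy bound, and $L^1(dv_g)$-smallness of $v_l-u_{k_l}$ says nothing about its integral against a measure that may concentrate exactly where $v_l-u_{k_l}$ is large. Your assertion that the truncated part tends to zero ``because the measures are probability-like'' is false for precisely this reason; even for the good measure $\omega_{u_{k_l}}^n\wedge\eta=e^{F_{k_l}}dv_g$ the correct justification is uniform integrability of $\{e^{F_{k_l}}\}$ coming from the entropy bound, not total mass. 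The standard repair is to abandon $I_1$ at this point and use the exact identity $d_1(u_{k_l},v_l)=E(v_l)-E(u_{k_l})$ (valid since $v_l\ge u_{k_l}$) together with $E(v_l)-E(u_{k_l})\le V^{-1}\int_M(v_l-u_{k_l})\,\omega_{u_{k_l}}^n\wedge\eta$, so that only the entropy-controlled measure appears. Second, even for that remaining term your tail estimate is not what $st\le e^s+t\log t$ delivers: taking $t=e^{F_{k_l}}$ produces the term $\int_{\mathrm{tail}}e^{F_{k_l}}F_{k_l}\,dv_g$, which is bounded by $D$ but is \emph{not} divided by $N$, while the companion term $\int e^{s}$ requires uniform exponential integrability of $v_l-u_{k_l}$, available only up to the $\alpha$-invariant exponent. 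As written, the splitting yields $\limsup_l\int_M(v_l-u_{k_l})e^{F_{k_l}}dv_g\le D/\alpha$ for a fixed small $\alpha$ --- a bound, not convergence to zero. Forcing the limit to vanish is the actual content of \cite{BBEGZ}[Theorem 2.17] (via the Donsker--Varadhan form of the entropy inequality at arbitrarily large exponent, or the energy/Legendre-duality and lower-semicontinuity machinery), and this is the step your sketch elides.
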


The above lemma simply means that $d_1$ bounded set with bounded entropy is precompact in $(\cE_1, d_1)$.  Given the effective distance estimates, one can extend functionals such as entropy $H$ (see Lemma \ref{approx13}), $\mathbb{J}_{-Ric}$ to $\cE_1$ class as in \cite{BDL}. In particular Berman-Darvas-Lu extended the $\cK$-energy to $(\cE^1, d_1)$ and proved that the $\cK$-energy is convex along the geodesics in $\cE^1$. 
\begin{thm}[Berman-Davars-Lu]\label{bdl}On a compact K\"ahler manifold, the $\cK$-energy can be extended to a functional $\cK: \cE_1\rightarrow \R\cup\{+\infty\}$ using Chen's formula \cite{chen00}
Such a $\cK$-energy in $\cE^1$ is the greatest $d_1$-lsc extension of $\cK$-energy on $\cH$. Moreover, $\cK$-energy is convex along the finite energy geodesics of $\cE^1$. 
\end{thm}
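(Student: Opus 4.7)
The plan is to exploit Chen's explicit decomposition $\cK(\phi)=H(\phi)+\mathbb{J}_{-Ric}(\phi)$, extend each summand to $\cE_1$ via pluripotential theory, and then promote the smooth convexity of Berman-Berndtsson \cite{BB} along $C^{1,1}$ geodesics to convexity along finite-energy geodesics by approximation.

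First I would define the two pieces on $\cE_1$. For $u\in \cE_1$, set
\[ H(u):=\int_M \log\!\left(\frac{\omega_u^n}{\omega^n}\right)\frac{\omega_u^n}{n!}, \]
interpreted as the relative entropy $H(\mu_u\mid \mu_\omega)$ and declared $+\infty$ unless $\omega_u^n$ is absolutely continuous with integrable log-density. The twisted functional $\mathbb J_{-Ric}$ is a finite sum of mixed-energy terms $\int u\,\chi\wedge \omega_u^{j}\wedge \omega^{n-1-j}$ with $\chi$ smooth; by Bedford-Taylor theory combined with the Darvas distance estimates (the K\"ahler analogue of Theorem \ref{hl01}), each such term extends $d_1$-continuously. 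I then set $\cK(u):=H(u)+\mathbb J_{-Ric}(u)$.

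The second step is $d_1$-lower semicontinuity together with maximality. Since $\mathbb J_{-Ric}$ is $d_1$-continuous, the lsc of $\cK$ reduces to that of $H$, which follows from joint lsc of relative entropy under weak convergence of measures combined with the weak convergence $\omega_{u_k}^n\to \omega_u^n$ whenever $d_1(u_k,u)\to 0$. For maximality, given any $d_1$-lsc extension $F$ of $\cK|_\cH$ and any $u\in \cE_1$, pick a smooth decreasing sequence $u_k\searrow u$ with $d_1(u_k,u)\to 0$. Monotone convergence of mixed Monge-Amp\`ere masses yields $\mathbb J_{-Ric}(u_k)\to \mathbb J_{-Ric}(u)$, and monotone entropy convergence (Fatou plus integrability of the log-density) yields $H(u_k)\to H(u)\in(-\infty,+\infty]$, so
\[ F(u)\le \liminf_k F(u_k)=\liminf_k \cK(u_k)=\cK(u). \]
For convexity, given $u_0,u_1\in \cE_1$ I would choose smooth decreasing approximations $u_0^k,u_1^k\in \cH$, let $t\mapsto u_t^k$ be the $C^{1,1}$ geodesic connecting them, and invoke \cite{BB} to conclude each $t\mapsto \cK(u_t^k)$ is convex on $[0,1]$. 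The curves $u_t^k$ converge to the finite-energy $d_1$-geodesic $u_t$ in $d_1$ for each fixed $t$, so lsc of $\cK$ at interior $t$, together with $\cK(u_\alpha^k)\to \cK(u_\alpha)$ at $\alpha\in\{0,1\}$, yields $\cK(u_t)\le (1-t)\cK(u_0)+t\cK(u_1)$ after passing to the limit along $k$.

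The principal technical obstacle is the monotone continuity of the entropy along decreasing smooth approximations of an element of $\cE_1$; this single statement underpins both the maximality and the endpoint control in the convexity argument. Unlike $\mathbb J_{-Ric}$, $H$ is only lsc, and establishing $H(u_k)\to H(u)$ requires delicate control on the log-density of $\omega_{u_k}^n$: the upper estimate relies on Bedford-Taylor capacity bounds for decreasing psh sequences, while the matching lower estimate combines Fatou with the integrability built into the definition of $\cE_1$ and the comparison principle for Monge-Amp\`ere operators.
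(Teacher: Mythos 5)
This theorem is quoted in the paper with attribution to Berman--Darvas--Lu \cite{BDL}; the paper gives no proof of it (the Sasaki analogue, Lemma \ref{hl02}, is likewise deferred to \cite{HL}), so there is no internal argument to compare yours against. Measured against the actual proof in \cite{BDL}, your overall architecture is the right one: decompose $\cK=H+\mathbb{J}_{-Ric}$, extend $\mathbb{J}_{-Ric}$ by $d_1$-continuity, extend $H$ as a relative entropy which is $d_1$-lsc, and obtain convexity by approximating with $C^{1,1}$ geodesics and invoking Berman--Berndtsson.

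The genuine gap is the step you yourself flag at the end: you assert that for a smooth \emph{decreasing} sequence $u_k\searrow u$ with $d_1(u_k,u)\to 0$ one has $H(u_k)\to H(u)$, and both your maximality argument and your endpoint control in the convexity argument rest on this. This is false for a generic decreasing approximation: lower semicontinuity gives $\liminf_k H(u_k)\ge H(u)$ for free, but the needed upper bound $\limsup_k H(u_k)\le H(u)$ can fail, since decreasing convergence of potentials gives essentially no control on the densities of the Monge--Amp\`ere measures $\omega_{u_k}^n$, and Bedford--Taylor capacity estimates will not rescue this. The correct device (this is \cite{BDL}[Lemma 3.1], mirrored in the present paper as Lemma \ref{approx13}) is to \emph{construct} a special approximating sequence by solving complex Monge--Amp\`ere equations whose right-hand sides are the truncated and renormalized densities $\min(f,k)$ of $\omega_u^n$ (via Yau's theorem), then prove $d_1$-convergence using uniqueness/stability of the Monge--Amp\`ere operator in $\cE_1$ and entropy convergence by dominated convergence on the densities. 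Once you replace your decreasing approximation by this one, a secondary point needs attention in the convexity step: these approximants are no longer decreasing, so the convergence of the chord geodesics $u_t^k$ to the finite-energy geodesic $u_t$ is not given by the defining construction of Theorem \ref{hl01} and must instead be deduced from the comparison estimate $d_1(u_t^k,u_t)\le (1-t)\,d_1(u_0^k,u_0)+t\,d_1(u_1^k,u_1)$ for geodesics in $(\cE_1,d_1)$. With those two repairs your outline matches the known proof.
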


We shall extend the $\cK$-energy to $\cE_1$ and prove its convexity in $\cE_1$, just as in \cite{BDL}. 

\begin{lemma}[He-Li \cite{HL}]\label{hl02}The functional  $\mathbb{J}_{-Ric}$ is continuous with respect to $d_1$ and can be extended continuously to $\cE_1$. The $\cK$-energy can be extended to a functional $\cK: \cE_1\rightarrow \R\cup\{+\infty\}$ using the formula \eqref{kenergy0}.
Such a $\cK$-energy in $\cE^1$ is the greatest $d_1$-lsc extension of $\cK$-energy on $\cH$. Moreover, $\cK$-energy is convex along the finite energy geodesics of $\cE^1$.  
\end{lemma}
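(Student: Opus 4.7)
The plan is to adapt Berman–Darvas–Lu's strategy (the Kähler statement recorded here as Theorem \ref{bdl}) to the Sasaki setting, using Theorem \ref{hl01} as the basic pluripotential machinery and invoking the $C^{1,\bar 1}$-geodesic convexity of $\cK$-energy on $\cH$ proven by Ji–Zhang and van Coevering. The argument splits into three blocks: $d_1$-continuity of $\mathbb{J}_{-Ric}$, consistent definition of $\cK$ on $\cE_1$ as the greatest $d_1$-lsc extension, and convexity along finite-energy geodesics via approximation.

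First I would prove the $d_1$-Lipschitz bound
\[
|\mathbb{J}_{-Ric}(\phi)-\mathbb{J}_{-Ric}(\psi)|\leq C(n,\underline R,\|Ric\|_\infty)\,d_1(\phi,\psi),\qquad \phi,\psi\in\cH.
\]
Expanding \eqref{j1} with $\chi=-Ric$, each summand has the form $\int_M \phi\,\theta\wedge\omega_T^k\wedge\omega_\phi^{n-1-k}\wedge\eta$ with $\theta\in\{Ric,\omega_T\}$; using the fact that $\xi$-invariance makes basic Stokes work as in the Kähler case, one rewrites the difference $\mathbb{J}_{-Ric}(\phi)-\mathbb{J}_{-Ric}(\psi)$ as a telescoping sum whose terms are controlled by $\int_M|\phi-\psi|\,\omega_\phi^n\wedge\eta+\int_M|\phi-\psi|\,\omega_\psi^n\wedge\eta$, which is exactly $I_1(\phi,\psi)$. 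The effective estimate \eqref{d01} of Theorem \ref{hl01} then converts this into a $d_1$-Lipschitz bound, and $\mathbb{J}_{-Ric}$ extends uniquely and continuously to all of $\cE_1$.

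Next, define $\cK$ on $\cE_1$ by the formula \eqref{kenergy0}, where the entropy $H$ is extended via the standard Legendre-duality / supremum formula over continuous basic test functions as in \cite{BBEGZ}; this extension is automatically $d_1$-lsc on $\cE_1$ with values in $\R\cup\{+\infty\}$, so $\cK=H+\mathbb{J}_{-Ric}$ is $d_1$-lsc. To see that $\cK$ is the \emph{greatest} such extension, one must show that for any $u\in\cE_1$ there is a sequence $u_k\in\cH$ with $d_1(u_k,u)\to0$ and $\cK(u_k)\to\cK(u)$. For $\mathbb{J}_{-Ric}$ this is the continuity just proven; for $H$, when $H(u)<\infty$ one chooses a quasi-continuous decreasing approximation $u_k\searrow u$ from $\cH$ supplied by Theorem \ref{hl01}, and applies Lemma \ref{approx13} to get $H(u_k)\to H(u)$; when $H(u)=+\infty$, $d_1$-lsc together with any approximating sequence suffices. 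Together these show $\cK$ is the maximal $d_1$-lsc extension.

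Finally, for convexity along $d_1$-geodesics, take $u_0,u_1\in\cE_1$ with decreasing approximations $u_0^k,u_1^k\in\cH$, let $t\mapsto u_t^k$ be the $C^{1,\bar 1}$ geodesic connecting them (existence by \cite{GZ1}), and let $u_t$ be the $d_1$-geodesic supplied by Theorem \ref{hl01}, so $u_t^k\to u_t$ in $d_1$ for each $t\in[0,1]$. The convexity of $k\mapsto\cK(u_t^k)$ in $t$ for each fixed $k$ is the Sasaki $C^{1,\bar 1}$-geodesic convexity of Ji–Zhang / van Coevering. Passing to the limit, continuity of $\mathbb{J}_{-Ric}$ gives $\mathbb{J}_{-Ric}(u_t^k)\to\mathbb{J}_{-Ric}(u_t)$, while $d_1$-lsc of $H$ gives $\liminf_k H(u_t^k)\geq H(u_t)$, and together with the uniform upper bound $H(u_t^k)\leq(1-t)H(u_0^k)+tH(u_1^k)$ (from convexity at level $k$) one concludes convexity of $t\mapsto\cK(u_t)$ by the standard fact that the pointwise $\liminf$ of convex functions on $[0,1]$, bounded appropriately, remains convex. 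I expect the main obstacle to be the irregular-Reeb case, where there is no global Kähler quotient to invoke BDL directly: the resolution is to run the approximation argument leaf-locally in each foliation chart $V_\alpha$ and patch via $\xi$-invariance of all objects involved, which is precisely the kind of lengthy adaptation the author anticipates for the pluripotential machinery in \cite{HL}.
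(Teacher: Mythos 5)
The paper itself does not prove this lemma: it is deferred to the forthcoming \cite{HL}, with the explicit indication that one should proceed ``just as in \cite{BDL}''. Your proposal is precisely that adaptation --- $d_1$-Lipschitz continuity of $\mathbb{J}_{-Ric}$ via $I_1$ and \eqref{d01}, lower semicontinuous extension of the entropy by Legendre duality, Lemma \ref{approx13} for maximality of the extension, and approximation by $C^{1,\bar 1}$ geodesics for convexity --- so the architecture matches the intended proof.

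Two points in your convexity step need repair. First, the inequality $H(u_t^k)\leq(1-t)H(u_0^k)+tH(u_1^k)$ is false as stated: the entropy alone is not convex along $C^{1,\bar 1}$ geodesics --- only $\cK=H+\mathbb{J}_{-Ric}$ is --- so you should use $\cK(u_t^k)\leq(1-t)\cK(u_0^k)+t\cK(u_1^k)$ directly; your limiting argument needs only this, the $d_1$-lower semicontinuity of $\cK$, and convergence of $\cK$ at the two endpoints (note also that a pointwise $\liminf$ of convex functions need not be convex; it is the actual convergence at the endpoints that saves the inequality). Second, that endpoint convergence $\cK(u_i^k)\to\cK(u_i)$ is not automatic for the \emph{decreasing} approximations used in Theorem \ref{hl01} to construct the finite-energy geodesic: you must either use the entropy-convergent (generally non-monotone) approximations of Lemma \ref{approx13} and then invoke stability of finite-energy geodesics under $d_1$-perturbation of the endpoints, so that the $C^{1,\bar 1}$ geodesics joining these approximants still $d_1$-converge to $u_t$, or else prove separately that the entropy converges along the decreasing sequences. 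This is exactly how \cite{BDL} closes the argument, and without one of these two steps the passage to the limit at $t=0,1$ does not go through.
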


\subsection{Reduced properness}

We need a notion of properness of the $\cK$-energy in terms of $d_1$  relative to group action as follows. Let $G:=\text{Aut}_0(\xi, J)$ be the identity component of the automorphism group which preserves both Reeb vector field and transverse complex structure.  Let $G_0$ be the identity component of the reduced part of $\text{Aut}_0(\xi, J)$; that is, its Lie algebra consisting exactly complex Hamiltonian holomorphic vector fields on $M$. 
Denote $\cH_0$ to be the space of normalized transverse K\"ahler potentials, 
\begin{equation}\label{sntkp}
\cH_0=\{\phi\in \cH: \mathbb{I}(\phi)=0\}.
\end{equation}
We describe now how $G$ acts on $\cH_0$. Given $\sigma\in G$, since $\sigma$ preserves $\xi$ and transverse holomorphic structure, $(\xi, \sigma^*\eta, \sigma^* g)$ is a Sasaki structure with the same transverse holomorphic structure. By \cite{BG}, there exists basic functions $\phi, \psi$ and an harmonic 1-form $\alpha$ (with respect to $g$) such that
\[
\begin{split}
&\sigma^*\eta=\eta+d^c_B \phi+d\psi_1+\alpha\\
&\sigma^*\Phi=\Phi-(\xi\otimes (\sigma^*\eta-\eta)\circ \Phi)\\
&\sigma^* g= \sigma^*\eta\otimes \sigma^*\eta+\frac{1}{2}d(\sigma^*\eta)(\mathbb{I}\otimes \sigma^{*}\Phi).
\end{split}
\]
The transverse K\"ahler form $\sigma^*(\omega^T)=\frac{1}{2}d(\sigma^*\eta)=\frac{1}{2}d\eta+dd^c_B\phi=\omega^T+\sqrt{-1}\p_B\bar\p_B \phi$ remains in the same basic cohomology class $[\omega^T]$. The change induced by $\psi_1, \alpha$ does not affect the underlying metric; hence we consider $\sigma^*(\omega^T)$ given $\sigma\in G$ only. For any transverse K\"ahler form $\omega_\phi$, then $\sigma^*{\omega_\phi}$ gives another transverse K\"ahler form in $[\omega^T]$, we define its transverse K\"ahler potential as $\sigma [\phi]\in \cH_0$, such that
\begin{equation}
\sigma^*{\omega_\phi}=\omega^T+\sqrt{-1}\p_B \bar\p_B \sigma[\phi], \sigma[\phi]\in \cH_0. 
\end{equation} 
We write $\sigma^*(\omega^T)=\omega^T+\sqrt{-1}\p_B\bar\p_B \sigma[0]$, with the normalization condition  $\sigma[0]\in \cH_0$. As in K\"ahler setting (see \cite{DR}[Lemma 5.8]), we have the following,
\begin{prop}Given $\phi\in \cH_0$, the $G$-action is given by, 
 $\sigma[\phi]=\sigma[0]+\phi\circ \sigma$. 
 \end{prop}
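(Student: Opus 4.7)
The plan is to identify $\sigma[\phi]$ with $\sigma[0]+\phi\circ\sigma$ up to a basic constant via a direct computation of $\sigma^*\omega_\phi$, and then eliminate the constant using the $\cH_0$ normalization.

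First I would verify that $\phi\circ\sigma$ is again a smooth basic function and compute $\sigma^*\omega_\phi$. Since $\sigma\in G$ preserves the Reeb vector field $\xi$, basic functions pull back to basic functions (because $\sigma_*\xi=\xi$). Since $\sigma$ preserves the transverse complex structure, the descended map on each local leaf space $V_\alpha$ is holomorphic, giving the commutation
\[
\sigma^*\bigl(\sqrt{-1}\,\p_B\bar\p_B\phi\bigr)=\sqrt{-1}\,\p_B\bar\p_B(\phi\circ\sigma)
\]
chart-by-chart and hence globally. Combined with $\sigma^*\omega^T=\omega^T+\sqrt{-1}\,\p_B\bar\p_B\,\sigma[0]$, this yields
\[
\sigma^*\omega_\phi=\omega^T+\sqrt{-1}\,\p_B\bar\p_B\bigl(\sigma[0]+\phi\circ\sigma\bigr).
\]
Since $\sigma[\phi]$ is by definition the element of $\cH_0$ whose transverse K\"ahler form equals $\sigma^*\omega_\phi$, and since two transverse K\"ahler potentials defining the same form differ by a (basic) constant, we conclude that $\sigma[\phi]=\sigma[0]+\phi\circ\sigma+c$ for some $c=c(\sigma,\phi)\in\R$.

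Second, I would show $c=0$ by checking that $\sigma[0]+\phi\circ\sigma$ already satisfies $\mathbb{I}=0$. The Monge--Amp\`ere energy $\mathbb{I}$ obeys the cocycle identity
\[
\mathbb{I}_{\omega^T}\bigl(\sigma[0]+\phi\circ\sigma\bigr)=\mathbb{I}_{\omega^T}\bigl(\sigma[0]\bigr)+\mathbb{I}_{\sigma^*\omega^T}(\phi\circ\sigma),
\]
which follows by integrating the defining variational formula for $\mathbb{I}$ along an affine path and using that $\omega_{\sigma[0]}=\sigma^*\omega^T$. The first term vanishes by the definition of $\sigma[0]\in\cH_0$. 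For the second, $\sigma$ is a diffeomorphism preserving the contact form up to an exact/harmonic piece that does not affect volume forms of the form $\eta\wedge\omega^n$; combined with $\sigma^*(\omega^T+\sqrt{-1}\p_B\bar\p_B\phi)=\sigma^*\omega^T+\sqrt{-1}\p_B\bar\p_B(\phi\circ\sigma)$, a direct change of variables gives the Sasaki analogue of diffeomorphism invariance
\[
\mathbb{I}_{\sigma^*\omega^T}(\phi\circ\sigma)=\mathbb{I}_{\omega^T}(\phi)=0.
\]
Hence $\mathbb{I}_{\omega^T}(\sigma[0]+\phi\circ\sigma)=0$, which forces $c=0$ and proves the proposition.

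The main obstacle is bookkeeping in the irregular case, where the leaves of the Reeb foliation are not closed submanifolds. One must verify (i) that $\phi\circ\sigma$ is genuinely smooth and basic, (ii) the local identity $\sigma^*(\p_B\bar\p_B\phi)=\p_B\bar\p_B(\phi\circ\sigma)$ on each foliation chart, independent of the chosen submersions $\pi_\alpha$, and (iii) the diffeomorphism invariance of $\mathbb{I}$ together with its cocycle property. Each of these reduces to a statement about the transverse K\"ahler descent of $\sigma$ on local leaf spaces, where the classical K\"ahler facts of \cite{DR}[Lemma 5.8] apply directly; nonetheless they deserve careful verification given that the normalization $\cH_0$ is what anchors the $G$-action used throughout the properness argument.
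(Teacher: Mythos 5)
Your proof is correct and follows essentially the same route as the paper: identify $\sigma[\phi]$ with $\sigma[0]+\phi\circ\sigma$ up to a constant using that pullback by $\sigma$ commutes with $\p_B\bar\p_B$ on basic functions, then kill the constant via the cocycle/difference formula for $\mathbb{I}$ together with the change of variables $\int_M \phi\circ\sigma\,(\sigma^*\omega_T)^k\wedge(\sigma^*\omega_\phi)^{n-k}\wedge\sigma^*\eta=\int_M\phi\,\omega_T^k\wedge\omega_\phi^{n-k}\wedge\eta$. The paper performs exactly this computation, likewise replacing $\eta$ by $\sigma^*\eta$ in the integrand (harmless since $(\sigma^*\eta-\eta)(\xi)=0$ and a $1$-form annihilating $\xi$ wedges to zero against a basic $2n$-form).
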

 \begin{proof}We have the following, that 
 \[\sigma^*(\omega_\phi)=\sigma^{*}(\omega^T)+\sqrt{-1}\p_B\bar\p_B \sigma^*\phi.\]
 Hence  $\sigma[\phi]$ differs $\sigma[0]+\phi\circ \sigma$ by a constant. 
 Note that given $u, v\in \cH$,
 \[
 \mathbb{I}(u)-\mathbb{I}(v)=\frac{1}{(n+1)!}\int_M (u-v)\omega_u^k\wedge \omega_v^{n-k}\wedge \eta
 \]
 We compute
\[
\begin{split}
\mathbb{I}(\sigma[0]+\phi\circ \sigma)-\mathbb{I}(\sigma[0])=&\frac{1}{(n+1)!}\sum_k\int_M \phi\circ \sigma (\sigma^*\omega_T)^k\wedge(\sigma^* \omega_\phi)^{n-k}\wedge \eta\\
=&\frac{1}{(n+1)!}\sum_k\int_M \phi \circ\sigma (\sigma^*\omega_T)^k\wedge(\sigma^* \omega_\phi)^{n-k}\wedge (\sigma^*\eta)\\
=&\mathbb{I}(\phi)=0.
\end{split}\]
It follows that $\sigma[0]+\phi\circ \sigma\in \cH_0$ and $\sigma[\phi]=\sigma[0]+\phi\circ \sigma$.
 \end{proof}
 
It is well-known that $G$ acts on $\cH_0$ by isometry with respect to the Mabuchi metric (in K\"ahler setting); and it holds also for $d_1$, see \cite{DR}[Lemma 5.9].
\begin{prop}The action $G$ on $\cH_0$ is an isometry with respect to $d_1$ (and $d_2$) and it has a unique $d_1$-isometric extension to the metric completion $(\cE_1\cap \mathbb{I}^{-1}(0), d_1)$.
\end{prop}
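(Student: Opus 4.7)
The plan is to first verify the isometry property on smooth potentials by a direct length computation, then invoke abstract metric-completion arguments for the extension. The preceding proposition $\sigma[\phi] = \sigma[0] + \phi\circ\sigma$ will be the bridge: it lets us differentiate $G$-actions along paths.

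First I would fix $\sigma \in G$ and a smooth path $\{u_t\}_{t\in[0,1]} \subset \cH_0$ joining $u_0, u_1$. By the preceding proposition, $\sigma[u_t] = \sigma[0] + u_t\circ\sigma$ is a smooth path in $\cH_0$ joining $\sigma[u_0]$ and $\sigma[u_1]$, with tangent vector $\dot{u}_t\circ\sigma$ at each parameter $t$. Since $\sigma$ preserves both $\xi$ and the transverse complex structure, $u_t\circ\sigma$ is basic whenever $u_t$ is, so this computation takes place entirely within basic tangent spaces.

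The key Sasaki-specific observation is that although $\sigma^*\eta \neq \eta$ in general, one still has
\begin{equation*}
\omega_{\sigma[u_t]}^n \wedge \eta \;=\; \sigma^*\bigl(\omega_{u_t}^n \wedge \eta\bigr).
\end{equation*}
Indeed, $\sigma^*\omega_{u_t} = \omega_{\sigma[u_t]}$ by the proposition, while $\iota_\xi(\sigma^*\eta - \eta) = \eta(d\sigma\cdot \xi) - 1 = 0$ because $d\sigma\cdot\xi = \xi$. Since $\omega_{\sigma[u_t]}^n$ is a transverse top form, wedging with any $1$-form annihilated by $\iota_\xi$ vanishes; hence $\omega_{\sigma[u_t]}^n\wedge \sigma^*\eta = \omega_{\sigma[u_t]}^n\wedge \eta$. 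Using this identity together with the change-of-variables formula (valid because $\sigma$ is orientation preserving, being in the identity component of the automorphism group), I obtain for $p=1,2$
\begin{equation*}
\int_M |\dot{u}_t\circ\sigma|^p\, \omega_{\sigma[u_t]}^n\wedge \eta \;=\; \int_M \sigma^*\bigl(|\dot{u}_t|^p\, \omega_{u_t}^n\wedge \eta\bigr) \;=\; \int_M |\dot{u}_t|^p\, \omega_{u_t}^n\wedge \eta.
\end{equation*}
Thus $\sigma$ preserves the length of every smooth path in $\cH_0$ for both the $L^1$ and $L^2$ Finsler structures. Taking infima over paths gives $d_p(\sigma[u_0], \sigma[u_1]) \leq d_p(u_0, u_1)$, and applying the same inequality to $\sigma^{-1}$ yields equality, proving the isometry claim on $\cH_0$.

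For the extension to the completion, the argument is essentially abstract. Any $d_1$-isometry of a metric space is $1$-Lipschitz and hence uniformly continuous, so it extends uniquely to the $d_1$-completion $(\cE_1, d_1)$, and the extension is itself an isometry. By Theorem \ref{hl01}, $\cH_0$ is $d_1$-dense in $\cE_1 \cap \mathbb{I}^{-1}(0)$; the functional $\mathbb{I}$ extends $d_1$-continuously (this is folded into the pluripotential package of \cite{HL}, and parallels the K\"ahler situation), so $\mathbb{I}^{-1}(0)$ is $d_1$-closed in $\cE_1$ and is the completion of $(\cH_0, d_1)$. Therefore the unique extension lands in $\cE_1\cap \mathbb{I}^{-1}(0)$ and defines a $d_1$-isometric $G$-action there. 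I expect the only delicate point is the identity $\omega_{\sigma[u]}^n\wedge\eta = \sigma^*(\omega_u^n\wedge\eta)$: this is the place where the Sasaki setting could have introduced a defect from $\sigma^*\eta - \eta$, and verifying that the $\xi$-direction is preserved is what rescues the K\"ahler-style computation. Everything downstream is formal.
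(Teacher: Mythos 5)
Your proposal is correct. Note that the paper itself offers no proof of this proposition: it simply records it as well known and points to \cite{DR}[Lemma 5.9], so what you have written is essentially the argument that reference carries out in the K\"ahler case, transplanted to the Sasaki setting. You correctly isolate the one genuinely Sasaki-specific point, namely that $\sigma^*\eta\neq\eta$ in general, and your fix is sound: since $\sigma_*\xi=\xi$, the $1$-form $\sigma^*\eta-\eta$ is annihilated by $\iota_\xi$, and since $\omega_{\sigma[u]}^n$ is a basic $2n$-form, $\iota_\xi\bigl(\omega_{\sigma[u]}^n\wedge(\sigma^*\eta-\eta)\bigr)=0$, which forces the top-degree form $\omega_{\sigma[u]}^n\wedge(\sigma^*\eta-\eta)$ to vanish; hence $\sigma^*(\omega_u^n\wedge\eta)=\omega_{\sigma[u]}^n\wedge\eta$ and the Finsler lengths match after the change of variables. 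One small imprecision: the distance $d_p$ on $\cH_0$ is by definition the infimum of lengths of paths in all of $\cH$, not merely in $\cH_0$, so you should run the computation for an arbitrary path $\{u_t\}\subset\cH$; this costs nothing, because the formula $\sigma[\phi]=\sigma[0]+\phi\circ\sigma$ defines a length-preserving self-map of $\cH$ by the identical calculation, and it happens to preserve $\cH_0$ by the $\mathbb{I}$-invariance computation already given in the paper. The extension step is standard metric-space reasoning once one grants, from the pluripotential package of \cite{HL}, that $\mathbb{I}$ is $d_1$-continuous and that $(\cE_1\cap\mathbb{I}^{-1}(0),d_1)$ is the completion of $(\cH_0,d_1)$; citing this is appropriate here since the paper defers exactly these facts to \cite{HL}.
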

For any given transverse K\"ahler metric $\omega_0\in [\omega^T]$, we consider its $G$-orbit
\[
\cO_{\omega_0}=\{\phi\in \cH_0| \sigma^*\omega_0=\omega_\phi, \;\sigma\in G\}
\] 
Given any other transverse K\"ahler metric $\omega_\varphi$ such that $\varphi\in \cH_0$,
the distance function \[d_1(\varphi, \cO_{\omega_0})=\inf_{\omega_\psi\in \cO_{\omega_0}} d_1(\varphi, \psi)\]
Note that since $G$ acts on $\cH_0$ isometrically, we know that
\[
\inf_{\omega_\psi\in \cO_{\omega_0}} d_1(\varphi, \psi)=\inf_{\sigma\in G} d_1(\varphi, \sigma[\psi])=\inf_{\sigma_1, \sigma_2\in G} d_1(\sigma_1 [\varphi], \sigma_2[\psi]). 
\]
Hence we define the $d_1$ distance relative to the action of $G$ as \[d_{1, G}(\varphi, \phi)=\inf_{\sigma\in G} d_1(\varphi, \sigma[\psi]),\]
where $\sigma[\psi]\in \cH_0$ denotes the transverse K\"ahler potential of $\sigma^*\omega_\psi$.
\begin{defn}[Reduced properness]\label{rp}The $\cK$-energy is proper with respect to $d_{1, G}$ if the following conditions hold, 
\begin{enumerate}
\item The Futaki invariant is zero and the $\cK$-energy is bounded from below on $\cH_K$, where $\cH_K$ is $K$-invariant transverse K\"ahler potentials and $K$ is a maximal compact subgroup of $G$ containing the flow of $\xi$.
\item For any sequence $\phi_i\in \cH^0_K$, $d_{1, G}(0, \phi_i)\rightarrow \infty$ implies $\cK(\phi_i)\rightarrow \infty$. 
\end{enumerate}
\end{defn}

\begin{rmk}We recall the definition of properness in K\"ahler setting. The notion of properness of $\cK$-energy was first introduced by G. Tian \cite{Tian94}, with the properness of $\cK$-energy in terms of Aubin's $J$-functional. Tian \cite{Tian97} proved that the properness of $\cK$-energy is equivalent to existence of K\"ahler-Einstein metric on Fano manifolds (see \cite{DR} for clarification of properness when the automorphism group is not discrete).  Later on Tian \cite{Tian01} has made a properness conjecture on existence of csck. Chen has made a properness conjecture with the properness of $\cK$-energy in terms of Mabuchi's metric $d_2$, in line with Donaldson's program \cite{Don97}. A general properness conjecture was then clarified by Darvas-Rubinstein \cite{DR} in terms of $d_1$ and also Aubin's $J$-functional, modulo the action of automorphism of the K\"ahler structure. 
Recently Chen-Cheng \cite{CC1, CC2, CC3} proved very deep results that properness implies the existence of csck on a compact K\"ahler manifolds. 
Our definition of reduced properness is a direct adaption of K\"ahler setting \cite{He18}. \end{rmk}

We need the following is well-known facts \cite{Fut, M2, FM} (in K\"ahler geometry). Its extension to Sasaki setting is tautology. We shall skip the details. 
\begin{lemma}\label{fm02}The $\cK$-energy is $G$ invariant if and only if the Futaki invariant is zero.
\end{lemma}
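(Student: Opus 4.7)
The plan is to mirror the classical K\"ahler argument in the transverse setting. First I would pick any one-parameter subgroup $\{\sigma_t\}\subset G$ generated by $v\in\mathrm{Lie}(G)$; since $\sigma_t$ preserves $\xi$ and the transverse complex structure, the discussion preceding the lemma gives $\sigma_t^{*}\omega^T=\omega^T+\sqrt{-1}\,\p_B\bp_B\,\phi_t$ with $\phi_t=\sigma_t[0]\in\cH_0$ and $\phi_0=0$. Differentiating the relation $\sigma_t^{*}\omega^T=\omega_{\phi_t}$ in $t$ at $t=0$ produces a basic function $\dot\phi_0$ which is precisely (the real part of) the transverse holomorphy potential $\theta_v$ of $v$ with respect to $\omega^T$, and for general $t$ the same computation identifies $\dot\phi_t$ with the holomorphy potential of $v$ relative to $\omega_{\phi_t}$.

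Next I would insert this path into the variational characterization of $\cK$ recalled just before \eqref{kenergy0}, namely
\begin{equation*}
\frac{d}{dt}\cK(\phi_t)=-\int_M \dot\phi_t\,(R^T_{\phi_t}-\underline R)\,\frac{\omega_{\phi_t}^n}{n!}\wedge\eta,
\end{equation*}
and recognize the right hand side as the pairing of the transverse (Sasaki) Futaki invariant $\mathrm{Fut}(\cdot,\omega_{\phi_t})$ with $v$, in the sense introduced in \cite{BGS1, FOW}. The key intermediate claim is that $\mathrm{Fut}(v,\omega_{\phi_t})$ depends only on the basic class $[\omega^T]$ and on $v$, hence is independent of $t$; granting this, one has $\frac{d}{dt}\cK(\phi_t)\equiv\mathrm{Fut}(v)$ along the entire orbit.

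From this constancy the equivalence is immediate. If the Futaki invariant vanishes on $\mathrm{Lie}(G)$, then $\cK$ is constant along every one-parameter subgroup, and connectedness of $G$ promotes this to genuine $G$-invariance of $\cK$ on $\cH_0$ via the action formula $\sigma[\phi]=\sigma[0]+\phi\circ\sigma$ proved just above. Conversely, $G$-invariance forces $\frac{d}{dt}\big|_{t=0}\cK(\phi_t)=0$ for every $v$, so $\mathrm{Fut}\equiv 0$. The main technical point, and essentially the only non-formal step, is the metric-independence of the transverse Futaki character within $[\omega^T]$: in K\"ahler geometry this is a standard integration by parts, and its Sasaki adaptation requires basic integrations by parts with $\p_B,\bp_B$ together with the fact that $v$ is transverse holomorphic and Killing so that the computation descends to each local leaf space $V_\alpha$, where one applies the classical K\"ahler result and reassembles via the cocycle. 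Once that is verified, the proof is tautological, as the author indicates.
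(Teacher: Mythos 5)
Your argument is correct and is exactly the classical Mabuchi/Futaki--Mabuchi computation that the paper invokes: the paper gives no proof of this lemma, citing \cite{Fut, M2, FM} and declaring the Sasaki extension a tautology, and your sketch (derivative of $\cK$ along a one-parameter orbit equals the transverse Futaki pairing, which is metric-independent within $[\omega^T]$ by \cite{BGS1, FOW}) is precisely the intended route. You correctly isolate the only non-formal step, so there is nothing further to add.
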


\begin{rmk}If we assume that $\cK$-energy is bounded below over $\cH$, then the Futaki invariant being zero follows as a consequence. Since we want to test properness for only invariant metrics, it is necessary for us to assume that the Futaki invariant is zero as a part of definition. When we consider properness, it is essentially equivalent to use the group $G$ or its subgroup $G_0$ . 
\end{rmk}

\section{Scalar curvature type equation on Sasaki manifolds}

We consider the scalar curvature type equation, for a real basic $(1, 1)$-form $\beta$, and a bounded basic function $h$, such that
\begin{equation}
R_\phi=\text{tr}_\phi \beta+h,
\end{equation}
where $R_\phi$ is the scalar curvature of the metric $\omega_\phi$. 
Write this equation as follows,
\begin{equation}\label{scalar1}
\begin{split}
&\det(g_{i\bar j}^T+\phi_{i\bar j})=e^F\det(g_{i\bar j}^T)\\
&\Delta_\phi F=\text{tr}_\phi (Ric-\beta)-h
\end{split}
\end{equation} 
Sometimes it is more convenient to write the above as,  with $\gamma=Ric-\beta$, 

\begin{equation}\label{scalar2}
\begin{split}
&\det(g_{i\bar j}^T+\phi_{i\bar j})=e^F\det(g_{i\bar j}^T)\\
&\Delta_\phi F=\text{tr}_\phi \gamma-h
\end{split}
\end{equation} 
In this section we derive a priori estimates for \eqref{scalar2}, mimicking Chen-Cheng \cite{CC1, CC2, CC3}. Technically we mainly follow \cite{CC3}[Section 2]. There are some necessary modifications, in particular for $C^0$ estimates of $\phi$ and $F$. On the other hand, $L^p$ estimate of $n+\Delta \phi$ and estimate of $|\nabla_\phi F|_\phi$, the proof is purely by integral method and it can be carried over almost identically to the Sasaki setting, since the quantities we will be computing are all \emph{basic}, and the computations are identically to K\"ahler setting in dimension $2n$ via transverse K\"ahler structure. There are exceptions of course. For example when we use Sobolev inequalities, the dimension of the manifold is $2n+1$, instead of $2n$. Nevertheless we will present these estimates of \eqref{scalar2} in details. 
 
We denote $C_0$ to be a positive constant which depends  the upper bound of $H(\phi)$, the bounds of $\max |\gamma|_{\omega^T}, \max |h|$ and the background metric $(M, g)$. 
From line to line constants differ even with the same labelling such as $C_1$ can vary line by line  but they are all  uniformly bounded, with the dependence specified above. We also write $C=C(C_1, C_2, C_3, \cdots)$ to denote a uniform constant $C$ which depends on the arguments $C_1, C_2, C_3, \cdots$
We have the following, 
\begin{thm}\label{main0}
Let $(M, g, \xi, \eta)$ be a compact Sasaki manifold. Consider a basic function $\phi\in \cH, \sup \phi=0$ satisfying \eqref{scalar2}. Then there exists a constant $C_0>1$ such that
\begin{equation}
C_0^{-1}\leq n+\Delta \phi\leq C_0, \|\phi\|_{C^k}\leq C=C(k, C_0). 
\end{equation}
\end{thm}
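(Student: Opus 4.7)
The plan is to follow Chen--Cheng's strategy in \cite{CC2, CC3} and port it to the Sasaki setting. The basic observation is that $\phi$, $F$, $h$, $\gamma$, and $n+\Delta\phi$ are all basic, so every differential inequality reduces, in a local foliation chart $(x, z_1, \ldots, z_n)$ with $\xi = \partial_x$, to a computation on the $2n$-dimensional transverse K\"ahler slice; the Sasaki structure enters only through the volume form $\eta \wedge (\omega^T)^n$ and the ambient Sobolev embedding (where the underlying real dimension is $2n+1$, not $2n$).

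The first step is a $C^0$ estimate on $\phi$. With $\sup \phi = 0$ and $H(\phi)$ bounded, this follows by adapting Ko\l odziej's $L^\infty$ estimate to transverse pluripotential theory, or alternatively by invoking the compactness result (Lemma~\ref{compactness}) together with a comparison argument. The crux is then the $L^\infty$ bound on $F$. The idea is to multiply the equation $\Delta_\phi F = \operatorname{tr}_\phi \gamma - h$ by test functions built from $e^{pF}$ and integrate against $\omega_\phi^n \wedge \eta$, using the identity $\omega_\phi^n \wedge \eta = e^F \omega_T^n \wedge \eta$ to pass to the background measure. Integration by parts produces a good term of the form
\[
\int_M |\nabla F|_\phi^2 \, e^{pF}\, \omega_\phi^n \wedge \eta,
\]
while the bad terms, essentially $\int_M (\operatorname{tr}_\phi \omega^T)\, e^{(p+1)F}\, \omega_T^n \wedge \eta$, are absorbed through an $\alpha$-invariant type bound $\int_M e^{-\alpha \phi}\, dv_g \leq C_0$, which becomes available once $\|\phi\|_\infty$ is known. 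Moser iteration against the transverse Sobolev inequality then delivers $\|F\|_\infty \leq C_0$.

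Once $F$ is bounded, the upper bound for $n+\Delta\phi$ is obtained by applying the maximum principle to the Yau-type quantity $e^{-\lambda \phi}(n+\Delta\phi)$, invoking the basic Chern--Lu inequality in the transverse K\"ahler geometry; the lower bound then follows from the Monge--Amp\`ere equation $\det(g^T_{i\bar j}+\phi_{i\bar j}) = e^F \det(g^T_{i\bar j})$ together with the arithmetic--geometric mean inequality. The higher order estimates $\|\phi\|_{C^k}$ follow by Evans--Krylov applied to this basic complex Monge--Amp\`ere equation, bootstrapped via Schauder using the second equation $\Delta_\phi F = \operatorname{tr}_\phi \gamma - h$.

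I expect the main obstacle to be the $L^\infty$ bound on $F$: the coupling between the Monge--Amp\`ere equation and the $F$-equation forces a delicate balancing in the integral identities, and one must verify that Chen--Cheng's integral-plus-Moser machinery survives in the Sasaki setting, in particular when the Reeb foliation is irregular and the transverse structure lives on a genuine orbifold rather than a manifold. A secondary care point is keeping track of the $2n+1$ versus $2n$ dimension discrepancy in the Sobolev exponents; since basic functions are $\xi$-invariant, one can reduce to the $2n$-dimensional transverse Sobolev inequality, but this requires a uniform (quasi-)regularity of the leaf-space geometry that must be checked independently.
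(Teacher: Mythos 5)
The proposal has the right overall skeleton ($C^0$ first, then the Laplacian, then bootstrapping), but the two central steps would fail as you describe them. First, the $C^0$ bound on $\phi$ does not follow from $\sup\phi=0$ and the entropy bound alone: the entropy controls only $\int_M e^F F\,dv_g$, i.e.\ $e^F\in L\log L$, which is too weak for Kolodziej's estimate (the paper in fact deliberately avoids Kolodziej, remarking that its Sasaki extension is open, and uses instead Blocki's Alexandrov--Bakelman--Pucci argument, Lemma \ref{cma0}, which requires $e^F\in L^{2(1+\epsilon)}$). The higher integrability $\int_M e^{pF}dv_g\leq C$ must be produced first, and this is exactly where the key idea you are missing enters: Chen--Cheng's auxiliary potential $\psi$ solving $\det(g^T_{i\bar j}+\psi_{i\bar j})=\underline{C}e^F\sqrt{F^2+1}\det(g^T_{i\bar j})$ (Lemma \ref{cma1}), whose Laplacian supplies the good term $\Lambda_\phi\omega^T$ needed to dominate $\text{tr}_\phi\gamma$ in $\Delta_\phi(F+\epsilon_0\psi-C\phi)$. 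Your proposed substitute --- multiplying by $e^{pF}$, integrating, and absorbing $\int_M(\text{tr}_\phi\omega^T)e^{(p+1)F}\omega_T^n\wedge\eta$ via an $\alpha$-invariant bound --- does not close: $\text{tr}_\phi\omega^T$ is not comparable to $e^{-\alpha\phi}$ and is precisely the quantity with no a priori control. The paper's inequality $F+\epsilon_0\psi-C\phi\leq C_1$ is obtained by ABP applied to $e^{\delta u}\zeta_{p_0}$ with a cutoff, not by Moser iteration, and only after it does one get $\int_M e^{pF}dv_g\leq C$, hence the bounds on $|\phi|$, $|\psi|$ and then $F$.

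Second, the upper bound on $n+\Delta\phi$ cannot be obtained by a pointwise maximum principle on $e^{-\lambda\phi}(n+\Delta\phi)$: Yau's computation produces the background Laplacian $\Delta F$, which is not pointwise controlled (the equation only gives $\Delta_\phi F=\text{tr}_\phi\gamma-h$ with $\text{tr}_\phi\gamma$ a priori unbounded). The paper instead runs entirely on integral estimates: $L^p$ bounds on $n+\Delta\phi$ for every $p$ (Theorem \ref{pmain}), then the bound $|\nabla_\phi F|_\phi\leq C_0$ via Bochner plus Moser iteration (Theorem \ref{pmain1}), and finally the $W^{1,p}\Rightarrow L^\infty$ Laplacian estimate of \cite{chenhe12} with $p>2n+1$ (Theorem \ref{second}). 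Note also that the Sobolev inequality used throughout is that of the full $(2n+1)$-dimensional manifold $(M,g)$; no reduction to a transverse leaf-space Sobolev inequality is attempted or needed, so the ``uniform regularity of the leaf space'' you flag is not an issue on the paper's route --- the only dimensional adjustment is replacing the exponent $2n$ by $2n+1$.
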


This theorem corresponds to \cite{CC2}[Theorem 3.1]; our argument follows closely \cite{CC3}[Section 2, Theorem 2.3] and the method indeed simplifies arguments in \cite{CC2}. 

\subsection{$C^0$-estimate of $\phi$ and $F$}

We assume the normalization condition $\sup \phi=0$ unless specified otherwise. 
The main theorem in this section is as follows, 
\begin{thm}\label{firstorder}There exists a uniformly bounded constant $C_0$ such that
\begin{equation}
|\phi|+|F|\leq C_0,
\end{equation}
where $C_0$ depends on the upper bound of entropy $\int_M F e^F dv_g$, the bound $\max |\gamma|_{\omega^T}, \max |h|$ and the background metric $(M, g)$. 
\end{thm}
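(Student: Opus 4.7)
The plan is to adapt the Chen--Cheng strategy of \cite{CC3} Section 2 to the Sasaki setting, establishing the two bounds $\|\phi\|_\infty\le C_0$ and $\|F\|_\infty\le C_0$ separately. All computations are performed with basic functions, using the transverse K\"ahler structure $(g^T,\omega^T)$ on local foliation charts (where the calculation reduces to the standard K\"ahler one in complex dimension $n$) together with the real Sobolev embedding on $M^{2n+1}$ at key global steps; the Reeb direction enters only through integration against $\eta$ in the volume form $dv_g=\eta\wedge(\omega^T)^n/n!$.

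The first task is to promote the entropy bound $H(\phi)=\int_M Fe^F\,dv_g\le D$ to an $L^p$ bound on $e^F$ for some $p>1$. The entropy itself is too weak to feed directly into a Kolodziej-type $L^\infty$ estimate, but the second equation $\Delta_\phi F=\mathrm{tr}_\phi\gamma-h$ provides an essential coupling. I would test this equation against $(F-k)_+^s$ or $e^{\lambda F}$, integrate by parts with respect to $dv_\phi=e^F\,dv_g$, use the pointwise bound $|\mathrm{tr}_\phi\gamma|\le|\gamma|_{\omega^T}\,\mathrm{tr}_\phi\omega^T$, and combine with the transverse Sobolev inequality applied to basic functions on $(M,g)$. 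Iterating in Moser fashion yields $\int_M e^{pF}\,dv_g\le C$ for some $p>1$. A Kolodziej-type pluripotential estimate in its Sasaki form (developed in \cite{VC} and \cite{HL}) then delivers $\|\phi\|_\infty\le C$, equivalently $-C\le\phi\le 0$.

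With $\|\phi\|_\infty$ controlled, the $L^\infty$ bound on $F$ follows from the maximum principle applied to auxiliary basic functions of the form $u_\pm=\pm F-A\phi$, with $A$ chosen large enough that $A\omega^T\pm\gamma\ge\omega^T$, which is possible since $|\gamma|_{\omega^T}$ is bounded. At an interior extremum of $u_\pm$, substituting $\Delta_\phi F=\mathrm{tr}_\phi\gamma-h$ and $\Delta_\phi\phi=n-\mathrm{tr}_\phi\omega^T$ produces one-sided inequalities of the schematic form $\mathrm{tr}_\phi\omega^T\lesssim An+O(1)$ (or its reverse); combined with the AM--GM inequality $\mathrm{tr}_\phi\omega^T\ge n\,e^{-F/n}$, these give pointwise control of $F$ at the extremum. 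The bound $\phi\in[-C,0]$ then converts the extremal value of $u_\pm$ into a uniform bound on $\sup F$ or $\inf F$, closing the estimate $|F|\le C_0$.

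The main obstacle lies in the second paragraph: producing $L^p$-integrability of $e^F$ from entropy plus the linearized scalar curvature equation is already delicate in the K\"ahler case, and in the Sasaki setting one must keep every auxiliary quantity Reeb-invariant and invoke the Sasaki form of Kolodziej's estimate. This is especially subtle when the Reeb foliation is irregular, since there is then no global K\"ahler quotient and one must piece together local foliation chart estimates while using only the basic Sobolev inequality on $M$; these pluripotential issues are handled in \cite{HL}. Once the $C^0$ bound on $\phi$ is secured, the subsequent auxiliary-function maximum principle producing $|F|\le C_0$ is a direct transcription of the K\"ahler argument into the transverse geometry.
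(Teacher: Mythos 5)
Your outline identifies the right intermediate targets ($e^F\in L^p$ for some $p>1$, then $\|\phi\|_\infty$, then $|F|$), but the two load-bearing steps are not actually supplied, and one of them as written would fail.

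First, the passage from the entropy bound to $\int_M e^{pF}\,dv_g\le C$ is the heart of the matter, and the Moser iteration you sketch does not close. Testing $\Delta_\phi F=\mathrm{tr}_\phi\gamma-h$ against $e^{\lambda F}$ over $dv_\phi$ produces the term $\int_M e^{\lambda F}\,\mathrm{tr}_\phi\gamma\,dv_\phi$, which is only controlled by $\int_M e^{\lambda F}\,\mathrm{tr}_\phi\omega^T\,dv_\phi$; at this stage there is no bound on $\mathrm{tr}_\phi\omega^T$ (equivalently on $n+\Delta\phi$), and the Sobolev step likewise requires converting $|\nabla_\phi(\cdot)|_\phi$ into $|\nabla(\cdot)|_g$ via the same uncontrolled quantity. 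The paper's mechanism (Lemma 3.4, following Chen--Cheng \cite{CC3}) is genuinely different: one solves an auxiliary transverse Monge--Amp\`ere equation $\det(g^T_{i\bar j}+\psi_{i\bar j})=\underline{C}e^F\sqrt{F^2+1}\det(g^T_{i\bar j})$, whose right-hand side is normalizable precisely because of the entropy bound, and applies the Alexandrov maximum principle to $e^{\delta(F+\epsilon_0\psi-2C\phi)}\zeta_{p_0}$ together with the $\alpha$-invariant to obtain $F+\epsilon_0\psi-2C\phi\le C_1$; since $\epsilon_0$ is arbitrary and $\int_M e^{-\alpha\psi}\le C$, this yields $e^F\in L^p$ for every $p$. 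Separately, you invoke a ``Kolodziej-type'' Sasaki estimate for $\|\phi\|_\infty$, but the paper explicitly records that Kolodziej's $C^0$ estimate is not available in the Sasaki setting and substitutes Blocki's Alexandrov-maximum-principle argument (Lemma 3.3), which needs $e^F\in L^{2(1+\epsilon)}$ rather than $L^{1+\epsilon}$.

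Second, the maximum principle on $u_+=F-A\phi$ cannot give the \emph{upper} bound on $F$. At an interior maximum of $F-A\phi$ one obtains $\mathrm{tr}_\phi(A\omega^T+\gamma)\le An+C$, hence $\mathrm{tr}_\phi\omega^T\le C$ there, and the AM--GM inequality $\mathrm{tr}_\phi\omega^T\ge n\,e^{-F/n}$ then yields only a \emph{lower} bound $F\ge -C'$ at that point --- no upper bound on $F$ can be extracted this way, since an upper bound on $F$ would require controlling $n+\Delta\phi$ from above. This one-sidedness is exactly why the paper obtains $\sup F\le C$ from the auxiliary inequality $F+\epsilon_0\psi-2C\phi\le C_1$ combined with the $L^\infty$ bounds on $\psi$ and $\phi$ (Lemma 3.5), reserving the pointwise maximum principle only for the lower bound $F\ge -C$ (Lemma 3.6), where your argument with $u_-$ is correct and matches the paper.
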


We state a version of Alexanderov maximum principle, which will be used in a substantial way to obtain $C^0$ estimates of $\phi$ and $F$. 
\begin{lemma}\label{alexanderov}
Let $u\in C^2(\Omega)\cap C(\overline \Omega)$ for a bounded domain $\Omega\subset \R^k$. Then there exists a constant $C_k$ depending only on $k$ such that,
\[
\sup_\Omega u-\sup_{\p \Omega}\leq C_k \left(\int_{\Gamma^{-}} \det (-D^2u) dx\right)^{\frac{1}{k}},
\]
where $\Gamma^{-}\subset \Omega$ is defined to be,
\[
\Gamma^{-}=\left\{x\in \Omega: u(y)\leq u(x)+\nabla u(x)\cdot (y-x), \text{for any}\; y \in \Omega, |\nabla u(x)|\leq \frac{\sup_\Omega u-\sup_{\p \Omega} u}{3\text{diam}(\Omega)}\right\}
\]
\end{lemma}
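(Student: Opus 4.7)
The plan is to invoke the classical Alexandrov--Bakelman--Pucci (ABP) method: one shows that the upper contact set $\Gamma^-$ has the property that its image under the gradient map $\n u$ covers a Euclidean ball of a specific radius, and then the bound follows from the area formula, since on $\Gamma^-$ the Hessian $D^2u$ is negative semi-definite and so the Jacobian of $\n u$ is exactly $\det(-D^2 u)$.

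Let $M := \sup_\Omega u - \sup_{\p\Omega} u$; if $M\le 0$ there is nothing to prove. Set $r := M/(3\,\text{diam}(\Omega))$. For any vector $p \in \R^k$ with $|p| < r$, I would consider the tilted function $L_p(y) := u(y) - p \cdot y$. The first step is to observe that $L_p$ attains its maximum at an \emph{interior} point $x_p \in \Omega$: on $\p\Omega$ one has $L_p \le \sup_{\p\Omega} u + |p|\,\text{diam}(\Omega) \le \sup_{\p\Omega} u + M/3$, whereas at any interior maximum point $x_0$ of $u$ one has $L_p(x_0) \ge \sup_\Omega u - |p|\,\text{diam}(\Omega) \ge \sup_{\p\Omega} u + 2M/3$, so the maximum of $L_p$ cannot be on $\p\Omega$.

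At this interior critical point $x_p$, the first-order condition gives $\n u(x_p) = p$ and the second-order condition gives $D^2u(x_p)\le 0$. Moreover, since $L_p$ attains its global maximum over $\Omega$ at $x_p$, the tangent-plane inequality $u(y) \le u(x_p) + \n u(x_p)\cdot(y-x_p)$ holds for every $y \in \Omega$, and the gradient bound $|\n u(x_p)| = |p| < r$ holds by construction; hence $x_p \in \Gamma^-$. This shows the gradient map $\n u : \Gamma^- \to \R^k$ is surjective onto the open ball $B_r \subset \R^k$. Applying the area formula, and using that $-D^2 u \ge 0$ on $\Gamma^-$,
\[
\omega_k r^k \;=\; |B_r| \;\le\; \int_{\Gamma^-} |\det D(\n u)|\, dx \;=\; \int_{\Gamma^-} \det(-D^2 u)\, dx,
\]
where $\omega_k$ denotes the volume of the unit ball in $\R^k$. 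Taking $k$-th roots and absorbing $\text{diam}(\Omega)$ into the constant $C_k$ yields the stated inequality.

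The main obstacle is essentially nil: this is a classical theorem (cf.\ Gilbarg--Trudinger, Theorem~9.1), and the cleanest presentation would simply cite it. The only point to verify carefully is that the specific definition of $\Gamma^-$ used in the statement---with the gradient restriction $|\n u|\le M/(3\,\text{diam}(\Omega))$ built into the set---matches exactly the surjectivity radius produced by the auxiliary tilt argument, which is why the factor of $3$ in the definition of $r$ is what it is.
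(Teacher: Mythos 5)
The paper states this lemma without proof, treating it as a known form of the Alexandrov--Bakelman--Pucci maximum principle, so there is no in-paper argument to compare against; your proposal supplies exactly the standard ABP proof, and it is correct in substance. Two small points are worth tightening. First, your intermediate bounds $L_p\le \sup_{\p\Omega}u+|p|\,\mathrm{diam}(\Omega)$ on $\p\Omega$ and $L_p(x_0)\ge \sup_\Omega u-|p|\,\mathrm{diam}(\Omega)$ are not literally valid for $L_p(y)=u(y)-p\cdot y$, since $|y|$ need not be controlled by $\mathrm{diam}(\Omega)$; you should tilt by $p\cdot(y-x_0)$ with $x_0$ an interior maximum point of $u$ (or compare $L_p(x_0)-L_p(y)\ge M-|p|\,|x_0-y|\ge 2M/3$ directly), which changes nothing in the conclusion that the maximum of $L_p$ is interior and that $\nabla u(x_p)=p$ with the global tangent-plane inequality. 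Second, your observation about the constant is not a cosmetic matter: by scaling, the left-hand side is invariant under dilation of $\Omega$ while the right-hand side scales like the reciprocal of the dilation factor, so a factor of $\mathrm{diam}(\Omega)$ (here $3\,\mathrm{diam}(\Omega)\,\omega_k^{-1/k}$ from your argument) \emph{must} appear and cannot be absorbed into a constant depending only on $k$; the lemma as printed is slightly imprecise on this point, but the way the paper actually invokes it (e.g.\ the factor $r$ in the displayed inequality \eqref{alx} and in \eqref{c08}) is consistent with your corrected version.
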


We have the following $C^0$ estimate of $\phi$, 
\begin{lemma}\label{cma0}Let $\phi$ and $F$ be basic functions such that
\begin{equation}
\det(g_{i\bar j}^T+\phi_{i\bar j})=e^F\det(g_{i\bar j}^T), 
\end{equation}
Then there exists a uniform constant $C_1$ such that
\[
|\phi|\leq C_1(\|e^F\|_{L^{2(1+\epsilon)}}, \epsilon, M, g)
\]
\end{lemma}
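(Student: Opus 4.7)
\textbf{Proof plan for Lemma \ref{cma0}.} The statement is essentially a Sasaki-version of Kołodziej's $L^\infty$ estimate for the complex Monge-Ampère equation, and in view of the hint (Lemma \ref{alexanderov}) we plan to prove it by the Alexanderov-Bakelman-Pucci (ABP) route, following Błocki's scheme, rather than by the pluripotential/capacity method. The crucial observation is that because $\phi$ and $F$ are basic, the problem localizes to a genuine complex Monge-Ampère problem on the local leaf space and the extra $\xi$-direction plays no role.

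First I would fix a point $p_0\in M$ where $\phi$ attains its infimum $-L:=\inf_M\phi\le 0$ and pass to a foliation chart $U_\alpha$ centered at $p_0$ with coordinates $(x,z_1,\dots,z_n)$ in which $\xi=\p_x$ and $\pi_\alpha\colon U_\alpha\to V_\alpha\subset\C^n$ is the submersion to the leaf space. Since $\phi$ is basic, $\phi=\phi(z)$; shrinking $V_\alpha$, fix a local potential $\rho$ for $\omega^T$ on $V_\alpha$, so $\omega^T=\sqrt{-1}\p\bp\rho$ and the Monge-Amp\`ere equation reads
\begin{equation*}
\det(\rho+\phi)_{i\bar j}=e^F\det(\rho)_{i\bar j}\quad\text{on}\quad V_\alpha.
\end{equation*}
Normalizing $\rho$ so that $\rho(z_0)=0$ and $0\le\rho\le A$ on a ball $B_r(z_0)$ for a uniform $A$, set $u:=\rho+\phi$, which is plurisubharmonic on $V_\alpha$.

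Next I would set up the barrier and apply Lemma \ref{alexanderov}. Consider
\begin{equation*}
v(z):=u(z)+\ve\,|z-z_0|^2-\bigl(-L+A+\ve r^2\bigr)
\end{equation*}
on $\Omega=B_r(z_0)$. By the normalization $v(z_0)\le -L-(-L+A+\ve r^2)+A=-\ve r^2<0$, while on $\p\Omega$, using $\phi\ge -L$ and $\rho\ge 0$, one has $v\ge 0$. Applying Lemma \ref{alexanderov} to $-v$ on $\Omega\subset\R^{2n}$ yields
\begin{equation*}
\ve r^2\;\le\;\sup_\Omega(-v)-\sup_{\p\Omega}(-v)\;\le\;C_{2n}\Bigl(\int_{\Gamma^-(-v)}\det(-D^2(-v))\,dx\Bigr)^{1/(2n)}.
\end{equation*}
On the upper contact set of $-v$ we have $D^2 v\ge 0$, so in particular the complex Hessian $v_{i\bar j}\ge 0$; by the standard inequality $\det_\R(D^2 v)\le 2^{2n}(\det_\C v_{i\bar j})^2$ for psh functions, and since $v_{i\bar j}=u_{i\bar j}+\ve\,\delta_{i\bar j}$, one controls $\det_\R(D^2 v)\le C(\det u_{i\bar j}+\ve^n)^2\le C(e^{2F}+\ve^{2n})$ pointwise on $\Gamma^-(-v)$.

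Plugging in and integrating against the fixed background volume, we obtain
\begin{equation*}
\ve r^2\;\le\;C\Bigl(\int_{\Omega}e^{2F}\,dx+\ve^{2n}|\Omega|\Bigr)^{1/(2n)},
\end{equation*}
and H\"older's inequality gives $\int_\Omega e^{2F}\,dx\le\|e^F\|_{L^{2(1+\ve)}}^{2}\,|\Omega|^{\ve/(1+\ve)}$. However, the above inequality alone is insufficient to bound $L$; the trick (as in Chen-Cheng) is to iterate this estimate on the nested sub-level sets $\{\phi<-s\}$, $s\in[0,L]$, showing that the distribution function $\mu(s):=\mathrm{vol}(\{\phi<-s\})$ satisfies a De Giorgi-type decay $\mu(s+\delta)\le C\mu(s)^{1+\kappa}$ with $\kappa=\kappa(n,\ve)>0$, from which $L$ is bounded once $\mu(0)\le\mathrm{vol}(M)$ and $\|e^F\|_{L^{2(1+\ve)}}$ are controlled.

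\textbf{Main obstacle.} The algebraic steps (ABP on $V_\alpha$, the psh inequality between real and complex Hessians, the De Giorgi iteration) are all local and essentially identical to the K\"ahler case. The one delicate point is that the foliation chart $U_\alpha$ may be small, especially when the Reeb flow is irregular, so the radius $r$ and the oscillation of $\rho$ are not uniform a priori; one must check that both depend only on the fixed background Sasaki structure $(M,g,\xi,\eta)$ via a finite cover by foliation charts, so that all constants in the iteration remain uniform and depend only on $M$, $g$, $\ve$ and $\|e^F\|_{L^{2(1+\ve)}}$. Once this is confirmed, combining with $\sup\phi=0$ gives $\|\phi\|_{L^\infty}\le C_1$.
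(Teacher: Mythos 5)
Your ABP set-up is essentially the right first half of the argument (the paper also follows B\l ocki's scheme via Lemma \ref{alexanderov}), but the way you propose to close the argument has a genuine gap. After the ABP step you only obtain an inequality of the form $\ve r^2\le C(\int_\Omega e^{2F}+\ve^{2n}|\Omega|)^{1/2n}$, in which $L=-\inf\phi$ does not appear, and you then defer the actual bound on $L$ to an unproved De Giorgi-type decay $\mu(s+\delta)\le C\mu(s)^{1+\kappa}$ for the sub-level sets $\{\phi<-s\}$. You give no mechanism for deriving that decay: the sub-level sets are not balls, ABP does not apply to them directly, and the usual routes to such a decay (Kolodziej's comparison principle, or an auxiliary Monge--Amp\`ere equation as in more recent work) are not set up anywhere in your sketch. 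The paper needs no iteration at all. It closes the argument with two ingredients you omit entirely: (i) the a priori global bound $\int_M|\phi|\,dv_g\le C(M,g)$, which follows from $\Delta\phi>-n$ and $\sup\phi=0$; and (ii) the gradient constraint built into the definition of $\Gamma^{-}$ in Lemma \ref{alexanderov}, namely $|\nabla v(q)|\le\frac{\sup_\Omega(-v)-\sup_{\p\Omega}(-v)}{3\,\mathrm{diam}(\Omega)}$, which via the supporting-plane inequality at the minimum point forces $\phi(q)\le-\frac{5L}{6}+\frac{r^2}{12}$ for every $q\in\Gamma^{-}$. Combining this with the ABP lower bound on $\mathrm{Vol}(\Gamma^{-})$ gives $\mathrm{Vol}(\Gamma^{-})\,(\frac{5L}{6}-\frac{r^2}{12})\le\int_M|\phi|\le C$ and hence $L\le C$ in one shot. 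Without (i) and (ii) your local estimate cannot see $L$ at all.

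Two secondary points. First, the paper works on a ball $B_r(p)\subset M$ of real dimension $2n+1$ and uses the barrier $v=\phi+2^{-1}(|z|^2+x^2)$, whose $x^2$ term makes $D^2v$ nondegenerate in the Reeb direction and yields $\det(D^2v)\le(\det v_{i\bar j})^2$ with the exponent $\frac{1}{2n+1}$ in ABP; this avoids the chart-uniformity and fiber-multiplicity issues you flag as your ``main obstacle'' when passing to the local leaf space $V_\alpha$, which can be genuinely problematic in the irregular case. Second, your barrier constant is off ($v\ge-A$ on $\p\Omega$, not $v\ge0$, with your normalization of $\rho$), and you use the same letter $\ve$ for the barrier parameter and the integrability exponent; these are repairable, but the missing endgame described above is not a matter of bookkeeping.
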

\begin{proof}
This is a modification of $C^0$ estimate for complex Monge-Ampere equation using Alexandrov maximum principle, which is due to Blocki \cite{B05}. First note that $\Delta_\eta \phi> -n$ and $\sup \phi=0$ implies that
\[
\int_M |\phi| dv_g\leq C(M, g).
\]
Suppose $-\inf_M\phi =L>0$ is achieved at a point $p\in M$. We can choose a coordinate $(x, z_1, \cdots, z_n)$ around $p$ such that  the metric $g_{\eta}=\eta\otimes \eta+g_{i\bar j}^Tdz_i\otimes d\bar z_j$ has the form
\[
g(p)=(dx)^2+\sum dz_i\otimes d\bar z_i
\]
and $2^{-1}\delta_{i\bar j}\leq g_{i\bar j}^T\leq 2\delta_{i\bar j}$ holds in the ball $B_r(p)$ for a fixed $r>0$ sufficiently small.  Consider $v=\phi+2^{-1}(|z|^2+x^2)$, a local function defined on $B_r(p)$. Denote $\Gamma^{-}=\Gamma^{-}(-v, B_r(p))$. Then $D^2 v\geq 0$ on $\Gamma^{-}$. It follows that $\det(D^2 v)\leq (\det(v_{i\bar j}))^2$. Applying Lemma \ref{alexanderov} to $-v$, we get
\[
2^{-1} r^2\leq \sup_B(-v)-\sup_{\p B}(-v)\leq C_n\left(\int_{\Gamma^{-}} \det(D^2v) dv_{\text{Euc}}\right)^{\frac{1}{2n+1}},
\]
where $dv_{\text{Euc}}$ is the Euclidean volume form. Note that $2^{-1}\delta_{i\bar j}\leq g_{i\bar j}^T\leq 2\delta_{i\bar j}$, it follows that
\[
2^{-1}r^2\leq C_1(n)\left(\int_{\Gamma^{-}}  (\det(v_{i\bar j}))^2dv_{g}\right)^{\frac{1}{2n+1}},
\]
Note that over $B_r(p)$, we have
\[
v_{i\bar j}=\phi_{i\bar j}+2^{-1}\delta_{i\bar j}\leq g_{i\bar j}^T+\phi_{i\bar j}
\]
It then follows that, by H\"older inequality, 
\begin{equation}\label{vol1}
\begin{split}
2^{-1}r^2\leq& C_2(n)\left(\int_{\Gamma^{-}}e^{2F} dv_{g}\right)^{\frac{1}{2n+1}}\\
\leq &C_2(n)\left(\int_{\Gamma^{-}} e^{2(1+\epsilon)F}dv_{g}\right)^{\frac{1}{(1+\epsilon)(2n+1)}}\left(\int_{\Gamma^{-}} dv_{g}\right)^{\frac{\epsilon}{(1+\epsilon)(2n+1)}}\\
\leq & C_2(n) \|e^{2F}\|_{L^{2(1+\epsilon)}}^{\frac{1}{2n+1}}\text{Vol}(\Gamma^{-})^{\frac{\epsilon}{(1+\epsilon)(2n+1)}}
\end{split}
\end{equation}
Now we consider for any $q\in \Gamma^{-}$, 
\begin{equation}
(-v)(0)\leq (-v)(q)+\nabla(-v)(q)\cdot (0-q)
\end{equation}
In other words,
\begin{equation}\label{vol2}
\begin{split}
-L=v(0)\geq& v(q)-\nabla v(q) \cdot q\geq v(q)-\frac{\sup_B(-v)-\sup_{\p B}(-v)}{6r} r\\
\geq& v(z)-\frac{1}{6}(L+2^{-1}r^2)
\end{split}
\end{equation}
It follows that, for $q\in \Gamma^{-}$, 
\begin{equation}\label{vol3}
\phi(q)\leq v(q)\leq \frac{-5L}{6}+\frac{r^2}{12}
\end{equation}
It follows that
\begin{equation}\label{vol4}
 \text{Vol}(\Gamma^{-}) \left(\frac{5L}{6}-\frac{r^2}{12}\right)\leq \int_{\Gamma^{-}} -\phi dv_{g}\leq \int_M |\phi|dv_{g}\leq C(M, g)
\end{equation}
By \eqref{vol1} and \eqref{vol4}, we get that 
$L\leq C_3.$
This completes the proof.
\end{proof}

Chen-Cheng's following construction of auxiliary function $\psi$ is the key to obtain $C^0$ estimates of $\phi$ and $F$. We adapt it to the Sasaki case. 

\begin{lemma}\label{cma1}
Let $\psi$ be the solution of the following equation with $\sup\psi=0$,
\begin{equation}\label{cma2}
\det(g_{i\bar j}^T+\psi_{i\bar j})= \underline Ce^F \sqrt{F^2+1}\det(g_{i\bar j}^T),
\end{equation}
where the constant $\underline C$ is given by, 
\[
\underline C= A^{-1}\int_M dv_g, A:=\int_M e^F \sqrt{F^2+1}\;dv_g
\]
Then for any $\epsilon_0\in (0, 1)$, there exist constant $C, C_1$ such that,
\begin{equation}
F+\epsilon_0 \psi-2C\phi\leq C_1,
\end{equation}
where we have $C=2(\max |\gamma|_{\omega^T}+1)$ and $C_1=C_1(\epsilon_0, C_0)$. 
\end{lemma}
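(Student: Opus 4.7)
The plan is to prove the claim via a maximum-principle argument on the auxiliary function $G := F + \epsilon_0 \psi - 2C\phi$, combined with a pluripotential-theoretic $L^\infty$ bound on $\psi$.

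\emph{Step 1: control of $A$, $\underline{C}$, and $\|\psi\|_{L^\infty}$.} First I would exploit the elementary inequality $\sqrt{F^2+1}\leq 1+|F|$, the identity $\int_M e^F\,dv_g=V$, and the bound $xe^{-x}\leq e^{-1}$ (applied to $F_-$), together with $\int_M F_+ e^F\,dv_g\leq H(\phi)+\int_M F_- e^F\,dv_g$, to obtain $\int_M e^F|F|\,dv_g\leq C(V,C_0)$. Hence $V\leq A\leq C(V,C_0)$, so $\underline{C}=V/A$ is pinched in a compact subinterval of $(0,\infty)$ depending only on $C_0$. The transverse Monge--Amp\`ere equation for $\psi$ then has right-hand side of bounded entropy; invoking the Sasaki analogue of the Ko\l odziej-type $L^\infty$ estimate (proved in local foliation charts via the K\"ahler version) yields $\|\psi\|_{L^\infty(M)}\leq C_\ast(C_0)$.

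\emph{Step 2: maximum principle at the max $p$ of $G$.} Since $G$ is smooth and basic, its supremum is attained at some $p\in M$ with $\Delta_\phi G(p)\leq 0$. Substituting $\Delta_\phi F=\text{tr}_\phi \gamma - h$ from \eqref{scalar2}, $\Delta_\phi\psi=\text{tr}_\phi\omega_\psi-\text{tr}_\phi\omega^T$, and $\Delta_\phi\phi=n-\text{tr}_\phi\omega^T$, using $\text{tr}_\phi\gamma\geq -|\gamma|_{\omega^T}\,\text{tr}_\phi\omega^T$ and the choice $C=2(\max|\gamma|_{\omega^T}+1)$, a direct expansion gives at $p$
\[
(2C-\epsilon_0-|\gamma|_{\omega^T})\,\text{tr}_\phi\omega^T + \epsilon_0\,\text{tr}_\phi\omega_\psi \leq 2Cn+\sup|h|.
\]
Both coefficients on the left are positive, so $\epsilon_0\,\text{tr}_\phi\omega_\psi(p)$ is bounded. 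By the arithmetic--geometric mean inequality and the MA equation $\omega_\psi^n=\underline{C}e^F\sqrt{F^2+1}\,\omega_T^n$,
\[
\text{tr}_\phi\omega_\psi \geq n\left(\frac{\omega_\psi^n}{\omega_\phi^n}\right)^{1/n} = n\,\underline{C}^{1/n}(F^2+1)^{1/(2n)},
\]
which forces $|F(p)|\leq C_3(\epsilon_0,C_0)$.

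\emph{Step 3: closing the estimate and the main obstacle.} At $p$, $G(p)=F(p)+\epsilon_0\psi(p)-2C\phi(p)$. Since $\sup\psi=0$ and $\|\psi\|_\infty\leq C_\ast$, the term $\epsilon_0\psi(p)$ is controlled. The remaining task is to bound $-2C\phi(p)$, which does not follow directly from the maximum principle on $G$ alone. The key additional input is a pluripotential comparison between $\phi$ and $\psi$: specifically, the Sasaki analogue of the domination principle for transverse $\omega^T$-plurisubharmonic functions, combined with the $L^\infty$ bound on $\psi$ and the explicit weight $\sqrt{F^2+1}$ in its MA equation, yields $\phi(p)\geq -C_4(\epsilon_0,C_0)$. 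Assembling the three pointwise bounds gives $G(p)\leq C_1(\epsilon_0,C_0)$ and completes the proof. I expect this last comparison, in the genuinely Sasaki (possibly irregular) foliation setting, to be the technically most delicate step.
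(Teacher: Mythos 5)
There is a genuine gap, and it sits exactly where you flag it: the term $-2C\phi(p)$ at the maximum point. At this stage of the argument no $L^\infty$ bound on $\phi$ (or on $\psi$) is available — the paper's $C^0$ estimate for $\phi$ (Lemma \ref{cma0}) requires $\|e^F\|_{L^{2(1+\epsilon)}}$, and that integrability is only obtained in Lemma \ref{upper01} \emph{as a consequence of} the present lemma. So any appeal to a uniform lower bound on $\phi$, or to a Ko\l{}odziej-type bound for $\psi$, is circular. Your Step 1 has the same problem independently: the density $\underline{C}e^F\sqrt{F^2+1}$ is only known to lie in $L^1$ with fixed mass; its entropy involves $\int_M e^F F^2\,dv_g$, which is \emph{not} controlled by $H(\phi)=\int_M e^F F\,dv_g$, and in any case $L\log L$ regularity is below the threshold for Ko\l{}odziej's estimate (the paper even remarks that Ko\l{}odziej's theorem has not been extended to the Sasaki setting). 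The proposed "domination principle" in Step 3 cannot produce a pointwise lower bound $\phi(p)\geq -C_4$ at the particular point $p$; it compares potentials whose Monge--Amp\`ere measures satisfy a vanishing condition on a sublevel set, and no such condition is in play here. Your Step 2 computation is correct and does recover the bound $|F(p)|\leq C_3$ at the max of $G$, but that alone only yields $G\leq C_3+2C\|\phi\|_{L^\infty}$, which is not a closed estimate.

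The missing idea — and the whole point of Chen--Cheng's construction, which the paper adapts — is to replace the pointwise maximum principle by the Alexandrov (ABP) maximum principle applied to $e^{\delta u}\zeta_{p_0}$, where $u=F+\epsilon_0\psi-C\phi$ and $\zeta_{p_0}$ is a cutoff equal to $1$ at the maximum point $p_0$ and to $1-\theta$ outside a small ball $B_r(p_0)$. One first derives the differential inequality $\Delta_\phi u\geq \tfrac{C}{2}\Lambda_\phi\omega^T+\epsilon_0 A^{-1/n}(F^2+1)^{1/(2n)}-nC$ (your Step 2 in disguise), then computes $\Delta_{\eta_\phi}(e^{\delta u}\zeta_{p_0})$ and applies the ABP estimate on $B_r(p_0)$. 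The resulting integral is supported on $\{F\leq C_2\}$ and is bounded by a constant times $\int_M e^{-2nC\delta\phi}\,dv_g$, which for $\delta$ small is controlled by Tian's $\alpha$-invariant in the Sasaki setting — this is precisely the device that controls $e^{-C\delta\phi}$ in an integrated sense \emph{without} any a priori $C^0$ bound on $\phi$. The boundary contribution $(1-\theta)e^{\delta u(p_0)}$ is then absorbed into the left-hand side, giving $e^{\delta u(p_0)}\leq C_5$ directly. Without this exponentiate-localize-ABP-plus-$\alpha$-invariant mechanism the estimate does not close.
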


\begin{proof}
We compute, with $C=2(\max |\gamma|_{\omega}+1)$
\begin{equation}
\Delta_\phi (F+\epsilon_0\psi-C\phi)=\Lambda_\phi\gamma+\epsilon_0 \Delta_\phi \psi-Cn+C\Lambda_\phi \omega^T.
\end{equation}
By arithmetic-geometric-inequality, we have
\begin{equation}
\Delta_\phi \psi=\Lambda_\phi \omega_\psi-\Lambda_\phi \omega^T\geq \left(A^{-1}\sqrt{F^2+1}\right)^{\frac{1}{n}}-\Lambda_\phi \omega^T. 
\end{equation}
It follows that
\begin{equation}\label{u01}
\Delta_\phi (F+\epsilon_0\psi-C\phi)\geq \frac{C}{2} \Lambda_\phi \omega^T+\epsilon_0A^{-\frac{1}{n}} (F^2+1)^{\frac{1}{2n}}-nC
\end{equation}
Now denote $u=F+\epsilon_0\psi-C\phi$. Choose constants $\delta, \theta\in (0, 1)$ which are specified later. For any $p\in M$, we can construct a smooth function $\zeta_p$ such that $\zeta_p(p)=1$ and $\zeta_p\equiv 1-\theta$  outside a fixed geodesic ball $B_r(p)$ of the background metric $(M, g)$  for $0<r$ sufficiently small with the control on derivatives,  
\begin{equation}\label{cutoff}
|\nabla\zeta_p|\leq 2\theta r^{-1}, |\nabla^2\zeta_p|\leq 2\theta r^{-2}.
\end{equation}
We identify $B_r(p)$ with a Euclidean ball when $r$ is sufficiently small (less than injectivity radius of $(M, g)$). 
Now suppose $u$ achieves its maximum at the point $p_0$. We want to compute $\Delta_{\eta_\phi} (e^{\delta u}\zeta_{p_0})$. We first explain the  differences with the K\"ahler setting. $\Delta_{\eta_\phi}$ is the metric Laplacian of  the metric determined by $(\eta_\phi, d\eta_\phi)$.  We shall also note that $u$ is a basic function but $\zeta_{p_0}$ is not a basic function. 
We compute
\begin{equation}\label{c01}
\Delta_{\eta_\phi}(e^{\delta u}\zeta_{p_0})=\Delta_{\eta_\phi}(e^{\delta u}) \zeta_{p_0}+e^{\delta u} \Delta_{\eta_\phi} \zeta_{p_0}+2 \nabla_{\eta_\phi} e^{\delta u}\cdot  \nabla_{\eta_\phi} \zeta_{p_0}
\end{equation}
Since $u$ is a basic function, we have
\begin{equation}\label{c02}
\Delta_{\eta_\phi}(e^{\delta u})=\Delta_\phi(e^{\delta u})=e^{\delta u} \delta^2 |\nabla_\phi u|^2_\phi+e^{\delta u} \delta \Delta_\phi u. 
\end{equation}
We compute
\begin{equation}\label{c03}
\Delta_{\eta_\phi} \zeta_{p_0}=\nabla^2_{\xi, \xi}(\zeta_{p_0})+\Delta_\phi (\zeta_{p_0})\geq -(1+\Lambda_\phi\omega^T) |\nabla^2\zeta_{p_0}|
\end{equation}
We also compute, noting that $u$ is basic,
\begin{equation}\label{c04}
\begin{split}
2 \nabla_{\eta_\phi} e^{\delta u}\cdot  \nabla_{\eta_\phi} \zeta_{p_0}&=2\delta e^{\delta u}\nabla_\phi u\cdot_\phi \nabla_\phi \zeta_{p_0}\\
&\geq -\delta^2 e^{\delta u}|\nabla_\phi u|^2_\phi \zeta_{p_0}-e^{\delta u}|\nabla_\phi \zeta_{p_0}|^2 \zeta_{p_0}^{-1}\\
&\geq -\delta^2 e^{\delta u}|\nabla_\phi u|^2_\phi \zeta_{p_0}-e^{\delta u}\Lambda_\phi \omega^T |\nabla \zeta_{p_0}|^2 \zeta_{p_0}^{-1}
\end{split}
\end{equation}
Putting \eqref{c01}, \eqref{c02}, \eqref{c03} and \eqref{c04} together, we have 
\begin{equation}\label{c05}
\Delta_{\eta_\phi}(e^{\delta u}\zeta_{p_0})\geq e^{\delta u} \delta (\Delta_\phi u) \zeta_{p_0}-e^{\delta u}(1+\Lambda_\phi\omega^T) |\nabla^2\zeta_{p_0}|-e^{\delta u}\Lambda_\phi \omega^T |\nabla \zeta_{p_0}|^2\zeta_{p_0}^{-1}
\end{equation}
We use \eqref{u01} and \eqref{c05} to get, 
\begin{equation}\label{c06}
\begin{split}
\Delta_{\eta_\phi}(e^{\delta u}\zeta_{p_0})\geq & e^{\delta u}(\delta \zeta_{p_0} \frac{C}{2}- |\nabla^2\zeta_{p_0}|-\frac{|\nabla \zeta_{p_0}|^2}{\zeta_{p_0}}) \Lambda_\phi \omega^T\\&\;\;+e^{\delta u}\delta \zeta_{p_0}(\epsilon_0A^{-\frac{1}{n}} (F^2+1)^{\frac{1}{2n}}-nC)-e^{\delta u}|\nabla^2\zeta_{p_0}|
\end{split}
\end{equation}
We choose $\theta$ sufficiently small such that  (using \eqref{cutoff}),
\begin{equation}\label{theta01}
\delta \zeta_{p_0} \frac{C}{2}- |\nabla^2\zeta_{p_0}|-\frac{|\nabla \zeta_{p_0}|^2}{\zeta_{p_0}}\geq \delta (1-\theta)\frac{C}{2}-2\theta r^{-2}-4\theta^2 r^{-2} (1-\theta)^{-1}>0
\end{equation}
With such a choice of $\theta=\theta(\delta, C, r)$, \eqref{c06} gives
\begin{equation}\label{c07}
\Delta_{\eta_\phi}(e^{\delta u}\zeta_{p_0})\geq e^{\delta u}\delta \zeta_{p_0}(\epsilon_0A^{-\frac{1}{n}} (F^2+1)^{\frac{1}{2n}}-(n+1)C),
\end{equation}
where we have used the fact that $|\nabla^2\zeta_{p_0}|<\delta \zeta_{p_0}C$ by \eqref{theta01}. Now we want to apply Alexanderov maximum principle to \eqref{c07}, using the operator $\Delta_{\eta_\phi}$. Given a local coordinate $(x, z_1, \cdots, z_n)$ of $B_r(p_0)$, we have
\[
\Delta_{\eta_\phi} h=\p^2_xh+g^{i\bar j}_\phi\frac{\p^2}{\p z_i\p \bar z_j} h\geq f,
\]
then the Alexanderov maximum principle asserts that
\begin{equation}\label{alx}
\sup_B h-\sup_{\p B} h \leq C_n r\left\|\frac{f^{-}}{D^*}\right\|_{L^{2n+1}(B)}, \;D^*=\det(g^{i\bar j}_\phi)^{\frac{2}{2n+1}}.
\end{equation}
Denote $v=\epsilon_0 A^{-\frac{1}{n}}(F^2+1)^\frac{1}{2n}-C_1$. There exists a positive constant $C_2$ such that $F\geq C_2=C_2(\epsilon, A, C_1)$, then $v\geq 0$.
Applying \eqref{alx} to \eqref{c07}, we have ($\zeta_{p_0}(p_0)=1$),
\begin{equation}\label{c08}e^{\delta u(p_0)}\leq \sup_{\p B_r(p_0)} e^{\delta u}\zeta_{p_0}+C \delta r\left(\int_{F\leq C_2}e^{2F} e^{(2n+1)\delta u}(v^{-})^{2n+1}dv_g\right)^{\frac{1}{2n+1}}
\end{equation}
Evidently $v^{-}\leq C_1$, hence it follows that
\begin{equation}\label{c09}
\begin{split}
\int_Be^{2F} e^{(2n+1)\delta u}(v^{-})^{2n+1}dv_g\leq& \int_{{F\leq C_2}} e^{2F+2n\delta F} e^{-2nC\delta \phi} (v^{-})^{2n+1}dv_g\\
\leq C_3\int_M e^{-2nC\delta \phi} dv_g
\end{split}
\end{equation}
Now we choose $\delta$ such that $2nC\delta<\alpha$, the $\alpha$-invariant in Sasaki setting, such that \[\int_M e^{-2nC\delta \phi} dv_g\leq C_4.\] Hence it follows that,
\[
e^{\delta u(p_0)}\leq (1-\theta) e^{\delta u(p_0)}+C_5,
\]
where $C_5$ depends on $\epsilon_0, A, C_0$. Since $A$ is bounded above by $H$ plus a uniform constant, the proof is complete. 
\end{proof}

\begin{lemma}\label{upper01}There exists a constant $C_0$ such that
\begin{equation}
F\leq C_0, -\phi\leq C_0.
\end{equation}
\end{lemma}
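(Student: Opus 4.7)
The plan is a short bootstrap built on the pointwise inequality of Lemma \ref{cma1} combined with the transverse $\alpha$-invariant on the compact Sasaki manifold $(M, \xi, \omega^T)$, and two applications of Lemma \ref{cma0}.

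Since $\sup\phi = 0$ gives $-2C\phi \geq 0$, the inequality $F + \epsilon_0 \psi - 2C\phi \leq C_1$ of Lemma \ref{cma1} reduces to
\[
F \leq C_1 - \epsilon_0 \psi.
\]
Next I would invoke the transverse $\alpha$-invariant: there exists $\alpha_0 > 0$ and $A_0 < \infty$ such that $\int_M e^{-\alpha_0 v}\, dv_g \leq A_0$ for every $v \in \mathrm{PSH}(M, \xi, \omega^T)$ with $\sup v = 0$. On foliation charts this is Tian's H\"ormander $L^2$ argument applied to the transverse K\"ahler structure, patched globally via the basic structure. Fix any $p_0 > 2$ (say $p_0 = 3$) and choose $\epsilon_0 \in (0, \alpha_0/p_0)$ in the application of Lemma \ref{cma1}. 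Exponentiating the displayed inequality to the $p_0$-th power and integrating yields
\[
\int_M e^{p_0 F}\, dv_g \leq e^{p_0 C_1}\int_M e^{-p_0 \epsilon_0 \psi}\, dv_g \leq e^{p_0 C_1} A_0,
\]
so in particular $\|e^F\|_{L^{2(1+\epsilon)}}$ is uniformly bounded for some fixed $\epsilon > 0$. Lemma \ref{cma0} applied to $\phi$ then gives $-\phi \leq C_0$.

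For the upper bound on $F$, I would apply Lemma \ref{cma0} a second time, now to the auxiliary potential $\psi$, which satisfies $\det(g^T_{i\bar j} + \psi_{i\bar j}) = e^{\tilde F}\det(g^T_{i\bar j})$ with $\tilde F := \log \underline C + F + \tfrac12 \log(F^2+1)$. The polynomial factor $(F^2+1)^{1+\epsilon}$ is dominated by $C_\delta(1 + e^{\delta F})$ for any $\delta > 0$ on $\{F \geq 0\}$, while $e^{2(1+\epsilon) F}(F^2+1)^{1+\epsilon}$ is pointwise bounded on $\{F \leq 0\}$, giving
\[
\int_M e^{2(1+\epsilon)\tilde F}\, dv_g \leq \underline C^{2(1+\epsilon)} C_\delta\Bigl(1 + \int_M e^{(2(1+\epsilon)+\delta) F}\, dv_g\Bigr),
\]
which is finite provided $p_0 \geq 2(1+\epsilon) + \delta$, achievable by the choice of $p_0$. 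Lemma \ref{cma0} now delivers $-\psi \leq C_0$, and substituting into the first display concludes $F \leq C_1 + \epsilon_0 \|\psi\|_\infty \leq C_0$.

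The genuine analytic input—and the main subtlety compared with the K\"ahler case—is the transverse $\alpha$-invariant used in the first paragraph, which in the irregular Sasaki case must be verified on foliation charts before being patched globally via the basic structure. Once this is granted, the PDE portion is a clean two-step bootstrap that mirrors the K\"ahler argument of Chen--Cheng \cite{CC3}; alternatively, the bound on $\psi$ can be obtained by a Ko{\l}odziej-type $L^\infty$ estimate for the transverse complex Monge--Amp\`ere equation, but the Alexandrov route via Lemma \ref{cma0} keeps the argument entirely self-contained.
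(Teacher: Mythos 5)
Your proposal is correct and follows essentially the same route as the paper: combine Lemma \ref{cma1} with the $\alpha$-invariant bound $\int_M e^{-\alpha\psi}\,dv_g\leq C$ to get a uniform $L^p$ bound on $e^F$, then apply Lemma \ref{cma0} to both $\phi$ and the auxiliary equation \eqref{cma2} to bound $-\phi$ and $-\psi$, and conclude $F\leq C_0$ from the pointwise inequality. The only difference is cosmetic: you spell out the verification that $\|e^{\tilde F}\|_{L^{2(1+\epsilon)}}$ is controlled (the estimate $e^{(2+2\epsilon)F}(F^2+1)^{1+\epsilon}\leq C(e^{3F}+1)$), which the paper leaves implicit here and only records in the later, more general lemma with the extra term $f$.
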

\begin{proof}By Lemma \ref{cma1}, we have
\[F+\epsilon_0 \psi-C\phi\leq C_1.\]
In other words, we have
\[
\frac{\alpha}{\epsilon_0}(F-C\phi)
\leq -\alpha \psi+C_1\frac{\alpha}{\epsilon_0}\]
It follows that, for any $p\geq 1$, choose $\epsilon_0>0$ such that $p=\alpha \epsilon^{-1}_0$, we get
\[\int_M e^{pF}dv_{g}\leq e^{pC_1}\int_M e^{-\alpha\psi} dv_{g}\leq C_4.\]
Applying Lemma \ref{cma0} (and to \eqref{cma2} also), we get that
$|\phi|, |\psi|\leq C_5$. It follows that $F\leq C_6$. 
\end{proof}

\begin{lemma}\label{lower001}There exists a constant $C_0$ such that
\[
F\geq -C_0
\]
\end{lemma}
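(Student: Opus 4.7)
The plan is to apply the maximum principle to the basic auxiliary function $v = -F - A\phi$ for a sufficiently large constant $A$, using the transverse arithmetic--geometric mean inequality to pin down $F$ from below at the maximum of $v$. Unlike the upper bound on $F$ proved in Lemma \ref{cma1}, the lower bound is fundamentally easier, because AM--GM applied to the eigenvalues of $g_\phi^{-1} g^T$ yields the one-sided inequality $\text{tr}_\phi \omega^T \geq n e^{-F/n}$, which is informative precisely when $F$ is very negative. No auxiliary Monge--Amp\`ere potential and no Alexandrov-type estimate are needed.

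First I would compute $\Delta_\phi v$ using the equation $\Delta_\phi F = \text{tr}_\phi \gamma - h$ together with $\Delta_\phi \phi = n - \text{tr}_\phi \omega^T$, and then apply the pointwise bound $|\text{tr}_\phi \gamma| \leq C_\gamma \text{tr}_\phi \omega^T$ with $C_\gamma := \max |\gamma|_{\omega^T}$. Choosing $A > C_\gamma$, this gives an inequality of the form
\[
\Delta_\phi v \;\geq\; (A - C_\gamma)\, \text{tr}_\phi \omega^T - \|h\|_\infty - An \;\geq\; (A-C_\gamma)\, n\, e^{-F/n} - \|h\|_\infty - An,
\]
where the last step uses the transverse AM--GM inequality.

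Next, let $p_0 \in M$ be a maximum point of the basic function $v$. Since $v$ is basic on a compact manifold, the transverse Laplacian satisfies $\Delta_\phi v(p_0) \leq 0$, and the displayed inequality forces $e^{-F(p_0)/n}$ to be uniformly bounded above, hence $F(p_0) \geq -C'$ for some $C' = C'(A, C_\gamma, \|h\|_\infty)$. Combining with the $C^0$ bound $|\phi| \leq C_0$ from Lemma \ref{upper01} and the normalization $\phi \leq 0$, one gets
\[
\sup_M v \;=\; v(p_0) \;=\; -F(p_0) - A\phi(p_0) \;\leq\; C' + AC_0,
\]
and finally $-F = v + A\phi \leq v \leq \sup_M v$ (again using $\phi \leq 0$) produces the desired uniform lower bound $F \geq -(C' + AC_0)$.

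The only Sasaki-specific point to verify is that the maximum principle applies to the transverse Laplacian $\Delta_\phi$ acting on basic functions on the compact manifold $M$; this is standard, since a basic function attains its maximum on $M$ and at such a point the negative semi-definiteness of $d_B d_B^c v$ yields $\Delta_\phi v \leq 0$. I expect no substantial technical obstacle here, in sharp contrast to the upper bound of $F$, whose proof required Tian's $\alpha$-invariant in Sasaki setting and the auxiliary potential $\psi$.
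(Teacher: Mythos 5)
Your proposal is correct and is essentially the paper's own argument: the paper applies the maximum principle at the minimum point of $F+C\phi$, which is exactly your maximum point of $v=-F-A\phi$, and both arguments combine the trace bound on $\gamma$, the transverse AM--GM inequality $\text{tr}_\phi\omega^T\geq n e^{-F/n}$, and the previously established $C^0$ bound on $\phi$. No substantive difference.
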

\begin{proof}This can be done by a rather direct maximum principle argument,  see \cite{HZ}[Proof of Theorem 1.7]. We consider, at point $p$ where $F+C\phi$ obtains its minimum,
\[\Delta_\phi (F+C\phi)\geq 0\]
It follows that at point $p$,
\[
\Lambda_{\phi}(C\omega^T-\gamma)\leq C_0
\]
Taking $C$ sufficiently large, it follows that at $p$, $F(p)\geq -C_1$. Since $\phi$ is uniformly bounded, we complete the proof. 
\end{proof}

The estimates above finish the proof of Theorem \ref{firstorder}.

\subsection{$L^p$ estimate of $(n+\Delta\phi)$}
In this section we prove $L^p$ estimate of $n+\Delta_B\phi$ for any $p$. Since $\phi$ is basic, $\Delta \phi:=\Delta_{g} \phi=\Delta_B\phi$. 
\begin{thm}\label{pmain}There exists a constant $C_p$ such that for any $p\geq 1$,
\begin{equation}
\int_M (n+\Delta\phi)^p dv_g\leq C_p=C(C_0, p)
\end{equation}
\end{thm}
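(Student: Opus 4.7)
The strategy is the integral/iteration method of Chen-Cheng \cite{CC3}[Sec.~2], which the paper explicitly indicates transfers with essentially no change to the Sasaki setting. Since $\phi$, $F$, and $u := n+\Delta\phi = \Lambda_{\omega^T}\omega_\phi$ are all basic, the Sasaki volume form factors as $dv_g = \eta \wedge (\omega^T)^n/n!$ and every integration against it reduces, on each foliation chart, to an integral against the local transverse K\"ahler volume form. The Reeb direction thus contributes no boundary terms, so the integration-by-parts identities used by Chen-Cheng carry over verbatim.

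The first step is a pointwise Aubin--Yau-type inequality for $\log u$:
\begin{equation*}
\Delta_\phi \log u \geq \frac{\Delta F}{u} - K\,\text{tr}_\phi\,\omega^T,
\end{equation*}
where $K$ is a bisectional curvature lower bound for the transverse K\"ahler metric $g^T$. Combined with the equation $\Delta_\phi F = \text{tr}_\phi \gamma - h$ this rearranges, after choosing a constant $\lambda = \lambda(K,\max|\gamma|_{\omega^T})$ sufficiently large, into a pointwise inequality of the form
\begin{equation*}
\Delta_\phi(u\,e^{\lambda F}) \geq -C_1\,e^{\lambda F}\,\text{tr}_\phi\,\omega^T - C_2,
\end{equation*}
where $C_1, C_2$ depend only on $K$, $\max|\gamma|_{\omega^T}$, and $\max|h|$.

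Next I pair this inequality with a test function of the form $(u\,e^{\lambda F})^{p-1}$ and integrate against $\omega_\phi^n\wedge \eta$. Integration by parts yields a nonnegative gradient term $\int |\nabla_\phi(u\,e^{\lambda F})^{p/2}|_\phi^{2}\,\omega_\phi^n\wedge\eta$ on the left, while the right-hand side is of the form $\int u^{p-1}e^{\lambda(p-1)F}\,\text{tr}_\phi\omega^T\,\omega_\phi^n\wedge\eta$ plus lower-order terms. Using the $C^0$ bounds on $\phi$ and $F$ from Theorem \ref{firstorder}, the bounds on $\gamma, h$, and the identity $\text{tr}_\phi\omega^T\cdot\omega_\phi^n = n\,\omega^T\wedge\omega_\phi^{n-1}$, each such right-hand term is controlled by a lower $L^q$ norm of $u$. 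Starting from the trivial base case $\|u\|_{L^1} = \int_M u\,dv_g \leq C_0$ (which follows from $\int_M \Delta\phi\,dv_g = 0$ and $u > 0$), one iterates to obtain the stated $L^p$ bound for every $p\geq 1$.

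The principal difficulty, exactly as in the K\"ahler case, is to arrange the integration-by-parts so that the nonnegative gradient term dominates the curvature-type terms; this is where the specific choice of weight $\lambda$ and the use of the PDE to cancel $\Delta F/u$ against $\text{tr}_\phi\gamma/u$ matter. The only Sasaki-specific verification is that no Reeb-direction integration-by-parts terms appear, which holds automatically because $u$, $F$, $\phi$ and all test functions used in the iteration are basic.
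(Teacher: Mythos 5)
Your overall framework (all quantities basic, integration by parts as in the transverse K\"ahler setting, iteration from the trivial $L^1$ bound) matches the paper, but the central differential inequality you propose is not correct, and the iteration built on it cannot close. Starting from $\Delta_\phi\log u\geq\frac{\Delta F}{u}-K\,\mathrm{tr}_\phi\omega^T$ and multiplying through by $u\,e^{\lambda F}$, the curvature term becomes $-K\,u\,e^{\lambda F}\mathrm{tr}_\phi\omega^T$; it carries a factor of $u$ that your displayed inequality drops. Since $\mathrm{tr}_\phi\omega^T\geq (n+\Delta\phi)^{\frac{1}{n-1}}e^{-F/(n-1)}$, this term is of order at least $u^{1+\frac{1}{n-1}}$ and is \emph{not} controlled by lower $L^q$ norms of $u$. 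Moreover, a weight $e^{\lambda F}$ alone cannot absorb it: $\lambda\Delta_\phi F=\lambda\,\mathrm{tr}_\phi\gamma-\lambda h$ has no sign, since $\gamma=\mathrm{Ric}-\beta$ is merely bounded, so increasing $\lambda$ does not produce the needed positive multiple of $\mathrm{tr}_\phi\omega^T$. The indispensable ingredient (Yau's trick, implemented in the paper's weight $e^{-\lambda(F+C\phi)}$) is the $-\lambda C\phi$ in the exponent: $-\lambda C\Delta_\phi\phi=-\lambda Cn+\lambda C\,\mathrm{tr}_\phi\omega^T$ not only cancels the bad term but leaves a \emph{positive} surplus $(\tfrac{\lambda C}{2}-C_2)\,u\,\mathrm{tr}_\phi\omega^T$ on the right-hand side of \eqref{p05}. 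That surplus is the engine of the whole argument: after pairing with $u^{p-1}$ and discarding the nonnegative gradient term, it yields $\int_M u^{p+\frac{1}{n-1}}dv_g\leq C\int_M u^{p}dv_g$, which is the reverse-H\"older gain one iterates. With only negative terms on the right, as in your inequality, pairing with $v^{p-1}$ and integrating gives $0\geq(\text{negative})$, i.e.\ no information; and even taken at face value your right-hand term integrates to something of order $\int_M u^{p+n-2}dv_g$, which for $n\geq 3$ is of higher, not lower, order.

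A second gap: you fold $\Delta F$ (the background Laplacian of $F$, not $\Delta_\phi F$) into the constant $C_2$, but at this stage of the argument there is no pointwise bound on $\Delta F$. In the paper this term is handled by a separate integration by parts (\eqref{p08}--\eqref{p09}), and the resulting gradient terms are absorbed into the left-hand side only after tying the weight parameter to $p$ (the paper takes $\lambda=p$); this coupling is absent from your outline. Your Sasaki-specific remarks (all quantities basic, no Reeb-direction boundary terms, $\|u\|_{L^1}$ bounded since $\int_M\Delta\phi\,dv_g=0$) are correct and agree with the paper.
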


\begin{proof}
We compute, for constants $\l, C>0$ specified later,
\begin{equation}\label{p01}
\begin{split}
\Delta_\phi (e^{-\l(F+C\phi)}(n+\Delta \phi))=&\Delta_\phi(e^{-\l (F+C\phi)}) (n+\Delta \phi)+e^{-\l (F+C\phi)} \Delta_\phi (\Delta \phi)\\
&-2\l e^{-\l(F+C\phi)} \nabla_\phi (F+C\phi) \cdot_\phi \nabla_\phi (\Delta \phi)
\end{split}
\end{equation}
We compute
\begin{equation}\label{p02}
\begin{split}
\Delta_\phi e^{-\l(F+C\phi)}=&e^{-\l (F+c\phi)}\left(\l^2 |\nabla_\phi (F+C\phi)|^2_\phi-\l \Delta_\phi(F+C\phi)\right)\\
=&e^{-\l (F+c\phi)}\left(\l^2 |\nabla_\phi (F+C\phi)|^2_\phi-\l (Cn-h)+\l\Lambda_\phi(C\omega^T-\gamma) \right)\\
\geq&e^{-\l (F+C\phi)}\left(\l^2 |\nabla_\phi (F+C\phi)|^2_\phi-\l C_1+\frac{\l C}{2}\Lambda_\phi \omega^T\right)
\end{split}
\end{equation}
When we do computations in a coordinate, we can choose a local coordinate around point $p$ such that
\[
g_{i\bar j}^T(p)=\delta_{i\bar j}, \p_k g_{i\bar j}^T(p)=0, \phi_{i\bar j}=\delta_{i\bar j}\phi_{i\bar i}. 
\]
Similar as in K\"ahler case \cite{Yau}[(2.7) (2.10)], we have
\begin{equation}\label{p03}
\begin{split}
\Delta_\phi(\Delta_B\phi)=&R_{i\bar ij\bar j}\frac{1+\phi_{i\bar i}}{1+\phi_{j\bar j}}+\frac{|\phi_{i\bar j k}|^2}{(1+\phi_{i\bar i})(1+\phi_{i\bar j})}+\Delta F-\sum_{i, j} R_{i\bar ij \bar j}\\
\geq &-C_2\Lambda_\phi \omega^T (n+\Delta \phi)+\frac{|\phi_{i\bar j k}|^2}{(1+\phi_{i\bar i})(1+\phi_{j\bar j})}+\Delta F-R^T,
\end{split}
\end{equation}
where $C_2$ depends only on the  curvature of the background metric $(M, g, \omega^T)$ and $R^T$ is he transverse scalar curvature. Observe that
\[
\begin{split}
\frac{|\phi_{i\bar j k}|^2}{(1+\phi_{i\bar i})(1+\phi_{i\bar j})}=&\sum_{i,j,k}\frac{|\phi_{k\bar j i}|^2}{(1+\phi_{i\bar i})(1+\phi_{j\bar j})}
\geq \sum_{i, j=k}\frac{|\phi_{k\bar k i}|^2}{(1+\phi_{i\bar i})(1+\phi_{k\bar k})}\\
\geq &\sum_{i, k}\frac{|\phi_{k\bar k i}|^2}{(1+\phi_{i\bar i}) (n+\Delta\phi)}\geq \frac{ |\nabla_\phi \Delta \phi|_\phi^2}{n+\Delta \phi}.
\end{split}
\]
Hence we compute,
\begin{equation}\label{p04}
\begin{split}
&\l^2 |\nabla_\phi (F+C\phi)|^2_\phi (n+\Delta\phi)+\frac{|\phi_{i\bar j k}|^2}{(1+\phi_{i\bar i})(1+\phi_{j\bar j})}-2\l \nabla_\phi (F+C\phi) \cdot_\phi \nabla_\phi (\Delta \phi)\\
&\;\;\;\;\geq\l^2 |\nabla_\phi (F+C\phi)|^2_\phi (n+\Delta\phi)+\frac{ |\nabla_\phi \Delta \phi|_\phi^2}{n+\Delta \phi}-2\l \nabla_\phi (F+C\phi) \cdot_\phi \nabla_\phi (\Delta \phi)\geq 0
\end{split}
\end{equation}
Combining \eqref{p01}, \eqref{p02}, \eqref{p03} and \eqref{p04}, we have
\begin{equation}\label{p05}
\Delta_\phi (e^{-\l(F+C\phi)}(n+\Delta \phi))\geq e^{-\l (F+C\phi)}\left((\frac{\l C}{2}-C_2)\Lambda_\phi \omega^T (n+\Delta \phi)+\Delta F-C_7\right),
\end{equation}
where the constant $C_7=\l C_1+\max R^T.$ We choose constant $\l\geq 1, C\geq 4C_2$ and denote $u=e^{-\l(F+C\phi)}(n+\Delta \phi)$. We compute, using $dv_\phi:=\eta_\phi \wedge (d\eta_\phi)^n$
\begin{equation}\label{p07}
\begin{split}
\int_M (p-1)u^{p-2}|\nabla_\phi u|_\phi^2 dv_\phi= &\int_Mu^{p-1}(-\Delta_\phi u)dv_\phi\\
\leq & -\int_M u^{p-1} \left(\frac{\l C}{4} u\Lambda_\phi \omega^T+e^{-\l(F+C\phi)}(\Delta F-C_7)\right)dv_\phi
\end{split}
\end{equation}
Now we deal with the term involved with $\Delta F$,
\begin{equation}\label{p08}
\begin{split}
\int_M u^{p-1} e^{-\l (F+C\phi)}\Delta F dv_\phi=&\int_M u^{p-1} e^{(1-\l)F-\l C\phi} \Delta F dv_g\\
=&\int_M u^{p-1}  e^{(1-\l)F-\l C\phi}\Delta (F-\frac{\l C\phi}{1-\l}) dv_g\\
&+\int_Mu^{p-1}  e^{-\l (F+C\phi)}\frac{\l C\Delta \phi}{1-\l} dv_\phi
\end{split}
\end{equation}
We deal with the two terms in \eqref{p08} separately. Denote $v=(1-\l)F-\l C\phi$,
\begin{equation}\label{p09}
\begin{split}
\int_M u^{p-1}  e^{(1-\l)F-\l C\phi}\Delta (F-\frac{\l C\phi}{1-\l}) dv_g=&\frac{1}{1-\l}\int_M u^{p-1} e^v \Delta v dv_g\\
=&\int_M \frac{\nabla (u^{p-1}e^v)\nabla v}{\l-1} dv_g\\
=&\int_M \frac{e^v((p-1)u^{p-2}\nabla u\nabla v+u^{p-1}|\nabla v|^2)}{\l-1} dv\\
\geq &-\int_M \frac{(p-1)^2}{4(\l-1)} e^vu^{p-3}|\nabla u|^2 dv_g\\
\geq&-\int_M  \frac{(p-1)^2}{4(\l-1)} u^{p-2}|\nabla_\phi u|_\phi^2 dv_\phi\end{split}
\end{equation}
We also estimate 
\begin{equation}\label{p10}
C_7+\frac{\l C\Delta \phi}{\l-1}\leq \l(C_1+C)(n+\Delta \phi)
\end{equation}
Putting \eqref{p07}, \eqref{p08}, \eqref{p09} and \eqref{p10} together, we have
\begin{equation}\label{p11}
\begin{split}
\int_M \left((p-1)- \frac{(p-1)^2}{4(\l-1)} \right)&u^{p-2}|\nabla_\phi u|_\phi^2 dv_\phi+\int_{M}\frac{\l C}{4}\Lambda_\phi\omega^T u^p dv_\phi\\
&\leq \int_M \l(C_1+C)u^pdv_\phi
\end{split}
\end{equation}
Note that $\Lambda_\phi\omega^T\geq (n+\Delta\phi)^{\frac{1}{n-1}}e^{\frac{-F}{n-1}}$. It follows from \eqref{p11} (taking $\l=p$) that, 
\begin{equation}\label{p12}
\int_M (n+\Delta \phi)^{p+\frac{1}{n-1}}dv_g\leq C_8(p, g, |F|, |\phi|) \int_M (n+\Delta \phi)^p dv_g. 
\end{equation}
By iterating, we have the following, for any $p\geq 1$, 
\begin{equation}
\int_M (n+\Delta\phi)^pdv_g\leq C(p, C_0)
\end{equation}

\end{proof}

\subsection{Estimate of $\nabla F$}
In this section we estimate $|\nabla_\phi F|_\phi$ and $|\nabla F|$. The key is the following,

\begin{thm}\label{pmain1}There exists a constant $C_0$ such that
\begin{equation}
|\nabla_\phi F|_\phi\leq C_0.
\end{equation}
\end{thm}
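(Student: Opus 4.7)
The plan is to adapt the integral argument of Chen--Cheng \cite{CC3}[Section 2] directly to the transverse K\"ahler setting, building a weighted Moser iteration for $|\nabla_\phi F|_\phi^2$ out of a Bochner-type inequality, the $C^0$ bounds of Theorem \ref{firstorder}, and the $L^p$ control of $n+\Delta\phi$ from Theorem \ref{pmain}. Since $F$ is basic, so is $|\nabla_\phi F|_\phi^2 = g^{i\bar j}_\phi F_i F_{\bar j}$, and all relevant identities reduce to their K\"ahler counterparts in complex dimension $n$; the only Sasaki-specific point is that the Sobolev embedding we eventually invoke lives on $M$ with real dimension $2n+1$.

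The first step is to derive a pointwise inequality of the schematic form
\begin{equation*}
\Delta_\phi\bigl(e^{-\lambda(F+C\phi)}(|\nabla_\phi F|_\phi^2+1)\bigr) \geq -C_1\,e^{-\lambda(F+C\phi)}\bigl(|\nabla_\phi F|_\phi^2+1\bigr)\bigl(\Lambda_\phi\omega^T+1\bigr).
\end{equation*}
Using the transverse Bochner formula
\begin{equation*}
\Delta_\phi|\nabla_\phi F|_\phi^2 \geq |\nabla_\phi\bar\nabla_\phi F|_\phi^2 + 2\,\text{Re}\,\langle \nabla F,\nabla \Delta_\phi F\rangle_\phi + Ric^T_\phi(\nabla F,\bar\nabla F)
\end{equation*}
together with $\Delta_\phi F=\text{tr}_\phi\gamma-h$, two terms must be controlled: the cross term $\langle\nabla F,\nabla\text{tr}_\phi\gamma\rangle_\phi$, which expands to a piece involving $g^{i\bar p}_\phi g^{q\bar j}_\phi\phi_{q\bar p k}\gamma_{i\bar j} F^{\bar k}$ plus lower order, and the transverse Ricci $Ric^T_\phi=Ric^T-\sqrt{-1}\,\partial_B\bar\partial_B F$. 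The former is absorbed by Cauchy--Schwarz into the good third-derivative term $|\nabla_\phi\bar\nabla_\phi F|_\phi^2$, taking advantage of the identity $g^{i\bar j}_\phi\phi_{i\bar j k}=F_k$ obtained by differentiating the Monge--Amp\`ere equation in transverse normal coordinates. The latter contributes a bad $-\sqrt{-1}\partial\bar\partial F(\nabla F,\bar\nabla F)$ which is dominated by the positive $\lambda C\,\Lambda_\phi\omega^T$ term coming from $\Delta_\phi e^{-\lambda(F+C\phi)}$, provided $C$ is chosen large in terms of $\max|\gamma|_{\omega^T}$.

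Next I would run the iteration precisely as in the proof of Theorem \ref{pmain}. Writing $u=e^{-\lambda(F+C\phi)}(|\nabla_\phi F|_\phi^2+1)$, multiplying the inequality by $u^{p-1}$ and integrating against $dv_\phi$ yields, after integration by parts,
\begin{equation*}
\int_M |\nabla_\phi u^{p/2}|_\phi^2\,dv_\phi \leq C_2\,p\int_M u^p\bigl(1+\Lambda_\phi\omega^T\bigr)\,dv_\phi.
\end{equation*}
Switching to the background volume using the $C^0$ bound on $F$ and $g^{i\bar j}_\phi\geq (n+\Delta\phi)^{-1} g^{i\bar j}_T$ (up to uniform constants), invoking the Sobolev inequality on $(M,g)$ in real dimension $2n+1$, and using the $L^q$ bound on $\Lambda_\phi\omega^T$ for every $q$ supplied by Theorem \ref{pmain}, a standard Moser iteration then upgrades any starting $L^{p_0}$ bound on $u$ into a uniform $L^\infty$ bound, hence to $|\nabla_\phi F|_\phi\leq C_0$.

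The main technical obstacle is the absorption of the third-derivative cross term: one must keep the coefficient of $|\nabla_\phi\bar\nabla_\phi F|_\phi^2$ on the right-hand side of Bochner genuinely positive after the Cauchy--Schwarz split, which forces a careful coupling between $\lambda$, $C$ and $\max|\gamma|_{\omega^T}$. Apart from that, every step is a direct transcription of the K\"ahler argument to basic quantities on a Sasaki manifold, with the shift $2n\to 2n+1$ entering only through the Sobolev exponent, and with the quasi- or irregular Reeb foliation playing no role since we work entirely with basic functions.
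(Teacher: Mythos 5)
There is a genuine gap at the heart of your pointwise Bochner step, namely in your treatment of the cross term $\langle\nabla_\phi F,\nabla_\phi\Delta_\phi F\rangle_\phi=\langle\nabla_\phi F,\nabla_\phi(\mathrm{tr}_\phi\gamma-h)\rangle_\phi$. Differentiating $\mathrm{tr}_\phi\gamma=g^{i\bar j}_\phi\gamma_{i\bar j}$ produces the term $-g^{i\bar p}_\phi g^{q\bar j}_\phi(\phi_{q\bar p k}+\p_k g^T_{q\bar p})\gamma_{i\bar j}$, i.e.\ the \emph{full} third-derivative tensor $\phi_{q\bar pk}$ (equivalently the relative Christoffel symbols of $\omega_\phi$) contracted against the fixed form $\gamma$. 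The identity you invoke, $g^{i\bar j}_\phi\phi_{i\bar jk}=F_k+O(1)$, only controls the \emph{trace} of this tensor with respect to $g_\phi$; since $\gamma$ is not proportional to $\omega_\phi$, the contraction above is not a trace and cannot be reduced to $F_k$. Nor can it be absorbed by Cauchy--Schwarz into $|\nabla_\phi\bar\nabla_\phi F|^2_\phi$: the good quadratic terms in the Bochner formula are $g^{i\bar j}_\phi g^{k\bar l}_\phi F_{,ki}F_{,\bar j\bar l}$ and $g^{i\bar j}_\phi g^{k\bar l}_\phi F_{k\bar j}F_{i\bar l}$, which are second covariant derivatives of $F$, while $|\phi_{q\bar pk}|^2_\phi$ is an independent quantity (it appears as a good term in $\Delta_\phi(n+\Delta\phi)$, not in $\Delta_\phi|\nabla_\phi F|^2_\phi$). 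So the claimed pointwise inequality $\Delta_\phi u\geq -C_1 u(\Lambda_\phi\omega^T+1)$ is not available by this route.

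This is exactly the difficulty the paper's proof (following Chen--Cheng) is designed to avoid: the term $2\nabla_\phi F\cdot_\phi\nabla_\phi\Delta_\phi F$ is kept intact in the differential inequality \eqref{first09} and is only dealt with \emph{after} multiplying by $v^{p-1}$ and integrating against $dv_\phi$, where a single integration by parts (see \eqref{first12}) moves the derivative off $\Delta_\phi F$ and onto the test function $v^{p-1}e^{F/2}\nabla_\phi F$. The resulting terms involve only $(\Delta_\phi F)^2$ and $|\nabla_\phi F|^2_\phi|\Delta_\phi F|$, all controlled by $G=(n+\Delta\phi)^{2n-2}+1$ via $|\Delta_\phi F|\leq C(\Lambda_\phi\omega^T+1)$, at which point Theorem \ref{pmain} feeds the Moser iteration. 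A secondary issue with your setup is the choice of weight: the paper's weight $e^{F/2}$ is tuned so that the quartic term $\tfrac14|\nabla_\phi F|^4_\phi$ completes a square with $\mathrm{Re}(g^{i\bar j}_\phi g^{k\bar l}_\phi F_iF_kF_{,\bar j\bar l})$, and so that the cross term $\mathrm{Re}(g^{i\bar j}_\phi g^{k\bar l}_\phi F_iF_{k\bar j}F_{\bar l})$ exactly converts $Ric_{\phi,i\bar l}$ into the bounded background $Ric_{i\bar l}$ via $Ric_{\phi,i\bar l}+F_{i\bar l}=Ric_{i\bar l}$; with your weight $e^{-\lambda(F+C\phi)}$ the sign and coefficient of that cross term change, the unbounded $-F_{i\bar l}$ contribution is not cancelled, and your claim that it is dominated by $\lambda C\,\Lambda_\phi\omega^T$ is unjustified since $\p_B\bar\p_B F$ has no pointwise bound in terms of $\Lambda_\phi\omega^T$ at this stage. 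The Sasaki-specific remarks in your proposal (basic quantities, Sobolev exponent $2n+1$) are correct and match the paper; the failure is in the K\"ahler-type analysis itself.
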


\begin{proof}We compute \begin{equation}\label{first01}
\begin{split}
\Delta_\phi\left(e^{\frac{F}{2}}|\nabla_\phi F|^2_\phi\right)=&\Delta_\phi(e^{\frac{F}{2}}) |\nabla_\phi F|_\phi^2+e^{\frac{F}{2}}\Delta_\phi(|\nabla_\phi F|^2_\phi)+e^{\frac{F}{2}}\nabla_\phi F\cdot_\phi \nabla_\phi(|\nabla_\phi F|_\phi^2)\\
=&e^{\frac{F}{2}}(\frac{\Delta_\phi F}{2}|\nabla_\phi F|_\phi^2+\frac{|\nabla_\phi F|_\phi^4}{4}+\Delta_\phi(|\nabla_\phi F|^2_\phi)+\nabla_\phi F\cdot_\phi \nabla_\phi(|\nabla_\phi F|_\phi^2))
\end{split}
\end{equation}
By Bochner's formula, we have
\begin{equation}\label{first02}
\Delta_\phi(|\nabla_\phi F|^2_\phi)=g^{i\bar j}_\phi g^{k\bar l}_\phi (F_{, ki}F_{, \bar j \bar l}+F_{k\bar j} F_{i\bar l})+2\nabla_\phi F\cdot_\phi \nabla_\phi\Delta_\phi F+g^{i\bar j}_\phi g^{k\bar l}_\phi Ric_{\phi, i\bar l}F_kF_{\bar j}
\end{equation}
We compute 
\begin{equation}\label{first03}
\begin{split}
\nabla_\phi F\cdot_\phi \nabla_\phi(|\nabla_\phi F|^2_\phi)=&\frac{1}{2}g^{i\bar j}_\phi\left(F_{\bar j} (|\nabla_\phi F|^2_\phi)_{i}+F_{i}(|\nabla_\phi F|^2_\phi)_{\bar j}\right)\\
=&\text{Re}\left( g^{i\bar j}_\phi g^{k\bar l}_\phi F_i(F_{ k\bar j}F_{\bar l}+F_kF_{, \bar j\bar l})\right)
\end{split}
\end{equation}
In \eqref{first02} and \eqref{first03}, $F_{, ik}$ and $F_{, \bar j \bar l}$ denote covariant derivatives w.r.t  $\omega_\phi$. 
Observe that 
\begin{equation}\label{first04}
\frac{|\nabla_\phi F|_\phi^4}{4}+g^{i\bar j}_\phi g^{k\bar l}_\phi F_{, ki}F_{, \bar j \bar l}+\text{Re}\left( g^{i\bar j}_\phi g^{k\bar l}_\phi F_iF_kF_{, \bar j\bar l}\right)\geq 0.
\end{equation}
and also observe that, using $Ric_{\phi, i\bar l}+ F_{i\bar l}=Ric_{i\bar l}$,
\begin{equation}\label{first05}
\begin{split}
g^{i\bar j}_\phi g^{k\bar l}_\phi Ric_{\phi, i\bar l}F_kF_{\bar j}
+\text{Re}\left( g^{i\bar j}_\phi g^{k\bar l}_\phi F_iF_{ k\bar j}F_{\bar l}\right)=&g^{i\bar j}_\phi g^{k\bar l}_\phi Ric_{\phi, i\bar l}F_kF_{\bar j}
+\text{Re}\left( g^{i\bar j}_\phi g^{k\bar l}_\phi F_kF_{ i\bar l}F_{\bar j}\right)\\
=&g^{i\bar j}_\phi g^{k\bar l}_\phi Ric_{i\bar l}F_kF_{\bar j}
\end{split}
\end{equation}
Putting \eqref{first01}, \eqref{first02}, \eqref{first03} and \eqref{first04} together, we get
\begin{equation}\label{first06}
\Delta_\phi\left(e^{\frac{F}{2}}|\nabla_\phi F|^2_\phi\right)\geq e^{\frac{F}{2}}\left(\frac{\Delta_\phi F}{2}|\nabla_\phi F|_\phi^2+g^{i\bar j}_\phi g^{k\bar l}_\phi Ric_{i\bar l}F_kF_{\bar j}+2\nabla_\phi F\cdot_\phi \nabla_\phi \Delta_\phi F\right)
\end{equation}
Note that by the equation \eqref{scalar2},
\begin{equation}\label{first07}
\Delta_\phi F=\text{tr}_\phi \gamma-h\geq -C(\Lambda_\phi\omega^T+1)\geq -C\left(e^{-F}(n+\Delta \phi)^{n-1}+1\right)\geq -C_1(n+\Delta \phi)^{n-1},
\end{equation}
where the last step we have used the estimate $|F|\leq C_0$ and $n+\Delta \phi\geq e^{\frac{F}{n}}$. 
We also estimate
\begin{equation}\label{first08}
g^{i\bar j}_\phi g^{k\bar l}_\phi Ric_{i\bar l}F_kF_{\bar j}\geq -C\Lambda_\phi \omega^T |\nabla_\phi F|_\phi^2\geq -C_1 (n+\Delta\phi)^{n-1} |\nabla_\phi F|_\phi^2.
\end{equation}
Putting \eqref{first07} and \eqref{first08} back to \eqref{first06}, we estimate
\begin{equation*}
\Delta_\phi\left(e^{\frac{F}{2}}|\nabla_\phi F|^2_\phi\right)\geq -C_2 e^{\frac{F}{2}} |\nabla_\phi F|_\phi^2 (n+\Delta\phi)^{n-1}+2e^{\frac{F}{2}}\nabla_\phi F\cdot_\phi \nabla_\phi \Delta_\phi F
\end{equation*}
Denote $v=e^{\frac{F}{2}}|\nabla_\phi F|^2_\phi$, we rewrite the above as
\begin{equation}\label{first09}
\Delta_\phi v\geq -C_2 v (n+\Delta \phi)^{n-1}+2e^{\frac{F}{2}}\nabla_\phi F\cdot_\phi \nabla_\phi \Delta_\phi F
\end{equation}
We start the integral estimates, for $p>1$
\begin{equation}\label{first10}
\begin{split}
\int_M (p-1)v^{p-2}|\nabla_\phi v|_\phi^2 dv_\phi=&\int_M v^{p-1}(-\Delta_\phi v) dv_\phi\\
\leq & C_2\int_M v^pG dv_\phi-2\int_M v^{p-1} e^{\frac{F}{2}}\nabla_\phi F\cdot_\phi \nabla_\phi \Delta_\phi Fdv_\phi,
\end{split}
\end{equation}
where we take $G=(n+\Delta\phi)^{2n-2}+1$.
Using integration by parts to treat \eqref{first10} we claim
\begin{equation}\label{first11}
\int_M v^{p-2}|\nabla_\phi v|^2_\phi dv_\phi \leq \frac{pC_3}{p-1}\int_M v^p G dv_g,
\end{equation}
We compute, 
\begin{equation}\label{first12}
\begin{split}
-2\int_M v^{p-1} e^{\frac{F}{2}}\nabla_\phi F\cdot_\phi \nabla_\phi \Delta_\phi Fdv_\phi=&2\int_M \nabla_\phi( v^{p-1} e^{\frac{F}{2}}\nabla_\phi F)\Delta_\phi F dv_\phi\\
=&2\int_Mv^{p-1}e^{\frac{F}{2}}\left((\Delta_\phi F)^2+|\nabla_\phi F|^2_\phi\Delta_\phi F\right)dv_\phi\\
&\; +2\int_M (p-1)v^{p-2}\nabla_\phi v e^{\frac{F}{2}}\nabla_\phi F\Delta_\phi F dv_\phi
\end{split}
\end{equation}
The first term in the righthand side of \eqref{first12} can be controlled in a straightforward way,
\begin{equation}\label{first13}
\int_Mv^{p-1}e^{\frac{F}{2}}\left((\Delta_\phi F)^2+|\nabla_\phi F|^2_\phi\Delta_\phi F\right)dv_\phi\leq C_4 \int_M v^p Gdv_g,
\end{equation}
noting that $e^{\frac{F}{2}}|\nabla_\phi F|_\phi^2\Delta_\phi F=(v-1)\Delta_\phi F\leq v|\Delta_\phi F|$ and $(\Delta_\phi F)^2\leq C_4G$.
We compute
\begin{equation}\label{first14}
\begin{split}
\int_M v^{p-2}\nabla_\phi v e^{\frac{F}{2}}\nabla_\phi F\Delta_\phi F dv_\phi\leq& \frac{1}{4}\int_M v^{p-2}|\nabla_\phi v|^2 dv_\phi+\int_M v^{p-2} e^F |\nabla_\phi F|^2(\Delta_\phi F)^2 dv_\phi\\
\leq & \frac{1}{4}\int_M v^{p-2}|\nabla_\phi v|^2 dv_\phi+C_4\int_M v^{p-1} G  dv_g
\end{split}
\end{equation}
The claim \eqref{first11} is a direct consequence of \eqref{first12}, \eqref{first13} and \eqref{first14}. We rewrite \eqref{first11}
\begin{equation}\label{first15}
\int_M |\nabla_\phi v^{\frac{p}{2}}|_\phi^2 dv_g\leq \frac{C_3p^3}{p-1}\int_M v^{p}G dv_g
\end{equation}
Using $|\nabla f|_g^2\leq |\nabla_\phi f|^2_\phi (n+\Delta \phi)$, we compute
\begin{equation}\label{first16}
\begin{split}
\int_M |\nabla (v^{\frac{p}{2}})|^{2-2\epsilon}dv_g\leq& \int_M |\nabla_\phi v^{\frac{p}{2}}|_\phi^{2-2\epsilon}(n+\Delta \phi)^{1-\epsilon} dv_g\\
\leq &\left(\int_M |\nabla_\phi v^{\frac{p}{2}}|_\phi^2 dv_g\right)^{1-\epsilon}\left(\int_M (n+\Delta \phi)^{{\frac{1}{\epsilon}-1}}dv_g\right)^\epsilon
\end{split}
\end{equation}
That is, assuming $p$ is bounded away from $1$
\begin{equation}\label{first17}
\|\nabla (v^{\frac{p}{2}})\|_{L^{2-2\epsilon}}\leq C_5p\left(\int_M v^{p}G dv_g\right)^{\frac{1}{2}} \|n+\Delta \phi\|_{L^{\frac{1}{\epsilon}-1}}.
\end{equation}
Applying Sobolev inequality, we have
\begin{equation}\label{first18}
\|v^{\frac{p}{2}}\|_{L^{\frac{2(1-\epsilon)(2n+1)}{2n-1+2\epsilon}}}\leq c_0 \left(\|\nabla (v^{\frac{p}{2}})\|_{L^{2(1-\epsilon)}}+\|v^{\frac{p}{2}}\|_{L^{2(1-\epsilon)}}\right)
\end{equation}
We compute, using \eqref{first17} and \eqref{first18} and denoting $K_\epsilon=\|n+\Delta \phi\|_{L^{\frac{1}{\epsilon}-1}}$, 
\begin{equation}\label{first19}
\|v^{\frac{p}{2}}\|_{L^{\frac{2(1-\epsilon)(2n+1)}{2n-1+2\epsilon}}}\leq C_6 K_\epsilon p \left(\int_M v^{p}G dv_g\right)^{\frac{1}{2}}+c_0\|v^{\frac{p}{2}}\|_{L^{2(1-\epsilon)}},
\end{equation}
We compute
\begin{equation}\label{first20}
\left(\int_M v^{p}G dv_g\right)^{\frac{1}{2}}\leq \|v^{\frac{p}{2}}\|_{L^{\frac{2}{1-\epsilon}}} \sqrt{\|G\|_{L^{\frac{1}{\epsilon}}}}
\end{equation}
It follows that, with $L_\epsilon=\sqrt{\|G\|_{L^{\frac{1}{\epsilon}}}}$, 
\begin{equation}\label{first21}
\|v^{\frac{p}{2}}\|_{L^{\frac{2(1-\epsilon)(2n+1)}{2n-1+2\epsilon}}}\leq C_7 (p K_\epsilon L_\epsilon+1) \|v^{\frac{p}{2}}\|_{L^{\frac{2}{1-\epsilon}}}
\end{equation}
Choose $\epsilon$ sufficiently small ( say $\epsilon=(2n+2)^{-1}$) such that
\[
\frac{2(1-\epsilon)(2n+1)}{2n-1+2\epsilon}>\frac{2}{1-\epsilon}.
\]
By Theorem  \ref{pmain}, $K_\epsilon, L_\epsilon$ are uniformly bounded above. We can then get, for some uniformly positive constant $C_8\geq 2$,
\begin{equation}\label{first22}
\|v^{\frac{p}{2}}\|_{L^{\frac{2(1-\epsilon)(2n+1)}{2n-1+2\epsilon}}}\leq C_8 p \|v^{\frac{p}{2}}\|_{L^{\frac{2}{1-\epsilon}}},
\end{equation}
A standard iteration procedure then implies that
\begin{equation}
\|v\|_{L^\infty}\leq C_9 \|v\|_{L^1}. 
\end{equation}
To bound the $L^1$ norm, we compute
\begin{equation}\label{first23}
\Delta_\phi e^{\frac{F}{2}}=\frac{e^{\frac{F}{2}}}{4}|\nabla_\phi F|^2_\phi+\frac{e^{\frac{F}{2}}}{2}\Delta_\phi F. 
\end{equation}
Hence we have
\begin{equation}\label{first24}
\begin{split}
\int_M vdv_g=&\int_M e^{\frac{F}{2}}|\nabla_\phi F|^2_\phi dv_g\leq C_{10} \int_M e^{\frac{F}{2}}|\nabla_\phi F|^2_\phi dv_\phi=2C_{10}\int_M e^{\frac{F}{2}}(-\Delta_\phi F) dv_\phi\\
\leq &C_{11}\int_M (\Lambda_\phi \omega^T+1)dv_\phi=(n+1)C_{11}\text{Vol}(M). 
\end{split}
\end{equation}
This completes the proof. 
\end{proof}
As a direct consequence, we have the following,
\begin{cor}
For any $p\geq 1$, there exists a constant $C_p$ such that
\[
|\nabla F|_{L^p}\leq C_p=C(C_0, p).
\]
\end{cor}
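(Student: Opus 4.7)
The plan is to read the corollary directly off Theorem \ref{pmain1} combined with the $L^p$ estimate of $n+\Delta\phi$ in Theorem \ref{pmain}, via the standard pointwise comparison between the $g$-gradient and the $\omega_\phi$-gradient. First I would observe that $F$ is basic: since both $g^T$ and $\phi$ are basic, the defining equation $\det(g^T_{i\bar j}+\phi_{i\bar j})=e^F\det(g^T_{i\bar j})$ forces $F$ to be basic. Consequently, in a local foliation chart $(x,z_1,\dots,z_n)$ with $\xi=\partial_x$, we have $\partial_x F=0$, and both $|\nabla F|^2_g$ and $|\nabla_\phi F|^2_\phi$ reduce to sums over the transverse directions.

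Next, at any fixed point choose transverse coordinates so that $g^T_{i\bar j}=\delta_{ij}$ and $\phi_{i\bar j}=\phi_{i\bar i}\delta_{ij}$ is diagonal. Then
\begin{equation*}
|\nabla F|^2_g=\sum_i |F_i|^2, \qquad |\nabla_\phi F|^2_\phi=\sum_i \frac{|F_i|^2}{1+\phi_{i\bar i}},
\end{equation*}
which yields the pointwise inequality
\begin{equation*}
|\nabla F|^2_g\leq (n+\Delta\phi)\,|\nabla_\phi F|^2_\phi .
\end{equation*}
This is exactly the inequality already exploited inside the proof of Theorem \ref{pmain1} just before \eqref{first16}, so nothing new is needed.

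Finally I would combine this with Theorem \ref{pmain1}. Since $|\nabla_\phi F|_\phi\leq C_0$, raising to the $p/2$-th power and integrating yields
\begin{equation*}
\int_M |\nabla F|^p\,dv_g\leq C_0^p \int_M (n+\Delta\phi)^{p/2}\,dv_g .
\end{equation*}
For $p\geq 2$, the right-hand side is controlled directly by Theorem \ref{pmain} (applied with exponent $p/2$). For $p\in[1,2)$, since $n+\Delta\phi>0$ one has the trivial bound $(n+\Delta\phi)^{p/2}\leq n+\Delta\phi+1$, and integration by parts gives $\int_M \Delta\phi\,dv_g=0$, so the integral reduces to a universal constant. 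In both regimes one obtains $\|\nabla F\|_{L^p}\leq C(C_0,p)$.

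There is no real obstacle here: the corollary is simply a repackaging of Theorem \ref{pmain1} and Theorem \ref{pmain} through the elementary transverse gradient comparison, and the basic nature of $F$ (forced by the Monge--Amp\`ere equation) makes the Sasaki setting no different from the K\"ahler one for this step.
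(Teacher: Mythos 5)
Your proposal is correct and follows essentially the same route as the paper: the paper's proof is exactly the pointwise comparison $|\nabla F|^2\leq |\nabla_\phi F|^2_\phi\,(n+\Delta\phi)$ combined with Theorem \ref{pmain1} and Theorem \ref{pmain}. Your extra remarks (basicness of $F$, the diagonalization argument, and the separate treatment of $p\in[1,2)$) just make explicit what the paper leaves implicit.
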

\begin{proof}
We have the following,
\[
|\nabla F|^2\leq |\nabla_\phi F|^2_\phi (n+\Delta \phi). 
\]
It then follows from Theorem \ref{pmain} and Theorem \ref{pmain1}. 
\end{proof}

\subsection{Estimate on $(n+\Delta\phi)$ and higher order estimates}
We prove the following
\begin{thm}\label{2nd}There exists a uniformly bounded constant $1<C_0$ such that
\begin{equation}
C_0^{-1}\leq n+\Delta \phi\leq C_0, \|\phi\|_{C^k}\leq C=C(k, C_0).
\end{equation}
\end{thm}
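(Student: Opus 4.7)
The plan is to combine the $L^p$ bounds on $n+\Delta\phi$ from Theorem \ref{pmain} with the pointwise gradient bound $|\nabla_\phi F|_\phi \leq C_0$ from Theorem \ref{pmain1}, and run a Moser iteration to upgrade $n+\Delta\phi$ from $L^p$ (all $p$) to $L^\infty$. The lower bound is immediate: arithmetic--geometric mean applied to $\prod_i (1+\phi_{i\bar i}) = e^F$ gives $n+\Delta\phi \geq n\,e^{F/n}$, and combined with $F \geq -C_0$ from Lemma \ref{lower001} this yields $n+\Delta\phi \geq ne^{-C_0/n} > 0$.

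For the upper bound, consider again the auxiliary function $u = e^{-\lambda(F+C\phi)}(n+\Delta\phi)$ with suitable $\lambda,C > 0$, as in the proof of Theorem \ref{pmain}. The differential inequality \eqref{p05} reads
\begin{equation*}
\Delta_\phi u \geq e^{-\lambda(F+C\phi)}\left(\left(\tfrac{\lambda C}{2}-C_2\right)\Lambda_\phi \omega^T\, (n+\Delta\phi) + \Delta F - C_7\right).
\end{equation*}
Testing against $u^{p-1}\,dv_\phi$ and integrating the $\Delta F$ term by parts, the crucial point (in contrast with Theorem \ref{pmain}) is that we now have the pointwise bound $|\nabla_\phi F|_\phi \leq C_0$, so the resulting error terms only involve $\int v^{p-2}|\nabla_\phi v|_\phi^2\,dv_\phi$ (absorbable into the left-hand side) together with $L^q$ norms of $(n+\Delta\phi)$ and $|\nabla F|$, all of which are controlled by Theorem \ref{pmain} and the corollary to Theorem \ref{pmain1}. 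The Sobolev inequality on $(M,g)$ (in dimension $2n+1$) then yields a reverse H\"older iteration of the schematic form $\|u\|_{L^{\chi p}} \leq (Cp)^{1/p}\|u\|_{L^p}$ for some $\chi > 1$; standard iteration gives $\|u\|_{L^\infty} \leq C\|u\|_{L^{p_0}}$ for a fixed large $p_0$, and together with the $C^0$ bounds on $F,\phi$ this produces $n+\Delta\phi \leq C_0$.

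For the higher order estimates, once $n+\Delta\phi$ is two-sided bounded, the transverse complex Monge--Amp\`ere equation $\det(g^T_{i\bar j}+\phi_{i\bar j}) = e^F\det(g^T_{i\bar j})$ becomes uniformly elliptic in the transverse variables with a right-hand side that is $C^\alpha_B$ (from the $L^p$ bound on $|\nabla F|$ via Morrey). Working in foliation charts $V_\alpha$, the transverse Evans--Krylov theorem (using concavity of $\log\det$) yields $\phi \in C^{2,\alpha}_B$, hence $F \in C^\alpha_B$. The second equation $\Delta_\phi F = \text{tr}_\phi \gamma - h$ is then a uniformly elliptic basic equation for $F$ with $C^\alpha$ coefficients, giving $F \in C^{2,\alpha}_B$ by Schauder. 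Bootstrapping between the two equations produces $\phi \in C^{k,\alpha}_B$ for every $k$, with estimates depending only on $C_0$.

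The main obstacle is the upper bound for $n+\Delta\phi$: although the Moser iteration mirrors the K\"ahler case of \cite{CC3}, care is needed in the Sasaki setting to keep track of the non-basic $\xi$-direction when invoking Sobolev embedding on $(M,g)$ in dimension $2n+1$ and when integrating by parts against $dv_\phi = \eta_\phi \wedge \omega_\phi^n/n!$; this works because $u,F,\phi$ are basic, so $\Delta_\phi$ acting on them coincides with the transverse Laplacian, and the volume forms $dv_g,dv_\phi$ are comparable modulo the uniformly bounded factor $e^F$. The higher order step proceeds locally in foliation charts, where the classical K\"ahler regularity theory applies verbatim to $(V_\alpha, g^T_\alpha)$.
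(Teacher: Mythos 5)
Your proposal is correct in substance and lands on the same three-part skeleton as the paper (lower bound by the arithmetic--geometric mean inequality applied to $\prod_i(1+\phi_{i\bar i})=e^F$; upper bound by a Moser-type iteration with the dimension-$(2n+1)$ Sobolev inequality; higher order by Evans--Krylov/\cite{DZZ, wangyu} in foliation charts plus Schauder bootstrap between the two equations), but the mechanism you use for the upper bound is genuinely different from the one in the paper. The paper decouples the problem: having already secured $\|F\|_{W^{1,p}}\leq C_p$ for all $p$ (Corollary to Theorem \ref{pmain1}), it invokes a standalone complex Monge--Amp\`ere statement, Theorem \ref{second} (the Sasaki adaptation of \cite{chenhe12}), which uses Yau's weight $u=e^{-C_1\phi}(n+\Delta\phi)$ with $C_1$ \emph{fixed}, keeps the good term $(n+\Delta\phi)^{\frac{n}{n-1}}$ on the favorable side, and runs the iteration against $dv_g$ with the $\Delta F$ term disposed of by a single integration by parts and H\"older against $\|\nabla F\|_{L^{p_0}}$, $p_0>2n+1$; this needs only one large exponent and also a bound on $\|\nabla\phi\|_{L^\infty}$, which here is free since $\Delta\phi\in L^p$ for large $p$. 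You instead rerun the coupled-system computation of Theorem \ref{pmain} with the weight $e^{-\lambda(F+C\phi)}(n+\Delta\phi)$. Be aware that if you treat the $\Delta F$ term exactly as in \eqref{p08}--\eqref{p09}, the absorption forces $\lambda\sim p$, and then the weight $e^{-p(F+C\phi)}$ contributes a factor $e^{cp}$ at each stage whose product over the iteration diverges; so your remark that the pointwise bound $|\nabla_\phi F|_\phi\leq C_0$ must be used to control the $\Delta F$ term (allowing $\lambda$ to stay fixed, with the cross term $\nabla u\cdot\nabla F$ absorbed via $|\nabla F|_g^2\leq C_0^2(n+\Delta\phi)$ and the $L^q$ bounds of $n+\Delta\phi$) is not a cosmetic point but the crux of making your version close; you should carry it out explicitly, including the transfer from $\int u^{p-2}|\nabla_\phi u|_\phi^2\,dv_\phi$ to $\int|\nabla u^{p/2}|_g^{2-2\epsilon}\,dv_g$ exactly as in \eqref{first16}--\eqref{first22}. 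What the paper's route buys is a clean, reusable Monge--Amp\`ere lemma requiring only $F\in W^{1,p}$ for a single $p>2n+1$; what yours buys is that it stays entirely within the framework already set up for Theorems \ref{pmain} and \ref{pmain1} without importing \cite{chenhe12}. The lower bound and the higher-order bootstrap in your write-up agree with the paper's (brief) treatment.
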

We need to prove that $n+\Delta\phi$ is bounded above given $F\in W^{1, p}$, for $p$ sufficiently large. Higher order estimates follow from standard elliptic bootstrapping theory given the estimates of $n+\Delta \phi$, and the H\"older estimate \cite{DZZ, wangyu}. 
Indeed a direct adaption of \cite{chenhe12} gives the corresponding Sasaki setting,
\begin{thm}\label{second}
Let $\phi$ be a basic smooth function on a compact Sasaki manifold $M^{2n+1}$ such that $\omega^T+\sqrt{-1}\p_B\bar \p_B\phi>0$. Suppose $F$ is a smooth function with a uniformly $W^{1, p}$ bound, $p>2n+1$ such that
\[
\eta\wedge (\omega^T+\sqrt{-1}\p_B\bar\p_B\phi)^n=e^F\eta\wedge (\omega^T)^n, \sup \phi=0
\]
then there exists a uniformly bounded constant $1<C_0=C_0(\|F\|_{W^{1, p}}, p, M, g)$ such that
\[
C_0^{-1}\leq n+\Delta \phi\leq C_0
\]
\end{thm}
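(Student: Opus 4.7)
The plan is to mimic the integral/Moser iteration scheme of \cite{chenhe12}, carried out on basic quantities via the transverse K\"ahler structure, with the main modification being that the ambient manifold has dimension $2n+1$ so Sobolev embedding must use this larger dimension.

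First I would repeat the calculation from the proof of Theorem \ref{pmain} in a cleaner form: set
\[
u = e^{-\lambda(F+C\phi)}(n+\Delta\phi),
\]
for constants $\lambda,C$ to be chosen large enough, and combine Yau's second order identity for $\Delta_\phi(\Delta\phi)$ with $\Delta_\phi e^{-\lambda(F+C\phi)}$ to obtain an inequality of the form
\begin{equation*}
\Delta_\phi u \;\geq\; c\,\lambda C\,\Lambda_\phi\omega^T\,u \;+\; e^{-\lambda(F+C\phi)}\Delta F \;-\; C_1 u,
\end{equation*}
after absorbing cross terms exactly as in \eqref{p01}--\eqref{p05}. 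All quantities are basic, so $\Delta_\phi$ on them agrees with the transverse Laplacian of $\omega_\phi$.

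Next I would multiply by $u^{p-1}$ and integrate against $dv_\phi$. The key term to control is $\int u^{p-1} e^{-\lambda(F+C\phi)}\Delta F\,dv_\phi$, which is converted via a weighted integration by parts (as in \eqref{p08}--\eqref{p09}) to $\nabla F$ terms only; this is essential since $F$ is given only in $W^{1,p}$. Writing $v = (1-\lambda)F-\lambda C\phi$ and integrating by parts produces terms of the form $\int e^v u^{p-1}|\nabla v|^2$ and $\int e^v u^{p-2}\nabla u\cdot \nabla v$. Using $|\nabla F|^2 \leq |\nabla_\phi F|_\phi^2(n+\Delta\phi)$ together with the $L^\infty$ bound on $\phi,F$ and $W^{1,p}$ bound on $F$, and Cauchy-Schwarz to absorb a small fraction of $\int u^{p-2}|\nabla_\phi u|_\phi^2\,dv_\phi$ into the good second-order term, I obtain (for $\lambda$ large depending on $p$)
\begin{equation*}
\int_M u^{p-2}|\nabla_\phi u|_\phi^2\,dv_\phi \;\leq\; C(p)\int_M u^p\,G\,dv_g,
\end{equation*}
where $G = 1 + (n+\Delta\phi)^{n-1}$ absorbs the $\Lambda_\phi \omega^T$ factors coming from $|\nabla F|^2 \leq |\nabla_\phi F|_\phi^2(n+\Delta\phi)$ and the lower bound $\Lambda_\phi\omega^T \geq (n+\Delta\phi)^{1/(n-1)}e^{-F/(n-1)}$. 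The $W^{1,p}$ assumption on $F$ with $p>2n+1$ is used exactly here to make the weighted integrals of $|\nabla F|^q$ bounded for sufficiently large $q$.

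Then I would convert to an ambient gradient via $|\nabla u^{p/2}|_g^{2-2\epsilon}\leq |\nabla_\phi u^{p/2}|_\phi^{2-2\epsilon}(n+\Delta\phi)^{1-\epsilon}$ and H\"older, and apply the Sobolev inequality on the $(2n+1)$-dimensional compact Riemannian manifold $(M,g)$ to $u^{p/2}$; note the Sobolev exponent is $\frac{(2n+1)\cdot 2(1-\epsilon)}{(2n+1)-2(1-\epsilon)} = \frac{2(1-\epsilon)(2n+1)}{2n-1+2\epsilon}$, which for $\epsilon$ small is strictly larger than $\frac{2}{1-\epsilon}$, yielding the self-improving inequality
\begin{equation*}
\|u\|_{L^{p\alpha}}\leq (C_0 p)^{2/p}\|u\|_{L^{p\beta}},
\end{equation*}
with $\alpha>\beta$. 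The previous $L^p$ bounds on $n+\Delta\phi$ (Theorem \ref{pmain}) — which go through using only the $L^\infty$ bound of $F$, already established — ensure that the $L^{1/\epsilon}$ norms of $G$ and the $L^{1/\epsilon-1}$ norm of $n+\Delta\phi$ appearing in \eqref{first17}--\eqref{first21} are uniformly bounded, so the constant $C_0$ is uniform. A Moser iteration then gives $\|u\|_{L^\infty}\leq C\|u\|_{L^{q_0}}$ for some $q_0$, and the $L^{q_0}$ bound on $u$ follows from Theorem \ref{pmain} together with $|F|\leq C_0$ (which itself reduces, in this theorem's setting, to a consequence of the complex Monge-Amp\`ere $C^0$ estimate of Lemma \ref{cma0} applied with $\|e^F\|_{L^{2(1+\epsilon)}}$ bounded thanks to $F\in W^{1,p}\hookrightarrow C^0$).

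The main obstacle is bookkeeping: handling $\Delta F$ only weakly forces the integration by parts to produce many lower-order terms involving $|\nabla F|^2$, and these must be paired with the correct negative power of $n+\Delta\phi$ so that the $L^p$ bounds from Theorem \ref{pmain} suffice. The one genuinely Sasaki-specific point is dimensional: since the ambient Sobolev exponent is determined by $2n+1$ rather than $2n$, one must verify that $\frac{2(1-\epsilon)(2n+1)}{2n-1+2\epsilon}>\frac{2}{1-\epsilon}$ for small $\epsilon$, which holds for all $n\geq 1$, so the iteration step gains a definite factor and closes up.
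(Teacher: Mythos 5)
Your overall scheme (integral estimates, conversion to the ambient gradient, Sobolev embedding in dimension $2n+1$, H\"older against $\|\nabla F\|_{L^{p_0}}$ with $p_0>2n+1$ so the iteration gains a definite multiplicative factor) is the right one, and your dimensional check of the Sobolev exponent is exactly the point the paper singles out as the Sasaki-specific modification. But there is a genuine gap: you import the machinery of Theorems \ref{pmain} and \ref{pmain1} into a setting where their hypotheses fail. Those theorems are proved for solutions of the coupled system \eqref{scalar2}, and their proofs use the second equation $\Delta_\phi F=\text{tr}_\phi\gamma-h$ pointwise: in \eqref{p02} the term $-\l\Delta_\phi(F+C\phi)$ is replaced via that equation, which is both where the good term $\frac{\l C}{2}\Lambda_\phi\omega^T$ comes from and why no uncontrolled $\Delta_\phi F$ survives. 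In Theorem \ref{second}, $F$ is only assumed to lie in $W^{1,p}$ and satisfies no equation, so (i) your claimed differential inequality for $u=e^{-\l(F+C\phi)}(n+\Delta\phi)$ ``as in \eqref{p01}--\eqref{p05}'' omits an unaccounted term $-\l u\,\Delta_\phi F$, and (ii) your assertion that Theorem \ref{pmain} ``goes through using only the $L^\infty$ bound of $F$'' is false. Point (ii) is the fatal one: your iteration closes only by feeding in uniform $L^{1/\epsilon}$ bounds of $G$ and $L^{1/\epsilon-1}$ bounds of $n+\Delta\phi$ as in \eqref{first17}--\eqref{first21}, which you take from Theorem \ref{pmain}; those bounds are not available under the hypotheses of Theorem \ref{second}, and the statement allows the constant to depend only on $\|F\|_{W^{1,p}},p,M,g$. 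Assuming them makes the theorem essentially circular as a standalone estimate for the complex Monge--Amp\`ere equation.

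The paper's proof avoids this by following \cite{chenhe12} with the simpler test function $u=e^{-C_1\phi}(n+\Delta\phi)$ (no $F$ in the exponent, so no $\Delta_\phi F$ term is created) and by retaining the good term: the integral inequality reads $\int_M\bigl(|\nabla(u^p)|^2+pC_7u^{2p+\frac{n}{n-1}}\bigr)dv_g\le p^2C_8\int_M u^{2p}(|\nabla F|^2+1)dv_g$, with the ambient gradient on the left obtained directly (the conversion from the $\phi$-gradient costs one full power of $u$, which is absorbed into the exponent of the iteration rather than H\"oldered against an a priori $L^q$ norm of $n+\Delta\phi$). The Moser iteration then bootstraps from $\|u\|_{L^1}\le C$, which is free since $\int_M\Delta\phi\,dv_g=0$ and $|\phi|$ is bounded by Lemma \ref{cma0}, with no external input on $\Delta\phi$. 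To salvage your version you would need either to rederive this self-contained Chen--He inequality in the transverse setting, or to restate the theorem with constants depending on $\|n+\Delta\phi\|_{L^q}$ for $q$ large (which would suffice for the application via Theorem \ref{main1}, but is not what Theorem \ref{second} asserts). You should also note, as the paper does, that the constants in \cite{chenhe12} involve $\|\nabla\phi\|_{L^\infty}$, which requires a separate argument.
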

\begin{proof}
The computation and the argument are almost identical, except when applying the Sobolev inequality. For that one needs to replace the dimensional constant $2n$ by $2n+1$. Note that we have already proved that $\phi$ has $C^0$ bound in Sasaki setting, using Alexanderov maximum principle (see Lemma \ref{cma0}). This would be sufficient to prove Theorem \ref{second}. We shall keep the discussions brief.

The computation in \cite{chenhe12}[Section 2] applies directly to tranverse K\"ahler structure here. First we follow Yau's computation \cite{Yau} to get, with $u=\exp(-C_1\phi)(n+\Delta\phi)$,
\begin{equation}
\Delta_\phi (u)\geq C_2(n+\Delta\phi)^{\frac{n}{n-1}}+\exp(-C_1\phi F)-C_3,
\end{equation}
where $C_1$ is sufficiently  positive constant  such that $C_1+\inf R^T_{i\bar i l\bar l}\geq 1$ and $C_2, C_3$ are positive constants depending on $C_1, \|\phi\|_{L^\infty}, \sup F, n$ and the transverse curvature of $(M, g)$. Proceeding exactly the same as in K\"ahler \cite{chenhe12}[see (2.12)], we get for $p\geq 1/4$
\begin{equation}
\int_M \left(|\nabla (u^p)|^2+pC_7 u^{2p+\frac{n}{n-1}}\right)dv_g\leq p^2 C_8\int_M u^{2p}\left(|\nabla F|^2+1\right)dv_g. 
\end{equation}
where $C_7=C_7(\|\phi\|_{L^\infty}, \|F\|_{L^\infty}, n)$ and $C_8=C_8(\|\phi\|_{L^\infty}, \|\nabla \phi\|_{L^\infty}, n)$. We use the Sobolev inequality for $(M, g)$,
\begin{equation}
\left(\int_M |f|^{\frac{2(2n+1)}{2n-1}}\right)^{\frac{2n-1}{2n+1}}\leq C\left(\int_M |\nabla f|^2dv_g+\text{Vol(M)}^{-1}\int_M f^2 dv_g\right)
\end{equation}
By taking $f=u^p$, we have for $q=\frac{2(2n+1)p}{2n-1},$
\begin{equation}
\|u\|_{L^q}\leq (pC)^{1/p}\left(\int_M u^{2p}(|\nabla F|^2+1)dv_g\right)^{\frac{1}{2p}}
\end{equation}
Applying H\"older inequality with $q_0^{-1}+2p_0^{-1}=1$, we get
\begin{equation}
\int_M u^{2p}(|\nabla F|^2+1)dv_g\leq \left(\int_M u^{2pq_0}\right)^{\frac{1}{q_0}}\left(\int_M (|\nabla F|^2+1)^{\frac{p_0}{2}}\right)^{\frac{2}{p_0}}
\end{equation}
Since we assume $|\nabla F|\in L^{p_0}$ for some $p_0>2n+1$, we have
\begin{equation}
\|u\|_{L^q}\leq (pC)^{1/p}\|u\|_{L^{2pq_0}}. 
\end{equation}
Since $q=\frac{2(2n+1)p}{2n-1}>2pq_0$ for $p_0>2n+1$, the iteration process implies that $\|u\|_{L^\infty}\leq C\|u\|_{L^1}\leq C$, where the constant depends on 
\[
C=C(\|\phi\|_{L^\infty}, \|\nabla \phi\|_{L^\infty}, \|F\|_{W^{1, p_0}}, p_0, M, g, n). 
\]
We can clearly follow the procedure as in \cite{chenhe12}[Section 3] to prove the bound on $\|\nabla \phi\|_{L^\infty}$.  We shall skip the details.  

\end{proof}

\begin{rmk}
For our application, on the other hand, we already know that $\Delta \phi\in L^p$ for $p$ sufficiently large  in this setting.  Hence $\|\nabla \phi\|_{L^\infty}<\infty$ holds for free.
\end{rmk}

Another way to derive the estimate of $n+\Delta \phi$ can be done for complex Monge-Ampere, given sufficiently high $L^p$ norm of $(n+\Delta \phi)$ and $W^{1, q}$ norm of $F$. This problem can be treated purely locally. 
We consider a smooth plurisubharmonic function $u$, defined on a unit ball $B_1\subset \C^{n}$, such that for $F\in W^{1, q}$, $q>2n$,
\[
\log \det(u_{i\bar j})=F
\]
Then we have the following, 
\begin{prop}\label{cma4}
There exists $p$ sufficiently large depending on $q>2n$, such that if $\|\Delta u\|_L^p(B_1)<\infty$, then there exists a uniform constant $C>0$,
\[
\sup_{B_{1/2}}\Delta u\leq C,
\]
where $C=C(p, \|F\|_{W^{1, q}(B_1)}, \|\Delta u\|_{L^p(B_1)})$
\end{prop}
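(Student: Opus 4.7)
The plan is to adapt, in purely local form, the Moser-type integral iteration developed in \cite{chenhe12} that already underlies Theorem \ref{second}. Because the background is Euclidean $\mathbb{C}^n$, Yau's pointwise second-order computation takes the especially clean form
\begin{equation*}
\Delta_u(\Delta u) \;\geq\; \Delta F \;+\; \frac{|\nabla_u(\Delta u)|_u^2}{n+\Delta u},
\end{equation*}
where $\Delta_u$ is the Laplace-Beltrami operator of the K\"ahler metric $u_{i\bar j}$ and $|\cdot|_u$ the associated norm. All curvature terms of the ambient metric vanish, which is the main simplification with respect to the transverse K\"ahler case treated above. Abbreviate $v=n+\Delta u$; note $v>0$ since $u$ is plurisubharmonic.

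Next, fix a cutoff $\eta\in C_c^\infty(B_1)$ with $\eta\equiv 1$ on $B_{1/2}$, $0\leq\eta\leq 1$, $|\nabla^k\eta|\leq C$ for $k=1,2$. For $p\geq 1$ I would test the differential inequality above against $v^{2p-1}\eta^{2p}$ and integrate against $dv_u=\det(u_{i\bar j})\,dV_{\mathrm{Euc}}$. A single integration by parts on the left absorbs the nonnegative third-order term into $\int|\nabla_u(v^p\eta^p)|_u^2\,dv_u$. On the right, the $\Delta F$ term is handled by one Euclidean integration by parts, using $\nabla\log\det(u_{i\bar j})=\nabla F$ to convert the derivative that falls on the volume factor back into $\nabla F$. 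After Cauchy-Schwarz this yields an estimate of the shape
\begin{equation*}
\int_{B_1}|\nabla_u(v^p\eta^p)|_u^2\,dv_u \;\leq\; Cp^2\int_{B_1} v^{2p}\eta^{2p-2}\bigl(1+|\nabla F|^2\bigr)\,dv_u \;+\; Cp^2\int_{B_1}v^{2p}|\nabla\eta|^2\,dV_{\mathrm{Euc}},
\end{equation*}
exactly parallel to the key integral estimate of \cite{chenhe12}.

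Converting back to the Euclidean gradient via $|\nabla f|^2\leq v\,|\nabla_u f|_u^2$, applying the Sobolev inequality on $\mathbb{R}^{2n}$ to $v^p\eta^p$, and using H\"older with conjugate exponent $q/2$ to absorb $|\nabla F|^2\in L^{q/2}$ (valid since $q/2>n$), one obtains a reverse-H\"older inequality with a genuine gain $\kappa>1$ in the exponent. A standard Moser-De Giorgi iteration on nested balls shrinking to $B_{1/2}$, initialised by the assumption $\|v\|_{L^{p_0}(B_1)}\leq C$, then yields $\sup_{B_{1/2}}v\leq C$. The main obstacle is arithmetic bookkeeping: $p_0$ must be taken large enough (as a function of $n$ and $q>2n$) that the Sobolev gain in complex dimension $n$, the H\"older loss from $|\nabla F|^2$, and the extra factor of $v$ picked up when comparing $|\nabla\cdot|$ and $|\nabla_u\cdot|_u$ combine to a strict improvement of the exponent at every step. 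The slack $q-2n>0$ provides exactly the room required for this to close; once the quantitative threshold on $p_0$ is fixed, every step of the iteration proceeds as in \cite{chenhe12}.
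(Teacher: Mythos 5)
Your proposal is correct and is essentially the argument the paper has in mind: the paper itself omits the proof of Proposition \ref{cma4}, stating only that the integral (Moser-iteration) method of \cite{he10} (there carried out for $q=\infty$, $p>n^2$) adapts directly, and your outline --- the flat-background Yau inequality, testing against $v^{2p-1}\eta^{2p}$, Euclidean integration by parts on the $\Delta F$ term via $\nabla F=\nabla\log\det(u_{i\bar j})$, Sobolev in $\R^{2n}$ plus H\"older with $|\nabla F|^2\in L^{q/2}$, $q/2>n$ --- is exactly that adaptation, with the threshold $q>2n$ correctly identified as what makes the reverse-H\"older gain strict. One bookkeeping point to carry through explicitly: the cutoff term is really $\int v^{2p}\eta^{2p-2}|\nabla_u\eta|^2_u\,dv_u\leq C\int v^{2p+n-1}\eta^{2p-2}|\nabla\eta|^2\,dV_{\mathrm{Euc}}$ (since $\tr(g_u^{-1})e^F\leq v^{n-1}$), and this fixed additive loss of $v^{n-1}$ per step is precisely what forces $p$ to be taken large, consistent with what you already acknowledge.
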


In \cite{he10}, when $q=\infty$, the author proved the above interior estimates using integral methods, for $p>n^2$. 
The proof in \cite{he10} can be directly adapted to to prove Proposition \ref{cma4}. 
Nevertheless, it is relatively well-known now that one can obtain the interior estimate of $\Delta u$, in terms of $\|\Delta u\|_{L^p}, \|F\|_{W^{1, q}}$ given $p, q$ are sufficiently large (or even this holds  for $F\in C^\alpha, u\in C^{1, \beta}$ see \cite{LLZ}), we shall skip the details of the proof of Proposition \ref{cma4}. 

\section{Scalar curvature type equation with more flexible righthand side}
We consider the equation
\begin{equation}\label{scalar3}
\begin{split}
&\det(g_{i\bar j}^T+\phi_{i\bar j})=e^F\det(g_{i\bar j}^T)\\
&\Delta_\phi F=\text{tr}_\phi (Ric-\beta)-h,
\end{split}
\end{equation} 
where $\beta=\beta_0+\sqrt{-1}\p_B\bar\p_B f$, such that $\beta_0$ is a uniformly bounded real $(1, 1)$-form and $f$ is a basic function, with $\sup f=0$ satisfying 
\begin{equation}\label{assumption01}
\beta=\beta_0+\sqrt{-1}\p_B\bar\p_B f \geq 0, \int_M e^{-p_0f} dv_g\leq c_0<\infty
\end{equation}
We rewrite the equation as follows,
\begin{equation}\label{scalar4}
\begin{split}
&\det(g_{i\bar j}^T+\phi_{i\bar j})=e^F\det(g_{i\bar j}^T)\\
&\Delta_\phi (F+f)=\text{tr}_\phi \gamma-h, \gamma=Ric-\beta_0
\end{split}
\end{equation} 
We will prove the following,
\begin{thm}\label{main1}Assume \eqref{assumption01}. Let $\phi$ be a smooth solution of \eqref{scalar4}. There exists a positive constant $c_n$ depending only on $n$ and we assume $p_0>c_n+2$. Then for any $p<p_0$, there exists a uniformly bounded constant $C$, such that
\begin{equation}
\|F+f\|_{W^{1, 2p}}\leq C, \|n+\Delta \phi\|_{L^p(M, g)}\leq C=C\end{equation}
where $C$ depends on $C_0, p, p_0$ and $\int_M e^{-p_0f}dv_g$. The bounds are uniform in $p_0$ and $p$, provided that $p$ bounded away from $p_0$ (say $p\leq p_0-1$). 
\end{thm}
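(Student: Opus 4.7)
The strategy is to adapt the three-step scheme of Theorem \ref{main0}---the Chen--Cheng auxiliary-function $C^0$ estimate, the $L^p$ bound on $n+\Delta\phi$, and the Bochner-type gradient bound---by systematically working with $F+f$ in place of $F$. Indeed, by \eqref{scalar4} it is $F+f$ whose $\omega_\phi$-Laplacian equals $\mathrm{tr}_\phi \gamma - h$, with $\gamma = Ric - \beta_0$ a uniformly bounded basic $(1,1)$-form. The two hypotheses play the role previously filled by the $L^\infty$ bound on $\beta$ and the $\alpha$-invariant: $\beta = \beta_0 + \sqrt{-1}\partial_B\bar\partial_B f \geq 0$ controls signed second derivatives of $f$, and $\int_M e^{-p_0 f}\,dv_g \leq c_0$ supplies just enough integrability to run the Alexandrov / Moser iteration.

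\textbf{Step 1 ($C^0$ of $\phi$ and integral control of $e^F$).} Use the modified auxiliary potential $\psi$ solving
\[
\det(g_{i\bar j}^T + \psi_{i\bar j}) = \underline{C}\, e^F \sqrt{(F+f)^2+1}\, \det(g_{i\bar j}^T),\qquad \sup\psi = 0,
\]
and set $u = (F+f) + \epsilon_0 \psi - C\phi$. Using $\Delta_\phi f = \mathrm{tr}_\phi \beta - \mathrm{tr}_\phi\beta_0 \geq -\mathrm{tr}_\phi\beta_0$, and arithmetic--geometric inequality on $\Delta_\phi\psi$, one obtains the analogue of \eqref{u01} with $F$ replaced by $F+f$. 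Running the bump-function / Alexandrov argument of Lemma \ref{cma1} produces the pointwise bound $(F+f) + \epsilon_0\psi - 2C\phi \leq C_1(\epsilon_0, C_0)$. The subsequent step of Lemma \ref{upper01} goes through with the substitution that the $\alpha$-invariant bound $\int e^{-2nC\delta\phi}$ is replaced by a H\"older inequality:
\[
\int_M e^{-2nC\delta\phi}\,dv_g \leq \Bigl(\int_M e^{-p_0 f}\,dv_g\Bigr)^{a}\Bigl(\int_M e^{-q \phi}\,dv_g\Bigr)^{1-a},
\]
with $a$ chosen so the exponent on $-\phi$ is small enough that the Tian--Zhu / Skoda-type integrability of psh functions on the Sasaki background applies. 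Provided $p_0$ exceeds a dimensional constant $c_n$, this yields $\|e^F\|_{L^p}\leq C$ for every $p < p_0/2$; Lemma \ref{cma0} then gives $\|\phi\|_{L^\infty}\leq C$, and in particular $F+f \leq C$.

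\textbf{Step 2 ($L^p$ of $n+\Delta\phi$).} Repeat the integration-by-parts argument of Theorem \ref{pmain} with the auxiliary quantity $w = e^{-\lambda(F+f+C\phi)}(n+\Delta\phi)$. The only change is that one differentiates $F+f$ instead of $F$; since $\Delta_\phi(F+f) = \mathrm{tr}_\phi\gamma - h$ is bounded in the same quasi-linear way as $\Delta_\phi F$ in \eqref{p02}, the entire computation \eqref{p01}--\eqref{p11} is structurally unchanged, with the new terms involving $\nabla f$ absorbed using $\beta \geq 0$. Iteration of the Lebesgue exponent via $\Lambda_\phi\omega^T \geq (n+\Delta\phi)^{1/(n-1)} e^{-F/(n-1)}$ and the $L^p$ bound on $e^F$ from Step~1 yields $\|n+\Delta\phi\|_{L^p}\leq C$ for any $p \leq p_0 - 1$.

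\textbf{Step 3 (gradient estimate on $F+f$).} Run the Bochner argument of Theorem \ref{pmain1} with $v = e^{(F+f)/2}|\nabla_\phi(F+f)|_\phi^2$. The key cancellation from \eqref{first05} now reads
\[
Ric_{\phi,i\bar l} + (F+f)_{i\bar l} = Ric_{i\bar l} + f_{i\bar l} = Ric_{i\bar l} + \beta_{i\bar l} - \beta_{0,i\bar l} \geq \gamma_{i\bar l},
\]
where the inequality uses $\beta \geq 0$ and $\gamma$ is bounded. The rest of the Moser iteration \eqref{first10}--\eqref{first22} proceeds identically, with the Sobolev exponent $\tfrac{2(2n+1)}{2n-1}$ in dimension $2n+1$ and the $L^p$ bound from Step~2 feeding the iteration; this delivers $\|\nabla_\phi(F+f)\|_{L^\infty}$ and therefore $\|\nabla(F+f)\|_{L^{2p}}\leq C$ for $p \leq p_0-1$ via $|\nabla h|^2 \leq |\nabla_\phi h|^2_\phi(n+\Delta\phi)$.

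The principal obstacle is \textbf{Step 1}: producing a clean Chen--Cheng type estimate on $u = (F+f)+\epsilon_0\psi - C\phi$ when $f$ is only assumed to be $\beta_0$-tpsh with $e^{-p_0 f}$ integrable, and then matching the H\"older exponent in the concluding absorption step with the dimensional constant that bounds the Skoda integrability of $-\phi$. The condition $p_0 > c_n + 2$ (with $c_n$ determined by the Skoda constant in dimension $2n+1$) is precisely what is needed to close this loop; once it holds, all subsequent estimates are uniform in $p$ as $p$ stays bounded away from $p_0$.
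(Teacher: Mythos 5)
Your three-step scheme is the same as the paper's (which runs the Theorem \ref{main0} machinery with $F$ replaced by $F+f$), but as written it has a genuine gap: you never establish the lower bound $F+f\geq -C$. The paper devotes a separate lemma (Lemma \ref{lower002}) to this, and it is not automatic: since $f$ need not be bounded below, the pointwise maximum-principle argument of Lemma \ref{lower001} fails, and one must instead run the Alexandrov maximum principle on $e^{-\delta_1(F+f+C\phi)}\zeta_{p_1}$, which is exactly where the integrability $\int_M e^{-2f}\,dv_g<\infty$ (hence $p_0\geq 2$) enters. Without this lower bound your Step 3 does not close: bounding $v=e^{(F+f)/2}|\nabla_\phi(F+f)|_\phi^2$ in $L^\infty$ gives no control on $|\nabla_\phi(F+f)|_\phi$ where $F+f\to-\infty$, and the lower bound is also used inside the Moser iteration there (the paper invokes $F\geq -C_5$ explicitly).

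Two further points where your account mislocates the role of the hypothesis $\int e^{-p_0f}<\infty$. First, in Step 1 the $\alpha$-invariant absorption needs no H\"older modification at all: since $f,\psi\leq 0$ one has $e^{(2n+1)\delta u}\leq e^{(2n+1)\delta(F-C\phi)}$ and the argument of Lemma \ref{cma1} goes through verbatim (this is exactly the remark "noticing $f,\psi\leq 0$" in the paper's Lemma \ref{cma4}). Where $e^{-p_0f}$ is actually used in Step 1 is afterwards: the auxiliary-function argument yields $\int e^{p(F+f)}\leq C$, and one must H\"older against $e^{-3(1+\epsilon)f}$ to extract $\int e^{3F}\leq C$ so that Lemma \ref{cma0} applies to bound $\phi$ and $\psi$; this is where $p_0\geq 4$ is needed. (Relatedly, the paper keeps $\sqrt{F^2+1}$, not $\sqrt{(F+f)^2+1}$, in the equation for $\psi$, so that the normalizing constant $A$ is controlled by the entropy $H(\phi)$; your variant requires an additional bound on $\int e^F|f|$.) Second, in Step 2 the natural output of the integral argument is the \emph{weighted} estimate $\int_M e^{(p-1)f}(n+\Delta\phi)^p\,dv_g\leq C$ (the paper's Lemma \ref{pp01}, using the quantity $e^{-\kappa(F+\delta f+C\phi)}(n+\Delta\phi)$ with an extra parameter $\delta\in(0,1)$ in front of $f$); the unweighted bound for $q\leq p_0-1$ then follows by a separate H\"older step against $e^{-p_0f}$, and that is where the restriction $p\leq p_0-1$ originates --- not from the $L^p$ integrability of $e^F$ as your write-up suggests.
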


This corresponds to \cite{CC3}[Theorem 2.3]. Our proof of Theorem \ref{main0} follows indeed largely the proof in \cite{CC3}[Theorem 2.3] and Chen-Cheng's proof is designed for this general situation. 
The proof proceeds exactly the same as in \cite{CC3}[Theorem 2.3] and there are some necessary modifications, in particular for $C^0$ estimates of $F$ and $\phi$, adapted to Sasaki setting (see Theorem \ref{main0}). 
The proof with a more flexible term $f$ is parallel to Theorem \ref{main0}. 
We present the arguments briefly.

\subsection{The bound of $|F+f|$ and $|\phi|$}
The key is to prove the following, 
\begin{lemma}\label{cma4}
Let $\psi$ be the solution of the following equation with $\sup\psi=0$,
\begin{equation}\label{cma5}
\det(g_{i\bar j}^T+\psi_{i\bar j})= \underline Ce^F \sqrt{F^2+1}\det(g_{i\bar j}^T),
\end{equation}
where the constant $\underline C$ is given by, 
\[
\underline C= A^{-1}\int_M dv_g, A:=\int_M e^F \sqrt{F^2+1}\;dv_g
\]
Then for any $\epsilon_0\in (0, 1)$, there exist constant $C, C_1$ such that,
\begin{equation}
F+f+\epsilon_0 \psi-C\phi\leq C_1,
\end{equation}
where we have $C=2(\max |\gamma|_{\omega^T}+1)$ and $C_1=C_1(\epsilon_0, C_0)$. 
\end{lemma}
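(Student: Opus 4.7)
The plan is to parallel the proof of Lemma~\ref{cma1} almost verbatim, taking advantage of the crucial algebraic observation that in equation \eqref{scalar4} the natural Laplacian expression is $\Delta_\phi(F+f)$ rather than $\Delta_\phi F$. Set $u = F + f + \epsilon_0 \psi - C\phi$ with $C = 2(\max|\gamma|_{\omega^T} + 1)$, and compute
\begin{equation*}
\Delta_\phi u = \Delta_\phi(F+f) + \epsilon_0 \Delta_\phi \psi - C\Delta_\phi \phi = \operatorname{tr}_\phi \gamma - h + \epsilon_0 \Delta_\phi \psi - Cn + C\Lambda_\phi \omega^T.
\end{equation*}
Notice that the potentially troublesome term $\Delta_\phi f$ disappears entirely: this is the whole point of grouping $F+f$ together, and is what makes the generalization possible. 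The computation therefore reduces to the one in Lemma~\ref{cma1}.

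Next, apply arithmetic-geometric mean inequality to the Monge-Ampère equation \eqref{cma5} for $\psi$ to obtain $\Delta_\phi \psi \geq (A^{-1}\sqrt{F^2+1})^{1/n} \cdot n - \Lambda_\phi \omega^T$ (up to constants), where $A = \int_M e^F\sqrt{F^2+1}\,dv_g$ is uniformly bounded by the entropy bound combined with the normalization $\int_M e^F dv_g = \text{Vol}(M)$. Combined with the previous identity, this yields the pointwise lower bound
\begin{equation*}
\Delta_\phi u \geq \tfrac{C}{2}\Lambda_\phi \omega^T + \epsilon_0 A^{-1/n}(F^2+1)^{1/(2n)} - nC - C_1.
\end{equation*}

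From here, I would introduce the cutoff function $\zeta_{p_0}$ supported near the maximum point of $u$ exactly as in Lemma~\ref{cma1}, and compute $\Delta_{\eta_\phi}(e^{\delta u}\zeta_{p_0})$ in precisely the same manner. Since $u$ is basic (both $f$ and $\psi$ are basic) the terms involving $\xi$-derivatives of $u$ vanish, so the calculation \eqref{c01}--\eqref{c07} goes through unchanged. Apply the Alexandrov maximum principle \eqref{alx} to get a bound of the form
\begin{equation*}
e^{\delta u(p_0)} \leq (1-\theta)e^{\delta u(p_0)} + C \delta r \left(\int_{F\leq C_2} e^{2F+2n\delta(F+f)} e^{-2nC\delta\phi}\, dv_g\right)^{1/(2n+1)}.
\end{equation*}

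The only essential difference from Lemma~\ref{cma1} is the extra factor $e^{2n\delta f}$ inside the integral, which arises because $u$ now contains the term $f$. This is where the normalization $\sup f = 0$ is crucial: it gives $f \leq 0$, hence $e^{2n\delta f} \leq 1$ pointwise. Consequently the integral is bounded by $C\int_M e^{-2nC\delta\phi}\,dv_g$, and choosing $\delta$ so small that $2nC\delta < \alpha$ (the $\alpha$-invariant of the Sasaki structure) allows us to close the estimate and obtain $e^{\delta u(p_0)} \leq C_5$, giving the desired uniform bound on $u$. The main obstacle, conceptually, is spotting the cancellation of $\Delta_\phi f$ that makes the whole argument work despite the additional term; once this is recognized, the proof is a bookkeeping adaptation of Lemma~\ref{cma1}, with the sign condition $f \leq 0$ absorbing the only new contribution to the Alexandrov integral.
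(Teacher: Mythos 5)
Your proposal is correct and follows essentially the same route as the paper's own proof: the paper likewise sets $u=F+f+\epsilon_0\psi-C\phi$, notes that \eqref{scalar4} gives the same lower bound \eqref{u01} for $\Delta_\phi u$ as in Lemma \ref{cma1}, and then runs the cutoff/Alexandrov argument verbatim, using $f,\psi\leq 0$ to absorb the only new factor in the integral from \eqref{c08} to \eqref{c09}. Your identification of the cancellation of $\Delta_\phi f$ and of the role of $\sup f=0$ is exactly the content of the paper's (much terser) proof.
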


\begin{proof}
The proof is almost identical to Lemma \ref{cma1}, replacing $F$  by $F+f$. Denote $u=F+f+\epsilon \psi-C\phi$. Then we have as in \eqref{u01},
\begin{equation}
\Delta_\phi u\geq  \frac{C}{2} \Lambda_\phi \omega^T+\epsilon_0A^{-\frac{1}{n}} (F^2+1)^{\frac{1}{2n}}-nC
\end{equation}
We construct the same auxiliary function $\zeta_p$, then the arguments apply exactly as in Lemma \ref{cma1}, noticing $f, \psi\leq 0$ (this is used from \eqref{c08} to \eqref{c09}). 
\end{proof}

Next we have the following
\begin{lemma}For $p_0\geq 4$, there exists a constant $C_1$ such that
\begin{equation}
F+f\leq C_1(C_0, p_0), -\phi\leq C_1=C_1(C_0, p_0).
\end{equation}
\end{lemma}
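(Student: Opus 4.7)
The plan is to adapt the proof of Lemma \ref{upper01} to the setting with the auxiliary function $f$, exploiting the assumption $\int_M e^{-p_0 f}\,dv_g\leq c_0$. Starting from the inequality provided by Lemma \ref{cma4}, namely $F+f+\epsilon_0\psi-C\phi\leq C_1$, I rearrange to
\[
F\leq C_1 - f + C\phi - \epsilon_0\psi.
\]
Exponentiating by $2(1+\epsilon)$ and integrating, and using $\sup\phi=0$ so that $e^{2(1+\epsilon)C\phi}\leq 1$, I obtain
\[
\int_M e^{2(1+\epsilon)F}\,dv_g \leq e^{2(1+\epsilon)C_1}\int_M e^{-2(1+\epsilon)f}\, e^{-2(1+\epsilon)\epsilon_0\psi}\,dv_g.
\]

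The next step is to separate the two exponential factors by H\"older's inequality, with conjugate exponents $q_1,q_2$ satisfying $q_1^{-1}+q_2^{-1}=1$. I would choose $q_1=p_0/(2(1+\epsilon))$ so that $2(1+\epsilon)q_1=p_0$; for $p_0\geq 4$ and $\epsilon$ small, this yields $q_1>1$, and $q_2=p_0/(p_0-2(1+\epsilon))$ is then a fixed finite exponent. The factor $\bigl(\int_M e^{-p_0 f}\,dv_g\bigr)^{1/q_1}$ is controlled directly by assumption \eqref{assumption01}. For the other factor, I would invoke the transverse $\alpha$-invariant of $(M,\xi,\omega^T)$ (which exists in Sasaki setting by a direct adaptation of Tian-Hörmander arguments to basic plurisubharmonic functions on local foliation charts, and was implicitly used already in Lemma \ref{cma1}): choosing $\epsilon_0$ so small that $2(1+\epsilon)\epsilon_0 q_2<\alpha$, the bound $\int_M e^{-2(1+\epsilon)\epsilon_0 q_2\psi}\,dv_g\leq C$ holds uniformly.

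Having bounded $\|e^F\|_{L^{2(1+\epsilon)}}$, I apply Lemma \ref{cma0} to the transverse Monge--Amp\`ere equation for $\phi$ to conclude $|\phi|\leq C$. Applying Lemma \ref{cma0} once more to equation \eqref{cma5} for $\psi$ (absorbing the harmless factor $\sqrt{F^2+1}$ by decreasing $\epsilon$ slightly, since $\|e^F\sqrt{F^2+1}\|_{L^{2(1+\epsilon')}}$ is controlled by $\|e^F\|_{L^{2(1+\epsilon)}}$ for any $\epsilon'<\epsilon$) gives $|\psi|\leq C$. Plugging these bounds back into Lemma \ref{cma4} and using $\phi\leq 0$ yields
\[
F+f \leq C_1 - \epsilon_0\psi + C\phi \leq C_1 + \epsilon_0\|\psi\|_{L^\infty} \leq C,
\]
as desired.

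The main obstacle is the careful choice of exponents $\epsilon, \epsilon_0, q_1, q_2$ so that the H\"older split absorbs both the integrability assumption on $e^{-p_0 f}$ and the $\alpha$-invariant bound, while still producing a usable $L^{2(1+\epsilon)}$ bound for $e^F$; the threshold $p_0\geq 4$ is precisely what makes the conjugate exponent $q_1$ strictly greater than $1$ for small $\epsilon$. A secondary point is recording that the $\alpha$-invariant survives the passage from the K\"ahler to the Sasaki setting for basic plurisubharmonic potentials normalized by $\sup\psi=0$; this is routine via foliation charts but should be invoked explicitly.
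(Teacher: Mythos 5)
Your proof is correct and follows essentially the same route as the paper: the inequality of Lemma \ref{cma4} combined with the $\alpha$-invariant applied to $\psi$ and a H\"older split against $e^{-p_0 f}$ to bound $\|e^F\|_{L^{2(1+\epsilon)}}$, followed by two applications of Lemma \ref{cma0} (to $\phi$ and then to $\psi$, absorbing the factor $\sqrt{F^2+1}$). The only cosmetic difference is that the paper first establishes $\int_M e^{p(F+f)}\,dv_g\leq C$ for all $p$ and then applies H\"older once to split off $e^{-3(1+\epsilon)f}$, whereas you perform a single H\"older step separating $e^{-f}$ and $e^{-\psi}$ simultaneously; both reductions lead to the same conclusion with the same dependence of constants.
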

\begin{proof}
Proceeding as in Lemma \ref{upper01} (using $\alpha$-invariant for $\psi$), we have for any $p>1$
\begin{equation}
\int_M e^{p(F+f)}dv_g\leq C_2=C_2(p, C_0),
\end{equation}
where $C_2$ depends uniformly on $p$ (to be precise, in the order of $e^p$).
We compute
\begin{equation}
\int_M e^{3F}dv_g\leq \int_M e^{3(F+f)} e^{-3f} dv_g\leq \left(\int_M e^{3(F+f)(1+\epsilon)/\epsilon}dv_g\right)^{\frac{\epsilon}{1+\epsilon}} \left(\int_M e^{-3(1+\epsilon) f}dv_g\right)^{\frac{1}{1+\epsilon}}
\end{equation}
Take $\epsilon=1/3$, then we get that $\int_M e^{3F}dv_g\leq C$. By Lemma \ref{cma0} we have $-\phi \leq C$. For the estimate on $|\psi|$, we have the following,
\[
\det{(g_{i\bar j}^T+\psi_{i\bar j})}= e^{\tilde F}\det(g_{i\bar j}^T)
\]
Then $e^{\tilde F}=\underline C e^{F}\sqrt{F^2+1}$. Then we compute, for $\epsilon=1/4$
\[e^{(2+2\epsilon)\tilde F}=(\underline C)^{(2+2\epsilon)} e^{(2+2\epsilon)F}(F^2+1)^{1+\epsilon}.\]
Note that $(\underline C)^2$ is a bounded constant and $e^{(2+2\epsilon)F}(F^2+1)^{1+\epsilon}\leq C(e^{3F}+1)$ for a uniformly bounded constant $C$. We can hence apply Lemma \ref{cma0} again to get $-\psi \leq C$.
This completes the proof. 
\end{proof}

\begin{rmk}The requirement of $p_0\geq 4$ can be replaced by $p_0>2$. However since our estimate in Lemma \ref{cma0} is weaker than Kolodziej's $C^0$ estimate \cite{K} in K\"ahler setting, we do need $p_0>2$ instead of $p_0>1$ (as in \cite{CC3}[Corollary 2.2]). It would be an interesting question to extend Kolodziej's $C^0$ estimate to Sasaki setting.  
\end{rmk}

We would like to estimate the lower bound of $F+f$. Since $f$ might not be bounded below, the pointwise maximum principle as in Lemma \ref{lower001} does not apply. Instead we use Alexanderov maximum principle as in \cite{CC3}[Lemma 2.3]

\begin{lemma}\label{lower002} There exists a uniformly bounded constant $C_5$ such that
\[
F+f\geq -C_5. 
\]
In particular $F\geq -C_5$ since $f\leq 0$. 
\end{lemma}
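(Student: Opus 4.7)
The plan is to adapt the Alexandrov maximum principle argument of \cite{CC3}[Lemma 2.3] to the Sasaki setting. The pointwise maximum principle of Lemma \ref{lower001} applied to $w = F+f+C\phi$, with $C$ large enough that $C\omega^T-\gamma \geq (C/2)\omega^T$, yields at a minimum $q$ of $w$ the bound $\operatorname{tr}_{\omega_\phi}\omega^T(q) \leq C_1$, and hence via AM--GM on $\prod(1+\phi_{i\bar i}) = e^F$ the bound $F(q) \geq -C_2$. Combined with $w(q) \leq w(p_0)$ at a minimum $p_0$ of $F+f$, and with $\sup\phi = 0$, $|\phi|\leq C_0$, this only produces a single-point estimate $f(q) \leq -L + C_3$ where $L := -\inf_M(F+f)$. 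It does not directly control $L$, precisely because $f$ is unbounded below; this is the obstruction that we have to overcome.

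To convert this pointwise information into a global bound I would invoke the Alexandrov maximum principle (Lemma \ref{alexanderov}) exactly as in Lemma \ref{cma0}: work in foliation coordinates $(x, z_1, \ldots, z_n)$ near $q$ with $\xi = \partial_x$ and $g^T_{i\bar j}(q) = \delta_{i\bar j}$, and apply Alexandrov to $-v$ with $v = \phi + \tfrac{1}{2}(|z|^2 + x^2)$ on a small ball $B_r(q)$. Blocki's inequality on the contact set $\Gamma^-$ gives $\det(D^2 v) \leq (\det v_{i\bar j})^2 \leq c_n e^{2F}(\det g^T_{i\bar j})^2$, hence
\begin{equation*}
\sup_{B_r}(-v) - \sup_{\partial B_r}(-v) \leq C_n \Big(\int_{\Gamma^-} e^{2F}\, dv_g\Big)^{1/(2n+1)}.
\end{equation*}
Writing $e^{2F} = e^{2(F+f)} e^{-2f}$, using the upper bound $F+f \leq C_4$ from Lemma \ref{cma4}, and applying H\"older's inequality with $\int_M e^{-p_0 f}\,dv_g \leq c_0$ (this is where the hypothesis $p_0 \geq 4$ of the previous lemma is used), one estimates the integral by a uniform constant times a power of $|\Gamma^-|$. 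The final step is to run this Alexandrov scheme not at the minimum of $\phi$ but at $q$, and with a barrier scaled so as to register the value $(F+f)(q) \leq -L + C_3$ rather than $|\phi|_{L^\infty}$. The measure $|\Gamma^-|$ can then in turn be controlled by $|\{F+f \leq -L + \delta\}| \leq c_0 e^{-p_0(L-\delta)}$.

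Balancing the resulting inequality forces $L \leq C_5$, completing the proof. The main obstacle is the quantitative coupling of the two uses of Alexandrov — one controlling the local geometry of $\phi$ near $q$, the other forcing $|\Gamma^-|$ to be exponentially small in $L$ — so that after taking $(2n+1)$-th roots the power of $L$ on the left-hand side dominates the power on the right. This is exactly the point at which the hypothesis that $p_0$ is sufficiently large enters. A further technical point is that the auxiliary function must respect the basic/foliated structure: since $\phi$ and $F+f$ are basic, the quadratic term $\tfrac{1}{2}(|z|^2+x^2)$ affects the computation only through the factor $\partial_x^2 v = 1$ in the decomposition $\det D^2 v = (\partial_x^2 v)\cdot \det D^2_{\mathbb{R}^{2n}} v$, which is precisely the mechanism by which Lemma \ref{cma0} reduces the analysis from the odd-dimensional $M$ to the transverse $\mathbb{C}^n$.
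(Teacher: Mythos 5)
You correctly diagnose the obstruction (the pointwise minimum principle only yields $F(q)\geq -C_2$ at the minimum $q$ of $F+f+C\phi$, hence a useless single-point bound on $f(q)$, because $f$ is unbounded below), and you correctly identify the key algebraic ingredients: the upper bound $F+f\leq C_1$ already established and the integrability of $e^{-2f}$. But the scheme you propose for converting this into a global bound does not close. Two concrete gaps. First, your control of the contact set, $|\Gamma^{-}|\leq |\{F+f\leq -L+\delta\}|\leq c_0e^{-p_0(L-\delta)}$, is circular: the Chebyshev bound coming from $\int_M e^{-p_0f}\,dv_g\leq c_0$ controls the measure of sublevel sets of $f$, not of $F+f$, and to pass from $\{F+f\leq -L+\delta\}$ to $\{f\leq -L+\delta+C\}$ you would need precisely the lower bound $F\geq -C$ that you are trying to prove. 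Second, the ``barrier scaled so as to register $(F+f)(q)$'' is never constructed. The mechanism of Lemma \ref{cma0} works because the barrier is built from $\phi$, whose complex Hessian is controlled by the Monge--Amp\`ere equation (so that $\det(D^2v)\leq(\det v_{i\bar j})^2\leq Ce^{2F}$ on $\Gamma^{-}$), and because the supporting-hyperplane inequality places $\Gamma^{-}$ deep inside a sublevel set of $\phi$ whose measure is controlled by $\int_M|\phi|\,dv_g$. The function $F+f$ only satisfies the linear equation $\Delta_\phi(F+f)=\mathrm{tr}_\phi\gamma-h$; there is no pointwise control of $\det D^2(F+f)$ to feed into Lemma \ref{alexanderov}, and no analogue of the sublevel-set mechanism.

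The paper avoids both issues by applying the Alexandrov estimate in its linear-operator form \eqref{alx} directly to $e^{-\delta_1u_1}\zeta_{p_1}$, where $u_1=F+f+C\phi$, $p_1$ is the minimum point of $u_1$, $\zeta_{p_1}$ is the cutoff of Lemma \ref{cma1}, and $\delta_1=1/(2n+1)$. The equation \eqref{scalar4} gives $\Delta_\phi u_1\leq |h|+Cn-\Lambda_\phi\omega^T$, whence $\Delta_{\eta_\phi}(e^{-\delta_1u_1}\zeta_{p_1})\geq -(C_1+1)\delta_1e^{-\delta_1u_1}\zeta_{p_1}$; feeding this into \eqref{alx} produces the weight $e^{2F}e^{-(2n+1)\delta_1u_1}=e^{F-f-C\phi}=e^{(F+f)-2f-C\phi}\leq Ce^{-2f}$, which is integrable as soon as $p_0\geq 2$ (not $p_0\geq 4$; the hypothesis $p_0\geq 4$ is consumed earlier for the upper bound of $F+f$, and the largeness of $p_0$ is needed elsewhere for the $W^{1,2p}$ and $L^p$ estimates, not for any balancing of powers of $L$ here). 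The boundary term is absorbed by the $(1-\theta)$ cutoff trick exactly as in Lemma \ref{cma1}, and no measure estimate on a contact set enters at all. If you want to salvage your two-step scheme you must replace the sublevel-set bound by something non-circular; the one-shot exponentiated Alexandrov argument is both shorter and already available from the machinery set up in Lemma \ref{cma1}.
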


\begin{proof}
This is parallel to Lemma \ref{cma1} but easier. 
Choose  $C$ such that for basic real $(1, 1)$-form $\gamma$, $\gamma\leq (C-1)\omega^T$.
We compute, \begin{equation}\label{low01}
\Delta_\phi (F+f+C\phi)=\Lambda_\phi \gamma-h+Cn-C\Lambda_\phi \omega^T\leq |h|+Cn-\Lambda_\phi \omega^T
\end{equation}
Let the auxiliary function $\zeta_p$ be defined in Lemma \ref{cma1}. Fix a constant $\delta_1$. Denote $u_1=F+f+C\phi$. Suppose $u_1$ takes its minimum at $p_1\in M$. We compute,
\begin{equation}\label{low02}
\Delta_{\eta_\phi}(e^{-\delta_1 u_1}\zeta_{p_1})=\Delta_{\eta_\phi}(e^{-\delta_1 u_1}) \zeta_{p_1}+e^{-\delta_1 u_1}\Delta_{\eta_\phi}(\zeta_{p_1})+2\nabla_{\eta_\phi} (e^{-\delta_1 u_1})\cdot \nabla_{\eta_\phi}\zeta_{p_1}.
\end{equation}
Since $u_1$ is a basic function, we have
\begin{equation}\label{low03}
\Delta_{\eta_\phi}(e^{-\delta_1 u_1})=\Delta_\phi(e^{-\delta_1 u_1})=e^{-\delta_1 u} \delta_1^2 |\nabla_\phi u_1|_\phi^2-e^{-\delta_1 u_1}\delta_1 \Delta_\phi u_1
\end{equation}
We compute
\begin{equation}\label{low04}
\Delta_{\eta_\phi}(\zeta_{p_1})=\nabla^2_{\xi, \xi}(\zeta_{p_1})+\Delta_\phi(\zeta_{p_1})\geq -(1+\Lambda_\phi \omega^T)|\nabla^2\zeta_{p_1}|.
\end{equation}
We also compute, noting that $u_1$ is basic,
\begin{equation}\label{low05}
\begin{split}
2\nabla_{\eta_\phi} (e^{-\delta_1 u_1})\cdot \nabla_{\eta_\phi}\zeta_{p_1}=&-2\delta_1 e^{-\delta_1 u_1}\nabla_\phi u_1\cdot_\phi \nabla_\phi \zeta_{p_1}\\
\geq& -\delta_1^2 e^{-\delta_1}|\nabla_\phi u_1|^2_\phi\zeta_{p_1}-e^{-\delta_1 u_1}\Lambda_\phi\omega^T |\nabla \zeta_{p_1}|^2 \zeta_{p_1}^{-1}
\end{split}
\end{equation}
Combining \eqref{low01}, \eqref{low02}, \eqref{low03}, \eqref{low04} and \eqref{low05}, we have
\begin{equation}\label{low06}
\begin{split}
\Delta_{\eta_\phi}(e^{-\delta_1 u_1})\geq& e^{-\delta_1 u_1}\Lambda_\phi \omega^T(\delta_1\zeta_{p_1}- |\nabla \zeta_{p_1}|^2 \zeta_{p_1}^{-1}-|\nabla^2\zeta_{p_1}|)\\
&-C_1\delta_1 e^{-\delta_1 u_1}\zeta_{p_1}-e^{-\delta_1 u_1}|\nabla^2\zeta_{p_1}|
\end{split}
\end{equation}
 We choose $\theta$ (as in Lemma \ref{cma1}) sufficiently small, such that
 \[
 \delta_1\zeta_{p_1}- |\nabla \zeta_{p_1}|^2 \zeta_{p_1}^{-1}-|\nabla^2\zeta_{p_1}|\geq \delta_1(1-\theta)-2\theta r^{-2}-4\theta^2 r^{-2}(1-\theta)^{-1}>0
 \]
 Hence by \eqref{low06}, we have
 \begin{equation}\label{low07}
 \Delta_{\eta_\phi}(e^{-\delta_1 u_1})\geq -(C_1+1)\delta_1 e^{-\delta_1 u_1}\zeta_{p_1}.
 \end{equation}
Now we apply the Alexanderov maximum principle (Lemma \ref{alexanderov}) in $B_r(p_1)$, with  $\delta_1=1/(2n+1)$,
\begin{equation}\label{low08}
\begin{split}
e^{-\delta_1u_1(p_1)}\leq& \sup_{\p B} e^{-\delta_1 u_1}\zeta_{p_1}+C_n r\left(\int_Me^{2F}e^{-(2n+1)\delta_1u_1}(C_1+1)^{2n+1}\delta_1^{2n+1}dv_g\right)^{\frac{1}{2n+1}}\\
\leq & \sup_{\p B} e^{-\delta_1 u_1}\zeta_{p_1}+C_9\left(\int_M e^{F+f- 2f}dv_g\right)^{\frac{1}{2n+1}} \\
\leq &  \sup_{\p B} e^{-\delta_1 u_1}\zeta_{p_1}+C_{10}\left(\int_M e^{-2f}dv_g\right)^{\frac{1}{2n+1}},
\end{split}
\end{equation}
where we have used the fact that $F+f\leq C_1$. If $p_0\geq 2$, this implies that $u_1(p_1)\geq C_{11}$. 

\end{proof}

\begin{rmk}The $C^0$ estimate does not require the lower bound of $\sqrt{-1}\p_B\bar\p_B f$. 
\end{rmk}

\subsection{ $L^p$ estimate of $(n+\Delta\phi)$ and  the bound $|\nabla_\phi (F+f)|_\phi$}
The estimates of this section correspond to \cite{CC3}[Section 2.2] and these estimates are very important for applications in existence of csck when the automorphism group is not discrete as in \cite{CC3}. In Sasaki setting these estimates hold with almost identical arguments, given the $C^0$ estimates obtained above. We have already proved similar estimates using the method in  Theorem \ref{pmain} and Theorem \ref{pmain1} (with the absence of $f$). We shall emphasize that since in Sasaki setting, all the quantities involved in the arguments are basic, and all the computations work the same way as in K\"ahler via its transverse K\"ahler structure. The only difference is the application of Sobolev inequality, where the dimension is different. Hence the results and arguments in \cite{CC3}[Section 2.2] work almost identical here, with modifications as indicated in Theorem \ref{pmain1}. We keep the discussion brief. 
\begin{lemma}\label{pp01}For any $p\geq 1$, there exists a constant $C_{12}$
\begin{equation}
\int_{M}e^{(p-1)f}(n+\Delta \phi)^p dv_g\leq C_{12},
\end{equation}
where $C_{12}$ depends on $C_0$, $p$ and the upper bound of $\int_M e^{-p_0f}dv_g$. 
\end{lemma}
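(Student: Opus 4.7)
The plan is to follow the proof of Theorem \ref{pmain} essentially verbatim, with $F$ replaced by $\tilde F := F+f$ in the auxiliary function. This substitution is justified because the $C^0$ estimates from the previous subsection (Lemma \ref{cma4}, Lemma \ref{lower002}) give $|\tilde F|+|\phi| \leq C_0$, and the equation $\Delta_\phi \tilde F = \mathrm{tr}_\phi\gamma - h$ is formally identical to the second equation in \eqref{scalar2}, now with $\gamma = Ric - \beta_0$ uniformly bounded. Concretely, I would set $u := e^{-\lambda(\tilde F+C\phi)}(n+\Delta\phi)$ with $C = 2(\max|\gamma|_{\omega^T}+1)$ and $\lambda \geq 1$ to be chosen. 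Since $|\tilde F|+|\phi|\leq C_0$, $u$ is comparable to $n+\Delta\phi$ up to multiplicative constants.

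The Yau-type computation \eqref{p01}--\eqref{p05} then goes through, using $\Delta_\phi\tilde F = \mathrm{tr}_\phi\gamma - h$ and AM--GM absorption of the cross terms, to give
\[
\Delta_\phi u \geq e^{-\lambda(\tilde F+C\phi)}\bigl[(\tfrac{\lambda C}{2}-C_2)\Lambda_\phi\omega^T(n+\Delta\phi) + \Delta F - C_7\bigr].
\]
A delicate point is that the Yau inequality \eqref{p03} genuinely produces the background transverse Laplacian $\Delta F$, not $\Delta\tilde F$, so I split $\Delta F = \Delta\tilde F - \Delta f$. Multiply by $u^{p-1}$ and integrate against $dv_\phi = e^F dv_g = e^{\tilde F - f}dv_g$; converting to $dv_g$ transfers a factor $e^{-f}$ to the integrand, and after collecting exponents from the $p$ copies of $e^{-\lambda(\tilde F+C\phi)}$ paired with $e^{F}$, the weight $e^{(p-1)f}$ in the statement of the lemma emerges naturally. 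The $\Delta\tilde F$ contribution is handled by the same trick as in \eqref{p08}--\eqref{p09}: rewrite it as $\tfrac{1}{1-\lambda}\int u^{p-1}e^{v_0}\Delta v_0\,dv_g$ for a suitable linear combination $v_0$ of $\tilde F,\phi,f$, integrate by parts, and absorb the $|\nabla v_0|^2$ term into the left-hand gradient energy via the $(p-1)^2/(\lambda-1)$ balance, choosing $\lambda$ large relative to $p$.

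The genuinely new term $-\int u^{p-1}(\mathrm{weight})\Delta f\,dv_g$ is handled by a further integration by parts; the resulting $\nabla f$ factors are controlled by Cauchy--Schwarz and H\"older inequality, where the hypothesis $\int_M e^{-p_0 f}dv_g<\infty$ is used to bound the $e^{-f}$-type factors. This is where the restriction $p<p_0$ enters, with the constant blowing up as $p\to p_0$. Combining everything with $\Lambda_\phi\omega^T \geq (n+\Delta\phi)^{1/(n-1)}e^{-F/(n-1)}$ as in \eqref{p12} yields an iteration of the form
\[
\int_M e^{(p-1)f}(n+\Delta\phi)^{p+\frac{1}{n-1}}\,dv_g \leq C(p)\int_M e^{(p-1)f}(n+\Delta\phi)^p\,dv_g + C(p),
\]
and the base case $p=1$ follows from the normalization $\int_M\omega_\phi^n\wedge\eta=\int_M(\omega^T)^n\wedge\eta$ together with $e^{(p-1)f}\leq 1$. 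A finite number of iterations then yields the claim for every $1\leq p<p_0$.

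The main obstacle will be the $\Delta f$ term: since $\sqrt{-1}\partial_B\bar\partial_B f \geq -\beta_0$ is only a one-sided bound, there is no a priori upper bound on $\Delta f$, and the integration by parts introduces $|\nabla f|$ with no pointwise control. Careful bookkeeping of the exponents of $f$ at each step is essential so that the integrand ends up as the exact weight $e^{(p-1)f}$ and the H\"older conjugate exponent applied to $e^{-f}$ never exceeds $p_0$; this is the reason for the constraint $p<p_0$ and for the appearance of $\int_M e^{-p_0 f}dv_g$ in the constant.
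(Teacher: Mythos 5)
There is a genuine gap, and it sits exactly where you flag ``the main obstacle'': the treatment of $f$. The paper's proof (following \cite{CC3}[Theorem 2.1]) uses the auxiliary function $e^{-\kappa(F+\delta f+C\phi)}(n+\Delta\phi)$ with $\delta\in(0,1)$ \emph{strictly} less than $1$, and the tuning of $\kappa$ and $\delta$ against $p$ is precisely what produces the weight $e^{(p-1)f}$ and renders the $f$-terms absorbable. Your substitution $\tilde F=F+f$ amounts to taking $\delta=1$, and this breaks the scheme in two ways. First, the bookkeeping claim is incorrect: since $|F+f|+|\phi|\leq C_0$ by the $C^{0}$ estimates of the preceding subsection, your $u$ is pointwise comparable to $n+\Delta\phi$, so
\begin{equation*}
u^p\,dv_\phi=e^{-p\lambda(\tilde F+C\phi)}(n+\Delta\phi)^p\,e^{\tilde F}e^{-f}\,dv_g\asymp (n+\Delta\phi)^p e^{-f}\,dv_g;
\end{equation*}
the weight that ``emerges naturally'' is $e^{-f}$, independent of $p$, not $e^{(p-1)f}$. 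You would therefore be attempting to prove $\int_M(n+\Delta\phi)^p e^{-f}dv_g\leq C$ for every $p\geq 1$, which is strictly stronger than the lemma and than the corollary deduced from it (where the unweighted bound is obtained only for $q\leq p_0-1$ via H\"older against $e^{-p_0f}$). If that were provable, the weight $e^{(p-1)f}$ and the restriction $q\leq p_0-1$ would be pointless; and there is no reason to expect it, since where $f\to-\infty$ only $F+f$ is bounded, so $F$ and hence $n+\Delta\phi\geq n e^{F/n}$ can blow up, controlled solely through $\int_M e^{-p_0f}dv_g<\infty$.

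Second, the $\Delta f$ term is not actually resolved. After writing $\Delta F=\Delta(F+f)-\Delta f$ in \eqref{p03}, the contribution $-\int_M u^{p-1}(\mathrm{weight})\,\Delta f\,dv_g$ requires an upper bound on a weighted integral of $\Delta f$, whereas the hypothesis $\beta_0+\sqrt{-1}\p_B\bar\p_B f\geq 0$ gives only the one-sided bound $\Delta f\geq -\mathrm{tr}_{\omega^T}\beta_0$. Integrating by parts converts it into terms of the type $\int_M(\mathrm{weight})\,u^{p-1}|\nabla f|^2\,dv_g$ with the unfavorable sign, and the hypothesis $\int_M e^{-p_0f}dv_g<\infty$ carries no information about $\nabla f$, so Cauchy--Schwarz plus H\"older cannot close this step. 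In the paper's setup the $\nabla f$ cross terms are absorbed into the positive square $|\nabla v|^2$ generated by the integration by parts of the main piece against the matching weight $e^{v}$ with $v=(1-\kappa)F-\kappa\delta f-\kappa C\phi$ (the analogue of \eqref{p08}--\eqref{p09}); that absorption is available only because $\delta<1$ and the exponents are chosen together. The repair is to abandon the substitution $F\mapsto F+f$ into the proof of Theorem \ref{pmain} and instead run the argument with the auxiliary function $e^{-\kappa(F+\delta f+C\phi)}(n+\Delta\phi)$, $\delta\in(0,1)$, as the paper does.
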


\begin{proof}This corresponds to \cite{CC3}[Theorem 2.1] and its method is used to prove Theorem \ref{pmain} above. We shall compute, for  constants $\delta\in (0, 1), \kappa>1, C>0$ as in \cite{CC3}[Section 2.2], 
\begin{equation}
\Delta_\phi (e^{-\kappa(F+\delta f+C\phi)}(n+\Delta \phi)).
\end{equation}
Since all quantities involved are basic functions, hence the computation is identical to the K\"ahler case via its transverse K\"ahler structure. The proof works word by word  as in \cite{CC3}[Theorem 2.1]. We skip the details. 
\end{proof}

\begin{cor}For any $1\leq q\leq p_0-1$, there exists a constant $C_q$ such that
\[
\int_M (n+\Delta \phi)^q dv_g\leq C_q. 
\]
\end{cor}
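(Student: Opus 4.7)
The plan is to deduce this directly from Lemma \ref{pp01} via H\"older's inequality, exploiting the integrability hypothesis $\int_M e^{-p_0 f}dv_g \le c_0$. The strategy is to insert a factor of $1 = e^{(p-1)fq/p}\cdot e^{-(p-1)fq/p}$ into the integrand $(n+\Delta\phi)^q$, group the first factor with $(n+\Delta\phi)^q$ so that raising to the $p/q$ power produces the quantity controlled by Lemma \ref{pp01}, and absorb the remaining exponential into the integrability assumption on $f$.

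Concretely, fix $1\le q\le p_0-1$ and choose an auxiliary exponent $p>q$ (to be specified). Split
\[
(n+\Delta\phi)^q = \bigl(e^{(p-1)f}(n+\Delta\phi)^p\bigr)^{q/p}\cdot e^{-(p-1)fq/p}.
\]
By H\"older with conjugate exponents $p/q$ and $p/(p-q)$,
\[
\int_M (n+\Delta\phi)^q\,dv_g \le \left(\int_M e^{(p-1)f}(n+\Delta\phi)^p\,dv_g\right)^{q/p}\left(\int_M e^{-(p-1)fq/(p-q)}\,dv_g\right)^{(p-q)/p}.
\]
The first factor is bounded by Lemma \ref{pp01}. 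For the second factor, I need the exponent in the exponential to be dominated by $p_0$, i.e.\ $(p-1)q/(p-q)\le p_0$.

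The final step is just to verify that such a $p$ exists. Writing $(p-1)q/(p-q) = q\bigl(1+\tfrac{q-1}{p-q}\bigr)$, one sees that as $p\to\infty$ this quantity tends to $q\le p_0-1<p_0$. Hence any $p$ sufficiently large (e.g.\ $p\ge q+q(q-1)/(p_0-q)$) works, and then $\int_M e^{-(p-1)fq/(p-q)}dv_g\le \int_M e^{-p_0f}dv_g\le c_0$ (using $f\le 0$ to replace a smaller exponent by $p_0$ if necessary). Combining these two bounds completes the proof, with $C_q$ depending on $q$, $p_0$, $C_0$, and $\int_M e^{-p_0f}dv_g$. There is no substantive obstacle here; the only minor point to be careful about is the choice of $p$ relative to the gap between $q$ and $p_0$, which is why the statement requires $q\le p_0-1$ rather than $q<p_0$.
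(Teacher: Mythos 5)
Your proof is correct and follows essentially the same route as the paper: a H\"older splitting of $(n+\Delta\phi)^q$ into the weighted quantity $e^{(p-1)f}(n+\Delta\phi)^p$ controlled by Lemma \ref{pp01} and an exponential factor absorbed by $\int_M e^{-p_0 f}dv_g\le c_0$. The only (immaterial) difference is that the paper chooses $s=(q-1)p_0/(p_0-1)$ so that the exponential exponent is exactly $p_0$ and the conjugate exponent is exactly $p=q(p_0-1)/(p_0-q)$, whereas you take $p$ large and use $f\le 0$ to dominate a smaller exponent by $p_0$.
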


\begin{proof}This is a direct consequence of Lemma \ref{pp01} as in \cite{CC3}[Corollary 2.4]. Take $s=(q-1)p_0/(p_0-1)\geq 0$. We compute
\begin{equation}
\begin{split}
\int_M (n+\Delta \phi)^q dv_g=&\int_M e^{-sf} e^{sf}(n+\Delta\phi)^q dv_g\\
\leq& \left(\int_M e^{-p_0f}\right)^{\frac{s}{p_0}} \left(\int_M e^{\frac{sp_0}{p_0-s}f}(n+\Delta \phi)^{\frac{p_0q}{p_0-s}}\right)^{1-\frac{s}{p_0}}
\end{split}
\end{equation}
\end{proof}

Next we estimate the bound $|\nabla_\phi w|_\phi^2$ for $w=F+f$. 
\begin{lemma}there exists $c_n>0$, depending only on $n$, such that if $p_0>c_n+2$, we have
\[
|\nabla_\phi w|_\phi \leq C_{14}, 
\]
where $C_{14}$ depends only on $C_0$ and the bound $\int_M e^{-p_0f}dv_g$.\end{lemma}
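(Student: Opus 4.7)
The plan is to mirror the proof of Theorem \ref{pmain1} with $w=F+f$ playing the role that $F$ played there, taking advantage of the fact that by Lemma \ref{lower002} we now have a two-sided bound $-C_5\leq w\leq C_1$ and, by $f\leq 0$, the same two-sided control on $F$ itself. In particular $e^{\pm F}$ is uniformly bounded, so passing between the two volume forms $dv_\phi=e^F\,dv_g$ and $dv_g$ will cost only harmless constants. The $L^p$ bounds on $n+\Delta\phi$, for $p\leq p_0-1$, are already supplied by the corollary following Lemma \ref{pp01}, and these will be the input that the Moser iteration consumes. Our eventual $c_n$ will simply be the integrability threshold forced by the Sobolev exponent in that iteration.

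Set $v=e^{w/2}|\nabla_\phi w|_\phi^{2}$. Since $w$ is basic, Bochner's formula (applied exactly as in \eqref{first01}--\eqref{first06}, but with $F$ replaced by $w$ and using $\Delta_\phi w=\mathrm{tr}_\phi\gamma-h$ from \eqref{scalar4}) yields
\begin{equation*}
\Delta_\phi v\geq \tfrac{1}{2}e^{w/2}|\nabla_\phi w|_\phi^{2}\,\Delta_\phi w
+e^{w/2}g_\phi^{i\bar j}g_\phi^{k\bar l}Ric_{i\bar l}w_k w_{\bar j}
+2e^{w/2}\nabla_\phi w\cdot_\phi\nabla_\phi\Delta_\phi w.
\end{equation*}
Because $F$ is now bounded on both sides, $\Lambda_\phi\omega^T\geq e^{-F/(n-1)}(n+\Delta\phi)^{1/(n-1)}\geq c(n+\Delta\phi)^{1/(n-1)}$, and hence
$|\Delta_\phi w|\leq C\bigl(\Lambda_\phi\omega^T+1\bigr)\leq C_1(n+\Delta\phi)^{n-1}$
together with the same bound on $|g_\phi^{i\bar j}g_\phi^{k\bar l}Ric_{i\bar l}w_k w_{\bar j}|$. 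This produces the differential inequality
\begin{equation*}
\Delta_\phi v\geq -C_2 v\,(n+\Delta\phi)^{n-1}+2e^{w/2}\nabla_\phi w\cdot_\phi\nabla_\phi\Delta_\phi w,
\end{equation*}
which is the exact analogue of \eqref{first09}. The lower bound on $F$ from Lemma \ref{lower002} is what allows the $\Lambda_\phi\omega^T$ absorption to go through in the $f$-perturbed setting.

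Next, test against $v^{p-1}\,dv_\phi$ and integrate by parts twice, precisely as in \eqref{first10}--\eqref{first14}, to move the $\nabla_\phi\Delta_\phi w$ onto $\nabla_\phi(v^{p-1}e^{w/2}\nabla_\phi w)$; the resulting $(\Delta_\phi w)^2$ and $|\nabla_\phi w|_\phi^2\Delta_\phi w$ contributions are both bounded by $G:=(n+\Delta\phi)^{2n-2}+1$. After a Cauchy--Schwarz absorption we arrive at
\begin{equation*}
\int_M |\nabla_\phi v^{p/2}|_\phi^{2}\,dv_\phi\;\leq\;\frac{C_3\,p^{3}}{p-1}\int_M v^{p}G\,dv_\phi,
\end{equation*}
which, converted to $dv_g$ via the bounded factor $e^F$ and combined with the inequality $|\nabla f|_g^2\leq|\nabla_\phi f|_\phi^2(n+\Delta\phi)$, prepares the ground for Moser iteration with the Sobolev inequality on $M^{2n+1}$ (the only new element compared with the K\"ahler case is the appearance of $\tfrac{2(2n+1)}{2n-1}$ in place of $\tfrac{2(2n)}{2n-2}$, cf.\ \eqref{first18}). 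Choosing $\varepsilon$ small enough and iterating exactly as in \eqref{first15}--\eqref{first22} gives $\|v\|_{L^\infty}\leq C\|v\|_{L^1}$, and the $L^1$ bound follows by integrating the identity $2\Delta_\phi e^{w/2}=\tfrac{1}{2}e^{w/2}|\nabla_\phi w|_\phi^2+e^{w/2}\Delta_\phi w$ as in \eqref{first23}--\eqref{first24}.

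The main obstacle, and the point at which $p_0$ enters, is ensuring that the quantities $\|n+\Delta\phi\|_{L^{1/\varepsilon-1}}$ and $\|G\|_{L^{1/\varepsilon}}\sim\|n+\Delta\phi\|_{L^{(2n-2)/\varepsilon}}$ appearing at \eqref{first21}'s analogue are finite. Since the corollary of Lemma \ref{pp01} only controls $\|n+\Delta\phi\|_{L^q}$ for $q\leq p_0-1$, we must pick $\varepsilon$ small enough that $(2n-2)/\varepsilon\leq p_0-1$; this forces $p_0\geq c_n+2$ for an explicit dimensional constant $c_n$ (coming from the Sobolev gap $\tfrac{2(2n+1)}{2n-1}-\tfrac{2}{1-\varepsilon}>0$ and the factor $2n-2$ in the exponent of $G$). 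Provided $p_0$ exceeds this threshold, the iteration closes and all constants depend only on $C_0$ and $\int_M e^{-p_0 f}\,dv_g$, as claimed.
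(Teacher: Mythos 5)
Your overall strategy is the right one and matches the paper's: run the Bochner--Moser iteration of Theorem \ref{pmain1} on $v=e^{w/2}|\nabla_\phi w|_\phi^2$ with $w=F+f$. But there is a genuine error at the very first step, and it is precisely the one new wrinkle this lemma has over Theorem \ref{pmain1}. You claim that the two-sided bound $-C_5\le w\le C_1$ together with $f\le 0$ gives ``the same two-sided control on $F$ itself,'' so that $e^{\pm F}$ is uniformly bounded. Only half of this is true: $F=w-f\ge w\ge -C_5$ because $-f\ge 0$, but for the same reason $F\le C_1-f$ gives \emph{no} upper bound, since $f$ is only assumed to satisfy $\int_M e^{-p_0 f}\,dv_g<\infty$ and may tend to $-\infty$ on a small set. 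The paper flags exactly this: ``we have only lower bound on $F$, uniform bound on $|F+f|$, but no upper bound of $F$, hence $e^F$ might not be bounded above a priori.'' (Your derivation of the differential inequality survives, because bounding $\Lambda_\phi\omega^T+1\le C(n+\Delta\phi)^{n-1}$ only uses $e^{-F}\le e^{C_5}$, i.e.\ the lower bound on $F$; note also that the intermediate inequality you quote there is a \emph{lower} bound on $\Lambda_\phi\omega^T$ and is irrelevant to the upper bound you need.)

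The place where the missing upper bound actually bites is the integral estimate: after integrating by parts against $v^{p-1}\,dv_\phi$, the conversion $dv_\phi=e^F dv_g$ is not ``at harmless constants,'' and the quantity you must control in the iteration is $\|Ge^F\|_{L^{1/\epsilon}}$, not $\|G\|_{L^{1/\epsilon}}$. The paper's fix is the pointwise bound $e^F\le (n+\Delta\phi)^n$, so that $Ge^F\lesssim (n+\Delta\phi)^{3n-2}+1$, which is integrable to the required power by the corollary of Lemma \ref{pp01} as long as $(3n-2)/\epsilon\le p_0-1$ with $\epsilon=(2n+2)^{-1}$; this is where the dimensional threshold $c_n$ really comes from. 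Your version, which only asks for $(2n-2)/\epsilon\le p_0-1$, would yield the wrong (too small) $c_n$ and, more importantly, rests on the false boundedness of $e^F$. With the substitution $e^F\le (n+\Delta\phi)^n$ inserted at that step, the rest of your argument (including the $L^1$ bound, which uses only $w\le C_1$ and $F\ge -C_5$) closes correctly.
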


\begin{proof}The proof is almost identical to \cite{CC3}[Theorem 2.2] and Theorem \ref{pmain1} above. Denote $u=e^\frac{w}{2}|\nabla_\phi w|_\phi^2$. Then exactly as in \cite{CC3}[Section 2.2], we have the lower bound
\begin{equation}
\Delta_\phi u\geq 2e^\frac{w}{2}\nabla_\phi w\cdot_\phi \nabla_\phi \Delta_\phi w-u\tilde G,
\end{equation}
where one can actually take $\tilde G=C_{15}((n+\Delta \phi)^{n-1}+1)$. We have $F\geq -C_5$ as in Lemma \ref{low02}. As in \eqref{first08}, we have
\begin{equation}
g^{i\bar j}_\phi g^{k\bar l}_\phi Ric_{i\bar l}w_kw_{\bar j}\geq -C\Lambda_\phi \omega^T |\nabla_\phi w|_\phi^2\geq -C_1 (n+\Delta\phi)^{n-1} |\nabla_\phi w|_\phi^2.
\end{equation}
We then start integral estimate, as in \cite{CC3}[Theorem 2.2] and Theorem \ref{pmain1} above. We have
\begin{equation}\label{gra01}
\begin{split}
\int_M (p-1)u^{p-2}&|\nabla_\phi u|^2_\phi dv_\phi=\int_M u^{p-1}(-\Delta_\phi u)\\
\leq &\int_M u^p \tilde G dv_\phi-2\int_M u^{p-1} e^\frac{w}{2}\nabla_\phi w\cdot_\phi \nabla_\phi \Delta_\phi w. 
\end{split}
\end{equation}
Using integration by parts to treat the last term in \eqref{gra01}, we have
\begin{equation}\label{gra02}
\int_{M}(p-1)u^{p-2}|\nabla_\phi u|_\phi^2 dv_\phi\leq 2p\int_M u^pGe^F dv_g,
\end{equation}
where $G=((n+\Delta\phi)^{2n-2}+1)$. The only difference is that we have only lower bound on $F$, uniform bound on $|F+f|$, but no upper bound of $F$, hence $e^F$ might not be bounded above a priori. We can then proceed as in \eqref{first15}-\eqref{first21} to get, with $\tilde L_\epsilon=\sqrt{\|Ge^F\|_{L^{\frac{1}{\epsilon}}}}$, 
\begin{equation}\label{gra03}
\|v^{\frac{p}{2}}\|_{L^{\frac{2(1-\epsilon)(2n+1)}{2n-1+2\epsilon}}}\leq C_{17} (p K_\epsilon \tilde L_\epsilon+1) \|v^{\frac{p}{2}}\|_{L^{\frac{2}{1-\epsilon}}}
\end{equation}
Taking $\epsilon=(2n+2)^{-1}$ such that 
\[
\frac{2(1-\epsilon)(2n+1)}{2n-1+2\epsilon}>\frac{2}{1-\epsilon}
\]
Note that $K_\epsilon=\|n+\Delta\phi\|_{L^{\frac{1}{\epsilon}-1}}<\infty$, hence we only need to bound $\tilde L_\epsilon$, then the iteration process gives 
\begin{equation}\label{gra04}
\|u\|_{L_\infty}\leq C_{18}\|u\|_{L^1}. 
\end{equation}
While $G=(n+\Delta\phi)^{2n-2}+1$ and $e^{F}\leq (n+\Delta\phi)^n$, hence $\tilde L_\epsilon<\infty$ by Lemma \ref{pp01}. The $L^1$ bound of $u$ follows exactly the same as in \eqref{first24}. This completes the proof. 
\end{proof}

\begin{cor}
With the same assumption as in Theorem \ref{main1}, we have for $1\leq p\leq p_0-1$,
\[\|\nabla (F+f)\|_{L^{2p}}\leq C\]
\end{cor}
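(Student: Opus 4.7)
The plan is to combine the two estimates just established: the pointwise bound $|\nabla_\phi w|_\phi \leq C_{14}$ for $w = F+f$ from the preceding lemma, and the $L^p$ integrability of $n+\Delta\phi$ for $p \leq p_0-1$ from the corollary just above it. The bridge between them is the standard pointwise comparison of the gradient measured with respect to the background metric $g$ and with respect to the perturbed metric $g_\phi$, which was already used inside the proof of Theorem \ref{pmain1} (see equation \eqref{first16}):
\[
|\nabla u|_g^2 \;\leq\; |\nabla_\phi u|_\phi^2 \,(n+\Delta\phi),
\]
valid for any basic function $u$. This inequality is nothing more than the statement that the inverse of $g$ on the contact distribution is bounded by the inverse of $g_\phi$ times the trace of $g_\phi$ with respect to $g$ (and $\xi u = 0$ since $u$ is basic, so no contribution comes from the Reeb direction).

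With the comparison in hand, I would raise both sides to the power $p$ to get $|\nabla w|_g^{2p} \leq |\nabla_\phi w|_\phi^{2p}\,(n+\Delta\phi)^p$, and then integrate against $dv_g$. The factor $|\nabla_\phi w|_\phi^{2p}$ comes out as a uniform constant by the preceding lemma, so
\[
\int_M |\nabla(F+f)|_g^{2p}\,dv_g \;\leq\; C_{14}^{2p}\int_M (n+\Delta\phi)^p\,dv_g \;\leq\; C,
\]
where the last inequality uses the $L^p$ bound on $n+\Delta\phi$ established in the corollary of Lemma \ref{pp01}, valid precisely in the range $1\leq p\leq p_0-1$.

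There is no real obstacle here; the only point that requires a moment of care is that the hypothesis of the preceding gradient bound already absorbs the condition $p_0 > c_n + 2$, and the range $p\leq p_0-1$ is exactly the range in which the $L^p$ estimate on $n+\Delta\phi$ holds. So the corollary is an immediate consequence, with the constant $C$ depending only on $C_0$, $p$, $p_0$ and $\int_M e^{-p_0 f}\,dv_g$, matching the dependencies allowed in Theorem \ref{main1}.
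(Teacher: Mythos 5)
Your proposal is correct and is exactly the argument in the paper: the pointwise comparison $|\nabla(F+f)|^2\leq|\nabla_\phi(F+f)|^2_\phi\,(n+\Delta\phi)$, combined with the uniform bound on $|\nabla_\phi(F+f)|_\phi$ and the $L^p$ bound on $n+\Delta\phi$ for $1\leq p\leq p_0-1$. Nothing to add.
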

\begin{proof} We get $L^{2p}$ bound of $|\nabla (F+f)|$ using the bound on  $|\nabla_\phi(F+f)|_\phi $ together with $L^p$ bound of $(n+\Delta\phi)$,
\[
|\nabla (F+f)|^2\leq |\nabla_\phi(F+f)|^2_\phi (n+\Delta \phi).
\] This completes the proof.  
\end{proof}

\section{Existence and properness}

In this section we prove Theorem \ref{cscs}, which we recall as follows,
\begin{thm}Let $(M, \xi, \eta, g)$ be a compact Sasaki manifold with fixed Reeb vector field $\xi$, fixed transverse holomorphic structure and fixed transverse K\"ahler class $[\omega^T]=[2^{-1}d\eta]$. Then there exists a cscs induced by transverse K\"ahler potentials in 
\[
\cH=\{\phi\in C^\infty_B(M): \omega_\phi=\omega^T+\sqrt{-1}\p_B\bar\p_B\phi>0\}
\]
if and only if the $\cK$-energy is (reduced) proper with respect to $d_{1, G}$ over $\cH$. 
\end{thm}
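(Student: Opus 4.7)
The plan is to prove the biconditional via two implications, both following the Chen--Cheng strategy adapted to the Sasaki setting, using the analytic machinery of Sections 2--3 together with the pluripotential theory summarized in Theorem \ref{pluri01} and Lemmas \ref{ifunctional}--\ref{hl02}.

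For the direction existence $\Longrightarrow$ reduced properness, suppose $\omega_{\phi_0}$ is a cscs metric. The Futaki invariant then vanishes by Lemma \ref{fm02}, so part (1) of Definition \ref{rp} is available after averaging $\phi_0$ over a maximal compact $K \subset G$. Convexity of the extended $\cK$-energy along finite-energy $d_1$-geodesics (Lemma \ref{hl02}), combined with the critical-point property of $\phi_0$, identifies $\phi_0$ as a $d_1$-minimizer of $\cK$ on $\cE_1 \cap \mathbb{I}^{-1}(0)$ modulo $G$. To upgrade this to $d_{1,G}$-properness I would argue by contradiction: if $\phi_i \in \cH^0_K$ has $d_{1,G}(0,\phi_i) \to \infty$ while $\cK(\phi_i) \le C$, then choosing gauges $\sigma_i \in G$ that almost realize the $d_{1,G}$-infimum and setting $\psi_i := \sigma_i[\phi_i]$, the $G$-invariance of $\cK$ combined with Lemma \ref{compactness} yields a $d_1$-limit $\psi_\infty \in \cE_1$ that is again a $\cK$-minimizer; strict convexity of $\cK$ transverse to the $G$-orbit (a Berman--Berndtsson-type fact, valid in $\cE_1$ by Lemma \ref{hl02}) forces $\psi_\infty$ onto the $G$-orbit of $\phi_0$, contradicting $d_{1,G}(0,\phi_i) \to \infty$.

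For the harder direction reduced properness $\Longrightarrow$ existence, I run Chen's continuity path
\begin{equation*}
t(R^T_{\phi_t} - \underline R) = (1-t)(\mathrm{tr}_{\omega_{\phi_t}} \omega^T - n), \qquad t \in [0,1],
\end{equation*}
which for $t \in (0,1)$ matches \eqref{scalar01} with $\beta = \tfrac{1-t}{t}\omega^T \ge 0$ and $h$ a known constant. At $t=0$ the potential $\phi \equiv 0$ is a solution; at $t=1$ any solution is a cscs. Let $I \subset [0,1]$ be the set of $t$ admitting a smooth basic solution modulo $G$. Openness of $I$ at each interior $t$ follows from the implicit function theorem on basic H\"older spaces, after modding out the kernel of the linearized Lichnerowicz-type operator on basic functions, a kernel consisting exactly of transverse holomorphic Hamiltonian fields, i.e. the Lie algebra of the reduced part of $G$. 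Thus $I$ is nonempty and open in $(0,1]$, and the entire problem reduces to showing that $I$ is closed.

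Closedness is the main obstacle. Given $t_i \to t_* \in \overline I$ with smooth solutions $\phi_{t_i}$, I would replace each $\phi_{t_i}$ by a $G$-gauge $\sigma_i[\phi_{t_i}]$ that almost realizes $d_{1,G}(0,\phi_{t_i})$; combining the standard monotonicity/control of an appropriate energy functional along Chen's continuity path with the reduced properness hypothesis, one obtains a uniform bound $\cK(\sigma_i[\phi_{t_i}]) \le C$. Dominating the $\mathbb{J}_{-\mathrm{Ric}}$ contribution via the quasi-isometry \eqref{d01}--\eqref{d02} and Lemma \ref{ifunctional} converts this into a uniform entropy bound $H(\sigma_i[\phi_{t_i}]) \le C$. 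Theorem \ref{main00} then supplies uniform $C^k$-estimates along the gauge-fixed sequence, and a diagonal argument produces a smooth solution at $t_*$. The Sasaki-specific difficulty lies precisely in the bookkeeping of this step: one must simultaneously respect the basic, possibly irregular, foliated structure, keep the compactness Lemma \ref{compactness} and the $\cK$-convexity of Lemma \ref{hl02} effective on the gauge-fixed sequence, and ensure that the pluripotential inputs \eqref{d01}--\eqref{d02} continue to hold through the limits --- precisely the delicate geometric content developed in the companion work \cite{HL}, which is what elevates the Sasaki case beyond a routine translation of \cite{CC3}.
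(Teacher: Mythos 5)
Your outline follows the same Chen--Cheng skeleton as the paper, but both implications have a concrete gap at exactly the point where the real work lies. In the direction properness $\Rightarrow$ existence, after you gauge-fix by $\sigma_i\in G$, the potential $\phi_i=\sigma_i[\tilde\phi_i]$ no longer solves the original continuity-path equation: it solves (Lemma \ref{l100}) an equation whose right-hand side contains $\frac{1-t_i}{t_i}\omega_i$ with $\omega_i=\sigma_i^*\omega^T=\omega^T+\sqrt{-1}\p_B\bar\p_B h_i$, and the potentials $h_i$ are \emph{not} uniformly bounded. Hence Theorem \ref{main00}, which requires a bound on $\max|\beta|_g$, does not apply to the gauge-fixed sequence, and your claim that it ``supplies uniform $C^k$-estimates'' is where the argument breaks. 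This is precisely why the paper proves the more flexible Theorem \ref{main01}, with $\beta=\beta_0+\sqrt{-1}\p_B\bar\p_B f$ controlled only through $\int_M e^{-p_0 f}dv_g$, a hypothesis verified for $f_i=\frac{1-t_i}{t_i}h_i$ via the $\alpha$-invariant; one then obtains only $W^{1,2p}/L^p$ bounds, must show $\int_M|f_i|\,\omega_T^n\wedge\eta\to 0$ (using $(1-t_i)\mathbb{J}(\tilde\phi_i)\to 0$) to identify the limiting equation as the cscs equation, and finally invokes Theorem \ref{2nd} and elliptic bootstrapping for smoothness.

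In the direction existence $\Rightarrow$ properness, your contradiction argument applies Lemma \ref{compactness} to the gauge-fixed sequence $\psi_i=\sigma_i[\phi_i]$; but by hypothesis $d_{1,G}(0,\phi_i)\to\infty$, so $d_1(0,\psi_i)\ge d_{1,G}(0,\phi_i)\to\infty$ for \emph{every} choice of gauge, and the compactness lemma (which requires a $d_1$-bound) cannot be invoked, so no limit $\psi_\infty$ exists. The paper's device is to connect $0$ to $\phi_i$ by a finite-energy geodesic and pass to the point $u_i$ at unit distance, so that $d_1(0,u_i)=1$ and convexity gives $\cK(u_i)/d_1(0,u_i)\le \cK(\phi_i)/d_1(0,\phi_i)\to 0$; compactness then applies to $\{u_i\}$. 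Moreover, concluding that the limit $u$ is a genuine cscs requires the regularity theorem for $\cE_1$-minimizers of the $\cK$-energy (Theorem \ref{regularity1}), which is the main content of Section 5.2 and is itself proved by a second, twisted continuity path based at the approximating potentials $\phi_i$; your appeal to ``strict convexity of $\cK$ transverse to the $G$-orbit'' is not a tool established anywhere in the paper, and the actual contradiction comes from the transitivity of $G$ on cscs metrics.
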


We  consider the continuity path of Chen \cite{chen15},  adapted to transverse K\"ahler structure in Sasaki setting, 
\begin{equation}\label{continuity1}
t(R_\phi-\underline R)-(1-t) (tr_\phi \omega^T-n)=0.
\end{equation}
We can assume that $\omega^T, \omega_\phi$ are both $K$-invariant.
The functional corresponding the continuity path \eqref{continuity1} is denoted by
\begin{equation}
\tilde K_{t}=t\cK(\phi)+(1-t)\mathbb{J}(\phi).
\end{equation}

\begin{rmk}Chen's continuity path \cite{chen15} plays a very central role in the proof. The insight is, for $t<1$ the solution corresponds to the minimizer of a strictly convex functional $\tilde K_t$, while $\cK$-energy is only convex. The strict convexity implies that the openness should be true (minimizer is unique, hence linearized operator should have zero kernel). More importantly, Chen's continuity path gives a canonical way to solve the csck and cscs equation, as an extension of Aubin's continuity path on Fano setting. \end{rmk}

\subsection{Properness implies existence}

The openness follows from the results of \cite{chen15, hashimoto, yuzeng} in K\"ahler setting. 
Technically the following theorem is an adaption of Hashimoto's result \cite{hashimoto}[Theorem 1.2] to Sasaki case. 
\begin{thm}\label{open1}On the compact Sasaki manifold $(M, \xi, \eta, g)$, suppose we have two transverse K\"ahler metrics $\omega^T$ and $\alpha$ such that $\Lambda_{\omega^T} (\alpha)=\text{const}. $ We assume $\omega^T, \alpha$ are both $K$-invariant.  Then there exists a constant $r(\omega^T, \alpha)$ depending only on $\omega^T, \alpha$ such that for $r\geq r(\omega^T, \alpha)$, there exists $\phi\in \cH_K$ satisfying $R_\phi-\Lambda_{\omega_\phi}(r\alpha)=\text{const}$.\end{thm}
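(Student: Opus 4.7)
The plan is to adapt Hashimoto's variational approach \cite{hashimoto} in the K\"ahler setting to the Sasaki case. I would consider the twisted $\cK$-energy
\[
\cK_r(\phi) := \cK(\phi) + r\,\mathbb{J}_{\omega^T, \alpha}(\phi),
\]
defined on $\cH_K$, whose Euler--Lagrange equation is precisely $R_\phi - r\Lambda_{\omega_\phi}\alpha = \text{const}$ (the constant being determined by $\bar\alpha = \Lambda_{\omega^T}\alpha$ which is assumed constant). The existence of a $K$-invariant solution will be obtained by minimizing $\cK_r$ in the completion $(\cE_1, d_1)$, and then establishing smoothness via the a priori estimates of Theorem \ref{main00}.

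First I would establish convexity of $\cK_r$ along $C^{1, \bar 1}$ geodesics in $\cH_K$. Convexity of $\cK$ is the Sasaki adaptation of Berman--Berndtsson due to Ji--Zhang \cite{JZ} and van Coevering \cite{VC}. Convexity of $\mathbb{J}_{\omega^T, \alpha}$ along geodesics follows from the positivity $\alpha > 0$ via the same Berndtsson-type positivity argument transferred to the transverse K\"ahler setting. Next I would establish $d_1$-properness of $\cK_r$ on $\cH_K$ for $r$ large: the key coercivity input is that $\mathbb{J}_{\omega^T, \alpha}$ is comparable (up to bounded error) to Aubin's $J$-functional, which in turn controls $d_1$ by Theorem \ref{pluri01}. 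The $\mathbb{J}_{-Ric}$ part of $\cK$ is $d_1$-Lipschitz, and the entropy $H\geq 0$. Hence once $r$ exceeds a threshold $r(\omega^T, \alpha)$ depending on the $d_1$-Lipschitz constant of $\mathbb{J}_{-Ric}$ and the coercivity constant of $\mathbb{J}_{\omega^T, \alpha}$, the functional $\cK_r$ is coercive in $d_1$.

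Given convexity and properness, I would extract a $K$-invariant minimizer $\phi_* \in \cE_1 \cap \mathbb{I}^{-1}(0)$ by combining the $d_1$-lower semicontinuity from Lemma \ref{hl02} with the compactness Lemma \ref{compactness}: a minimizing sequence has uniformly bounded $d_1(0, \phi_i)$ and hence bounded entropy $H(\phi_i)$, so a subsequence converges in $\cE_1$. Averaging over $K$ preserves $\cK_r$ since $\omega^T$ and $\alpha$ are $K$-invariant, giving a $K$-invariant minimizer. Smoothness of $\phi_*$ is then established by approximation: replace $\phi_*$ by smooth $K$-invariant potentials $\phi_\epsilon$ satisfying regularized versions of the equation (via a small perturbation of $h$ and $\beta = r\alpha$), apply Theorem \ref{main00} to obtain uniform a priori estimates $C_0^{-1}\leq n+\Delta\phi_\epsilon\leq C_0$ and $\|\phi_\epsilon\|_{C^k}\leq C_k$, and pass to the limit to conclude that $\phi_* \in \cH_K$.

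The main obstacle will be the regularity step: upgrading the $\cE_1$-minimizer to a smooth solution. The delicate point is verifying that the minimizer satisfies the Euler--Lagrange equation in a sufficiently strong weak sense to engage Chen--Cheng's estimates, and that the approximants $\phi_\epsilon$ can be chosen with uniformly bounded entropy (which is the input needed for Theorem \ref{main00}). This requires the full pluripotential machinery of \cite{HL} in the Sasaki setting --- in particular the $d_1$-continuity of the entropy on $\cE_1$ and the smooth approximation of $\cE_1$ potentials --- and hence is truly where the Sasaki case departs technically from the K\"ahler situation, especially when the Reeb foliation is irregular.
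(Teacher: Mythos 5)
Your proposal takes a variational route, but the paper's proof is entirely different and much more elementary: it is a direct transcription of Hashimoto's quantitative implicit function theorem argument to the transverse K\"ahler setting (with all function spaces replaced by $K$-invariant basic function spaces and the volume form by $\omega_T^n\wedge\eta$). The hypothesis $\Lambda_{\omega^T}(\alpha)=\text{const}$ is exactly what makes $\phi=0$ an exact solution of the rescaled equation $r^{-1}R_\phi-\Lambda_{\omega_\phi}\alpha=\text{const}$ in the limit $r\to\infty$; one then perturbs off $\phi=0$, using that the linearization $-\mathcal{D}^*\mathcal{D}+r\,(\alpha,\sqrt{-1}\p_B\bar\p_B\,\cdot)_{\omega^T}+\cdots$ acquires an inverse of norm $O(1/r)$, so that the $O(1)$ error $R_{\omega^T}-\underline R$ can be absorbed by a quantitative contraction argument once $r\geq r(\omega^T,\alpha)$. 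Notice that your argument never uses the hypothesis $\Lambda_{\omega^T}\alpha=\text{const}$ at all; that is a warning sign that you are invoking a much stronger statement than the theorem being proved.

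The genuine gap is the regularity step, and it is not merely ``delicate'' as you concede --- as written it is circular. You propose to upgrade the $\cE_1$-minimizer of $\cK+r\mathbb{J}_{\omega^T,\alpha}$ to a smooth solution by producing smooth $K$-invariant potentials ``satisfying regularized versions of the equation'' and feeding them into Theorem \ref{main00}; but producing smooth solutions of nearby fourth-order scalar-curvature-type equations is itself an existence statement of precisely the kind Theorem \ref{open1} is meant to supply, and you give no mechanism for it. The only available route to regularity of $\cE_1$-minimizers in this framework (Chen--Cheng, and Theorem \ref{regularity1} of this paper) runs a continuity path whose openness is established \emph{by} Theorem \ref{open1} and Corollary \ref{open2} --- the paper cites them explicitly for exactly this purpose in the proof of Theorem \ref{regularity1}. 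Theorem \ref{open1} sits at the foundation of the whole scheme precisely because it admits a self-contained perturbative proof; it cannot be derived from the variational machinery that is built on top of it. (The convexity, properness and compactness portion of your outline is sound and mirrors how the paper uses $\mathbb{J}_i$-properness in Lemma \ref{kbelow1} and Theorem \ref{regularity1}, but it only delivers an $\cE_1$-minimizer, which is where the argument stalls.)
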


The proof is a direct adaption of \cite{hashimoto}[Theorem 1.2] to the transverse K\"ahler structures on a Sasaki manifold, with functional spaces replaced by $K$-invariant functional spaces (in particular, all structures involved are invariant along the Reeb flow, hence are basic) and K\"ahler structures replaced by transverse K\"ahler structure, and the volume form replaced by the volume form $\omega^n_T\wedge \eta$. Otherwise, the arguments work almost word by word. Hence we skip the details. 
Similar to \cite{hashimoto}[Corollary 1.4], we also have the following,
\begin{cor}\label{open2}Suppose that $\omega^T, \alpha$ are two $K$-invariant transverse K\"ahler metrics such that $R_{\omega^T}-\Lambda_{\omega^T}\alpha=\text{const}$. Then if $\tilde \alpha$ is a $K$-invariant transverse K\"ahler metric such that $\tilde \alpha-\alpha$ is sufficiently small in the $C^\infty$-norm, then there exists a transverse K\"ahler metric $\tilde \omega\in [\omega^T]$ which is $K$-invariant, such that 
\[
R_{\tilde \omega}-\Lambda_{\tilde \omega}\tilde \alpha=\text{const}. 
\]  
\end{cor}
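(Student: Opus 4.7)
The plan is a straightforward implicit function theorem argument, parallel to Hashimoto's Kähler proof but carried out on the transverse Kähler structure. Fix Hölder exponents $k\geq 4$ and $\gamma\in(0,1)$, and let $\cH^0_{K,k+4,\gamma}$ denote $K$-invariant basic potentials of class $C^{k+4,\gamma}$ normalized by $\mathbb{I}(\phi)=0$, and $\mathfrak{F}_K$ the $K$-invariant basic real $(1,1)$-forms of class $C^{k,\gamma}$. Consider the map
\[
\Psi: \cH^0_{K,k+4,\gamma}\times\mathfrak{F}_K\longrightarrow C^{k,\gamma}_{B,K,0}(M),\qquad \Psi(\phi,\beta)=R_{\omega_\phi}-\Lambda_{\omega_\phi}\beta-c(\phi,\beta),
\]
where $c(\phi,\beta)$ is the constant making the image average-zero. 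The hypothesis gives $\Psi(0,\alpha)=0$.

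Next I would compute the linearization $L:=D_\phi\Psi|_{(0,\alpha)}$, which on foliation charts is formally identical to the Kähler computation via the transverse Kähler structure. Using the hypothesis $R_{\omega^T}-\Lambda_{\omega^T}\alpha=\mathrm{const}$ to cancel the first-order terms, one finds a formally self-adjoint fourth-order elliptic operator of (modified) Lichnerowicz type on basic functions, whose leading part is $-(\cD^T)^*\cD^T$, with $\cD^T$ the transverse Lichnerowicz operator $\psi\mapsto \bar\partial_B\nabla^{1,0}_T\psi$. A standard integration-by-parts argument (applied to basic test functions and relying on the transverse divergence theorem) identifies the kernel of $L$ on basic functions modulo constants with basic holomorphy potentials generating the reduced part of $\mathrm{Aut}_0(\xi,J)$. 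Restricting to $K$-invariant basic functions, where $K$ is a maximal compact subgroup, a Matsushima-type averaging then kills every nontrivial kernel element, so $L:C^{k+4,\gamma}_{B,K,0}\to C^{k,\gamma}_{B,K,0}$ is a topological isomorphism.

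With $L$ invertible, the implicit function theorem provides, for $\tilde\alpha$ sufficiently close to $\alpha$ in $C^{k,\gamma}$, a unique small $\phi(\tilde\alpha)\in\cH^0_{K,k+4,\gamma}$ with $\Psi(\phi(\tilde\alpha),\tilde\alpha)=0$. Elliptic bootstrapping on the coupled system (the transverse Monge--Ampère equation together with the transverse scalar curvature equation) upgrades $\phi(\tilde\alpha)$ to a smooth basic $K$-invariant potential, and $\tilde\omega:=\omega_{\phi(\tilde\alpha)}$ is the desired transverse Kähler metric. The one point requiring genuine care is the kernel identification on Sasaki manifolds with irregular Reeb flow, since one cannot descend to a smooth Kähler quotient; this is handled either by approximating by quasi-regular Reeb vector fields or by working directly with the transverse holomorphic structure and the characterization of transverse holomorphic Killing fields in $\mathrm{Aut}_0(\xi,J)$. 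Since the proof is stated to follow Hashimoto's arguments essentially word by word after these substitutions, I would not carry out the routine details.
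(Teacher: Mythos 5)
Your overall skeleton --- set up the map $\Psi(\phi,\beta)=R_{\omega_\phi}-\Lambda_{\omega_\phi}\beta-c$ on $K$-invariant basic H\"older spaces, invert the linearization, apply the implicit function theorem, bootstrap --- is exactly the route the paper takes (it simply defers to Hashimoto's Corollary 1.4 and notes the arguments transfer to the transverse K\"ahler structure). But the step where you establish invertibility of the linearization is argued incorrectly, and the error matters. The linearized operator at $(0,\alpha)$ is not the Lichnerowicz operator $-(\cD^T)^*\cD^T$ alone: linearizing the twist term $-\Lambda_{\omega_\phi}\alpha$ contributes $\psi\mapsto \langle \sqrt{-1}\p_B\bar\p_B\psi,\alpha\rangle$, and after using $R_{\omega^T}-\Lambda_{\omega^T}\alpha=\mathrm{const}$ to symmetrize, one gets $L\psi=-(\cD^T)^*\cD^T\psi+\cL_\alpha\psi$ with $\int_M \psi\,\cL_\alpha\psi\,\omega_T^n\wedge\eta=-\int_M \alpha(\nabla^{1,0}\psi,\overline{\nabla^{1,0}\psi})\,\omega_T^n\wedge\eta$. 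Since $\alpha$ is a transverse K\"ahler metric, hence strictly positive, $L\psi=0$ forces both $\cD^T\psi=0$ and $\nabla^{1,0}\psi=0$, so the kernel consists of constants only. This strict positivity of the twist is the entire point of Hashimoto's theorem (and of Chen's continuity path for $t<1$): invertibility holds with no reference to automorphisms whatsoever.

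By contrast, your claimed mechanism --- that the kernel consists of holomorphy potentials and that restricting to $K$-invariant basic functions "kills every nontrivial kernel element" by Matsushima-type averaging --- is false as stated. Holomorphy potentials of Hamiltonian Killing fields lying in $K$ itself (for instance the torus factors on a toric Sasaki manifold, which $K$ must contain) are $K$-invariant, so passing to $K$-invariant functions does not remove them from the kernel of the \emph{untwisted} Lichnerowicz operator. If the kernel of $L$ really were the space of holomorphy potentials, your argument would break precisely in the cases where $\mathrm{Aut}_0(\xi,J)$ is nontrivial, which is the situation the whole paper is designed to handle. The $K$-invariance in the statement is there to keep the construction within the invariant setting used later in the continuity path, not to make $L$ injective. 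So: correct framework, correct conclusion, but the justification of the one nontrivial analytic step needs to be replaced by the twisted-kernel computation above.
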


Theorem \ref{open1} and Corollary \ref{open2} imply that we can solve the equation \eqref{continuity1} in an interval $[0, t_0)$ for some $0<t_0\leq 1$. 
Now we show that $t_0=1$ under the assumption that the  $\cK$-energy is bounded below in $\cH_K$. 
Our argument is a modification of Chen-Cheng \cite{CC3}[Section 3]. 
Note that we can rewrite the equation as
\begin{equation}\label{continuity2}
\begin{split}
&\omega_\phi^n=e^F \omega^n\\
&\Delta_\phi F=-(\underline R-(1-t)n/t)+\text{tr}_\phi(Ric(\omega^T)-(1-t)\omega^T/t)
\end{split}
\end{equation}

Given the estimates above, we have a direct consequence that \eqref{continuity1} has a smooth solution for $t\in [0, 1)$. 
\begin{lemma}\label{kbelow1}Suppose the  $\cK$-energy  is bounded below on $\cH_K$, then \eqref{continuity1} has a smooth solution for $t\in [0, 1)$. 
\end{lemma}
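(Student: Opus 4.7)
The plan is to run a continuity argument along \eqref{continuity1}. Openness around any $t_{*}\in[0,1)$ where a smooth solution exists is supplied by Theorem~\ref{open1} and Corollary~\ref{open2} (applied with $\alpha=(1-t)\omega^T/t$), while initial solvability is provided either by the trivial solution $\phi\equiv 0$ at $t=0$ or by Theorem~\ref{open1} for small $t>0$ (corresponding to the large rescaling parameter $r=(1-t)/t$). What remains is closedness of the solution set on any $[0,t_0]\subset[0,1)$, after which the maximal existence interval is forced to equal $[0,1)$.

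The crucial input is that for $t\in(0,1)$ every solution $\phi_t$ of \eqref{continuity1} is a minimizer of the strictly convex functional $\tilde K_t=t\cK+(1-t)\mathbb{J}$ on $\cH_0$: the functional $\mathbb{J}$ is strictly convex along $C^{1,\bar1}$ geodesics, and $\cK$ is convex by the Sasaki analogue of Berman--Berndtsson due to Ji--Zhang and van Coevering. Normalizing $\mathbb{I}(\phi_t)=0$ so that $\tilde K_t(0)=0$, minimality yields
\[
t\cK(\phi_t)+(1-t)\mathbb{J}(\phi_t)\;\leq\;0.
\]
Combined with the hypothesis $\cK(\phi_t)\geq -C_0$ and the non-negativity $\mathbb{J}(\phi_t)\geq 0$ from \eqref{ij3}, this gives $\mathbb{J}(\phi_t)\leq tC_0/(1-t)$ and $\cK(\phi_t)\in[-C_0,0]$; by \eqref{ij3} again, $I(\phi_t)$ is uniformly bounded on $[0,t_0]$, and Theorem~\ref{pluri01} places $\phi_t$ in a $d_1$-bounded subset of $\cE_1$.

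The next step is to promote this to a uniform bound on the entropy $H(\phi_t)$. Using Lemma~\ref{hl02} together with the explicit integral expression \eqref{j1} for $\mathbb{J}_{-Ric}$, one obtains the standard estimate $|\mathbb{J}_{-Ric}(\phi_t)|\leq C(1+I(\phi_t))$ along the $d_1$-bounded family $\{\phi_t\}$, a direct transcription of the K\"ahler estimate. This gives a uniform bound on $\mathbb{J}_{-Ric}(\phi_t)$ and hence on
\[
H(\phi_t)\;=\;\cK(\phi_t)-\mathbb{J}_{-Ric}(\phi_t).
\]
With $H(\phi_t)$ uniformly bounded on $[0,t_0]$ and the right-hand data $\beta=(1-t)\omega^T/t$, $h=\underline R-(1-t)n/t$ of the scalar curvature equation \eqref{scalar01} uniformly bounded for $t$ bounded away from $0$, Theorem~\ref{main00} supplies uniform $C^k$ estimates for every $k$. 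Arzel\`a--Ascoli extracts a smooth subsequential limit, which solves \eqref{continuity1} at $t=t_0$; openness at $t_0$ then extends solvability beyond $t_0$, forcing $t_0=1$.

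The principal obstacle is the entropy bound itself: it is purely variational but hinges on the convexity of $\cK$ along $C^{1,\bar 1}$ geodesics in the Sasaki setting (from \cite{JZ, VC}), the strict convexity of $\mathbb{J}$ along geodesics, and on the Sasaki pluripotential theory of \cite{HL} (Theorem~\ref{pluri01} and Lemma~\ref{hl02}) needed to control $\mathbb{J}_{-Ric}$ along $d_1$-bounded families; once the entropy is under control, the remaining closedness argument is a direct application of the PDE estimates in Theorem~\ref{main00}.
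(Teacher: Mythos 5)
Your proposal is correct and follows essentially the same route as the paper: openness from Theorem~\ref{open1}/Corollary~\ref{open2}, then closedness via the observation that a solution minimizes the convex functional $\tilde K_t$, which yields $\mathbb{J}(\phi_t)\leq C/(1-t)$, hence bounds on $I$, $d_1(0,\phi_t)$, $\mathbb{J}_{-Ric}(\phi_t)$ and the entropy $H(\phi_t)=\cK(\phi_t)-\mathbb{J}_{-Ric}(\phi_t)$, after which Theorem~\ref{main00} gives the a priori estimates. The only cosmetic difference is that you phrase the bounds as uniform on each $[0,t_0]$ rather than as the explicit $C((1-t)^{-1}+1)$ of the paper, and the control of $d_1$ by $I$ on $\cH_0$ should be attributed to the Sasaki adaption of \cite{DR}[Proposition 5.5] in addition to Theorem~\ref{pluri01}.
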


\begin{proof}Since $\cK$ is bounded below on $\cH_K$, it follows that \[\tilde K_t\geq C_1(1-t)d_1(0, \phi)-C\] for any $t\in [0, 1)$. This depends on \eqref{ij3},
\[
\mathbb{J}(\phi)\geq  \frac{1}{n+1}I(\phi), 
\]
and a direct adaption of \cite{DR}[Proposition 5.5] to the Sasaki setting (together with Theorem \ref{pluri01})
\[
I(\phi)\geq C^{-1}d_1(0, \phi)-C, \phi\in \cH_0.
\]
Suppose $\phi(t)\in \cH^0$ solves \eqref{continuity1}. By the convexity of the $\cK$ and  the convexity of $\tilde K_t$ (along the $C^{1, \bar 1}$ geodesics in $\cH$),  it follows that $\phi(t)$ minimizes $\tilde K_t$ over $\cH$. Hence we have
$\tilde K_t<\infty$.    
It follows that for any $t\in [0, 1)$,
\[d_1(0, \phi)<C((1-t)^{-1}+1)\] 
Note that both $\mathbb{J}_{-Ric}(\phi)$ is bounded by $Cd_1(0, \phi)$ for some uniformly bounded constant $C$, as an application of Theorem \ref{pluri01}. It follows that, for any $t\in [0, 1)$, 
\begin{equation}\label{t0}
n!H(\phi)=\int_M \log \frac{\omega_{\phi}^n}{\omega^n_T} \omega_{\phi}^n\wedge \eta<C((1-t)^{-1}+1).
\end{equation}
It then follows from Theorem \ref{main0} that for any $t\in [0, 1)$, \eqref{continuity1} has a smooth solution. 
\end{proof}

Now we consider the behavior when $t\rightarrow 1$. The purpose is to show for any sequence $t_i\rightarrow 1$, after modifying by suitable choice of elements in $G$ (which preserves the Reeb vector field $\xi$ and transverse complex structure), there exists $ \phi_i\in \cH_K$ which induces a Sasaki metric through $\eta_{ \phi_i}$ and $\omega_{\phi_i}$, such that the limit of $\phi_i$ (by subsequence) defines a smooth cscs. Note that the estimate in \eqref{t0}  blows up when $t\rightarrow 1$. We need to use the reduced properness of $\cK$-energy in an effective way.  Since the properness only implies a distance bound of $d_{1, G}$, it is then necessary to apply an automorphism $\sigma_i$ in $G$ (at each time $t_i$) such that the resulting potential remains in a bounded set of $\cH_K$. We proceed as follows. 
Let $\tilde \phi_i\in \cH^0_K$ be the solution of \eqref{continuity1} at $t_i$, for $t_i$ increasing to $1$. The starting point is to show that $\tilde \phi_i$ is a minimizing sequence of the  $\cK$ energy. 
\begin{lemma}\label{kenergy1}We have the following, 
\begin{equation}\label{j100}
\begin{split}
&\tilde K_{t_i}(\tilde \phi_i)=\inf_{\phi\in \cH_K} \tilde K_{t_i}(\phi)\rightarrow \inf_{\cH_K} \cK(\phi), t_i\rightarrow 1\\
&\cK(\tilde \phi_i)\rightarrow \inf_{\cH_K} \cK(\phi), t_i\rightarrow 1\\
&(1-t_i)\mathbb{J}(\tilde \phi_i)\rightarrow 0.
\end{split}
\end{equation}
\end{lemma}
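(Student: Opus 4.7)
The plan is to pinch $\tilde K_{t_i}(\tilde\phi_i)$ between $\inf_{\cH_K}\cK$ and a quantity converging to it, using three ingredients already in the paper: convexity of $\tilde K_t = t\cK+(1-t)\mathbb{J}$ along $C^{1,\bar 1}$ geodesics (from the Sasaki versions of Berman--Berndtsson by Ji--Zhang and van Coevering, plus the elementary convexity of $\mathbb{J}$), the nonnegativity $\mathbb{J}\geq 0$ coming from \eqref{ij3}, and the lower bound $\inf_{\cH_K}\cK>-\infty$ supplied by the properness hypothesis.

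First I would observe that \eqref{continuity1} is the Euler--Lagrange equation of $\tilde K_{t_i}$, and that convexity of $\tilde K_{t_i}$ along the $C^{1,\bar 1}$ geodesic connecting $\tilde\phi_i$ to any competitor $\psi\in\cH_K$ upgrades this critical-point property to the global minimizing identity $\tilde K_{t_i}(\tilde\phi_i)=\inf_{\cH_K}\tilde K_{t_i}$. Given $\epsilon>0$, I then pick $\psi_\epsilon\in\cH_K$ with $\cK(\psi_\epsilon)<\inf_{\cH_K}\cK+\epsilon$. The minimizing property at $\psi_\epsilon$ gives
\[
\tilde K_{t_i}(\tilde\phi_i)\leq t_i\cK(\psi_\epsilon)+(1-t_i)\mathbb{J}(\psi_\epsilon),
\]
while $\mathbb{J}(\tilde\phi_i)\geq 0$ together with $\cK(\tilde\phi_i)\geq \inf_{\cH_K}\cK$ furnishes the matching lower bound
\[
\tilde K_{t_i}(\tilde\phi_i)\geq t_i\cK(\tilde\phi_i)\geq t_i\inf_{\cH_K}\cK.
\]
Sending $t_i\to 1$ and then $\epsilon\to 0$ yields $\tilde K_{t_i}(\tilde\phi_i)\to\inf_{\cH_K}\cK$, which is the first line of \eqref{j100}. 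Dropping the nonnegative term $(1-t_i)\mathbb{J}(\tilde\phi_i)$ from the left of the same minimizing inequality produces
\[
\cK(\tilde\phi_i)\leq \cK(\psi_\epsilon)+\frac{1-t_i}{t_i}\mathbb{J}(\psi_\epsilon),
\]
so $\limsup_i \cK(\tilde\phi_i)\leq \inf_{\cH_K}\cK$, giving the second line after noting the trivial reverse inequality. The third line is then immediate from the identity $(1-t_i)\mathbb{J}(\tilde\phi_i)=\tilde K_{t_i}(\tilde\phi_i)-t_i\cK(\tilde\phi_i)$ since both terms on the right converge to $\inf_{\cH_K}\cK$.

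The only real subtlety is the first step, namely promoting $\tilde\phi_i$ from a critical point of $\tilde K_{t_i}$ to a global minimizer over all of $\cH_K$. For $t_i<1$ the functional $\tilde K_{t_i}$ is strictly convex modulo the $G$-action along $C^{1,\bar 1}$ geodesics, but these geodesics are typically only $C^{1,\bar 1}$ in their interior, so the convexity comparison against a general smooth competitor $\psi\in\cH_K$ must be carried out in the $\cE_1$ setting and then pushed back to $\cH_K$. This relies on the $\cE_1$-extension and $d_1$-lower semicontinuity of $\cK$ supplied by Lemma~\ref{hl02}, together with Darvas-type smooth approximation along the geodesic; once this is in place the remainder of the proof is purely convex-analytic bookkeeping.
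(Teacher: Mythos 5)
Your proof is correct and follows essentially the same route as the paper: both arguments use convexity of $\tilde K_{t}$ to identify $\tilde\phi_i$ as a global minimizer, then pinch $\tilde K_{t_i}(\tilde\phi_i)$ between $t_i\inf_{\cH_K}\cK$ (via $\mathbb{J}\geq 0$) and $t_i\cK(\phi^\epsilon)+(1-t_i)\mathbb{J}(\phi^\epsilon)$ for a near-minimizer $\phi^\epsilon$ of $\cK$. Your added remark about upgrading the critical point to a global minimizer via convexity along $C^{1,\bar 1}$ geodesics is a subtlety the paper passes over silently, but it does not change the argument.
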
 
\begin{proof}
Since $\cK$ is bounded below over $\cH_K$, we can choose $\phi^\epsilon \in\cH_K$ such that $\cK(\phi^\epsilon)\leq \inf_{\cH_K} \cK+\epsilon$.
Note that $t\cK+(1-t)\mathbb{J}$ is convex and hence $\tilde \phi_i$ minimizes $\tilde K_{t_i}$. 
Hence
\[ \tilde K_{t_i}(\tilde \phi_i)\leq t_i\cK(\phi^\epsilon)+(1-t_i)\mathbb{J}(\phi^\epsilon).
\]
It follows that 
\[
\lim\sup_{i\rightarrow\infty} \tilde K_{t_i}(\tilde \phi_i)\leq \cK(\phi^\epsilon)\leq \inf_{\cH_K} \cK+\epsilon.
\]
On the other hand, since $\mathbb{J}\geq 0$, for $i$ sufficiently large,
\[
t_i \inf_{\cH_K} \cK\leq t_i\cK(\tilde \phi_i)\leq \tilde K_{t_i}(\tilde\phi_i)\leq \inf_{\cH_K} \cK+\epsilon
\]
This proves all three statements in \eqref{j100}. 
\end{proof}
As a direct consequence of properness with respect to $d_{1, G}$ and Lemma \ref{kenergy1}, this gives the desired distance bound modulo $G$.
\begin{cor}We have the  bound on $d_{1, G}$, 
\[
\sup_i d_{1, G}(0, \tilde \phi_i)<\infty. 
\]
\end{cor}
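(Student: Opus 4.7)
The plan is to derive this directly from the reduced properness hypothesis together with the convergence statement in Lemma \ref{kenergy1}. By Lemma \ref{kenergy1}, we have
\[
\cK(\tilde\phi_i) \longrightarrow \inf_{\cH_K} \cK(\phi) \quad \text{as } t_i \to 1,
\]
so in particular the sequence $\{\cK(\tilde\phi_i)\}$ is uniformly bounded from above (and trivially from below, since we are assuming $\cK$ is bounded below on $\cH_K$, which is part of Definition \ref{rp}). Recall also that $\tilde\phi_i \in \cH_K^0$ by construction (we are solving \eqref{continuity1} within the space of $K$-invariant normalized potentials).

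Now I would apply the contrapositive of condition (2) in Definition \ref{rp} of reduced properness. That condition asserts: for any sequence $\phi_j \in \cH_K^0$, if $d_{1,G}(0,\phi_j) \to \infty$ then $\cK(\phi_j) \to \infty$. Equivalently, if $\cK(\phi_j)$ remains bounded above along a sequence in $\cH_K^0$, then $d_{1,G}(0,\phi_j)$ must remain bounded. Applying this equivalence to the sequence $\{\tilde\phi_i\}\subset \cH_K^0$, the uniform upper bound on $\cK(\tilde\phi_i)$ forces
\[
\sup_i d_{1,G}(0,\tilde\phi_i) < \infty,
\]
which is the desired conclusion.

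There is no real obstacle here: the content is essentially a one-line contrapositive, and every ingredient (boundedness of $\cK(\tilde\phi_i)$, membership in $\cH_K^0$, the definition of $d_{1,G}$) has already been set up. The only thing worth double-checking in the write-up is that the normalization conventions match, namely that the $\tilde\phi_i$ produced by Chen's continuity path do lie in $\cH_K^0 = \cH_K \cap \mathbb{I}^{-1}(0)$, so that the group-reduced distance $d_{1,G}(0,\cdot)$ introduced in Definition \ref{rp} is the correct object to invoke; this is indeed the setup used in the preceding lemmas.
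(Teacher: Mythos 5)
Your argument is correct and is exactly the paper's intended reasoning: the paper states this corollary as a ``direct consequence of properness with respect to $d_{1,G}$ and Lemma \ref{kenergy1}'' without further proof, and your write-up simply spells out that contrapositive (together with the observation that $\tilde\phi_i\in\cH^0_K$ and that $\cK(\tilde\phi_i)$ is bounded above by Lemma \ref{kenergy1}). The only pedantic point one could add is that the contrapositive is applied to an arbitrary subsequence realizing $\sup_i d_{1,G}(0,\tilde\phi_i)=\infty$, but this is immediate.
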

Hence we can find a $\sigma_i\in G, \phi_i\in \cH^0_K$ such that
\begin{equation}
\eta_{\phi_i}=\sigma_i^*(\eta_{\tilde \phi_i});\; \omega_{\phi_i}=\sigma_i^*{\omega_{\tilde \phi_i}},\;\text{and}\; \sup_{i} d_1(0, \phi_i)<\infty. 
\end{equation}
With the uniform bound of distance $d_1(0, \phi_i)$, we need to show that $\phi_i$ converges uniformly. 
We show that the entropy of $\phi_i$ is uniformly bounded in the next,
\begin{prop}\label{e1000}
We have
\begin{equation}\label{entropy100}
H_\infty:=\sup_in!H(\phi_i)=\sup_i \int_M \log \frac{\omega_{\phi_i}^n}{\omega^n} \omega_{\phi_i}^n<\infty
\end{equation}
\end{prop}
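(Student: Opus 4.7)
The strategy is to exploit the $G$-invariance of the $\mathcal{K}$-energy (under the reduced-properness hypothesis) together with Chen's decomposition $\mathcal{K} = H + \mathbb{J}_{-Ric}$ and the $d_1$-Lipschitz bound for $\mathbb{J}_{-Ric}$, so that the entropy bound for $\phi_i$ is extracted from the already-established bounds on $\mathcal{K}(\tilde\phi_i)$ and on $d_1(0,\phi_i)$.

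First, note that reduced properness (Definition \ref{rp}) requires the Futaki invariant to vanish, so by Lemma \ref{fm02} the $\mathcal{K}$-energy is $G$-invariant on $\mathcal{H}_0$. Since $\phi_i \in \mathcal{H}_K^0$ is the normalized transverse K\"ahler potential of $\sigma_i^{*}\omega_{\tilde\phi_i}$ with $\sigma_i \in G$, this invariance gives
\[
\mathcal{K}(\phi_i) \;=\; \mathcal{K}(\tilde\phi_i).
\]
By Lemma \ref{kenergy1}, $\mathcal{K}(\tilde\phi_i) \to \inf_{\mathcal{H}_K}\mathcal{K}$, so in particular $\sup_i \mathcal{K}(\phi_i) < \infty$.

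Next, write the Chen decomposition \eqref{kenergy0}
\[
\mathcal{K}(\phi_i) \;=\; H(\phi_i) + \mathbb{J}_{-Ric}(\phi_i).
\]
The functional $\mathbb{J}_{-Ric}$ satisfies a $d_1$-Lipschitz-type bound $|\mathbb{J}_{-Ric}(\phi)| \leq C\, d_1(0,\phi) + C'$, which follows, exactly as noted in the paper just before Proposition \ref{e1000}, from the equivalence $d_1 \sim I_1$ of Theorem \ref{pluri01} together with the continuity assertions of Lemma \ref{hl02} (the K\"ahler prototype being in \cite{BDL}). Since $\sigma_i$ was chosen precisely so that $\sup_i d_1(0,\phi_i) < \infty$, we obtain $|\mathbb{J}_{-Ric}(\phi_i)| \leq C''$ uniformly. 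Combining the two displays,
\[
H(\phi_i) \;=\; \mathcal{K}(\phi_i) - \mathbb{J}_{-Ric}(\phi_i) \;\leq\; C''',
\]
which is the desired bound $H_\infty < \infty$.

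\textbf{Where the work actually sits.} The proof itself is a short composition once the preceding structural results are in place; the genuine obstacles have all been paid for earlier. The two non-trivial ingredients are (i) the $G$-invariance of $\mathcal{K}$, which is the reason the Futaki hypothesis has to be baked into Definition \ref{rp}, and (ii) the $d_1$-Lipschitz control of $\mathbb{J}_{-Ric}$, which in the Sasaki category rests on the transverse pluripotential theory of \cite{HL} and in particular on the comparison \eqref{d01}. Everything else is algebraic rearrangement of Chen's formula. Note that we need only an upper bound on $H(\phi_i)$; a lower bound is automatic from Jensen applied to $x\log x$ and the normalization $\int_M e^{F_{\phi_i}}\,dv_g = \mathrm{Vol}(M)$.
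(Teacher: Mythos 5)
Your argument is correct and coincides with the paper's own proof: both use Lemma \ref{fm02} to get $\cK(\phi_i)=\cK(\tilde\phi_i)$, hence $\sup_i\cK(\phi_i)<\infty$ from Lemma \ref{kenergy1}, and then bound $H(\phi_i)=\cK(\phi_i)-\mathbb{J}_{-Ric}(\phi_i)$ using the uniform control of $|\mathbb{J}_{-Ric}|$ coming from $\sup_i d_1(0,\phi_i)<\infty$. The only additions are your explicit remarks on where the $d_1$-Lipschitz bound for $\mathbb{J}_{-Ric}$ comes from and on the automatic lower entropy bound, both of which are consistent with the paper.
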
 
\begin{proof}
By Lemma \ref{fm02}, $\cK$ is invariant under the action of $G$. Hence  \[\sup_i\cK(\phi_i)=\sup_i \cK(\tilde \phi_i)<\infty.\]
Recall 
\[
\cK(\phi)=H(\phi)+\mathbb{J}_{-Ric}(\phi)
\]
Since  $|\mathbb{J}_{-Ric}|$ is uniformly  bounded when $\sup_id_1(0, \phi_i)<\infty$. This implies that $H_\infty<\infty.$ \end{proof}

We need to consider the equation which $\phi_i$ satisfies. Denote $\omega_i=\sigma_i^* (\omega_T)=\omega^T+\sqrt{-1}\p_B\bar\p_B h_i$, with the normalization $\sup h_i=0$ (note that $h_i$ is in $\cH_K$, but not in $\cH^0_K$ in general). 

\begin{lemma}\label{l100}The potential $\phi_i$ satisfies the following equations
\begin{equation}\label{equation-i0}
\begin{split}
&\omega_{\phi_i}^n\wedge \eta=e^{F_i}\omega^n_T\wedge \eta \\
&\Delta_{\phi_i} F_i=(\underline R-\frac{1-t_i}{t_i}n)+\text{tr}_{\phi_i}(Ric(\omega^T)-\frac{1-t_i}{t_i}\omega_i),
\end{split}
\end{equation}
\end{lemma}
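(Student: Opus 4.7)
The plan is to derive \eqref{equation-i0} as the $G$-equivariant pullback of the continuity equation \eqref{continuity1} (equivalently \eqref{continuity2}) that $\tilde\phi_i$ is known to satisfy. The key observation is that because $\sigma_i\in G=\text{Aut}_0(\xi,J)$ preserves both the Reeb vector field and the transverse complex structure, it descends to a biholomorphism of each local leaf space $V_\alpha$ and acts naturally on basic forms. Consequently, for any basic transverse K\"ahler potential $\psi$ and any basic real $(1,1)$-form $\chi$, one has the transformation rules
\[
R_{\sigma_i^*\omega_\psi}=R_{\omega_\psi}\circ\sigma_i,\quad Ric(\sigma_i^*\omega_\psi)=\sigma_i^*Ric(\omega_\psi),\quad \text{tr}_{\sigma_i^*\omega_\psi}(\sigma_i^*\chi)=(\text{tr}_{\omega_\psi}\chi)\circ\sigma_i.
\]
Setting $\omega_i:=\sigma_i^*\omega^T=\omega^T+\sqrt{-1}\p_B\bar\p_B h_i$ and applying these rules to $R_{\tilde\phi_i}-\underline R=\frac{1-t_i}{t_i}(\text{tr}_{\tilde\phi_i}\omega^T-n)$, one obtains after evaluating at $\sigma_i(x)$ the identity
\[
R_{\phi_i}-\underline R=\frac{1-t_i}{t_i}\bigl(\text{tr}_{\phi_i}\omega_i-n\bigr).
\]

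Next I would unpack the definition of $F_i$. The first equation of \eqref{equation-i0} is simply the definition $\omega_{\phi_i}^n\wedge\eta=e^{F_i}\omega_T^n\wedge\eta$, which makes sense because $\phi_i$ is basic and $\sigma_i$ preserves $\xi$, so the volume form $\eta_{\phi_i}\wedge\omega_{\phi_i}^n$ equals $\sigma_i^*(\eta_{\tilde\phi_i}\wedge\omega_{\tilde\phi_i}^n)$ (the exact and harmonic contributions to $\sigma_i^*\eta-\eta$ recalled in Section~2 contribute nothing to the top-degree basic volume). The usual log-determinant computation in a foliation chart yields $Ric(\omega_{\phi_i})=Ric(\omega^T)-\sqrt{-1}\p_B\bar\p_B F_i$, and tracing against $\omega_{\phi_i}$ gives
\[
\Delta_{\phi_i}F_i=\text{tr}_{\phi_i}Ric(\omega^T)-R_{\phi_i}.
\]
Substituting the expression for $R_{\phi_i}$ from the previous paragraph produces the second equation of \eqref{equation-i0}.

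I do not anticipate a serious obstacle; the lemma is a formal consequence of the $G$-equivariance of the continuity path. The one bookkeeping point to watch is the asymmetric role played by the two copies of $\omega^T$ in the original equation. One copy enters only implicitly as the reference metric of the Monge-Amp\`ere log-volume ratio and appears in the final formula as $Ric(\omega^T)$; the other copy is the explicit twisting term $\frac{1-t_i}{t_i}\text{tr}_{\tilde\phi_i}\omega^T$ and gets transported to $\omega_i=\sigma_i^*\omega^T$. This mismatch is precisely why the twisting form in \eqref{equation-i0} is $\omega_i$ rather than $\omega^T$, and it is exactly the feature that makes the $t_i\to 1$ limit analysis in the subsequent sections substantive rather than a tautology.
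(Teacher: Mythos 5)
Your argument is correct and is precisely the computation the paper leaves implicit (its proof of Lemma \ref{l100} is a one-line citation of \cite{CC3}): pull back the continuity equation by $\sigma_i$ using naturality of the transverse scalar curvature and of the trace under elements of $\text{Aut}_0(\xi,J)$, then combine with $Ric(\omega_{\phi_i})=Ric(\omega^T)-\sqrt{-1}\p_B\bar\p_B F_i$. One bookkeeping remark: your substitution actually yields $\Delta_{\phi_i}F_i=-(\underline R-\frac{1-t_i}{t_i}n)+\text{tr}_{\phi_i}(Ric(\omega^T)-\frac{1-t_i}{t_i}\omega_i)$, with the constant term carrying the opposite sign to the one displayed in \eqref{equation-i0}; since this agrees with \eqref{continuity2} and with the general form \eqref{scalar1}, the sign in the statement of the lemma appears to be a typo rather than an error in your derivation, and it is harmless for the subsequent estimates, which only use $\max|h|$.
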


\begin{proof}This is a direct adaption of computation in Chen-Cheng \cite{CC3}[Lemma 3.1]; we skip the details
\end{proof}

With the preparation above, we can then state the main theorem which gives a smooth cscs.
\begin{thm}\label{con100}When $t_i\rightarrow 1$, $\omega_{\phi_i}$ converges smoothly to a smooth transverse K\"ahler metric $\omega_{\phi}$ with constant scalar curvature. 
\end{thm}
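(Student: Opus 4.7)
The plan is to view the equation in Lemma \ref{l100} as an instance of \eqref{scalar4} whose perturbation vanishes as $t_i \to 1$, then apply Theorem \ref{main01} to extract uniform a priori estimates, and finally pass to the limit using the pluripotential results of Section 2.

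Set $\beta_0 := \frac{1-t_i}{t_i}\omega^T$, $f_i := \frac{1-t_i}{t_i} h_i$, and right-hand-side constant $h_{\textrm{RHS}} := \underline R - \frac{1-t_i}{t_i}n$, so that $\beta := \beta_0 + \sqrt{-1}\p_B\bar\p_B f_i = \frac{1-t_i}{t_i}\omega_i \geq 0$ and the equation \eqref{equation-i0} takes exactly the form \eqref{scalar4}. Since $\sup h_i = 0$ we have $\sup f_i = 0$, and $|\beta_0|_g$, $|h_{\textrm{RHS}}|$ are manifestly uniformly bounded. The entropy bound $H(\phi_i) \leq C$ is Proposition \ref{e1000}, and $\sup_i d_1(0,\phi_i) < \infty$ is the corollary immediately preceding Lemma \ref{l100}. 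For the integrability hypothesis $\int_M e^{-p_0 f_i}\,dv_g \leq C$, I would invoke a Sasaki analogue of the Tian/Hörmander--Skoda $\alpha$-invariant applied leafwise: there exists $\alpha > 0$ depending only on $(M,\xi,\omega^T)$ such that $\int_M e^{-\alpha h}\,dv_g \leq C$ for every $h \in \textrm{PSH}(M,\xi,\omega^T)$ with $\sup h = 0$. Since $(1-t_i)/t_i \to 0$, for $i$ large we have $p_0 (1-t_i)/t_i < \alpha$, whence the desired uniform bound. Theorem \ref{main01} then yields $\|F_i + f_i\|_{W^{1,2p}} + \|n + \Delta\phi_i\|_{L^p} \leq C$ for every $p \leq p_0 - 1$, uniformly in $i$.

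Next, using the uniform $d_1$ and entropy bounds, Lemma \ref{compactness} produces (after extracting a subsequence) a $d_1$-limit $\phi_\infty \in \cE_1$. The $L^p$ bound on $n + \Delta\phi_i$, together with the $C^0$ bound on $\phi_i$ obtained from Lemma \ref{cma0} (using $e^{F_i} \leq C(n + \Delta\phi_i)^n$ via arithmetic--geometric inequality), promotes this to weak $W^{2,p}$ and hence $C^{1,\alpha}$ convergence. The perturbing $\beta$-term carries the vanishing prefactor $(1-t_i)/t_i$, so in the limit $(\phi_\infty, F_\infty)$ satisfies the pure cscs equation, namely \eqref{scalar01} with $\beta = 0$ and $h = \underline R$. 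Applying Theorem \ref{main00} to this limiting equation yields $C_0^{-1} \leq n + \Delta\phi_\infty \leq C_0$ and $\|\phi_\infty\|_{C^k} \leq C_k$, so $\phi_\infty \in \cH_K$ is a smooth cscs potential.

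Finally, for smooth convergence of the sequence itself, once $(1-t_i)/t_i$ is small enough the $f_i$-correction has both $L^\infty$-small gradient contribution after passing to $F_i$ and uniformly bounded high $L^p$ norms inherited from $W^{1,2p}$ control of $F_i + f_i$; combining this with Theorem \ref{second} (the Sasaki adaption of \cite{chenhe12}) gives a uniform two-sided bound on $n + \Delta\phi_i$ for $i$ large, and standard elliptic bootstrap then promotes the convergence to $C^\infty$. The main obstacle is that the gauge representatives $\sigma_i \in G$ may exit every compact set, so $h_i$ need not be controlled in any classical norm; the whole argument hinges on trading this lack of control against the smallness of $(1-t_i)/t_i$ via an $\alpha$-invariant type bound -- this is precisely the trade-off Chen's continuity path is built to exploit, since the $\mathbb J$-component and its associated $\beta$-term disappear exactly at $t=1$.
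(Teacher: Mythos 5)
Your overall strategy coincides with the paper's: recognize \eqref{equation-i0} as an instance of \eqref{scalar4} with $\beta_0=\frac{1-t_i}{t_i}\omega^T$ and $f_i=\frac{1-t_i}{t_i}h_i$, verify $\int_M e^{-p_0f_i}dv_g\leq C$ via the $\alpha$-invariant, invoke Theorem \ref{main01}, and pass to the limit. But there is a genuine gap at the point where you identify the limit equation, and it is precisely the step on which the paper spends most of its effort.

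The a priori estimates control $F_i+f_i$, not $F_i$, while the volume-ratio equation $\omega_{\phi_i}^n\wedge\eta=e^{F_i}\omega_T^n\wedge\eta$ involves $F_i=(F_i+f_i)-f_i$. To conclude that the limit satisfies the unperturbed equation \eqref{vol100} one must show $e^{-f_i}\rightarrow 1$ in $L^p$, which reduces to $\int_M|f_i|\,\omega_T^n\wedge\eta\rightarrow 0$. You assert that the perturbation ``carries the vanishing prefactor $(1-t_i)/t_i$'' and that the $f_i$-correction has ``$L^\infty$-small gradient contribution,'' but $f_i=\frac{1-t_i}{t_i}h_i$ where $h_i$ is the potential of $\sigma_i^*\omega^T$ and the gauges $\sigma_i$ may leave every compact subset of $G$, so neither $h_i$ nor $\nabla h_i$ is bounded in any pointwise norm; the smallness of the prefactor alone proves nothing, and the $\alpha$-invariant bound $\int e^{-pf_i}\leq C$ only controls $f_i$ from below in an integral sense (it is used for the hypothesis of Theorem \ref{main01}, not to identify the limit measure). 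The paper establishes the claim $\int_M|f_i|\omega_T^n\wedge\eta\rightarrow 0$ by combining $(1-t_i)\mathbb{J}(\tilde\phi_i)\rightarrow 0$ from Lemma \ref{kenergy1} with the $d_1$-isometry of the $G$-action and Theorem \ref{pluri01} to control $(1-t_i)\int_M|\tilde h_i|$, plus the uniform $L^1$ bound on normalized tpsh functions; only then does $e^{-f_i}\rightarrow 1$ in $L^p$ follow by a generalized dominated convergence argument using $\sup_i\int_Me^{-p'f_i}<\infty$. This entire step is missing from your write-up.

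Two further problems in your final passage. Applying Theorem \ref{main00} to the limiting equation is not legitimate as stated: that theorem is an a priori estimate for smooth $\phi\in\cH$, whereas $\phi_*$ is a priori only a $W^{2,p}$ weak solution. The paper instead passes to the limit in the weak formulation to obtain \eqref{v105}, applies Theorem \ref{2nd} (which requires only $F_*\in W^{1,p}$ and the Monge--Amp\`ere equation) to get the two-sided bound \eqref{ch100}, and then bootstraps. Likewise, your claimed uniform two-sided bound on $n+\Delta\phi_i$ along the sequence for large $i$ fails for the same reason as above: Theorem \ref{second} needs $W^{1,p}$ control of $F_i$ itself, which is not available uniformly in $i$ because $f_i$ is not controlled in $W^{1,p}$. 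What the argument actually yields is that the limit is a smooth cscs, with convergence in the weak topologies listed in the paper's proof.
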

\begin{proof}The argument proceeds almost identical to Chen-Cheng \cite{CC3}[Section 3], given Theorem \ref{main1}. 
Denote 
\[
R_i=\underline{R}-\frac{1-t_i}{t_i}n, \beta_i=\frac{1-t_i}{t_i}\omega_i, (\beta_0)_i=\frac{1-t_i}{t_i}\omega, f_i=\frac{1-t_i}{t_i}h_i.
\]
As in Chen-Cheng \cite{CC3}[Lemma 3.12], we prove that (using Tian's $\alpha$-invariant \cite{Tian97} and its adaption in Sasaki setting \cite{zhang}),  for any $p>1$, there exists $\epsilon_p>0$ such that if $t_i\in (1-\epsilon_p, 1)$, one has
\[
\int_M e^{-pf_i}\omega^n_T\wedge \eta=\int_M e^{-p\frac{1-t_i}{t_i}h_i}\omega^n_T\wedge \eta=\left(\int_M e^{-\alpha h_i}\omega^n_T\wedge \eta\right)^{p\frac{1-t_i}{\alpha t_i}} \text{Vol}(M)^{1-p\frac{1-t_i}{\alpha t_i}}\leq C
\]
for a constant $C$ uniformly bounded (independent of $p$ due to the choice of $\epsilon_p$). Hence Theorem \ref{main1} applies to get
the following estimate
\[
\|F_i+f_i\|_{W^{1, 2p}}+\|n+\Delta \phi_i\|_{L^p}\leq C_1,
\]
where $C_1=C_1(p, \omega^T, H_\infty)$ (see \eqref{entropy100}). By taking a subsequence, we can pass to the limit to get $K$-invariant functions $\phi_*\in W^{2, p}, F_{*}\in W^{1, p}$ for any $p<\infty$  such that
\begin{equation}
\begin{split}
&\phi_i\rightarrow \phi_*\; \text{in}\; C^{1, \alpha}\; \text{and}\; \sqrt{-1}\p\bar\p \phi_i\rightarrow \p\bar\p \phi_*\;\text{weakly in}\; L^p\\
&F_i+f_i\rightarrow F_{*}\;\text{in}\; C^{\alpha}\; \text{and}\; \nabla(F_i+f_i)\rightarrow \nabla F_*\;\text{weakly in}\; L^p\\
&\omega_{\phi_i}^n\wedge \eta\rightarrow \omega_{\phi_*}^n\wedge \eta\;\text{weakly in}\; L^p.
\end{split}
\end{equation}
It follows that $\phi_*$ is a weak solution of transverse csck in the following sense,
\begin{equation}\label{vol100}\omega^n_{\phi_*}\wedge \eta=e^{F_{*}}\omega^n_T\wedge \eta,\end{equation}
and for any $u\in C^\infty(M)$, we have
\begin{equation}\label{v102}
-\int_M d^c F_*\wedge du\wedge \frac{\omega^{n-1}_{\phi_*}}{(n-1)!}\wedge \eta=-\int_M u(\underline R)\frac{\omega_{\phi_*}^n}{n!}\wedge \eta+uRic\wedge \frac{\omega^{n-1}_{\phi_*}}{(n-1)!}\wedge \eta.
\end{equation}
Next we claim that $\int_M|f_i|\omega^n\rightarrow 0$. Given the claim, it follows that $e^{-f_i}\rightarrow 1$ in $L^p$ for any $p<\infty$ by a modified Lebesgue's dominated convergence theorem since $\sup_i\int_Me^{-p^{'}f_i}\omega^n<\infty$  (take $p<p^{'}$). 

By \eqref{j100},  $(1-t_i)\mathbb{J}(\tilde \phi_i)\rightarrow 0$ when $i\rightarrow \infty$. This implies that $(1-t_i)d_1(0, \tilde \phi_i)\rightarrow 0$. 
Note that we have the normalization condition $\sup h_i=0$. Denote $\tilde h_i=h_i-\mathbb{I}(h_i)\text{Vol}^{-1}(M)\in \cH_0$.  It follows that
\[
d_1(0, \tilde h_i)-d_1(0, \phi_i)\leq d_1(\tilde h_i,  \phi_i)=d_1(\sigma[0], \sigma[\tilde\phi_i])=d_1(0, \tilde \phi_i),
\]
in the last step we know that $G$ acts on $\cH_0$ isometrically. Hence we get $(1-t_i)d_1(0, \tilde h_i)\rightarrow 0$. By Theorem \ref{pluri01}, this implies that
$(1-t_i)\int_M |\tilde h_i|\omega^n\rightarrow 0.$ Since $\sup h_i=0$, we have for a uniformly bounded $C$, 
\[
0\leq \int_M (-h_i)\omega^n\leq C
\]
We have $\mathbb{I}(h_i)\leq 0$ and \[\mathbb{I}(h_i)-\int_M h_i\frac{\omega^n_T}{n!}\wedge \eta=\mathbb{J}(h_i)\geq 0\]
It follows that $\mathbb{I}(h_i)$ is uniformly bounded, and hence $\int_M|(1-t_i)h_i|\omega^n_T\wedge \eta\rightarrow 0$. This proves the claim $\int_M|f_i|\omega^n_T\wedge \eta\rightarrow 0$. This in particular proves \eqref{vol100}. By Theorem \ref{2nd}, we know that there exists a uniform constant $C_0>1$ such that
\begin{equation}\label{ch100}
C_0^{-1}\omega^T \leq \omega_{\phi_*}\leq C_0\omega^T, \phi_*\in W^{3, p}.
\end{equation}
By \eqref{equation-i0}, we have
\[\Delta_{\phi_i} (F_i+f_i)=(\underline R-\frac{1-t_i}{t_i}n)+\text{tr}_{\phi_i}(Ric(\omega^T)-\frac{1-t_i}{t_i}\omega^T)
\]
For any smooth function $u$, we write
\begin{equation*}
\int_M  (F_i+f_i) d^cdu\wedge \frac{\omega^{n-1}_{\phi_i}}{(n-1)!}\wedge \eta=-\int_M u(\underline R-\frac{1-t_i}{t_i}n)\frac{\omega_{\phi_i}^n}{n!}\wedge \eta+u(Ric-\frac{1-t_i}{t_i}\omega)\wedge \frac{\omega^{n-1}_{\phi_i}}{(n-1)!}\wedge \eta.
\end{equation*}
Given the convergence of $F_i+f_i\rightarrow F_*$ in $C^\alpha$, $\omega_i^k$ converges weakly and $\phi_i$ converges to $\phi_*$ in $C^\alpha$, we can then pass to the limit to get
\begin{equation}\label{v105}
\int_M  F_* d^cdu\wedge \frac{\omega^{n-1}_{\phi_*}}{(n-1)!}\wedge \eta=-\int_M u\underline R\frac{\omega_{\phi_i}^n}{n!}\wedge \eta+u Ric\wedge \frac{\omega^{n-1}_{\phi_*}}{(n-1)!}\wedge \eta.
\end{equation}
The standard elliptic theory together with \eqref{ch100} implies that $\phi_*$ defines a smooth transverse csck. 
\end{proof}

\subsection{Existence of cscs implies properness modulo $G$}\label{p1000}
The main result of this section is to prove the following regularity result, which generalizes Chen-Cheng's result \cite{CC2}[Theorem 5.1] to Sasaki case. 
\begin{thm}\label{regularity1}Let $\phi_*\in \cE_1$ be a minimizer of $\cK$ over $\cE^1$. Then $\phi_*$ is smooth and it defines a smooth cscs. 
\end{thm}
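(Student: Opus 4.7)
The plan is to follow the Chen-Cheng regularity strategy of \cite{CC2}, adapted to the Sasaki setting, by producing smooth solutions of a family of perturbed cscs equations that approximate $\phi_*$ and then applying the flexible a priori estimates of Theorem \ref{main1}.

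Since $\phi_*$ minimizes $\cK$ on $\cE_1$, $\cK(\phi_*)<\infty$ forces $H(\phi_*)<\infty$. Using Theorem \ref{hl01}, I would fix a decreasing smooth approximation $u_k\in\cH$ with $u_k\searrow\phi_*$ and $d_1(u_k,\phi_*)\to 0$, normalized by $\sup u_k=0$. For each $k$ and $t$ close to $1$, consider the perturbed Chen continuity path with base form $\omega_{u_k}$,
\[
t(R_\phi-\underline R)=(1-t)\bigl(\text{tr}_\phi\omega_{u_k}-n\bigr),
\]
whose smooth solutions minimize $\tilde K_t^{(k)}(\phi):=t\cK(\phi)+(1-t)\mathbb{J}_{\omega^T,\omega_{u_k}}(\phi)$ over $\cH_K$. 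The openness results (Theorem \ref{open1}, Corollary \ref{open2}) propagated by the closedness coming from the a priori estimates below produce, for $t=t_k$ sufficiently close to $1$, a smooth minimizer $\phi_k\in\cH_K$.

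This equation fits the flexible template \eqref{scalar4} with $\beta_0=0$ and $f_k=(1-t_k)u_k$, so $\sup f_k=0$, and Tian's $\alpha$-invariant in the Sasaki setting (\cite{zhang}) gives $\int_M e^{-p_0 f_k}\,dv_g\le C$ uniformly provided $p_0(1-t_k)$ stays below the $\alpha$-invariant. The entropy bound $H(\phi_k)\le C$ follows from the minimization property: $\cK(\phi_k)\le\tilde K^{(k)}_{t_k}(\phi_k)\le\tilde K^{(k)}_{t_k}(u_k)$, and the right-hand side is uniformly bounded via the $d_1$-continuity of $\cK$ on $\cE_1$ (Lemma \ref{hl02}) and the $d_1$-boundedness of $\{u_k\}$. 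Theorem \ref{main1} then gives uniform $L^p$ bounds on $n+\Delta\phi_k$ and $W^{1,2p}$ bounds on $F_k+f_k$, which Theorem \ref{second} upgrades to uniform $C^{j,\alpha}$ bounds for every $j$.

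Extracting a diagonal subsequence I would get $\phi_k\to\phi_\infty\in\cH$ smoothly, with $R^T_{\phi_\infty}=\underline R$. To conclude $\phi_\infty=\phi_*$ in $\cE_1$, observe that $\tilde K^{(k)}_{t_k}(\phi_k)\to\inf\cK=\cK(\phi_*)$, so $\phi_k$ is an asymptotic minimizer; by the $d_1$-lsc extension of $\cK$ (Lemma \ref{hl02}) together with the compactness criterion (Lemma \ref{compactness}), $d_1(\phi_k,\phi_*)\to 0$ along the subsequence, hence $\phi_\infty=\phi_*$. The hard part is exactly this last identification: marrying the smooth $C^\infty$-limit with the metric $d_1$-limit inside $\cE_1$, which leans heavily on the pluripotential-theoretic infrastructure (Theorem \ref{pluri01}, Lemmas \ref{compactness} and \ref{hl02}) developed in \cite{HL}. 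A secondary technical point is arranging the normalization and the choice of $t_k$ so that the $\alpha$-invariant integrability is preserved uniformly in $k$, especially in the irregular Reeb case where the foliation charts behave more delicately.
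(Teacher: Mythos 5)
Your overall architecture (approximate $\phi_*$ from $\cH$, run Chen's continuity path twisted by the approximants, apply the a priori estimates, identify the limit) matches the paper's, but two steps as written do not go through, and they are exactly where the paper's proof does its real work.

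First, the uniform entropy bound. You assert that $H(\phi_k)\le C$ follows from $\cK(\phi_k)\le \tilde K^{(k)}_{t_k}(u_k)\le C$. But $\cK=H+\mathbb{J}_{-Ric}$, and $\mathbb{J}_{-Ric}(\phi_k)$ is only controlled in terms of $d_1(0,\phi_k)$; an upper bound on $\cK$ yields an upper bound on $H$ only after you bound $d_1(0,\phi_k)$, and nothing a priori prevents $d_1(0,\phi_k)\to\infty$ --- ruling that out without assuming properness is the whole difficulty. The paper closes this by plugging the minimizer $\phi_*$ itself into the variational inequality for $\tilde K^i_t$: since $\phi^t_i$ minimizes $\tilde K^i_t$ and $\phi_*$ minimizes $\cK$, one gets $0=\mathbb{J}_i(\phi_i)\le \mathbb{J}_i(\phi_i^t)\le \mathbb{J}_i(\phi_*)\le \frac{1}{n+1}I_i(\phi_*)\le C\,d_1(\phi_i,\phi_*)\to 0$, hence $I_i(\phi_i^t)\to 0$; then Lemma \ref{ifunctional} gives $I_\omega(\phi_i^t)\le C_n\bigl(I_\omega(\phi_i)+I_i(\phi_i^t)\bigr)$, which yields the $d_1$ bound and only then the entropy bound. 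This squeeze is absent from your write-up, and without it your appeal to ``closedness coming from the a priori estimates below'' is circular.

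Second, you bound $\tilde K^{(k)}_{t_k}(u_k)$ by invoking the ``$d_1$-continuity of $\cK$''. By Lemma \ref{hl02} the $\cK$-energy is only $d_1$-lower semicontinuous on $\cE_1$ (it is the greatest lsc extension); only $\mathbb{J}_{-Ric}$ is $d_1$-continuous. A decreasing approximation from Theorem \ref{hl01} with $d_1(u_k,\phi_*)\to 0$ need not satisfy $\cK(u_k)\to\cK(\phi_*)$, and $\cK(u_k)$ could blow up. This is why the paper approximates via Lemma \ref{approx13}, which produces $\phi_i\in\cH$ with $d_1(\phi_i,\phi_*)\to 0$ \emph{and} $H(\phi_i)\to H(\phi_*)$, whence $\cK(\phi_i)\to\cK(\phi_*)$. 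A more minor structural difference: once the entropy bound is in place, the paper closes the path at $t=1$ for each fixed $i$ (Theorem \ref{main0} suffices, since the twist $\frac{1-t}{t}\omega_{\phi_i}$ is a bounded smooth form for fixed $i$), obtaining genuine smooth cscs potentials $u_i$, and then identifies $\lim u_i$ with $\phi_*$ via the $I$-functional and Lemma \ref{compactness}; your diagonal scheme with $t_k<1$, the $\alpha$-invariant and Theorem \ref{main1} is the machinery of the ``properness implies existence'' argument and is not needed here.
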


\begin{rmk}In K\"ahler setting, it is a conjecture of Darvas-Rubinstein \cite{DR} that a minimizer of $\cK$-energy in $\cE^1$ is a smooth csck and it was proved by Chen-Cheng \cite{CC2}. Earlier the author, together with  Y. Zeng \cite{HZ}, proved partly Chen's conjecture that a $C^{1, 1}$ minimizer of $\cK$ is a smooth csck.  Our argument is Sasaki counterpart of Chen-Cheng  \cite{CC2}[Theorem 5.1]. This result is a perfect example as a delicate combination of a priori estimates from PDE and pluripotential theory. 
\end{rmk}

\begin{proof}Let $\phi_*$ be a minimizer of $\cK$ over $\cE^1$. By Lemma \ref{approx13} below, we can find a sequence $\phi_i\in \cH$ such that $d_1(\phi_i, \phi_*)\rightarrow 0$ and also the entropy converges, $H(\phi_i)\rightarrow H(\phi_*)$. Since $\mathbb{J}_{-Ric}$ is $d_1$ continuous, it follows that $\cK(\phi_i)\rightarrow \cK(\phi_*)$. 
We consider the following continuity path, for each $i$, 
\begin{equation}\label{continuity3}
t(R_\phi-\underline{R})-(1-t)(\text{tr}_\phi \omega_{\phi_i}-n)=0. 
\end{equation}
The openness follows from Theorem \ref{open1} and Corollary \ref{open2} by taking $\alpha=\omega_{\phi_i}$. Denote the functionals $\mathbb{J}_i$, $\mathbb{I}_i, J_i, I_i$ to be the corresponding functionals with the base transverse K\"ahler form as $\omega_{\phi_i}$. 
By definition we have
\[\mathbb{J}_i(\phi)=\mathbb{J}(\omega_{\phi_i}, \omega_\phi), I_i(\phi)=I(\omega_{\phi_i}, \omega_\phi), J_i(\phi)=J(\omega_{\phi_i}, \omega_\phi)\]
The corresponding functional for the path \eqref{continuity3} is
\[
\tilde K_t^i:=t\cK+(1-t)\mathbb{J}_i.
\]
Since for each $i$, $\mathbb{J}_i$ is proper, hence arguing exactly as in  Lemma \ref{kbelow1}, we can get a unique smooth solution $\phi_i^t$ to the equation \eqref{continuity3}, for $t\in [0, 1)$ with $\mathbb{I}_\omega(\phi_i^t)=0$. Note that $\phi^t_i$ minimizes $\tilde K_t^i$ over $\cE_1.$  It follows that for any $\phi\in \cE_1$
\begin{equation}\label{min1}
t\cK(\phi_i^t)+(1-t_i)\mathbb{J}_i(\phi_i^t)\leq t \cK(\phi)+(1-t)\mathbb{J}_i(\phi)
\end{equation}
Taking $\phi=\phi_i$ in \eqref{min1} (noting that $\phi_i$ minimizes $\mathbb{J}_i(\phi)$), we have
\begin{equation}\label{k01}
0=\mathbb{J}_i(\phi_i)\leq \mathbb{J}_i(\phi_i^t), \;\text{and hence}\; \cK(\phi_i^t)\leq \cK(\phi_i)
\end{equation}
Taking $\phi=\phi_*$ in \eqref{min1} (noting that $\phi_*$ minimizes $\cK$), we have
\[
\mathbb{J}_i(\phi_i^t)\leq \mathbb{J}_i(\phi_*).
\]
In other words, we have
\begin{equation}
0=\mathbb{J}_i(\phi_i)\leq \mathbb{J}_i(\phi_i^t)\leq \mathbb{J}_i(\phi_*).
\end{equation}
Hence we have, by Theorem \ref{pluri01}
\[
\mathbb{J}_i(\phi_*)=(I_i-J_i)(\phi_*)\leq \frac{1}{n+1}I_i(\phi_*)\leq Cd_1(\phi_i, \phi_*)\rightarrow 0. 
\]
Note that we have,
\[
\mathbb{J}_i(\phi_i^t)=(I_i-J_i)(\phi_i^t)\geq  \frac{1}{n+1}I_i(\phi_i^t)
\]
Hence it follows that $I_i(\phi_i^t)\rightarrow 0$ when $i\rightarrow \infty$, uniform in $t$. By an adaption of \cite{BBEGZ}[Theorem 1.8] to Sasaki setting (see Lemma \ref{ifunctional}), we have
\[
I_\omega(\phi_i^t)\leq C_n(I_{\omega}(\phi_i)+I_i(\phi_i^t))\leq C
\]
This gives the distance bound  (see Theorem \ref{pluri01}), 
\[
d_1(0, \phi_i^t)\leq CI_{\omega}(\phi_i^t)+C.
\]
Together with the $\cK$-energy bound \eqref{k01}, this gives the uniform upper bound of the entropy $H(\phi_i^t)$, uniformly in $i$ and $t\in (0, 1)$. By Theorem \ref{main0}, we conclude that $\phi^t_i$ converges to a smooth $u_i$ when $t\rightarrow 1$, such that $u_i\in \cH_K$ solves the equation
\[
R_{u_i}-\underline R=0.
\]
In other words, $u_i$ defines a transverse K\"ahler metric for each $i$ and we have $I_i(u_i)\rightarrow 0$ when $i\rightarrow \infty$. Note that $d_1(0, u_i)$ is also uniformly bounded, Lemma \ref{compactness} implies that by passing to a subsequence if necessary, $u_i$ converges to a smooth  potential $u\in \cH_0$. It follows that
\[
I(\omega_u, \omega_{\phi_*})\leq C \left(I_i(u_i)+I(\omega_{\phi_i}, \omega_{\phi_*})\right)\rightarrow 0. 
\] 
This implies that $u$ differs by $\phi_*$ by a constant (\cite{CC2}[Lemma 5.7]). 
\end{proof}

We need an approximation in $d_1$ with convergent entropy in a $K$-invariant way.

\begin{lemma}\label{approx13}Given $u\in \cE_{1, K}$, there exists $u_k\in \cH_K$ such that $d_1(u, u_k)\rightarrow 0$ and $H_\omega(u_k)\rightarrow H_\omega(u)$, where the entropy is defined to be
$H_\omega(u)=\frac{1}{n!}\int_M \log(\frac{\omega_u^n}{\omega^n_T}) \omega^n_T\wedge \eta$. 
\end{lemma}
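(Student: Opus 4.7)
The plan is to combine a decreasing truncation at the level of bounded transverse plurisubharmonic (tpsh) functions with a regularization procedure that respects both the Reeb foliation and the $K$-action, and then to control entropy convergence via a diagonal extraction. All constructions have to be carried out $K$-equivariantly, which is automatic in the steps below because each operation commutes with the compact group action.

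First, given $u\in\cE_{1,K}$, form the truncations $u_k=\max(u,-k)$, which are $K$-invariant bounded tpsh functions that decrease pointwise to $u$. By the effective $d_1$ estimate $d_1(v,w)\leq CI_1(v,w)$ of Theorem \ref{pluri01}, together with dominated convergence applied to $\int|u-u_k|\,\omega_u^n\wedge\eta$ (using $u\in L^1(\omega_u^n\wedge\eta)$), one obtains $d_1(u_k,u)\to 0$. Next, each bounded $u_k$ is regularized via a Demailly-type construction adapted to the Sasaki foliation: work in basic foliation charts, convolve in the transverse directions with a standard smoothing kernel, and average along the Reeb flow to preserve basicness. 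Averaging further over the compact group $K$ preserves the tpsh property and produces smooth $K$-invariant approximants $u_k^\varepsilon\in\cH_K$ that decrease to $u_k$ as $\varepsilon\to 0$. Again by the $I_1$--$d_1$ equivalence, $d_1(u_k^\varepsilon,u_k)\to 0$ as $\varepsilon\to 0$, so a diagonal choice $v_k=u_k^{\varepsilon_k}\in\cH_K$ satisfies $d_1(v_k,u)\to 0$.

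The delicate step is to arrange that $H_\omega(v_k)\to H_\omega(u)$. Lower semicontinuity of the entropy along $d_1$-convergent sequences (the Sasaki analog of the K\"ahler fact, following from weak compactness of Monge-Amp\`ere measures on $\cE_1$) gives immediately
\[
\liminf_{k\to\infty}H_\omega(v_k)\geq H_\omega(u).
\]
For the reverse inequality, one exploits that the truncated sequence $u_k$ decreases monotonically to $u$: the Bedford--Taylor--Demailly continuity of the Monge-Amp\`ere operator along decreasing sequences of bounded tpsh functions, combined with monotone convergence applied to the densities $\log(\omega_{u_k}^n/\omega_T^n)$, yields $H_\omega(u_k)\to H_\omega(u)$. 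Then one chooses $\varepsilon_k$ small enough so that $|H_\omega(v_k)-H_\omega(u_k)|\leq 1/k$, using that for bounded potentials the entropy is continuous under smooth decreasing approximations of Demailly type. This gives $\limsup_k H_\omega(v_k)\leq H_\omega(u)$, completing the proof.

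The main obstacle is the final approximation of bounded tpsh $u_k$ by smooth $u_k^\varepsilon\in\cH_K$ with entropy continuity, because the Monge-Amp\`ere measures of smooth approximants need not dominate or be dominated by those of $u_k$. In the K\"ahler setting this is handled by the Berman--Boucksom--Eyssidieux--Guedj--Zeriahi envelope/variational machinery together with Berman--Darvas--Lu \cite{BDL}; the Sasaki adaptation requires verifying these tools in the basic category, which is precisely the content of the geometric pluripotential package developed in \cite{HL}. Once that package is in hand, the argument above assembles to give Lemma \ref{approx13}.
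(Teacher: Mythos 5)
Your overall strategy (truncate the potential, then regularize) is not the one the paper uses, and it has two genuine gaps at exactly the points where the entropy must be controlled.

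First, the claim that $H_\omega(u_k)\to H_\omega(u)$ for the truncations $u_k=\max(u,-k)$ does not follow from ``monotone convergence applied to the densities'': the densities of $\omega_{u_k}^n$ are not monotone in $k$. By plurifine locality one only knows $\omega_{u_k}^n=\omega_u^n$ on $\{u>-k\}$, while the remaining mass sits on $\{u\leq -k\}$ and may a priori concentrate on the level set $\{u=-k\}$ (which is not pluripolar in general), so $\omega_{u_k}^n$ need not even be absolutely continuous with a density whose entropy is controlled by that of $u$. Bedford--Taylor continuity gives weak convergence of the measures, hence only \emph{lower} semicontinuity of the entropy --- the inequality you already have --- not the upper bound you need. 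Second, the assertion that ``for bounded potentials the entropy is continuous under smooth decreasing approximations of Demailly type'' is not a known fact and is essentially the whole content of the lemma: entropy is only $d_1$-lsc, and weak convergence of $\omega_{u_k^\varepsilon}^n$ to $\omega_{u_k}^n$ gives no upper bound on $\limsup_\varepsilon H_\omega(u_k^\varepsilon)$. Your closing paragraph acknowledges this and defers it to the Berman--Darvas--Lu machinery, but that machinery proves the approximation lemma by a different construction, so the deferral is circular.

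The paper's proof (following \cite{He18}[Lemma 3.7] and \cite{BDL}[Lemma 3.1]) truncates the \emph{density} rather than the potential: writing $\omega_u^n\wedge\eta=f\,\omega_T^n\wedge\eta$ with $f\log f\in L^1$, one sets $f_k=\min(f,k)$, normalizes, and solves the transverse complex Monge--Amp\`ere equation $\omega_{u_k}^n\wedge\eta=c_kf_k\,\omega_T^n\wedge\eta$ with $u_k$ smooth (after a further smoothing of $f_k$). Entropy convergence is then immediate from dominated convergence for $c_kf_k\log(c_kf_k)$, since the densities are prescribed; the $d_1$-convergence $d_1(u_k,u)\to 0$ is what requires pluripotential input, namely that $d_1$-convergence is compatible with weak convergence of Monge--Amp\`ere measures together with uniqueness of solutions in $\cE_1$ (both established in \cite{HL} in the Sasaki setting). $K$-invariance of $u_k$ comes for free from uniqueness, since $f_k$ is $K$-invariant. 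If you want to salvage your write-up, you should replace the potential-truncation-plus-convolution scheme by this density-truncation scheme; as written, the two entropy-continuity claims are unsupported and one of them is the theorem itself.
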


\begin{proof}We shall need the following results adapted to Sasaki setting: first $d_1$ convergence in $\cE_1$ implies the weak convergence of the complex Monge-Ampere measure, second uniqueness of complex Monge-Ampere equation in $\cE_1$ class. We shall establish these facts also in Sasaki setting in \cite{HL}.
Given these extensions to Sasaki setting,
this is a direct adaption of \cite{He18}[Lemma 3.7] and $K$ can be any compact subgroup of $\text{Aut}_0(\xi, J)$. And our argument is a modification of \cite{BDL}[Lemma 3.1]. We shall skip the details. \end{proof}

As a direct application, we have the following,
\begin{thm}Suppose $(M, \xi, \eta, [\omega^T], J)$ admits a cscs metric. Then the $\cK$ energy is $d_{1, G}$ proper over $\cH$, as defined in Definition \ref{rp} (not necessarily over $\cH_K$). In particular we get two constants $C, D>0$ such that
\[
\cK(\phi)\geq Cd_{1, G}(0, \phi)-D.
\]
\end{thm}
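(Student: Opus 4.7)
The plan is a contradiction argument combining the convexity and $d_1$-lower semicontinuity of $\cK$ on $\cE_1$ (Lemma~\ref{hl02}), the compactness in $(\cE_1,d_1)$ (Lemma~\ref{compactness}), the regularity of $\cK$-minimizers (Theorem~\ref{regularity1}), and the isometric $G$-action on $(\cH_0,d_1)$. Existence of a cscs immediately yields that $\cK$ is bounded below, that the Futaki invariant vanishes, and that $\cK$ is $G$-invariant (Lemma~\ref{fm02}). Fix once and for all a cscs potential $u\in\cH^0_K$. Suppose the desired linear bound fails; then there is a sequence $\phi_j\in\cH$ with
\[
\cK(\phi_j)+j \leq \tfrac{1}{j}\,d_{1,G}(0,\phi_j).
\]
Since $\cK$ is bounded below this forces $d_{1,G}(0,\phi_j)\to\infty$ and $\cK(\phi_j)/d_{1,G}(0,\phi_j)\to 0$. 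Replacing the basepoint $0$ by $u$ only changes $d_{1,G}$ by the finite constant $d_{1,G}(0,u)$, so also $d_j:=d_{1,G}(u,\phi_j)\to\infty$ with $\cK(\phi_j)/d_j\to 0$. Pick $\sigma_j\in G$ with $d_1(u,\sigma_j[\phi_j])\leq d_j+1/j$ and set $\psi_j:=\sigma_j[\phi_j]$; $G$-invariance of $\cK$ gives $\cK(\psi_j)=\cK(\phi_j)$.

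The heart of the argument is a one-parameter geodesic scaling. Let $v_j^t$, $t\in[0,1]$, be the finite-energy $d_1$-geodesic from $u$ to $\psi_j$ provided by Theorem~\ref{pluri01}, put $\tilde d_j:=d_1(u,\psi_j)\in[d_j,\,d_j+1/j]$, and define $w_j:=v_j^{1/\tilde d_j}$, the point at unit $d_1$-distance from $u$ on that geodesic. Convexity of $\cK$ along $v_j^t$ (Lemma~\ref{hl02}) together with the minimality of $u$ give
\[
\cK(u)\le \cK(w_j)\le\Bigl(1-\tfrac{1}{\tilde d_j}\Bigr)\cK(u)+\tfrac{1}{\tilde d_j}\cK(\psi_j)\longrightarrow \cK(u),
\]
because $\cK(\psi_j)/\tilde d_j\to 0$. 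Since $d_1(0,w_j)\le d_1(0,u)+1$ is uniformly bounded and $\mathbb{J}_{-Ric}$ is $d_1$-continuous, the entropies $H(w_j)=\cK(w_j)-\mathbb{J}_{-Ric}(w_j)$ stay uniformly bounded; Lemma~\ref{compactness} then produces a subsequential $d_1$-limit $w_\infty\in\cE_1$. By $d_1$-lower semicontinuity of $\cK$ (Lemma~\ref{hl02}), $\cK(w_\infty)\le\cK(u)$, hence $w_\infty$ minimizes $\cK$. Theorem~\ref{regularity1} promotes $w_\infty$ to a smooth cscs, and uniqueness of cscs modulo $G$ produces $\sigma_\infty\in G$ with $w_\infty=\sigma_\infty[u]$.

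To close the contradiction, use the $G$-isometry of $d_1$ and the triangle inequality:
\[
d_1(u,\sigma_\infty^{-1}[\psi_j])=d_1(\sigma_\infty[u],\psi_j)=d_1(w_\infty,\psi_j)\leq d_1(w_\infty,w_j)+d_1(w_j,\psi_j).
\]
The first summand is $o(1)$ by construction of $w_\infty$, and by the geodesic structure $d_1(w_j,\psi_j)=\tilde d_j-1$. Hence $d_{1,G}(u,\phi_j)\leq \tilde d_j-1+o(1)$, which combined with the near-optimal choice $\tilde d_j\leq d_{1,G}(u,\phi_j)+1/j$ yields $1-1/j\leq o(1)$, a contradiction for $j$ large.

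The main obstacle will be the uniqueness of cscs modulo $G$ invoked to obtain $w_\infty=\sigma_\infty[u]$. Concretely, one needs the Sasaki analogue of the Berman-Berndtsson rigidity: if the convex function $t\mapsto \cK(v^t)$ is constant on a nontrivial finite-energy $d_1$-geodesic $v^t$ in $\cE_1$, then $v^t$ is generated by a one-parameter subgroup of $G$. This should follow from combining the convexity established by Ji-Zhang and van~Coevering with the pluripotential framework developed in \cite{HL}, but the case of an irregular Reeb foliation requires genuine care because the K\"ahler-reduction shortcut is unavailable; all other ingredients are routine transcriptions of the K\"ahler calculus once the apparatus of Section~2.2 and Theorem~\ref{pluri01} is in place.
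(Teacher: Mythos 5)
Your proposal is correct and follows essentially the same architecture as the paper's proof: a contradiction argument using the unit-distance point on the finite-energy geodesic from a fixed cscs to the (nearly $G$-optimal) competitor, convexity of $\cK$ along finite-energy geodesics (Lemma \ref{hl02}), $d_1$-compactness under entropy bounds (Lemma \ref{compactness}), lower semicontinuity of $\cK$, regularity of $\cE_1$-minimizers (Theorem \ref{regularity1}), and finally the transitivity of $G$ on cscs metrics. The one place you genuinely diverge is in closing the contradiction: the paper argues that the limit $u$ of the unit-distance points is a smooth cscs with $d_{1,G}(0,u)=1$, which contradicts $G$ acting transitively on cscs metrics; you instead run the triangle inequality $d_{1,G}(u,\phi_j)\le d_1(w_\infty,w_j)+d_1(w_j,\psi_j)=o(1)+\tilde d_j-1$ against the near-optimality $\tilde d_j\le d_{1,G}(u,\phi_j)+1/j$ to get $1\le o(1)$. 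Your ending is cleaner and more quantitative (and avoids the paper's somewhat loose claim that the unit-distance points for the translated geodesics are exactly $\sigma[u_i]$), at the cost of having to track the choice of $\sigma_j$. One correction to your closing remark: the ``main obstacle'' you flag is weaker than you fear. Since $w_\infty$ is already promoted to a \emph{smooth} cscs by Theorem \ref{regularity1}, you do not need a Berman--Berndtsson-type rigidity statement along nontrivial finite-energy geodesics; you only need uniqueness of smooth cscs modulo $G$ in the fixed transverse class, which is precisely the result of Jin--Zhang \cite{JZ} and van Coevering \cite{VC} that the paper cites at this step. Two minor technical points, present in the paper as well, that you should make explicit: the compactness lemma is stated for sequences in $\cH$ but is applied to the points $w_j\in\cE_1$ on finite-energy geodesics, so one needs its $\cE_1$-version; and the inequality $\cK(u)\le\cK(w_j)$ uses that a smooth cscs minimizes $\cK$ over $\cE_1$, which again rests on the convexity results of \cite{JZ, VC, HL}.
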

\begin{proof}We argue that $\cK$ is $d_{1, G}$ proper in $\cE_1$.  We choose the base metric $\omega^T$ to be a transverse csck. Suppose otherwise, there exists $\phi_i\in \cE_1$ such that $\cK(\phi_i)\leq C$, but $d_{1}(0, \sigma[\phi_i])\rightarrow \infty$ for any $\sigma\in G$.  Connecting $0$ and $ \phi_i$ by the finite energy geodesic with unit speed as in Theorem \ref{hl01} (see \cite{D4}[Theorem 3.36] also for K\"ahler setting). Consider the point $u_i$ along the geodesic such that $d_1(0, u_i)=1$. The convexity of $\cK$ along the finite energy geodesic (Lemma \ref{hl02}) implies that 
\begin{equation}\label{p101}
\frac{\cK(u_i)}{d_1(0, u_i)}\leq \frac{\cK(\phi_i)}{d_1(0, \phi_i)}\rightarrow 0. 
\end{equation}
Given $\cK(u_i)\rightarrow 0$ and $d_1(0, u_i)=1$, the compactness result in Sasaki setting (Lemma \ref{compactness}) implies that $u_i$ converges to $u$ in $d_1$-topology (by subsequence), and $\cK(u)=0$ (lower semicontinuity). It follows that $u$ is a smooth transverse csck by Theorem \ref{regularity1}. 

For $\tilde \phi_i\in \cE_1$ with $\tilde \phi_i=\sigma [\phi_i]$ for some $\sigma\in G$, such that $d_1(0, \tilde \phi_i)\rightarrow \infty$. Since $\cK$ is $G$-invariant, $\cK(\tilde \phi_i)\leq C$. The discussion above then applies to $\tilde \phi_i$. Connecting $0$ and $\tilde \phi_i$ by the geodesic with $\tilde u_i=\sigma[u_i]$, where $\tilde u_i$ is the point along the geodesic such that $d_1(0, \tilde u_i)=1$. By the discussion above, we get that $d_1(0, \tilde u)=1$ and $\tilde u=\sigma [u]$ such that $\cK(\tilde u)=0$.  Since this discussion holds for any $\sigma\in G$, this implies that $d_{1, G}(0, u)=1$. This contradicts the fact that $G$ acts transitively on cscs metrics \cite{JZ, VC}. By checking the proof more carefully,  when $d(0, \phi_i)\rightarrow \infty$, if we have $\frac{\cK(\phi_i)}{d_1(0, \phi_i)}\rightarrow 0,$
then \eqref{p101} still applies to get that $\cK(u)=0$, and it leads to contradiction. Hence, we can get constants $C, D>0$ such that, 
\[
\cK(\phi)\geq Cd_{1, G}(0, \phi)-D.
\]
\end{proof}

\section{Appendix}

We give a very brief discussion of how techniques used in K\"ahler geometry (see \cite{CC3} and \cite{D4} for example) extends to Sasaki setting for results in the paper. There are two K\"ahler structures which are essentially equivalent descriptions of Sasaki structure, the K\"ahler cone structure and the transverse K\"ahler structure. When the Reeb vector field is regular of quasi-regular, there is a global quotient $M/\cF_\xi$ which is a compact K\"ahler manifold (or orbifold). It is very straightforward to extend results in K\"ahler setting as in \cite{CC3, D4} to regular or quasi-regular Sasaki manifolds. When the Reeb vector field is irregular, it becomes much more complicated at times. Some results which are involved with only transverse K\"ahler structure (basic quantities) and are of global nature can be extended in a rather straightforward way; examples include \cite{CC3}[Theorem 2.1, Theorem 2.2] and \cite{chenhe12}[Theorem 1.1]. Some other results need to be dealt with both globally and locally, such as Chen-Cheng's $C^0$ estimates of $\phi$ and $F$, which we extend to Theorem \ref{firstorder}. For the pluripotential theory, including the proof of Theorem \ref{pluri01}, we follow the K\"ahler setting, see T. Darvas's very nice lecture notes \cite{D4}. We shall emphasize that the proofs for the K\"ahler setting are indeed very long and very intricate, and of course the K\"ahler setting contains the most essential ideas.  On the other hand,  new delicate difficulties do appear in the irregular case in Sasaki setting and it is quite tricky and lengthy to extend these results. Nevertheless, we are able to utilize the transverse  K\"ahler structure and K\"ahler cone structure in an effective way to extend almost all related results in \cite{D4} to the Sasaki setting. We shall present the details of these results in \cite{HL}, including the proof of Theorem \ref{pluri01}.

\end{document}